\newtheorem{theorem}{Theorem}[section]
\newtheorem{proposition}[theorem]{Proposition}
\newtheorem{lemma}[theorem]{Lemma}
\newtheorem{corollary}[theorem]{Corollary}
\newtheorem{remark}[theorem]{Remark}
\newtheorem{observation}[theorem]{Observation}
\theoremstyle{definition}
\newtheorem{problem}[theorem]{Problem}
\newtheorem{question}[theorem]{Question}
\DeclareMathOperator{\con}{con}
\DeclareMathOperator{\var}{var}
\newcommand{\contpart}[2]{\textbf{Part \ref{#1}. #2}\dotfill\pageref{#1}}
\newcommand{\contsection}[2]{\ref{#1}. #2\dotfill\pageref{#1}}
\newcommand{\contspecsection}[2]{#2\dotfill\pageref{#1}}
\begin{document}

\title{The lattice of varieties of monoids}
\thanks{The first and third authors were supported by the Ministry of Science and Higher Education of the Russian Federation (project FEUZ-2020-0016).}

\author[S.\,V.\@ Gusev]{Sergey V.\@ Gusev}
\address{Sergey V.\@ Gusev: Institute of Natural Sciences and Mathematics, Ural Federal University, Lenina str.\@ 51, 620000 Ekaterinburg, Russia}
\email{sergey.gusb@gmail.com}

\author[E.\,W.\,H.\@ Lee]{Edmond W.\@ H.\@ Lee}
\address{Edmond W.\@ H.\@ Lee: Department of Mathematics, Nova Southeastern University, Fort Lauderdale, Florida~33314, USA}
\email{edmond.lee@nova.edu}

\author[B.\,M.\@ Vernikov]{Boris M.\@ Vernikov}
\address{Boris M.\@ Vernikov: Institute of Natural Sciences and Mathematics, Ural Federal University, Lenina str.\@ 51, 620000 Ekaterinburg, Russia}
\email{bvernikov@gmail.com}

\dedicatory{In memory of Professor Lev N.\@ Shevrin (1935--2021)}

\keywords{Monoid, variety, lattice of varieties}

\subjclass{Primary 20M07, secondary 08B15}

\begin{abstract}
We survey results devoted to the lattice of varieties of monoids.
Along with known results, some unpublished results are given with proofs.
A number of open questions and problems are also formulated.
\end{abstract}

\maketitle

\section*{Contents}

\begin{itemize}
\item[]\contsection{intr}{Introduction}
\item[]\contpart{MON and its sublat}{The lattice $\mathbb{MON}$ and its sublattices}
\begin{itemize}
\item[]\contsection{init}{Initial information}
\item[]\contsection{covers}{The covering property}
\item[]\contsection{complex}{Varietal lattices with complex structures}
\item[]\contsection{struct of sublat}{Structure of certain sublattices of $\mathbb{MON}$}
\end{itemize}
\item[]\contpart{restrict}{Varieties with restrictions to subvariety lattices}
\begin{itemize}
\item[]\contsection{ident}{Identities and related conditions}
\item[]\contsection{finite}{Finiteness conditions}
\item[]\contsection{other}{Other restrictions}
\end{itemize}
\item[]\contpart{elements}{Distinctive elements in $\mathbb{MON}$}
\begin{itemize}
\item[]\contsection{spec elem}{Special elements}
\item[]\contsection{definable}{Definable varieties and sets of varieties}
\end{itemize}
\item[]\contspecsection{refer}{References}
\end{itemize}

\section{Introduction}
\label{intr}

\subsection{General remarks}
\label{gen rem}
A \emph{variety} is a class of algebras of the same type that is closed under the formation of subalgebras, homomorphic images, and arbitrary direct products.
By the celebrated Birkhoff's theorem~\cite{Birkhoff-35}, varieties are precisely classes of algebras satisfying a given set of identities and so can be investigated, in principle, by both semantic and syntactic methods.
The theory of varieties is one of the most fruitful branches of modern general algebra.
As McKenzie \textit{et~al.}\@ \cite[p.\,244]{McKenzie-McNulty-Taylor-87} capaciously said, ``in order to guide research and organize knowledge, we group algebras into varieties.''
Moreover, according to Hobby and McKenzie \cite[p.\,12]{Hobby-McKenzie-88}, grouping algebras into varieties ``has proved so fruitful that it has no serious competitor.''
Varieties of algebras have been systematically examined since the 1950s.
Significant contributions to this area were made by many authoritative figures, such as G.\@ Gr\"atzer, B.\@ J\'onsson, A.\,I.\@ Mal'cev, R.\,N.\@ McKenzie, A.\@ Tarski, and several others.

The family of all varieties of algebras of a given type forms a lattice under set-theoretical inclusion, and the examination of such lattices is one of the main research directions in the theory of varieties.
``The study of such lattices reveals an extraordinary rich structure in varieties and helps to organize our knowledge about individual algebras and important families of algebras'' \cite[p.\,vii]{McKenzie-McNulty-Taylor-87}.

The present article is devoted to a survey of work on the lattice $\mathbb{MON}$ of all varieties of monoids, where monoids are considered as semigroups equipped with an additional \mbox{0-ary} operation that fixes the identity element.
It is astute to compare the investigation of the lattice $\mathbb{MON}$ with that of the lattice $\mathbb{SEM}$ of all varieties of semigroups since the latter lattice has been the subject of intensive examination that started as early as the 1960s.
Over 200 articles on the lattice $\mathbb{SEM}$ have so far been published; several survey articles have also been written \cite{Aizenshtat-Boguta-79,Evans-71,Shevrin-Vernikov-Volkov-09,Vernikov-15}, with Shevrin \textit{et~al.}\@~\cite{Shevrin-Vernikov-Volkov-09} being the most complete and up to date, while Vernikov~\cite{Vernikov-15} focused on special elements of $\mathbb{SEM}$.

The situation with the study of the lattice $\mathbb{MON}$ differs sharply from that of $\mathbb{SEM}$.
The first article concerning the lattice $\mathbb{MON}$ was published by Head~\cite{Head-68} back in the 1960s.
For the next 50 years or so, results established in this area were scarce and fragmented: since the pioneering work of Head~\cite{Head-68}, only two more articles published before 2018---by Poll\'ak~\cite{Pollak-81} and Wismath~\cite{Wismath-86}---were devoted, in whole or to a large extent, to the study of the lattice $\mathbb{MON}$.
Significant information on this lattice can also be deduced from the work of Vachuska~\cite{Vachuska-93} on varieties of monoids with an additional unary operation.
Intermediate results related to $\mathbb{MON}$ can also be found in several articles that were mainly devoted to the study of identities of monoids; see, for example, Jackson~\cite{Jackson-05b}  and Lee \cite{Lee-08,Lee-12b,Lee-14}.
In practically every case, an explicit description of the subvariety lattice of some variety is exhibited.

Recently, interest in the lattice $\mathbb{MON}$ has grown significantly.
Since 2018, many results that are fully or partially devoted to this lattice have been established \cite{Gusev-18a,Gusev-18b,Gusev-19,Gusev-20a,Gusev-20b,Gusev-22+a,Gusev-22+b,Gusev-arx,Gusev-Lee-20,Gusev-Lee-21,Gusev-Li_Y-Zhang-arx,Gusev-Sapir_O-22,Gusev-Vernikov-18,Gusev-Vernikov-21,Jackson-Lee-18,Jackson-Zhang-21,Lee-22+,Zhang-Luo-arx}.
In view of the large amount of information on the lattice $\mathbb{MON}$ accumulated to date, the time seems ripe to survey these results and discuss directions of further research.
The present article aims to achieve these goals.
To this end, it is natural to rely on the much richer experience in studying the lattice $\mathbb{SEM}$.
In Shevrin \textit{et~al.}\@~\cite{Shevrin-Vernikov-Volkov-09}, three main directions of research of $\mathbb{SEM}$ were proposed:
\begin{itemize}
\item[(i)] examine properties of the whole lattice $\mathbb{SEM}$ and the structure of its important sublattices;
\item[(ii)] characterize varieties with given properties of their subvariety lattices;
\item[(iii)] describe varieties that occupy, in a sense, a distinctive position in $\mathbb{SEM}$.
\end{itemize}
In the study of the lattice $\mathbb{MON}$, we note that presently, there are both significant advances and numerous open problems in the ``monoid versions'' of all three aforementioned directions.

Since the amount of information on the structure of the lattice $\mathbb{MON}$ available to date is still relatively small, we are able to mention all articles known to us that are, partially or fully, within the scope of the survey.

While organizing this survey, we found a number of natural open questions that were not difficult to answer using existing results and techniques.
All such results are included in the survey with explicit proofs as a rule or, sometimes, with explanation on how they can be deduced from known results.

The structure of the article is outlined in the table of contents.
The article consists of ten sections.
Sections~\ref{init}--\ref{definable} are grouped into three parts which correspond to the three research directions in (i)--(iii) above.

\subsection{Lattices of pseudovarieties}
\label{pseudovar}
A strong motivation for investigating monoid varieties comes from computer science, more specifically from the theory of languages.
We briefly outline this important connection and refer the reader to the monographs by Almeida~\cite{Almeida-94} and Rhodes and Steinberg~\cite{Rhodes-Steinberg-09} for a comprehensive treatment.

A \emph{language} over an alphabet $X$ is an arbitrary subset of the free monoid $F^1$ over $X$.
To each language $L \subseteq F^1$ is assigned its \emph{syntactic monoid} $M_L$ defined as the quotient of $F^1$ over the largest congruence $\rho$ such that $L$ is a union of $\rho$-classes.
The syntactic monoid captures several crucial properties of $L$; in particular, a language over a finite alphabet is regular if and only if its syntactic monoid is finite.

The correspondence $L \mapsto M_L$ is neither injective nor surjective even when its input is restricted to regular languages.
It was Eilenberg~\cite{Eilenberg-76} who realized that this correspondence is bijective if the input and output are raised to the level of certain classes of regular languages and finite monoids, respectively.
The classes on the language side, commonly called \emph{varieties of regular languages}, are closed under certain natural language-theoretical operations; the classes on the monoid side are precisely \emph{pseudovarieties}---classes of finite monoids that are closed under the formation of submonoids, homomorphic images, and finitary direct products.
For any variety $\mathbf V$ of algebras of any type, the class $\mathbf V_\mathsf{fin}$ of all finite members from $\mathbf V$ is a pseudovariety, but not all pseudovarieties arise in this manner.
Pseudovarieties that are not of the form $\mathbf V_\mathsf{fin}$ include the class of all finite groups and the class of all finite monoids that are \emph{aperiodic} in the sense that all subgroups are trivial.

Both varieties of regular languages and pseudovarieties of finite monoids form complete lattices under inclusion, and Eilenberg's correspondence is an isomorphism between these two lattices.
Therefore, results concerning lattices of pseudovarieties can be reinterpreted in terms of regular languages.
Conversely, language theory distinguishes certain varieties of regular languages by their combinatorial or logical properties and thus motivates the study of the algebraic counterparts of these distinguished varieties.
An important example is the class of star-free languages, whose algebraic counterpart is the pseudovariety of aperiodic monoids; see Sch\"utzenberger~\cite{Schutzenberger-65}.
This is one of the reasons why the lattice $\mathbb{APER}$ of varieties of aperiodic monoids is a prominent sublattice of $\mathbb{MON}$.

Although the focus of the present survey is only on monoid varieties, many results overviewed will be applicable to monoid pseudovarieties, due to the following result of Aglian\'o and Nation~\cite{Agliano-Nation-89}: if $\mathbf P$ is a pseudovariety of algebras of any type and $K_{\mathbf P}$ is the class of subvariety lattices of varieties generated by monoids in $\mathbf P$, then the lattice of all subpseudovarieties of $\mathbf P$ is a homomorphic image of a sublattice of an ultraproduct of lattices from $K_{\mathbf P}$.
In particular, any positive universal sentence that holds in the subvariety lattice of a monoid variety $\mathbf V$, such as any lattice identity, also holds in the subpseudovariety lattice of $\mathbf V_\mathsf{fin}$.
Further, if $\mathbf V$ is \emph{locally finite}---that is, every finitely generated monoid in $\mathbf V$ is finite---then the mapping $\mathbf X \mapsto \mathbf X_\mathsf{fin}$ is an isomorphism between the lattice of subvarieties of $\mathbf V$ and the lattice of subpseudovarieties of $\mathbf V_\mathsf{fin}$; see Hall and Johnston \cite[Theorem~5.3]{Hall-Johnston-89}.

Throughout this survey, for many finite monoids $M$, the subvariety lattice of the monoid variety $\var M$ generated by $M$ will be explicitly described.
As a result of the aforementioned isomorphism, each of these subvariety lattices is isomorphic to the subpseudovariety lattice of the monoid pseudovariety generated by $M$.

\subsection{Terminology and notation}
\label{notat}
We assume that the reader is familiar with rudiments of semigroup theory, lattice theory, and universal algebra.
We adopt standard terminology and notation from Clifford and Preston~\cite{Clifford-Preston-61} and Howie~\cite{Howie-95} for semigroups and monoids, Gr\"atzer~\cite{Gratzer-11} for lattices, and Burris and Sankappanavar~\cite{Burris-Sankappanavar-81} and McKenzie \textit{et~al.}\@~\cite{McKenzie-McNulty-Taylor-87} for universal algebra.

The monoid obtained by adjoining a new identity element to a semigroup $S$ is denoted by $S^1$.
The free semigroup and the free monoid over a countably infinite alphabet $X$ are denoted by $F$ and $F^1$, respectively.
As usual, elements of $X$ and $F^1$ are called \emph{letters} and \emph{words}, respectively; the empty word is the identity element of $F^1$.
Words, unlike letters, are written in bold.
The two words forming an identity are connected by the symbol $\approx$, while the symbol $=$ denotes, among other things, the equality relation on the free semigroup or monoid.

Let $\mathbf T$ denote the trivial variety of algebras of any type.
We use the standard symbol $\mathbb N$ for the set of all natural numbers.
For any $n\in\mathbb N$, let $\mathbf A_n$ denote the variety of Abelian groups of exponent dividing $n$; in particular, $\mathbf A_1=\mathbf T$.
We will often put $\mathsf{sem}$ in the subscript to distinguish between a particular semigroup object and its monoid namesake.
For instance, the variety of all commutative [respectively, semilattice] monoids is denoted by $\mathbf{COM}$ [respectively, $\mathbf{SL}$], the corresponding semigroup variety is denoted by $\mathbf{COM}_\mathsf{sem}$ [respectively, $\mathbf{SL}_\mathsf{sem}$].
The variety of all monoids [respectively, semigroups] is denoted by $\mathbf{MON}$ [respectively, $\mathbf{SEM}$].
A variety of monoids [respectively, semigroups] is \emph{proper} if it is different from $\mathbf{MON}$ [respectively, $\mathbf{SEM}$].
The variety of monoids [respectively, semigroups] defined by an identity system $\Sigma$ is denoted by $\var\Sigma$ [respectively, $\var_\mathsf{sem}\Sigma$].
The variety of monoids generated by a monoid $M$ is denoted by $\var M$.

A semigroup or monoid is \emph{completely regular} if it is a union of groups.
A variety of semigroups or monoids is \emph{commutative} [respectively, \emph{periodic}, \emph{completely regular}] if it consists of commutative [respectively, periodic, completely regular] semigroups or monoids; a variety of semigroups [respectively, monoids] is \emph{overcommutative} if it contains the variety $\mathbf{COM}_\mathsf{sem}$ [respectively, $\mathbf{COM}$].
In accordance with the above convention, the lattice of all periodic [respectively, commutative, overcommutative, completely regular] varieties of monoids is denoted by $\mathbb{PER}$ [respectively, $\mathbb{COM}$, $\mathbb{OC}$, $\mathbb{CR}$], while the eponymous varietal lattice in the semigroup case is denoted by $\mathbb{PER}_\mathsf{sem}$ [respectively, $\mathbb{COM}_\mathsf{sem}$, $\mathbb{OC}_\mathsf{sem}$, $\mathbb{CR}_\mathsf{sem}$].

For a possibly empty set $W$ of words, let $I(W)$ denote the set of all words that are not subwords of any word from $W$.
It is clear that $I(W)$ is an ideal of $F^1$.
Let $S(W)$ denote the Rees quotient monoid $F^1/I(W)$.
Since every Rees quotient monoid $S(W)$ in the present article involves a singleton set $W = \{ \mathbf{w} \}$, it is more convenient to write $S(\mathbf w)$ instead of $S(\{\mathbf w\})$.
Monoids of the form $S(W)$ appeared in the literature as early as the 1940s; their construction was attributed by Morse and Hedlund~\cite{Morse-Hedlund-44} to R.\,P.\@ Dilworth.
One of the earliest discovered examples of non-finitely based finite semigroups, due to Perkins~\cite{Perkins-69} in the 1960s, is the monoid $S(\{ xyzyx, xzyxy, xyxy, x^2z \})$.
Since the beginning of the millennium, such monoids were consistently and systematically used in the articles of M.\@ Jackson, O.\,B.\@ Sapir, and other authors; see Jackson and Lee \cite[Remark~2.4]{Jackson-Lee-18} for more references.

For a variety $\mathbf X$ of algebras, we denote its subvariety lattice by $L(\mathbf X)$.
If $\mathbf X$ is a variety of monoids or semigroups, then we denote by $\overleftarrow{\mathbf X}$ the variety \emph{dual to} $\mathbf X$, that is, the variety consisting of anti-isomorphic images of algebras from $\mathbf X$.

\subsection{Why do $\mathbb{MON}$ and $\mathbb{SEM}$ satisfy different properties?}
\label{why differ}
Since semigroups and monoids are closely related types of algebras, it seems plausible that the varieties they generate should have subvariety lattices that satisfy more or less similar properties.
However, this is very far from the case.
Significant differences between the lattices $\mathbb{SEM}$ and $\mathbb{MON}$ can already be found in some early works, such as Head~\cite{Head-68} and Poll\'ak~\cite{Pollak-81}; see Subsections~\ref{COM} and~\ref{covers exists}, respectively.
Throughout the article, we will often highlight differences in properties of these two lattices.
In some cases, the lattices $\mathbb{SEM}$ and $\mathbb{MON}$ satisfy similar properties, but such instances are rare.
In this subsection, we briefly discuss what causes the mentioned differences.

When we deal with identities of monoids, we can ``eliminate'' all occurrences of a letter by substituting~1 for it.
This fundamentally affects the deducibility of identities and essentially changes the structure of varietal lattices.
We give a simple but striking example:
\[
\mathbf K=\var_\mathsf{sem}\{x^2\approx yzy\}.\label{variety K}
\]
The lattice $L(\mathbf K)$ is extremely complex because it contains an isomorphic copy of every finite lattice \cite[Lemma~3]{Volkov-89}.
But the monoid variety $\var\{x^2\approx yzy\}$ is trivial because every monoid in it satisfies the identity $1\approx z$.
A deeper reason leading to significant differences between the lattices $\mathbb{MON}$ and $\mathbb{SEM}$ will be elaborated at the end of Subsection~\ref{sublat}.

\part{The lattice $\mathbb{MON}$ and its sublattices}
\label{MON and its sublat}

\section{Initial information}
\label{init}

\subsection{Embedding of $\mathbb{MON}$ in $\mathbb{SEM}$}
\label{embed}
The following easily verifiable result plays a fundamental role in the study of the lattice $\mathbb{MON}$.
It was explicitly noted, for example, in Almeida \cite[Section~7.1]{Almeida-94} and Jackson and Lee \cite[Subsection~1.1]{Jackson-Lee-18}.

\begin{proposition}
\label{MON is sublat of SEM}
The mapping from $\mathbb{MON}$ into $\mathbb{SEM}$ that sends a monoid variety generated by a monoid $M$ to the semigroup variety generated by the semigroup reduct of $M$ is an embedding of the lattice $\mathbb{MON}$ into the lattice $\mathbb{SEM}$.
\end{proposition}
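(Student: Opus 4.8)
The plan is to show that the map, which I will write as $\Phi\colon\mathbf V\mapsto\var_{\sem}\{\,M^\flat : M\in\mathbf V\,\}$ (where $M^\flat$ denotes the semigroup reduct of a monoid $M$), is injective and both preserves and reflects inclusion; this is exactly what it means for $\Phi$ to be an order-embedding, after which I would check that the two lattice operations are carried along. First I would record that $\Phi$ is well defined and agrees with the description in the statement: since taking the semigroup reduct commutes with the class operators $\mathsf H$, $\mathsf S$ and $\mathsf P$, one has $M\in\var N\Rightarrow M^\flat\in\var_{\sem}N^\flat$, so that $\var_{\sem}\{M^\flat:M\in\var N\}=\var_{\sem}N^\flat$, and more generally $\mathbf U\subseteq\mathbf V$ forces $\Phi(\mathbf U)\subseteq\Phi(\mathbf V)$. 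This settles monotonicity and the consistency of the two descriptions of $\Phi$ at one stroke.

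The core of the argument is a single translation lemma between monoid identities and semigroup identities. For words $\mathbf u,\mathbf v\in F^1$ and a fresh letter $z$ I would prove that a monoid $M$ satisfies $\mathbf u\approx\mathbf v$ if and only if its reduct $M^\flat$ satisfies the semigroup identity $z\mathbf u\approx z\mathbf v$. The forward implication is immediate. For the converse I substitute the identity element $1\in M$ for $z$: this is a legitimate substitution in the monoid, and it collapses $z\mathbf u\approx z\mathbf v$ back to $\mathbf u\approx\mathbf v$. Observe that $z\mathbf u$ and $z\mathbf v$ are genuine non-empty semigroup words even when one of $\mathbf u,\mathbf v$ is the empty word; the padding by $z$ is precisely the device that encodes an ``erasing'' monoid identity $\mathbf w\approx1$ as the honest semigroup identity $z\mathbf w\approx z$, and this is where the special feature of monoids emphasised in the Introduction is tamed. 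Passing from a single monoid to a whole variety, the lemma yields the equivalence $\mathbf V\models\mathbf u\approx\mathbf v\iff\Phi(\mathbf V)\models z\mathbf u\approx z\mathbf v$.

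With the translation lemma in hand, order-reflection is short. Assuming $\Phi(\mathbf U)\subseteq\Phi(\mathbf V)$, take any monoid identity $\mathbf u\approx\mathbf v$ holding in $\mathbf V$; then $\Phi(\mathbf V)\models z\mathbf u\approx z\mathbf v$, hence $\Phi(\mathbf U)\models z\mathbf u\approx z\mathbf v$, and the lemma returns $\mathbf U\models\mathbf u\approx\mathbf v$. Since $\mathbf V$ and $\mathbf U$ are determined by their identities, this gives $\mathbf U\subseteq\mathbf V$; combined with monotonicity we obtain $\mathbf U\subseteq\mathbf V\iff\Phi(\mathbf U)\subseteq\Phi(\mathbf V)$, so $\Phi$ is injective and an order-embedding. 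Finally I would verify that $\Phi$ respects the lattice operations. Preservation of joins is immediate from the same observation that the reduct commutes with $\mathsf H\mathsf S\mathsf P$: every monoid in $\mathbf U\vee\mathbf V$ has its reduct in $\var_{\sem}\bigl(\{M^\flat:M\in\mathbf U\}\cup\{N^\flat:N\in\mathbf V\}\bigr)$, whence $\Phi(\mathbf U\vee\mathbf V)=\Phi(\mathbf U)\vee\Phi(\mathbf V)$, and the argument works verbatim for arbitrary joins. The hard part will be preservation of meets: the inclusion $\Phi(\mathbf U\wedge\mathbf V)\subseteq\Phi(\mathbf U)\wedge\Phi(\mathbf V)$ is free from monotonicity, but for the reverse one must show that every non-empty identity forced on $\mathbf U\cap\mathbf V$ is already a semigroup-consequence of the non-empty identities of $\mathbf U$ together with those of $\mathbf V$. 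I expect to settle this once more through the translation lemma, checking that the encoding $\mathbf w\approx1\rightsquigarrow z\mathbf w\approx z$ is compatible with the equational closure used to compute the meet; this bookkeeping around the empty word is the one genuinely delicate point of the proof.
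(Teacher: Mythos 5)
The paper does not actually prove this proposition (it cites \cite[Proposition~2.1]{Gusev-18b} and \cite[Subsection~1.1]{Jackson-Lee-18} and calls the result easily verifiable), so your argument has to stand on its own. Its architecture is reasonable and most of it is sound: the translation lemma ($M\models\mathbf u\approx\mathbf v$ iff $M^\flat\models z\mathbf u\approx z\mathbf v$ for a fresh letter $z$) is correct and does yield that the non-empty identities of $\Phi(\mathbf V)$ determine all identities of $\mathbf V$, hence injectivity and order-reflection; the join computation via $\mathsf{HSP}$ of reducts is also fine (though ``the reduct commutes with $\mathsf S$'' is only true in the one direction you actually use --- a subsemigroup of a monoid reduct need not be the reduct of a submonoid).

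The genuine gap is exactly where you flag it: preservation of meets is left as an expectation, and the route you sketch --- showing that every non-empty identity of $\mathbf U\wedge\mathbf V$ is a \emph{semigroup-equational} consequence of the non-empty identities of $\mathbf U$ and of $\mathbf V$ --- is the wrong tool. A monoid-equational deduction may substitute the empty word for letters and may pass through words or identities with an empty side, and your encoding $\mathbf w\approx 1\rightsquigarrow z\mathbf w\approx z$ cannot be undone inside semigroup equational logic (there is no cancellation rule), so a step-by-step simulation of the monoid closure does not go through. The clean argument is semantic. Take a semigroup $T\in\Phi(\mathbf U)\wedge\Phi(\mathbf V)$. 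If neither $\mathbf U$ nor $\mathbf V$ satisfies an identity $\mathbf w\approx 1$ with $\mathbf w$ non-empty (equivalently, neither is a periodic group variety), then the monoid $T^1$ lies in $\mathbf U\cap\mathbf V$: for any identity $\mathbf u\approx\mathbf v$ of $\mathbf W\in\{\mathbf U,\mathbf V\}$ and any assignment into $T^1$, deleting the letters sent to $1$ produces an identity $\mathbf u'\approx\mathbf v'$ that still holds in $\mathbf W$; by hypothesis its two sides are empty or non-empty simultaneously, and in the latter case it is a semigroup identity of $\Phi(\mathbf W)$ and hence holds in $T$; therefore $T\le (T^1)^\flat\in\Phi(\mathbf U\wedge\mathbf V)$. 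If instead, say, $\mathbf V\models x^n\approx 1$, then $\Phi(\mathbf V)\models x^ny\approx y\approx yx^n$, so $T$ is already a group with identity element $e$, and one checks directly (again via the padded identities $\mathbf u y\approx y$) that the monoid $(T,\cdot,e)$ lies in $\mathbf U\cap\mathbf V$. Without an argument of this kind the proof of the proposition is incomplete, since a mere order-embedding of lattices need not preserve meets or joins.
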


Proposition~\ref{MON is sublat of SEM} allows us to transfer various results on $\mathbb{SEM}$ to results on $\mathbb{MON}$.
Concrete examples of how Proposition~\ref{MON is sublat of SEM} enables us to obtain new information about the lattice $\mathbb{MON}$ will appear repeatedly below.

\subsection{Some basic properties of $\mathbb{MON}$}
\label{basic prop}
Before proceeding to the discussion of specific properties of the lattice $\mathbb{MON}$, we note that this lattice possesses all textbook properties of subvariety lattices of varieties of algebras: it is complete, atomic, and coalgebraic, and its cocompact elements are precisely all finitely based monoid varieties.
Several more specific properties that hold in subvariety lattices of all varieties of algebras are listed in Lampe~\cite{Lampe-91}.

A description of the atoms of the lattice $\mathbb{SEM}$ was found back in the 1950s~\cite{Kalicki-Scott-55}; the following result is thus deducible from Proposition~\ref{MON is sublat of SEM}.

\begin{observation}
\label{atoms}
The varieties $\mathbf A_p$, where $p$ ranges over the primes, and $\mathbf{SL}$ are the only atoms of the lattice $\mathbb{MON}$.
\end{observation}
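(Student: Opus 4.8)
The plan is to prove the two inclusions separately, using the order-embedding $\Phi\colon\mathbb{MON}\to\mathbb{SEM}$ of Proposition~\ref{MON is sublat of SEM} together with the Kalicki--Scott description of the atoms of $\mathbb{SEM}$. First I would record where $\Phi$ sends the relevant varieties: clearly $\Phi(\mathbf T)=\mathbf T$, while $\Phi(\mathbf{SL})=\mathbf{SL}_{\sem}$ and $\Phi(\mathbf A_p)=\mathbf A_p^{\sem}$, since the $2$-element semilattice and the cyclic group $\mathbb Z_p$ are themselves monoids whose semigroup reducts generate these atoms of $\mathbb{SEM}$. Because $\Phi$ is an order-embedding fixing the bottom element, a strict chain $\mathbf T<\mathbf W<\mathbf{SL}$ in $\mathbb{MON}$ would yield a strict chain $\mathbf T<\Phi(\mathbf W)<\mathbf{SL}_{\sem}$ in $\mathbb{SEM}$, contradicting the fact that $\mathbf{SL}_{\sem}$ is an atom; the same argument applies verbatim to each $\mathbf A_p$. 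Hence $\mathbf{SL}$ and all the varieties $\mathbf A_p$ are atoms of $\mathbb{MON}$.

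For the reverse direction it suffices to establish the following claim: every non-trivial monoid variety $\mathbf V$ contains $\mathbf{SL}$ or $\mathbf A_p$ for some prime $p$. Granting this, an arbitrary atom $\mathbf V$ of $\mathbb{MON}$ contains one of these (necessarily non-trivial) varieties, and minimality forces equality, so $\mathbf V\in\{\mathbf{SL}\}\cup\{\mathbf A_p : p\text{ prime}\}$, which completes the proof.

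To prove the claim I would fix a non-trivial monoid $M\in\mathbf V$ and an element $a\neq1$, and analyse the monogenic submonoid $\langle a\rangle$, which lies in $\mathbf V$. There are three cases. If $\langle a\rangle$ is infinite, then it is the free monogenic monoid and admits $\mathbb Z_p$ as a homomorphic image for every prime $p$, whence $\mathbf A_p\subseteq\mathbf V$. If $\langle a\rangle$ is finite and some positive power of $a$ equals $1$, then $\langle a\rangle$ is a non-trivial cyclic group and contains a subgroup isomorphic to $\mathbb Z_p$ for any prime $p$ dividing its order, so again $\mathbf A_p\subseteq\mathbf V$. In the remaining case $\langle a\rangle$ is finite with no positive power of $a$ equal to $1$; then the subsemigroup $\{a,a^2,\dots\}$ contains a unique idempotent $f$, which differs from $1$, and $\{1,f\}$ is a submonoid isomorphic to the $2$-element semilattice, giving $\mathbf{SL}\subseteq\mathbf V$. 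The point requiring care --- and the only real obstacle --- is to respect the enriched monoid signature throughout: one must check that the witnessing objects ($\mathbb Z_p$, the $2$-element semilattice) occur as genuine \emph{submonoids} or \emph{monoid} quotients of $\langle a\rangle$ containing its distinguished identity, rather than merely as subsemigroups or subquotients of the reduct. The case analysis above is arranged precisely so that the identity element plays the required role in each witness.
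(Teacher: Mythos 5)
Your argument is correct. The paper itself offers no written proof: it simply asserts that the observation ``readily follows'' from the Kalicki--Scott description of the atoms of $\mathbb{SEM}$ together with Proposition~\ref{MON is sublat of SEM}. Your first half (that $\mathbf{SL}$ and the $\mathbf A_p$ really are atoms) is exactly that intended route: the lattice embedding is injective and order-preserving, sends $\mathbf T$ to $\mathbf T$, and sends $\mathbf{SL}$ and $\mathbf A_p$ to atoms of $\mathbb{SEM}$, so nothing can sit strictly below them. For the converse half (that there are no other atoms), the route implicit in the paper would be to push a non-trivial monoid variety through the embedding, invoke Kalicki--Scott to find one of the five semigroup atoms ($\mathbf{SL}_{\sem}$, $\mathbf A_{p\,\sem}$, $\mathbf{LZ}$, $\mathbf{RZ}$, $\mathbf{ZM}$) inside the image, and then argue in each case that the monoid variety contains $\mathbf{SL}$ or some $\mathbf A_p$; the cases $\mathbf{LZ}$, $\mathbf{RZ}$ and $\mathbf{ZM}$ require an extra step (e.g.\ the fact, used later in the paper via \cite[Lemma~7.1.1]{Almeida-94}, that a semigroup variety generated by a monoid and containing $L$ also contains $L^1$). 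You instead give a self-contained elementary argument: analyse the monogenic submonoid $\langle a\rangle$ of a non-trivial monoid and extract either a copy of $\mathbb Z_p$ (as a genuine submonoid or monoid quotient) or the two-element semilattice $\{1,f\}$. This bypasses Kalicki--Scott entirely for that direction and avoids the signature subtleties around the semigroup atoms with no monoid analogue; your explicit attention to the fact that the witnesses must contain the distinguished identity $1$ is exactly the point that makes the case analysis work. Both approaches are sound; yours is more elementary and arguably more transparent, at the cost of redoing by hand a small piece of what Kalicki--Scott already provides.
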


The variety $\mathbf{SEM}$ is join-irreducible in the lattice $\mathbb{SEM}$ and this lattice does not have coatoms; see Evans \cite[Section~X]{Evans-71}.
These properties also hold for monoid varieties.

\begin{observation}
\label{top}
\quad
\begin{itemize}
\item[a)] The variety $\mathbf{MON}$ is join-irreducible in the lattice $\mathbb{MON}$.
\item[b)] The lattice $\mathbb{MON}$ does not have coatoms.
\end{itemize}
\end{observation}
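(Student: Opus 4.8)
The plan is to prove both parts using Proposition~\ref{MON is sublat of SEM}, which lets me transfer known facts about $\mathbb{SEM}$ to $\mathbb{MON}$, combined with direct arguments about the placement of $\mathbf{MON}$ as the top element. Since the embedding sends $\mathbf{MON}$ to $\mathbf{SEM}$ (the semigroup reduct of the free monoid generates the variety of all semigroups), both statements are the monoid analogues of the cited semigroup facts, so I expect the semigroup results plus the embedding to carry most of the weight.

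For part~a), the goal is to show $\mathbf{MON}$ cannot be written as a join $\mathbf{X}\vee\mathbf{Y}$ with $\mathbf{X},\mathbf{Y}\subsetneq\mathbf{MON}$ proper subvarieties. The natural approach is by contradiction: suppose $\mathbf{MON}=\mathbf{X}\vee\mathbf{Y}$ with both proper. Each proper variety satisfies some non-trivial identity, say $\mathbf{X}$ satisfies $\mathbf{u}\approx\mathbf{v}$ and $\mathbf{Y}$ satisfies $\mathbf{s}\approx\mathbf{t}$. The key is that a join $\mathbf{X}\vee\mathbf{Y}$ satisfies exactly those identities holding in \emph{both} $\mathbf{X}$ and $\mathbf{Y}$. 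I would then exhibit a single non-trivial identity forced on $\mathbf{X}\vee\mathbf{Y}$, contradicting that the join equals $\mathbf{MON}$, which satisfies no non-trivial identity. The cleanest route is to use the embedding: the image of $\mathbf{X}\vee\mathbf{Y}$ in $\mathbb{SEM}$ is the semigroup join of the images, and since $\mathbf{SEM}$ is join-irreducible in $\mathbb{SEM}$ (cited from~\cite[Section~X]{Evans-71}), at least one image must equal $\mathbf{SEM}$; injectivity of the embedding then forces the corresponding monoid variety to be $\mathbf{MON}$, so it was not proper after all.

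For part~b), I want to show $\mathbb{MON}$ has no coatoms, i.e.\ no proper variety is maximal. Again I would transfer from $\mathbb{SEM}$: given any proper monoid variety $\mathbf{X}\subsetneq\mathbf{MON}$, its image under the embedding is a proper semigroup variety (by injectivity, since $\mathbf{X}\neq\mathbf{MON}$), and because $\mathbb{SEM}$ has no coatoms there is a proper semigroup variety strictly between the image and $\mathbf{SEM}$. The obstacle here is that the embedding of Proposition~\ref{MON is sublat of SEM} need not be surjective, so an intermediate semigroup variety need not pull back to a monoid variety; I cannot simply invert the embedding. I would therefore argue directly instead: given proper $\mathbf{X}\subsetneq\mathbf{MON}$, I must construct a proper monoid variety strictly containing $\mathbf{X}$. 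Since $\mathbf{X}$ is proper it satisfies some non-trivial identity $\mathbf{u}\approx\mathbf{v}$; I expect to produce a variety defined by a weaker identity (for instance, passing to a higher-length or higher-exponent relaxation of $\mathbf{u}\approx\mathbf{v}$) that still holds in some monoid outside $\mathbf{X}$ yet remains non-trivial, witnessing a proper variety strictly between $\mathbf{X}$ and $\mathbf{MON}$.

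The main obstacle will be part~b), precisely because the embedding is not onto and cannot be inverted to transport the ``no coatoms'' property mechanically. The delicate step is exhibiting, for an \emph{arbitrary} proper monoid variety, an explicit proper over-variety; the argument must handle all proper varieties uniformly, so I would rely on the syntactic characterization of proper varieties as those satisfying a non-trivial monoid identity and show that any such identity can be weakened while still excluding $\mathbf{MON}$. Part~a), by contrast, should follow almost immediately from join-irreducibility of $\mathbf{SEM}$ together with the fact that the embedding preserves joins and is injective, so I expect it to be routine once the transfer principle is set up.
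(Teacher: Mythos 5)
Part~a) of your proposal is correct and is exactly the paper's argument: the embedding of Proposition~\ref{MON is sublat of SEM} sends $\mathbf{MON}$ to $\mathbf{SEM}$ and preserves joins, so join-irreducibility of $\mathbf{SEM}$ in $\mathbb{SEM}$ pulls back along the injective map. Nothing more is needed there.

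For part~b) you correctly diagnose the obstacle (the embedding is not surjective and cannot be inverted to transport ``no coatoms''), but your proposal stops precisely where the work begins: you never exhibit the weakened identity system, you only state that you ``expect to produce'' one. That existence claim \emph{is} the content of the statement, so as written this is a genuine gap rather than a proof. The paper's construction is short and uniform: write $\mathbf V=\var\{\mathbf u_i\approx\mathbf v_i\mid i\in I\}$ for an arbitrary (possibly infinite) basis, and let $\mathbf w^\ast$ denote the word obtained from $\mathbf w$ by substituting $x^2$ for every letter $x$. Each identity $\mathbf u_i^\ast\approx\mathbf v_i^\ast$ is a consequence of $\mathbf u_i\approx\mathbf v_i$ (evaluate the latter at squares), so $\mathbf V\subseteq\mathbf V^\ast=\var\{\mathbf u_i^\ast\approx\mathbf v_i^\ast\mid i\in I\}$; since $\mathbf w\mapsto\mathbf w^\ast$ is injective on words, at least one of the starred identities is non-trivial, so $\mathbf V^\ast$ is proper; and the inclusion $\mathbf V\subset\mathbf V^\ast$ is strict, whence $\mathbf V$ is not a coatom. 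Note also a defect in your plan of weakening a \emph{single} non-trivial identity $\mathbf u\approx\mathbf v$ satisfied by $\mathbf V$: if $\mathbf V$ is not defined by that one identity, then $\var\{\mathbf u\approx\mathbf v\}$ already strictly contains $\mathbf V$ and no weakening is needed, while if $\mathbf V=\var\{\mathbf u\approx\mathbf v\}$ you must weaken in a way that provably enlarges the variety yet stays proper; the squaring substitution handles both cases, and handles infinitely based varieties, in one stroke.
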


\begin{proof}
a) This follows from Proposition~\ref{MON is sublat of SEM} and the aforementioned fact that the variety $\mathbf{SEM}$ is join-irreducible in $\mathbb{SEM}$.

\smallskip

b) Consider any proper monoid variety $\mathbf V=\var\{\mathbf u_i\approx\mathbf v_i\mid i\in I\}$.
Let $\xi$ denote the substitution that maps every letter $x$ to $x^2$.
Then $\mathbf V^\ast=\var\{ \xi(\mathbf u_i) \approx \xi (\mathbf v_i) \mid i\in I\}$ is a variety such that $\mathbf V \subset \mathbf V^\ast \subset \mathbf{MON}$, so that $\mathbf V$ is not a coatom of $\mathbb{MON}$.
\end{proof}

Recall that a lattice $\langle L;\vee,\wedge\rangle$ with a least element~0 is 0-\emph{distributive} if it satisfies the following implication:
\[
\forall x,y,z\in L:\quad x\wedge z=y\wedge z=0\longrightarrow(x\vee y)\wedge z=0.
\]
Lattices of varieties of classical types of algebras such as groups, rings, semigroups, and lattices are well known to be 0-distributive.
For semigroup varieties, this is a folklore result; see, for example, Shevrin \textit{et~al.}\@ \cite[Section~1]{Shevrin-Vernikov-Volkov-09}.
Varieties of monoids are of no exception to this trend.

\begin{observation}
\label{0-distr}
The lattice $\mathbb{MON}$ is 0-distributive.
\end{observation}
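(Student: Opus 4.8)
The plan is to deduce the $0$-distributivity of $\mathbb{MON}$ from that of $\mathbb{SEM}$ by transporting it along the embedding of Proposition~\ref{MON is sublat of SEM}. Write $\phi\colon\mathbb{MON}\to\mathbb{SEM}$ for this embedding and recall that the least element of $\mathbb{MON}$ is the trivial variety $\mathbf T$. The first thing I would record is that $\phi$ is bottom-preserving: the trivial monoid variety $\mathbf T$ is generated by the one-element monoid, whose semigroup reduct is the one-element semigroup, so $\phi(\mathbf T)$ is the trivial semigroup variety, i.e.\ the least element of $\mathbb{SEM}$. Since $\phi$ preserves both $\vee$ and $\wedge$ and sends $0$ to $0$, its image is a $0$-sublattice of $\mathbb{SEM}$.

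Next I would invoke the folklore fact, recalled just above, that $\mathbb{SEM}$ is $0$-distributive. The transfer step is then purely lattice-theoretic. Suppose $\mathbf X,\mathbf Y,\mathbf Z\in\mathbb{MON}$ satisfy $\mathbf X\wedge\mathbf Z=\mathbf Y\wedge\mathbf Z=\mathbf T$. Applying $\phi$ and using that it preserves meets and the bottom gives $\phi(\mathbf X)\wedge\phi(\mathbf Z)=\phi(\mathbf Y)\wedge\phi(\mathbf Z)=\mathbf T$ in $\mathbb{SEM}$. By $0$-distributivity of $\mathbb{SEM}$ we get $\bigl(\phi(\mathbf X)\vee\phi(\mathbf Y)\bigr)\wedge\phi(\mathbf Z)=\mathbf T$. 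Since $\phi$ preserves joins and meets, the left-hand side equals $\phi\bigl((\mathbf X\vee\mathbf Y)\wedge\mathbf Z\bigr)$, so $\phi\bigl((\mathbf X\vee\mathbf Y)\wedge\mathbf Z\bigr)=\phi(\mathbf T)$. Finally, injectivity of $\phi$ yields $(\mathbf X\vee\mathbf Y)\wedge\mathbf Z=\mathbf T$, which is exactly the required implication.

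There is no serious obstacle here: the only genuinely load-bearing observation, and the step I would be most careful to spell out, is that $\phi$ sends the least element to the least element, since $0$-distributivity is an implication between equalities involving the specific bottom element $0$. Everything else is a formal consequence of $\phi$ being an injective lattice homomorphism; in particular, $0$-distributivity is automatically reflected by any $0$-preserving embedding, so no properties of monoids beyond Proposition~\ref{MON is sublat of SEM} are needed.

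As an alternative route, one could argue directly via atoms: by Observation~\ref{atoms} the atoms of $\mathbb{MON}$ are $\mathbf{SL}$ and the varieties $\mathbf A_p$, and since $\mathbb{MON}$ is atomic it would suffice to show that each atom $\mathbf A$ is join-prime, i.e.\ that $\mathbf A\subseteq\mathbf X\vee\mathbf Y$ forces $\mathbf A\subseteq\mathbf X$ or $\mathbf A\subseteq\mathbf Y$. This demands an identity-based analysis of membership in joins for each atom and is more laborious, so I expect the embedding argument above to be the shorter and cleaner one.
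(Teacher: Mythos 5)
Your argument is correct and rests on exactly the same two ingredients as the paper's: the $0$-distributivity of $\mathbb{SEM}$ and the embedding of Proposition~\ref{MON is sublat of SEM} (whose bottom-preservation you rightly flag as the only point needing a word). The paper phrases the transfer slightly differently --- it first reduces $0$-distributivity to the claim that an atom of $\mathbb{MON}$ not contained in either of two varieties is not contained in their join, and then derives that claim from the embedding --- but your direct reflection of the quasi-identity along the $0$-preserving embedding is the same idea, if anything a bit more streamlined.
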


To prove this result, it suffices to check that whenever an atom of $\mathbb{MON}$ is not contained in two monoid varieties, then it is not contained in their join.
This easily follows from Proposition~\ref{MON is sublat of SEM} and the fact that the lattice $\mathbb{SEM}$ is 0-distributive.

It is evident that the map $\delta$ [respectively, $\delta_\mathsf{sem}$] that sends every variety $\mathbf V$ to its dual $\overleftarrow{\mathbf V}$ is an automorphism of the lattice $\mathbb{MON}$ [respectively, $\mathbb{SEM}$].
It is known that there exist infinitely many non-trivial injective endomorphisms of the lattice $\mathbb{SEM}$ different from $\delta_\mathsf{sem}$ (see Shevrin \textit{et~al.}\@ \cite[Section~1]{Shevrin-Vernikov-Volkov-09}), but the question of whether there exists a non-trivial automorphism of $\mathbb{SEM}$ different from $\delta_\mathsf{sem}$ remains open so far.
The following question is still open too.

\begin{question}
\label{aut exist?}
Does a non-trivial automorphism [injective endomorphism] of $\mathbb{MON}$ different from $\delta$ exist?
\end{question}

\subsection{Basic sublattices of $\mathbb{MON}$}
\label{sublat}
The lattice $\mathbb{MON}$ has several interesting and important sublattices.
First, $\mathbb{MON}$ is a disjoint union of two big sublattices: the ideal $\mathbb{PER}$ of all periodic varieties and the coideal $\mathbb{OC}$ of all overcommutative varieties.
The class of all completely regular monoid varieties forms a sublattice $\mathbb{CR}$ in $\mathbb{PER}$.
In turn, the lattice $\mathbb{CR}$ contains the sublattice $\mathbb{GR}$ of all periodic group varieties.
The ``antipode'' of $\mathbb{GR}$ and one more sublattice in $\mathbb{PER}$ is the lattice $\mathbb{APER}$ of all aperiodic varieties that was first introduced in Subsection~\ref{pseudovar}.
The intersection of the lattices $\mathbb{CR}$ and $\mathbb{APER}$ coincides with the lattice $\mathbb{BAND}$ of all varieties of band monoids, where $\mathbf{BAND}=\var\{x\approx x^2\}$ is the largest element.
To conclude the list of the main sublattices of the lattice $\mathbb{MON}$, we mention the lattice $\mathbb{COM}$ of all commutative varieties of monoids.
The sublattices of the lattice $\mathbb{MON}$ mentioned above and their relative location within $\mathbb{MON}$ are shown in Fig.~\ref{map}.

\begin{figure}[tbh]
\begin{center}
\unitlength=0.8mm
\begin{picture}(152,100)
\linethickness{1pt}
\bezier{460}(0,54)(0,11)(70,4)
\bezier{460}(0,54)(0,91)(70,95)
\bezier{460}(70,4)(140,9)(140,54)
\bezier{460}(70,95)(140,91)(140,54)
\linethickness{0.4pt}
\put(70,4){\circle*{1.9}}
\put(70,95){\circle*{1.9}}
\put(80.75,38.15){\circle*{1.9}}
\put(138.5,64){\circle*{1.9}}
\bezier{200}(2,64)(42,52)(70,4)
\bezier{230}(10,77)(78,65)(96,8)
\bezier{200}(29,88)(70,64)(138.5,64)
\bezier{230}(36,11)(135,17)(138.5,64)
\bezier{230}(70,4)(60,49)(140,56)
\put(108,46.75){\makebox(0,0)[cc]{$\mathbb{APER}$}}
\put(80,24){\makebox(0,0)[cc]{$\mathbb{BAND}$}}
\put(114,23){\makebox(0,0)[cc]{$\mathbb{COM}$}}
\put(50,59.75){\makebox(0,0)[cc]{$\mathbb{CR}$}}
\put(34,42){\makebox(0,0)[cc]{$\mathbb{GR}$}}
\put(80,72.5){\makebox(0,0)[cc]{$\mathbb{OC}$}}
\put(75,66.5){\makebox(0,0)[cc]{$\mathbb{PER}$}}
\put(78.75,38.3){\makebox(0,0)[rc]{$\mathbf{BAND}$}}
\put(140.5,64){\makebox(0,0)[lc]{$\mathbf{COM}$}}
\put(70,98.5){\makebox(0,0)[cc]{$\mathbf{MON}$}}
\put(70,0){\makebox(0,0)[cc]{$\mathbf T$}}
\end{picture}
\caption{The ``map'' of the lattice $\mathbb{MON}$}
\label{map}
\end{center}
\end{figure}

The lattices $\mathbb{COM}$, $\mathbb{CR}$, and $\mathbb{BAND}$ were examined by Head~\cite{Head-68}, Vachuska~\cite{Vachuska-93}, and Wismath~\cite{Wismath-86}, respectively.
We discuss results of these articles in greater detail in Section~\ref{struct of sublat}.

The lattice $\mathbb{OC}$ has not been systematically examined but some of its properties follow from known results.
For instance, the lattice $\mathbb{OC}_\mathsf{sem}$ is residually finite \cite[Corollary~2.3]{Volkov-94}; the following result is thus deducible from Proposition~\ref{MON is sublat of SEM}.

\begin{proposition}
\label{OC is res fin}
The lattice $\mathbb{OC}$ is residually finite.
\end{proposition}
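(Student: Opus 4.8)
The plan is to exploit the lattice embedding of Proposition~\ref{MON is sublat of SEM} together with the residual finiteness of $\mathbb{OC}_{\sem}$ already recorded above, using the elementary fact that residual finiteness is inherited by sublattices. Recall that a lattice $L$ is residually finite precisely when any two distinct elements of $L$ can be separated by a homomorphism onto a finite lattice. From this description it is immediate that every sublattice of a residually finite lattice is again residually finite: given distinct elements of the sublattice, they are distinct in $L$, so a separating homomorphism $L\to F$ onto a finite lattice exists, and its restriction to the sublattice still separates them.

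Next I would verify that the embedding $\Phi\colon\mathbb{MON}\hookrightarrow\mathbb{SEM}$ of Proposition~\ref{MON is sublat of SEM} carries the sublattice $\mathbb{OC}$ into the sublattice $\mathbb{OC}_{\sem}$. The only point requiring attention is that the image of an overcommutative monoid variety is again overcommutative. Since $\Phi$ is a lattice embedding, it preserves order, so it suffices to check that $\Phi(\mathbf{COM})\supseteq\mathbf{COM}_{\sem}$; then $\mathbf V\supseteq\mathbf{COM}$ forces $\Phi(\mathbf V)\supseteq\mathbf{COM}_{\sem}$, that is, $\Phi(\mathbf V)\in\mathbb{OC}_{\sem}$. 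To establish this, I would observe that every commutative semigroup $S$ embeds as a subsemigroup into the semigroup reduct of the commutative monoid $S^1$; hence the semigroup variety generated by the reducts of commutative monoids contains all commutative semigroups and therefore coincides with $\mathbf{COM}_{\sem}$. Thus in fact $\Phi(\mathbf{COM})=\mathbf{COM}_{\sem}$.

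Combining these two observations, $\Phi$ restricts to a lattice embedding of $\mathbb{OC}$ into $\mathbb{OC}_{\sem}$. Since $\mathbb{OC}_{\sem}$ is residually finite by~\cite[Corollary~2.3]{Volkov-94} and residual finiteness passes to sublattices, it follows that $\mathbb{OC}$ is residually finite. The main (and essentially only) genuine obstacle is the verification that overcommutativity is preserved under $\Phi$; once the identity $\Phi(\mathbf{COM})=\mathbf{COM}_{\sem}$ is in hand, the remainder of the argument is purely formal.
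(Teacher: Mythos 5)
Your argument is correct and follows the same route as the paper, which derives the result from the residual finiteness of $\mathbb{OC}_{\sem}$ and the embedding of Proposition~\ref{MON is sublat of SEM}; the paper leaves implicit the two points you spell out (that residual finiteness passes to sublattices and that the embedding carries $\mathbb{OC}$ into $\mathbb{OC}_{\sem}$ via $\Phi(\mathbf{COM})=\mathbf{COM}_{\sem}$). Your verification of these details is sound.
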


Further properties of $\mathbb{OC}$ will be established in the next two sections; see Corollary~\ref{without covers PER,APER,OC}, Remark~\ref{Pi_infty is not in OC}, and Corollary~\ref{no qid APER,OC}.

The lattice $\mathbb{MON}$ does not contain analogues of two important sublattices of $\mathbb{SEM}$: the lattice $\mathbb{NIL}_\mathsf{sem}$ of nil-varieties of semigroups and the lattice $\mathbb{PERM}_\mathsf{sem}$ of all \emph{permutative} varieties, that is, varieties satisfying an identity of the form
\[
x_1x_2\cdots x_n\approx x_{\pi(1)}x_{\pi(2)}\cdots x_{\pi(n)},
\]
where $\pi$ is a non-trivial permutation of the set $\{1,2,\dots,n\}$.
The semigroup variety $\mathbf K$ given on page~\pageref{variety K} is an example of a nil-variety, while every variety of commutative semigroups is permutative.

The lattice $\mathbb{NIL}_\mathsf{sem}$ has a very complex structure; see Shevrin \textit{et~al.}\@ \cite[Section~7]{Shevrin-Vernikov-Volkov-09}; in particular, it does not satisfy any non-trivial identity \cite{Jezek-76,Korjakov-82}.
However, since every nil-monoid is obviously singleton, the lattice $\mathbb{MON}$ does not contain non-trivial nil-varieties.

The lattice $\mathbb{PERM}_\mathsf{sem}$ also does not satisfy any non-trivial identity~\cite{Burris-Nelson-71b}.
It is obvious, however, that every permutative monoid is commutative.
Thus, in the monoid case, the lattice of permutative varieties ``collapses'' to the lattice $\mathbb{COM}$, whose structure turns out to be very simple; see Theorem~\ref{struct of COM}.

At first glance, it seems that the absence of non-trivial nil-varieties and non-commutative permutative varieties from the lattice $\mathbb{MON}$ should greatly simplify the study of this lattice.
This is true up to a certain extent, for example, the lattice $\mathbb{COM}$ has a much simpler structure than the lattice $\mathbb{COM}_\mathsf{sem}$.
But the opposite turns out to be more prevalent.
In general, the study of the lattice $\mathbb{MON}$ tends to be more complex due to the absence of the two aforementioned types of varieties.
The solutions of many problems related to the lattice $\mathbb{SEM}$ began with the construction of counterexamples that allowed one to find strong necessary conditions, and thereby greatly narrow the field for further investigation.
These counterexamples are often constructed from permutative or nil-varieties.
With fewer opportunities to construct analogous counterexamples in the case of monoids, the investigation of many problems related to monoid varieties is severely hindered.
This is the case, for example, in the study of varieties of monoids with a modular or distributive subvariety lattice; see Subsections~\ref{mod} and~\ref{distr}.

\subsection{Minimal forbidden subvarieties for certain classes of varieties}
\label{almost smth}
All sublattices of the lattice $\mathbb{MON}$ introduced in Subsection~\ref{sublat} can be considered as (non-ordered) classes of varieties.
All these classes, with the exception of $\mathbb{OC}$, can be characterized by minimal ``forbidden subvarieties''.
Corresponding results are summarized in Table~\ref{forbid var}.
We use here and below the following notation:
\begin{align*}
\mathbf C_n&=\var\{x^n\approx x^{n+1}\},\\
\mathbf D_1&=\var\{x^2\approx x^3,\,x^2y\approx xyx\approx yx^2\},\\
\mathbf{LRB}&=\var\{xyx\approx xy\},\\
\text{and}\enskip\mathbf{RRB}&=\var\{xyx\approx yx\}.
\end{align*}
Note that $\mathbf C_1= \mathbf{SL}$ and $\mathbf C_{n+1}=\var S(x^n)$ for any $n\in\mathbb N$; this readily follows from Almeida \cite[Corollary~6.1.5]{Almeida-94}.
The variety $\mathbf D_1$ belongs to a countably infinite series of varieties $\mathbf D_k$ which will be defined in Subsection~\ref{chain}.

\begin{table}[tbh]
\begin{center}
\caption{A characterization of certain classes of monoid varieties}
\label{forbid var}
\begin{tabular}{|c|c|c|}
\hline
&A variety $\mathbf V$ of monoids lies in&if and only if $\mathbf V$ does not contain\\
\hline
1&\rule{0pt}{10pt}$\mathbb{PER}$&$\mathbf{COM}$\\
\hline
2&\rule{0pt}{10pt}$\mathbb{APER}$&$\mathbf A_p$ for all prime $p$\\
\hline
3&\rule{0pt}{10pt}$\mathbb{GR}$&$\mathbf{SL}$\\
\hline
4&\rule{0pt}{10pt}$\mathbb{CR}$&$\mathbf C_2$\\
\hline
5&\rule{0pt}{10pt}$\mathbb{BAND}$&$\mathbf A_p$ for all prime $p$ and $\mathbf C_2$\\
\hline
6&\rule{0pt}{10pt}$\mathbb{COM}$& $\mathbf D_1$, $\mathbf{LRB}$, $\mathbf{RRB}$, and all minimal\\
&&non-Abelian group varieties\\
\hline
\end{tabular}
\end{center}
\end{table}

The result in line~1 of Table~\ref{forbid var} is generally known, while the result in line~2 holds because the varieties $\mathbf A_p$ with prime $p$ are the only atoms of the lattice of group varieties.
The results in lines~3 and~4 are well known; see Gusev and Vernikov \cite[Lemma~2.1 and Corollary~2.6]{Gusev-Vernikov-18}.
The result in line~5 immediately follows from the results in lines~2 and~4.

Finally, the result in line~6 is new and due to Gusev.
One can discuss it in greater detail.
Since every commutative monoid variety is finitely based~\cite{Head-68}, it follows from Zorn's lemma that every non-commutative monoid variety contains a minimal non-commutative subvariety.
To classify all minimal non-commutative monoid varieties, we need to describe, in particular, all minimal non-Abelian varieties of periodic groups.
This problem is extremely difficult in view of the following result.
Recall that a variety of algebras is \emph{locally finite} if every finitely generated member is finite.

\begin{theorem}[Kozhevnikov {\cite[Theorem~5 and its proof]{Kozhevnikov-12}}]
\label{chain gr uncount}
For every sufficiently large prime $p$, there exist uncountably many periodic, non-locally finite (in particular, non-Abelian), non-finitely based group varieties whose proper subvarieties are all contained in $\mathbf A_p$.
\end{theorem}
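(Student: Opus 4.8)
The plan is to realize all the required varieties inside the Burnside variety $\mathbf B_p$ of all groups of exponent dividing a fixed large prime $p$, and to obtain each of them as the variety generated by a suitable infinite periodic group. A preliminary and entirely elementary observation guides the whole construction: if $\mathbf V$ is any non-Abelian group variety all of whose proper subvarieties lie in $\mathbf A_p$, then $\mathbf V$ must have exponent exactly $p$. Indeed, a group in $\mathbf V$ possessing an element of order a prime $q\neq p$, or of order $p^2$, would yield the Abelian subvariety $\mathbf A_q$ or $\mathbf A_{p^2}$, each a proper subvariety of $\mathbf V$ not contained in $\mathbf A_p$, a contradiction. Since $\mathbf V$ is non-trivial it contains a copy of the cyclic group of order $p$, so $\mathbf A_p\subseteq\mathbf V$; hence every target variety covers $\mathbf A_p$, sitting between $\mathbf A_p$ and $\mathbf B_p$ with subvariety lattice the three-element chain $\mathbf T\subset\mathbf A_p\subset\mathbf V$. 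This reduces the theorem to producing uncountably many distinct subvarieties of $\mathbf B_p$, each non-locally finite and each covering $\mathbf A_p$.

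The choice of a large prime $p$ is essential, because it is exactly here that the Adian--Novikov theory of free Burnside groups, in the geometric form developed by Ol'shanskii, becomes available: for such $p$ the free groups of $\mathbf B_p$ are infinite, and one has fine control over the consequences of relations by means of van Kampen diagrams satisfying small-cancellation-type (graded) conditions. The heart of the proof is to use this machinery to construct, for each subset $\Omega$ of a fixed countable index set, an infinite group $G_\Omega$ of exponent $p$ by imposing on a free group of $\mathbf B_p$ an independently chosen system of defining relators encoding $\Omega$. The relators are selected so that, first, each $G_\Omega$ is infinite, whence $\var G_\Omega$ is non-locally finite, and second, $G_\Omega$ is \emph{just non-Abelian} in the varietal sense: any law holding in $G_\Omega$ but not in $\mathbf A_p$ forces, via the diagram calculus, a collapse to an Abelian group of exponent $p$. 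This second property is precisely the assertion that every proper subvariety of $\var G_\Omega$ lies in $\mathbf A_p$.

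Distinctness of the uncountably many varieties $\var G_\Omega$ is then a matter of verifying that the encoded relators are mutually independent: for $\Omega\neq\Omega'$ some relation separating them fails to be a consequence of the others, so the corresponding law distinguishes $\var G_\Omega$ from $\var G_{\Omega'}$. This independence is again established geometrically, by showing that a derivation of a relation from the remaining ones would produce a reduced diagram violating the imposed graded condition. Finally, the non-finite basability demanded in the statement comes for free from cardinality: there are only countably many finite systems of group identities, hence only countably many finitely based group varieties, so all but countably many of the uncountable family just constructed are automatically non-finitely based, and the surviving subfamily still has all the other required properties.

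The main obstacle is concentrated in the construction of the groups $G_\Omega$, namely the simultaneous verification that each $G_\Omega$ is infinite and satisfies the just-non-Abelian property. This is the genuinely hard, technical part of the argument and is exactly where Ol'shanskii's theory of graded presentations and the accompanying diagram surgery must be deployed; balancing infiniteness, which calls for relators that are long and sparse, against the covering property, which calls for enough relators to annihilate every non-Abelian law other than those defining $\mathbf V$, is the delicate point. Everything else --- the reduction to exponent $p$, the independence yielding continuum-many distinct varieties, and the cardinality argument for non-finite basability --- is comparatively routine once this geometric core is in place.
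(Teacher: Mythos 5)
The paper does not actually prove this statement: it is quoted as a known result, with the proof attributed entirely to Kozhevnikov~\cite[Theorem~5 and its proof]{Kozhevnikov-12}. So the relevant comparison is between your outline and Kozhevnikov's argument, and at the level of strategy they agree: work inside the Burnside variety of a large prime exponent $p$, build continuum many infinite $p$-groups whose generated varieties are just-non-Abelian, separate them by independent verbal relators, and dispose of finite basability by counting. Your peripheral reductions are sound -- the observation that such a variety must have exponent exactly $p$ and subvariety lattice $\mathbf T\subset\mathbf A_p\subset\mathbf V$ is correct, the cardinality argument that at most countably many members of an uncountable family can be finitely based is correct, and the passage from ``$G_\Omega$ is infinite'' to ``$\var G_\Omega$ is not locally finite'' is fine provided you note that $G_\Omega$ is finitely generated (which it is, as a quotient of a finite-rank free Burnside group).

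The genuine gap is that the entire mathematical content of the theorem is concentrated in the step you explicitly black-box: the existence, for each $\Omega$, of an infinite finitely generated group $G_\Omega$ of exponent $p$ such that \emph{every} identity of $G_\Omega$ not holding in $\mathbf A_p$ already forces commutativity, together with the mutual independence of the chosen relators as \emph{laws} (not merely as relations, which is what is needed to separate the varieties $\var G_\Omega$). Invoking ``Ol'shanskii's theory of graded presentations and the accompanying diagram surgery'' names the right toolbox but proves nothing: the delicate balance you yourself identify -- relators sparse enough to keep $G_\Omega$ infinite yet rich enough to kill every non-Abelian proper subvariety -- is exactly where Kozhevnikov's paper spends its effort, and without carrying out that construction the proposal establishes none of the claimed uncountable family. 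As submitted, this is an accurate road map to the literature rather than a proof.
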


Therefore, it is natural to consider only the non-group case.

\begin{theorem}
\label{just non-com}
The varieties $\mathbf D_1$, $\mathbf{LRB}$, and $\mathbf{RRB}$ are the only non-group minimal non-commutative varieties of monoids.
\end{theorem}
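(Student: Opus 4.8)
The plan is to prove the two directions separately: that each of $\mathbf D_1$, $\mathbf{LRB}$, $\mathbf{RRB}$ really is a non-group minimal non-commutative variety, and that no other variety has these properties. For the first direction I would check non-commutativity directly (the identity $xy\approx yx$ fails in the two-element left-zero band with an adjoined identity, in its dual, and in the generating monoid of $\mathbf D_1$), observe that each variety contains $\mathbf{SL}$ (semilattices are both left and right regular bands, and $\mathbf{SL}$ satisfies the identities $x^2\approx x^3$ and $x^2y\approx xyx\approx yx^2$ defining $\mathbf D_1$), and finally invoke the known description of the relevant subvariety lattices: the only proper subvarieties of $\mathbf{LRB}$ and of $\mathbf{RRB}$ are $\mathbf T$ and $\mathbf{SL}$, while the proper subvarieties of $\mathbf D_1$ are computed within the series $\mathbf D_k$ (see Subsection~\ref{chain}) and are all commutative. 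This direction is routine.

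For the converse, let $\mathbf V$ be an arbitrary non-group minimal non-commutative variety of monoids. The plan is to squeeze $\mathbf V$ with a chain of reductions. Since $\mathbf V$ does not consist of groups, line~3 of Table~\ref{forbid var} gives $\mathbf{SL}\subseteq\mathbf V$, and this inclusion is strict because $\mathbf{SL}$ is commutative. Containing $\mathbf{SL}$ forces every identity of $\mathbf V$ to be content-preserving: an identity holds in the two-element semilattice exactly when both sides have equal content, so $\mathbf u\approx\mathbf v$ holding in $\mathbf V$ implies $\con(\mathbf u)=\con(\mathbf v)$. Next, $\mathbf V$ contains no periodic non-Abelian group: such a group $G$ would generate (in the monoid signature) a variety consisting of groups, hence a non-commutative subvariety $\var G$ with $\mathbf{SL}\not\subseteq\var G$, so $\var G\subsetneq\mathbf V$ would be a proper non-commutative subvariety, contradicting minimality. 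Finally, $\mathbf V$ satisfies no non-trivial permutation identity, since permutative monoids are commutative (Subsection~\ref{sublat}); equivalently, every linear word is an isoterm for $\mathbf V$. Using that the free monogenic monoid generates $\mathbf{COM}$, one also sees that $\mathbf V$ is either periodic (with all subgroups Abelian) or overcommutative.

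The crux is then the following claim: any non-commutative monoid variety subject to the reductions above must contain at least one of $\mathbf D_1$, $\mathbf{LRB}$, $\mathbf{RRB}$. Granting it, minimality finishes at once, for the contained variety is a non-commutative subvariety of $\mathbf V$ and hence cannot be proper, so $\mathbf V$ equals one of the three. To establish the claim I would argue by contradiction: assuming $\mathbf V$ contains none of the three, I would aim to deduce the commutative law $xy\approx yx$, contradicting the previous paragraph. The analysis concentrates on the words of content $\{x,y\}$ in which one letter is doubled — namely $x^2y$, $xyx$, $yx^2$ and their mirror images — since these are exactly the shortest places where the three varieties differ ($\mathbf D_1$ identifies all three, $\mathbf{LRB}$ identifies $x^2y\approx xyx$, and $\mathbf{RRB}$ identifies $xyx\approx yx^2$). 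Excluding $\mathbf{LRB}$ means $\mathbf V$ satisfies an identity separating $xyx$ from its left reduction, excluding $\mathbf{RRB}$ constrains $xyx$ against its right reduction, and excluding $\mathbf D_1$ controls how these words collapse together with the power law $x^2\approx x^3$; combining these constraints with content-preservation and the isoterm property of linear words, the goal is to force an identity that transposes two adjacent letters of a linear word, i.e.\ a permutation identity.

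The genuinely hard part is precisely this combinatorial core, together with absorbing the overcommutative and Abelian-subgroup cases into it. The obstacle is the one flagged in Subsection~\ref{sublat}: with nil-varieties and non-commutative permutative varieties absent from $\mathbb{MON}$, there is little room to build convenient counterexamples, so one is forced to track by hand exactly which short words $\mathbf V$ must identify and to verify that every such identification propagates upward to a full permutation identity. Everything outside this core — the reductions above and the passage from the core claim to the theorem via minimality — is straightforward.
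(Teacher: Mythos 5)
Your first direction (that $\mathbf D_1$, $\mathbf{LRB}$ and $\mathbf{RRB}$ are indeed non-group minimal non-commutative varieties) is fine and genuinely routine, given the known lattices $L(\mathbf D_1)$, $L(\mathbf{LRB})$ and $L(\mathbf{RRB})$. The problem is the converse. Everything you actually prove there (the inclusion $\mathbf{SL}\subseteq\mathbf V$, content-preservation of identities, absence of non-Abelian subgroups, absence of permutation identities) is a genuine but easy reduction; the entire content of the theorem sits in what you call the ``crux'' claim, namely that every non-commutative variety surviving these reductions contains one of $\mathbf D_1$, $\mathbf{LRB}$, $\mathbf{RRB}$. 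You do not prove this claim: you describe a strategy (analyse the identities of $\mathbf V$ on the words $x^2y$, $xyx$, $yx^2$ and try to propagate the resulting collapses up to a permutation identity) and explicitly defer ``the genuinely hard part.'' As it stands this is a plan, not a proof, and the proposed syntactic analysis is far from obviously executable: deducing $xy\approx yx$ from the failure of three specific inclusions requires controlling identities in arbitrarily many letters, not just the two-letter words you single out, and nothing in your sketch explains how the exclusion of $\mathbf D_1$ (a statement about a subvariety, i.e.\ about \emph{all} identities of $\mathbf V$) translates into a usable syntactic hypothesis.

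For comparison, the paper avoids this combinatorial core entirely by splitting on complete regularity. If $\mathbf V$ is not completely regular, a quoted lemma (any variety that is neither completely regular nor commutative contains $\mathbf D_1$) finishes immediately via minimality. If $\mathbf V$ is completely regular, the paper passes to the semigroup variety $\mathbf V_{\sem}$ generated by the semigroup reduct, invokes Biryukov's classification of the non-group completely regular minimal non-commutative semigroup varieties (only $\mathbf{LZ}$ and $\mathbf{RZ}$), rules out the group case by minimality, and lifts back to monoids via the fact that a monoid-generated semigroup variety containing the two-element left zero semigroup $L$ must contain $L^1$, whence $\mathbf{LRB}\subseteq\mathbf V$ (dually $\mathbf{RRB}$). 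If you want to complete your argument, you should either supply the word-combinatorial deduction in full or, more realistically, replace your crux claim with these two external ingredients (the non-completely-regular lemma and the reduction to Biryukov's theorem), which is where the real work is hidden.
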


In comparison, there are precisely five non-group minimal non-commutative varieties of semigroups~\cite{Biryukov-76}.

To prove Theorem~\ref{just non-com}, we need the following result.

\begin{lemma}[Gusev and Vernikov {\cite[Lemma~2.14]{Gusev-Vernikov-18}}]
\label{non-cr and non-commut}
Any monoid variety that is neither completely regular nor commutative contains $\mathbf D_1$.
\end{lemma}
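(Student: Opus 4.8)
The plan is to establish the containment $\mathbf D_1\subseteq\mathbf V$ through the identity-theoretic criterion: $\mathbf D_1\subseteq\mathbf V$ holds if and only if every identity satisfied by $\mathbf V$ is also satisfied by $\mathbf D_1$. Accordingly, I would fix an arbitrary identity $\mathbf u\approx\mathbf v$ valid in $\mathbf V$ and show that it holds in $\mathbf D_1$, extracting exactly one piece of information from each of the two hypotheses.

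First, since $\mathbf V$ is not completely regular, line~4 of Table~\ref{forbid var} gives $\mathbf C_2\subseteq\mathbf V$, and since $\mathbf C_{n+1}=\var S(x^n)$ (taken with $n=1$) this yields the monoid $S(x)=\{1,x,0\}\in\mathbf V$, where $x^2=0$. Substituting one fixed letter $t$ by the element $x$ and every other letter by $1$ in $\mathbf u\approx\mathbf v$ forces $x^{c}=x^{d}$ in $S(x)$, where $c,d$ are the numbers of occurrences of $t$ in $\mathbf u$ and in $\mathbf v$; as the values $x^0=1$, $x^1=x$ and $x^k=0$ for $k\ge2$ are pairwise distinct, the counts $c$ and $d$ agree up to the threshold two. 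Ranging over all letters, this shows that $\mathbf u$ and $\mathbf v$ have the same set of letters occurring exactly once (the \emph{simple} letters) and the same set $M$ of letters occurring at least twice.

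Next I would record a normal form for $\mathbf D_1$. Substituting a word $\mathbf p$ for $y$ in the defining relations yields $x\mathbf p x\approx x^2\mathbf p$ and $x^2\mathbf p\approx\mathbf p x^2$, so every square is central and any two occurrences of a letter, together with the block between them, consolidate into a central square; combined with $x^2\approx x^3$, every word reduces in $\mathbf D_1$ to $\bigl(\prod_{y\in M}y^2\bigr)z_1z_2\cdots z_k$, where $z_1,\dots,z_k$ are its simple letters listed in order of first appearance. Hence two words coincide in $\mathbf D_1$ as soon as they share the set of repeated letters and the ordered sequence of simple letters. The first condition is automatic by the previous paragraph, so it remains to see that the simple letters occur in the same order in $\mathbf u$ and in $\mathbf v$. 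This is where non-commutativity enters: if the orders differed, there would exist simple letters $s,t$ with $s$ preceding $t$ in $\mathbf u$ but $t$ preceding $s$ in $\mathbf v$, and substituting $s\mapsto x$, $t\mapsto y$ and all remaining letters by $1$ would convert $\mathbf u\approx\mathbf v$ into $xy\approx yx$, making $\mathbf V$ commutative, a contradiction. Thus the orders agree, $\mathbf u\approx\mathbf v$ holds in $\mathbf D_1$, and $\mathbf D_1\subseteq\mathbf V$.

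I expect the only genuine work to be the reduction to normal form: the two substitution arguments are short, but the consolidation of the repeated letters into central squares must be carried out carefully to confirm that it never disturbs the left-to-right order of the simple letters. Everything else is bookkeeping governed by the counting invariant supplied by $S(x)$ and by the single transposition witnessed through non-commutativity.
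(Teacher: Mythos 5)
Your proof is correct. Note that the survey itself contains no proof of this lemma---it is quoted from \cite[Lemma~2.14]{Gusev-Vernikov-18}---so there is no in-paper argument to compare against; your derivation serves as a complete, self-contained substitute, and it runs along the same syntactic lines as the argument in the cited source. Each step checks out: non-complete-regularity yields $\mathbf C_2\subseteq\mathbf V$ by line~4 of Table~\ref{forbid var}, hence $S(x)=\{1,x,0\}\in\mathbf V$, and evaluating an identity $\mathbf u\approx\mathbf v$ of $\mathbf V$ letter by letter in $S(x)$ gives $\min(c,2)=\min(d,2)$ for the occurrence counts, so the simple letters and the multiple letters of the two sides coincide (this also disposes of the degenerate case where one side is the empty word). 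The normal form in $\mathbf D_1$ is legitimate: from $x^2y\approx xyx\approx yx^2$ one gets $x\mathbf px\approx x^2\mathbf p\approx\mathbf px^2$ for every word $\mathbf p$, and together with $x^3\approx x^2$ this absorbs all occurrences of a multiple letter into a single central square (e.g.\ $y^2\mathbf a'y\mathbf b'\approx\mathbf a'y^3\mathbf b'\approx y^2\mathbf a'\mathbf b'$) without permuting the remaining letters, so every word equals $\bigl(\prod_{y\in M}y^2\bigr)z_1\cdots z_k$ in $\mathbf D_1$. Finally, if the simple letters appeared in different orders on the two sides, some pair $(s,t)$ would be inverted, and the substitution $s\mapsto x$, $t\mapsto y$, all else $\mapsto 1$ would force $xy\approx yx$ in $\mathbf V$, contradicting non-commutativity; since two words with the same multiple-letter set and the same ordered sequence of simple letters have the same $\mathbf D_1$-normal form, every identity of $\mathbf V$ holds in $\mathbf D_1$, i.e.\ $\mathbf D_1\subseteq\mathbf V$. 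The one point you rightly flag as needing care---that consolidation never disturbs the left-to-right order of the simple letters---is exactly where the work lies, and your rewriting rules do preserve that order.
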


\begin{proof}[Proof of Theorem~\ref{just non-com}]
Let $\mathbf V$ be any non-group minimal non-commutative monoid variety.
By Lemma~\ref{non-cr and non-commut}, we may assume that $\mathbf V$ is completely regular.
Let $\mathbf V_\mathsf{sem}$ denote the semigroup variety generated by the semigroup reduct of a monoid that generates $\mathbf V$.
Then the variety $\mathbf V_\mathsf{sem}$ is non-commutative and completely regular.
Therefore, there exists a minimal non-commutative completely regular subvariety $\mathbf X$ of $\mathbf V_\mathsf{sem}$.
If $\mathbf X$ is a group variety, then $\mathbf V$ contains a non-Abelian group that generates $\mathbf X$; but this is impossible because $\mathbf V$ is a non-group minimal non-commutative monoid variety.
Thus, $\mathbf X$ is a non-group variety.
In view of Biryukov \cite[Theorem~2]{Biryukov-76}, there exist only two non-group completely regular minimal non-commutative semigroup varieties: $\mathbf{LZ}=\var_\mathsf{sem}\{xy\approx x\}$ and $\mathbf{RZ}=\var_\mathsf{sem}\{xy\approx y\}$.
Suppose that $\mathbf X=\mathbf{LZ}$.
The variety $\mathbf{LZ}$ is generated by the 2-element left zero semigroup $L_2$.
Since $\mathbf V_\mathsf{sem}$ contains the semigroup $L_2$ and is generated by a monoid, $L_2^1\in\mathbf V_\mathsf{sem}$ by Jackson \cite[Lemma~1.1]{Jackson-05c}.
It is well known that $\var L_2^1=\mathbf{LRB}$, so that $\mathbf{LRB}\subseteq\mathbf V$ by Proposition~\ref{MON is sublat of SEM}.
Since $\mathbf V$ is a minimal non-commutative monoid variety, we have $\mathbf V=\mathbf{LRB}$.
By symmetry, if $\mathbf X=\mathbf{RZ}$, then $\mathbf V=\mathbf{RRB}$.
\end{proof}

\section{The covering property}
\label{covers}

Let $S$ be a partially ordered set and $x,y\in S$.
Then $y$ is a \emph{cover} of $x$ if $x<y$ and there are no elements $z\in S$ such that $x<z<y$.
If every non-maximal element of $S$ has a cover, then $S$ has the \emph{covering property}.
As Shevrin \textit{et~al.}\@ \cite[Section~3]{Shevrin-Vernikov-Volkov-09} accurately narrated:
\begin{quote}
The study of the cover relation in varietal lattices attracted considerable attention on the early stage of development of the theory of varieties.
Evidently, there were anticipations that the structure of lattices of varieties can be revealed by moving ``upward'': from the trivial variety to its covers, that is, atoms, from the atoms to their covers, etc.
Although this hope with respect to ``big'' varietal lattices such as $\mathbb{SEM}$ has turned out to be somewhat naive, investigations of the cover relation in $\mathbb{SEM}$ and related varietal lattices have brought a number of interesting results.
\end{quote}
The above also closely describes the situation with the lattice $\mathbb{MON}$.

\subsection{The existence of covers}
\label{covers exists}
General properties of coalgebraic lattices imply that every proper variety of semigroups [respectively, monoids] defined by finitely many identities has a cover in $\mathbb{SEM}$ [respectively, $\mathbb{MON}$].
However, there exist varieties of semigroups and monoids that cannot be defined by finitely many identities.
Trakhtman \cite[Theorem~1]{Trakhtman-74} proved that the subvariety lattice of any overcommutative semigroup variety has the covering property.
It follows that the lattice $\mathbb{SEM}$ has this property because $\mathbb{SEM}$ is nothing but the subvariety lattice of the overcommutative variety $\mathbf{SEM}$.
Further details concerning the covering property in $\mathbb{SEM}$ can be found in Shevrin \textit{et~al.}\@ \cite[Section~3]{Shevrin-Vernikov-Volkov-09} or Volkov \cite[Section~3]{Volkov-02}.

The analog of the aforementioned result of Trakhtman~\cite{Trakhtman-74} for the lattice $\mathbb{MON}$ does not hold.
To give some corresponding examples, we need some notation.
Let
\[
\mathbf M_1=\var\{y^2x_1^2x_2^2\cdots x_k^2y\approx yx_1^2x_2^2\cdots x_k^2y^2\mid k\in\mathbb N\}
\]
and for each $k\in\mathbb N$, let $\mathbf N_k=\var\{\mathbf p_k\approx\mathbf q_k\}$, where
\begin{align*}
\mathbf p_k & =yxt_1t_2\cdots t_kzyt_kt_{k-1}\cdots t_1xz \\ 
\text{and}\enskip\mathbf q_k & =yxt_1t_2\cdots t_kzxyt_kt_{k-1}\cdots t_1xz.
\end{align*}
Define
\[
\mathbf N=\bigwedge_{k\in\mathbb N}\mathbf N_k=\var\{\mathbf p_k\approx\mathbf q_k\mid k\in\mathbb N\}.
\]

\begin{theorem}
\label{without covers}
The varieties $\mathbf M_1$ and $\mathbf N$ have no covers in $\mathbb{MON}$. 
Therefore, the lattice $\mathbb{MON}$ does not have the covering property.
\end{theorem}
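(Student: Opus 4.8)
The plan is to treat $\mathbf M_1$ and $\mathbf N$ separately: for each I would show that \emph{every} variety properly containing it has a variety strictly between, which is exactly the statement that it has no cover; the final sentence is then immediate, since $\mathbf M_1$ and $\mathbf N$ are proper and hence non-maximal in $\mathbb{MON}$. The starting observation, common to both families, is a nesting under the erasing substitution. Writing $\mathbf u_k=y^2x_1^2\cdots x_k^2y$ and $\mathbf v_k=yx_1^2\cdots x_k^2y^2$, the substitution $x_k\mapsto 1$ carries $\mathbf u_k\approx\mathbf v_k$ to $\mathbf u_{k-1}\approx\mathbf v_{k-1}$, and likewise $t_k\mapsto 1$ carries $\mathbf p_k\approx\mathbf q_k$ to $\mathbf p_{k-1}\approx\mathbf q_{k-1}$. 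Hence in any monoid the $k$th identity implies the $(k-1)$th; consequently the single-identity varieties $\var\{\mathbf u_k\approx\mathbf v_k\}$ (respectively $\mathbf N_k$) form descending chains with intersections $\mathbf M_1$ (respectively $\mathbf N$), and for any fixed monoid the set of indices $k$ at which it satisfies $\mathbf u_k\approx\mathbf v_k$ is an initial segment of $\mathbb N$.

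Next I would set up the reduction. Fix $\mathbf W$ with $\mathbf M_1\subsetneq\mathbf W$ and let $m$ be the least index with $\mathbf u_m\approx\mathbf v_m$ failing in $\mathbf W$ (such an $m$ exists, since otherwise $\mathbf W\subseteq\mathbf M_1$); by the nesting, $\mathbf W$ then fails $\mathbf u_k\approx\mathbf v_k$ for all $k\ge m$. Put $\mathbf U=\mathbf W\cap\var\{\mathbf u_m\approx\mathbf v_m\}$. Then $\mathbf M_1\subseteq\mathbf U\subsetneq\mathbf W$, the right-hand inclusion being strict because $\mathbf W$ fails $\mathbf u_m\approx\mathbf v_m$. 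Thus it suffices to rule out $\mathbf U=\mathbf M_1$, that is, to produce a monoid $M_0\in\mathbf W$ that satisfies $\mathbf u_m\approx\mathbf v_m$ yet fails some $\mathbf u_K\approx\mathbf v_K$ with $K>m$ (equivalently, a monoid of $\mathbf W$ whose level is finite and at least $m$): any such $M_0$ lies in $\mathbf U\setminus\mathbf M_1$, whence $\mathbf M_1\subsetneq\mathbf U\subsetneq\mathbf W$. The same reduction applies verbatim to $\mathbf N$, with $\mathbf p_k\approx\mathbf q_k$ in place of $\mathbf u_k\approx\mathbf v_k$.

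The heart of the argument, and the step I expect to be the main obstacle, is the construction of this bounded witness $M_0$ inside $\mathbf W$. An arbitrary monoid of $\mathbf W\setminus\mathbf M_1$ is useless, since it may fail \emph{all} of the identities $\mathbf u_k\approx\mathbf v_k$; and a witness to the failure of $\mathbf u_m\approx\mathbf v_m$ itself sits below level $m$. What is needed is a monoid that \emph{re-satisfies} the short identity $\mathbf u_m\approx\mathbf v_m$ while still breaking a longer one. I would seek it among the Rees quotients $S(\mathbf w)$: one designs a word $\mathbf w$ so that in $S(\mathbf w)$ the two sides of some $\mathbf u_K\approx\mathbf v_K$ with $K>m$ evaluate to distinct elements while those of $\mathbf u_m\approx\mathbf v_m$ cannot be separated, and so that $\mathbf w$ is an isoterm for $\mathbf W$, the latter yielding $S(\mathbf w)\in\mathbf W$ via the standard fact that $S(\mathbf w)$ lies in a variety precisely when $\mathbf w$ is an isoterm for it. Verifying simultaneously that $S(\mathbf w)\in\mathbf W$ and that its level is finite and at least $m$ is the combinatorial core; it rests on a careful reading of the subwords of $\mathbf w$ — the nested squared blocks $x_1^2\cdots x_k^2$ flanked by the two occurrences of $y$ in $\mathbf u_k,\mathbf v_k$, and, for $\mathbf N$, the mutually reversed blocks $t_1\cdots t_k$ and $t_k\cdots t_1$ straddling the repeated letters $x,y,z$ in $\mathbf p_k,\mathbf q_k$. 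Once such a witness is produced for each family, the variety $\mathbf U$ above is strictly intermediate, so neither $\mathbf M_1$ nor $\mathbf N$ has a cover, and therefore $\mathbb{MON}$ does not have the covering property.
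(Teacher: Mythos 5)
Your reduction is sound and is essentially the mechanism behind Poll\'ak's lemma, which the paper invokes: given $\mathbf W\supsetneq\mathbf N$ with least failing index $m$, the variety $\mathbf U=\mathbf W\wedge\mathbf N_m$ satisfies $\mathbf N\subseteq\mathbf U\subsetneq\mathbf W$, and everything hinges on showing $\mathbf U\ne\mathbf N$. But that last step is precisely where your proposal stops being a proof. You acknowledge that producing the witness $M_0$ is ``the combinatorial core'' and then only gesture at it: no word $\mathbf w$ is exhibited, and the route you sketch is problematic in principle, not just in execution. Membership $S(\mathbf w)\in\mathbf W$ requires $\mathbf w$ to be an isoterm for $\mathbf W$, and $\mathbf W$ here is an \emph{arbitrary} variety properly containing $\mathbf N$ (e.g.\ a join of $\mathbf N$ with a group variety); there is no reason a single word, or even a word depending only on $m$, should be an isoterm for every such $\mathbf W$. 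Worse, the existence of \emph{any} monoid in $\mathbf W$ satisfying $\mathbf p_m\approx\mathbf q_m$ but failing some $\mathbf p_K\approx\mathbf q_K$ is literally equivalent to the assertion $\mathbf U\ne\mathbf N$ that you are trying to prove, so the ``witness'' cannot be treated as a construction problem separate from the theorem itself.

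The paper closes this gap syntactically rather than semantically. Poll\'ak's lemma reduces the claim to two derivation-theoretic conditions, and the substantive one is Lemma~\ref{ident in N_k}: if $\mathbf N_k$ satisfies a non-trivial identity $\mathbf u\approx\mathbf v$ with $\mathbf u\in\{\mathbf p_n,\mathbf q_n\}$, then $\mathbf v$ is the other word and $k\ge n$. From this one argues that if $\mathbf W\wedge\mathbf N_m=\mathbf N$, then a deduction of $\mathbf p_{m+1}\approx\mathbf q_{m+1}$ from the identities of $\mathbf W$ together with $\mathbf p_m\approx\mathbf q_m$ could use the latter identity only trivially, forcing $\mathbf p_{m+1}\approx\mathbf q_{m+1}$ to hold in $\mathbf W$ --- contradicting the choice of $m$. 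No witness monoid is ever needed. If you want to complete your argument, you should replace the Rees-quotient search with an analysis of deductions from $\mathbf p_k\approx\mathbf q_k$ (and from the defining identities of $\mathbf M_1$, for which the paper simply cites Poll\'ak's original theorem) along the lines of Lemma~\ref{ident in N_k}.
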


The variety $\mathbf M_1$, due to Poll\'ak \cite[Theorem~1]{Pollak-81}, is the first published example of an overcommutative variety with no covers in $\mathbb{MON}$; other overcommutative examples can also be deduced from more recent results, such as Jackson \cite[Proposition~4.1]{Jackson-05a} and O.\,B.\@ Sapir \cite[proof of Lemma~5.1]{Sapir_O-00}.
In contrast, the variety $\mathbf N$ is aperiodic; it is a new example that is due to Gusev.
The following intermediate result is required to show that $\mathbf N$ has no covers in $\mathbb{MON}$.

\begin{lemma}
\label{ident in N_k}
Let $n,k\in\mathbb N$.
Suppose that the variety $\mathbf N_k$ satisfies a non-trivial identity $\mathbf u\approx\mathbf v$.
If $\mathbf u$ coincides with one of the words $\mathbf p_n$ or $\mathbf q_n$, then $\mathbf v$ coincides with the other word and $k\ge n$.
\end{lemma}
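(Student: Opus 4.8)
The plan is to argue entirely with words, via the completeness of equational logic: $\mathbf N_k$ satisfies $\mathbf u\approx\mathbf v$ exactly when there is a chain $\mathbf u=\mathbf w_0,\mathbf w_1,\dots,\mathbf w_m=\mathbf v$ in which each $\mathbf w_{j+1}$ arises from $\mathbf w_j$ by replacing a single factor $\phi(\mathbf p_k)$ by $\phi(\mathbf q_k)$, or a single factor $\phi(\mathbf q_k)$ by $\phi(\mathbf p_k)$, for some substitution $\phi$ into $F^1$. First I would record the elementary arithmetic of the words in play. In $\mathbf p_k$ every letter occurs exactly twice, while in $\mathbf q_k$ every letter occurs twice except $x$, which occurs three times; indeed $\mathbf q_k$ is $\mathbf p_k$ with one extra $x$ inserted just after the first $z$. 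In $\mathbf p_n$ every letter occurs exactly twice, and its first half $yxt_1\cdots t_n z$ consists precisely of the first occurrences of all letters while its second half $yt_n\cdots t_1xz$ consists of all the second occurrences; in $\mathbf q_n$ the only letter occurring three times is $x$. Everything reduces to analysing a single elementary step applied to $\mathbf p_n$ or to $\mathbf q_n$.

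The main bookkeeping tool is a parity count: for any $\phi$, a letter occurs in $\phi(\mathbf p_k)$ an even number of times (twice its total number of occurrences in the images $\phi(v)$), whereas in $\phi(\mathbf q_k)$ it occurs an odd number of times precisely when it occurs an odd number of times in $\phi(x)$. Comparing these parities with the multiplicities in $\mathbf p_n$ and $\mathbf q_n$ forces the direction of the first non-trivial step. From $\mathbf p_n$ no backward replacement $\phi(\mathbf q_k)\to\phi(\mathbf p_k)$ is possible non-trivially: it would require $\phi(\mathbf q_k)$ to be a factor of $\mathbf p_n$ with $\phi(x)$ non-empty, but then some letter of $\phi(x)$ would occur at least three times in $\mathbf p_n$, which never happens. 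Dually, from $\mathbf q_n$ no forward replacement is possible, since $\phi(\mathbf p_k)$ has all multiplicities even while every even-multiplicity factor of $\mathbf q_n$ has empty content (such a factor contains $0$ or $2$ of the three occurrences of $x$, and a short check of these cases collapses the content to nothing).

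The core of the argument is a rigidity statement for the surviving step. I would first show that any factor of $\mathbf p_n$ in which every letter present occurs with both of its occurrences---call it \emph{closed}---must be all of $\mathbf p_n$: each letter has one occurrence in the first half and one in the second half, so a closed factor lying inside a single half would consist of unmatched occurrences, hence a closed factor with non-empty content meets both halves, and tracing paired positions then forces the full interval. Since $\phi(\mathbf p_k)$ is automatically closed, the forward step out of $\mathbf p_n$ acts on the whole word, that is, $\phi(\mathbf p_k)=\mathbf p_n$; a parallel parity count gives $\phi(\mathbf q_k)=\mathbf q_n$ for the backward step out of $\mathbf q_n$. Matching the induced block factorisation $\phi(y)\phi(x)\phi(t_1)\cdots\phi(t_k)\phi(z)\,\phi(y)\phi(t_k)\cdots\phi(t_1)\phi(x)\phi(z)=\mathbf p_n$ against the two halves forces $\phi(x)=x$, $\phi(y)=y$, $\phi(z)=z$ and splits the word at position $n+3$, so that $\phi(t_1)\cdots\phi(t_k)=t_1\cdots t_n$ and $\phi(t_k)\cdots\phi(t_1)=t_n\cdots t_1$. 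Reversing the first equality and comparing it with the second forces each $\phi(t_i)$ to be a palindrome on distinct letters, hence empty or a single letter; thus exactly $n$ of the $k$ images are non-empty, giving $k\ge n$, and the step carries $\mathbf p_n$ to precisely $\mathbf q_n$ (and $\mathbf q_n$ back to $\mathbf p_n$). Hence a non-trivial chain never leaves $\{\mathbf p_n,\mathbf q_n\}$, which delivers both $\mathbf v=\mathbf q_n$ (respectively $\mathbf p_n$) and $k\ge n$.

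The hard part will be this last rigidity step, and especially the elimination of the degenerate cases in which some of $\phi(x),\phi(y),\phi(z)$ is empty or longer than one letter. These are handled by chasing the forced initial letters of the blocks through the explicit shape of $\mathbf p_n$---for example, ruling out $\phi(y)$ empty by noting that the decreasing block $t_n\cdots t_1xz$ that $\phi(z)$ would then have to spell cannot occur inside the increasing first half $yxt_1\cdots t_n z$. By contrast the parity and closed-factor steps are clean and do most of the work; the delicacy lies entirely in pinning the frame down so that the inserted (respectively deleted) factor is exactly the single letter $x$ in exactly the right position, which is what singles out the partner word $\mathbf q_n$ and excludes any spurious target.
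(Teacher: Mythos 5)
Your plan is correct and follows essentially the same route as the paper's proof: reduce to a single application of $\mathbf p_k\approx\mathbf q_k$ inside $\mathbf p_n$ or $\mathbf q_n$, show that any non-trivial applied instance must be the whole word with the substitution essentially the identity, and use the parity of the occurrences of $x$ (even in images of $\mathbf p_k$, three in images of $\mathbf q_k$) to pin down the direction of the step, the partner word, and $k\ge n$. The differences are only tactical: where you use the closed-factor/parity-vector count and the palindrome trick to bound the images of the $t_i$, the paper derives everything at once from the observation that no factor of length greater than one is repeated in $\mathbf p_n$ or $\mathbf q_n$, so the image of every letter under the substitution is a letter or empty --- a lemma worth stating up front, since it also disposes immediately of the degenerate cases (images of $x$, $y$, $z$ empty or long) that you defer as ``the hard part.''
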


\begin{proof}
The \emph{content} of a word $\mathbf w$, denoted by $\con(\mathbf w)$, is the set of letters occurring in $\mathbf w$.
The \emph{head} of a word $\mathbf w$, denoted by $h(\mathbf w)$, is the first letter of $\mathbf w$.
A letter is \emph{multiple} in a word $\mathbf w$ if it occurs at least twice in $\mathbf w$.
Let $\lambda$ denote the empty word.

By assumption, there is a deduction of the identity $\mathbf u\approx\mathbf v$ from the identity $\mathbf p_k\approx\mathbf q_k$, that is, a sequence $\mathbf u = \mathbf w_0, \mathbf w_1, \ldots, \mathbf w_m = \mathbf v$ of words, where for each $i=0,1,\dots,m-1$, there exist words $\mathbf a_i,\mathbf b_i\in F^1$ and an endomorphism $\xi_i$ on $F^1$ such that $\{ \mathbf w_i, \mathbf w_{i+1} \} = \{ \mathbf a_i\xi_i(\mathbf p_k)\mathbf b_i, \mathbf a_i\xi_i(\mathbf q_k)\mathbf b_i \}$.
By induction on $m$, it suffices to consider the case when $\mathbf u=\mathbf a\xi(\mathbf s)\mathbf b$ and $\mathbf v=\mathbf a\xi(\mathbf t)\mathbf b$ for some $\mathbf a,\mathbf b\in F^1$, endomorphism $\xi$ on $F^1$, and words $\{\mathbf s,\mathbf t\}=\{\mathbf p_k,\mathbf q_k\}$.
Since any subword of $\mathbf u$ of length greater than~1 occurs only once in $\mathbf u$ and all letters occurring in $\mathbf s$ are multiple, the following result holds.

\begin{observation}
\label{xi(a)}
For any letter $a\in\con(\mathbf s)$, the word $\xi(a)$ is either empty or a letter.
\end{observation}

If $\xi(x)=\lambda$, then $\xi(\mathbf s)=\xi(\mathbf t)$, whence $\mathbf u=\mathbf v$; but this contradicts the assumption that the identity $\mathbf u\approx\mathbf v$ is non-trivial.
Thus, $\xi(x)\ne\lambda$.
Observation~\ref{xi(a)} now implies that $\xi(x)$ is a letter.
We use this fact below without further reference.

We are going to verify that $\mathbf a=\lambda$.
Arguing by contradiction, suppose that $\mathbf a\ne\lambda$.
The letter $y$ occurs in each of the words $\mathbf u$ and $\mathbf v$ exactly twice.
Since $\mathbf a\ne\lambda$ and $y=h(\mathbf u)=h(\mathbf a\xi(\mathbf s)\mathbf b)$, we have $y\in\con(\mathbf a)$, whence $y$ appears in $\xi(\mathbf s)\mathbf b$ at most once.
Then $y\notin\con(\xi(\mathbf s))$ because every letter in $\xi(\mathbf s)$ is multiple.
Hence, either $y$ is multiple in $\mathbf a$ or $y\in\con(\mathbf b)$.
Suppose that $y$ is multiple in $\mathbf a$.
Then the word $\xi(\mathbf s)\mathbf b$ is a suffix of the word $t_nt_{n-1}\cdots t_1xz$; in particular, $\xi(\mathbf s)$ is a subword of the word $t_nt_{n-1}\cdots t_1xz$.
But this is impossible because the latter word does not contain multiple letters and every letter from $\mathbf s$ is multiple.
Thus, the case when $y$ is multiple in $\mathbf a$ is impossible, whence $y\in\con(\mathbf b)$.
It follows that the word $yt_1t_2\cdots t_nxz$ is a suffix of $\mathbf b$.
Therefore, since all letters in the word $\xi(\mathbf s)$ are multiple, we have $y,z,t_1,t_2,\dots,t_n\notin\con(\xi(\mathbf s))$.
It follows that $\xi(\mathbf s)=\lambda$, a contradiction.

Hence, $\mathbf a=\lambda$.
Analogous arguments imply that $\mathbf b=\lambda$.
We see that $\mathbf u=\xi(\mathbf s)$ and $\mathbf v=\xi(\mathbf t)$.
In particular, these equalities and Observation~\ref{xi(a)} imply that $k\ge n$.

As observed above, the word $\xi(x)$ is a letter.
Suppose that $\xi(y)=\lambda$.
Then $h(\xi(\mathbf s))=\xi(x)$.
Since $\xi(\mathbf s)=\mathbf u$ and $h(\mathbf u)=y$, we obtain $\xi(x)=y$.
The equality $\xi(\mathbf s)=\mathbf u$ implies that the word $yt_nt_{n-1}\cdots t_1xz$ is a suffix of $\xi(\mathbf s)$.
On the other hand, the word $\xi(\mathbf s)$ has a suffix $\xi(x)\xi(z)=y\xi(z)$.
This implies that $\xi(z)=t_nt_{n-1}\cdots t_1xz$, which contradicts Observation~\ref{xi(a)}.
Therefore, $\xi(y)\ne\lambda$.
Now Observation~\ref{xi(a)} again applies the conclusion that $\xi(y)$ is a letter.
Since $y=h(\mathbf s)$, we have $\xi(y)=h(\xi(\mathbf s))$.
But $\xi(\mathbf s)=\mathbf u$ and $h(\mathbf u)=y$.
Thus, $\xi(y)=y$. 
Analogous arguments imply that $\xi(z)=z$.

Further, the word $\mathbf s$ has the prefix $yx$.
Therefore, the word $\xi(\mathbf s)$ has the prefix $\xi(y)\xi(x)=y\xi(x)$.
On the other hand, the word $\mathbf u=\xi(\mathbf s)$ has the prefix $yx$.
Since $\xi(x)$ is a letter, we have $\xi(x)=x$.
Then $\xi(t_1t_2\cdots t_k)=t_1t_2\cdots t_n$ and $\xi(t_kt_{k-1}\cdots t_1)=t_nt_{n-1}\cdots t_1$.

The number of occurrences of the letter $x$ in the word $\xi(\mathbf p_k)$ [respectively, $\xi(\mathbf q_k)$] is a multiple of two [respectively, three].
However, the letter $x$ occurs in the word $\mathbf p_n$ [respectively, $\mathbf q_n$] exactly two [respectively, three] times.
Therefore, if $\mathbf u=\mathbf p_n$, then $\mathbf s=\mathbf p_k$.
This fact and the arguments in the previous paragraph imply that $\mathbf v=\mathbf q_n$.
Finally, if $\mathbf u=\mathbf q_n$, then $\mathbf s=\mathbf q_k$, whence $\mathbf v=\mathbf p_n$.
\end{proof}

\begin{proof}[Proof of Theorem~\ref{without covers}]
Since $\mathbf M_1$ has no covers in $\mathbb{MON}$ \cite[Theorem~1]{Pollak-81}, it suffices to consider $\mathbf N$.
By the lemma in Poll\'ak~\cite{Pollak-81}, it suffices to verify that the following statements hold for any $n\in\mathbb N$:
\begin{itemize}
\item[(i)] the identity $\mathbf p_n\approx\mathbf q_n$ follows from the identity $\mathbf p_{n+1}\approx\mathbf q_{n+1}$;
\item[(ii)] if the variety $\mathbf N$ satisfies the identity $\mathbf p_{n+1}\approx\mathbf u$ and the identity $\mathbf u\approx\mathbf v$ follows from the identity $\mathbf p_n\approx\mathbf q_n$, then $\mathbf u=\mathbf v$.
\end{itemize}
To verify claim~(i), it suffices to note that if we substitute~1 for $t_{n+1}$ in the identity $\mathbf p_{n+1}\approx\mathbf q_{n+1}$, then we obtain the identity $\mathbf p_n\approx\mathbf q_n$.
Claim~(ii) follows from Lemma~\ref{ident in N_k}.
\end{proof}

Since the variety $\mathbf M_1$ is overcommutative and the variety $\mathbf N$ is aperiodic and so also periodic, the following result holds.

\begin{corollary}
\label{without covers PER,APER,OC}
The lattices $\mathbb{PER}$, $\mathbb{APER}$, and $\mathbb{OC}$ do not have the covering property.
\end{corollary}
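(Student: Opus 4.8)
The plan is to transfer the already-established fact that $\mathbf N$ and $\mathbf M_1$ have no covers in $\mathbb{MON}$ (Theorem~\ref{without covers}) downward, respectively upward, into the three sublattices. The essential structural input is recorded in Subsection~\ref{sublat}: $\mathbb{PER}$ is an ideal of $\mathbb{MON}$, being down-closed since every submonoid variety of a periodic variety is periodic; $\mathbb{APER}$ is likewise down-closed, because a subvariety of an aperiodic variety again contains no non-trivial group; and $\mathbb{OC}$ is a coideal of $\mathbb{MON}$, i.e. up-closed. I would first isolate the following transfer principle: if $\mathbb{K}$ is a down-closed [respectively, up-closed] subposet of $\mathbb{MON}$, if $\mathbf V\in\mathbb{K}$ has no cover in $\mathbb{MON}$, and if $\mathbf V$ is non-maximal in $\mathbb{K}$, then $\mathbf V$ has no cover in $\mathbb{K}$ either; consequently $\mathbb{K}$ fails the covering property.

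The principle itself is proved by a one-line contradiction. Suppose some $\mathbf W\in\mathbb{K}$ were a cover of $\mathbf V$ in $\mathbb{K}$. Since $\mathbf V\subset\mathbf W$ in $\mathbb{MON}$ and $\mathbf V$ has no cover there, there is an intermediate variety $\mathbf Z\in\mathbb{MON}$ with $\mathbf V\subset\mathbf Z\subset\mathbf W$. If $\mathbb{K}$ is down-closed, then $\mathbf Z\subset\mathbf W\in\mathbb{K}$ forces $\mathbf Z\in\mathbb{K}$; if $\mathbb{K}$ is up-closed, then $\mathbf V\subset\mathbf Z$ with $\mathbf V\in\mathbb{K}$ forces $\mathbf Z\in\mathbb{K}$. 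Either way $\mathbf Z\in\mathbb{K}$ lies strictly between $\mathbf V$ and $\mathbf W$, contradicting the choice of $\mathbf W$.

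It then remains to verify the hypotheses for the two witnesses. For $\mathbb{PER}$ and $\mathbb{APER}$ I would take $\mathbf V=\mathbf N$: it is aperiodic, hence periodic, so $\mathbf N\in\mathbb{PER}\cap\mathbb{APER}$, and it has no cover in $\mathbb{MON}$ by Theorem~\ref{without covers}. For non-maximality it suffices to exhibit one larger variety inside each sublattice, and $\mathbf N_1$ serves. Since $\mathbf N_1=\var\{\mathbf p_1\approx\mathbf q_1\}$ is finitely based and proper it has a cover in $\mathbb{MON}$, so the cover-less $\mathbf N$ cannot equal it, and from $\mathbf N\subseteq\mathbf N_1$ we get $\mathbf N\subset\mathbf N_1$; moreover the identity $\mathbf p_1\approx\mathbf q_1$ is not balanced (the letter $x$ occurs twice on the left and three times on the right), so it fails both in $\mathbf{COM}$ and in every non-trivial Abelian group, and lines~1 and~2 of Table~\ref{forbid var} then give $\mathbf N_1\in\mathbb{PER}\cap\mathbb{APER}$. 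For $\mathbb{OC}$ I would take $\mathbf V=\mathbf M_1$, which is overcommutative and cover-less; it is non-maximal in $\mathbb{OC}$ since $\mathbf M_1\subset\mathbf{MON}\in\mathbb{OC}$. Applying the transfer principle to these three cases shows that each of $\mathbb{PER}$, $\mathbb{APER}$ and $\mathbb{OC}$ contains a non-maximal element without a cover, so none of them has the covering property.

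The only genuinely delicate point, which I would flag as the main obstacle, is not the absence of covers—this is inherited for free once the ideal/coideal structure is in place—but the non-maximality clause built into the definition of the covering property: an element with no cover is vacuously harmless if it happens to be maximal, so one must certify that $\mathbf N$ and $\mathbf M_1$ really do sit strictly below something \emph{inside} their respective sublattices. This is precisely why the check that $\mathbf N_1\in\mathbb{PER}\cap\mathbb{APER}$—rather than merely producing some over-variety of $\mathbf N$, which need not be periodic—is the crux of the argument.
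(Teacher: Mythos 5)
Your proof is correct and follows essentially the same route as the paper, which simply observes that $\mathbf N$ is aperiodic (hence periodic) and $\mathbf M_1$ is overcommutative and then invokes Theorem~\ref{without covers}. You have merely made explicit the ideal/coideal transfer principle and the non-maximality checks that the paper leaves implicit, and these details are all verified correctly.
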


On the other hand, the lattices $\mathbb{COM}$, $\mathbb{BAND}$, $\mathbb{CR}$, and $\mathbb{GR}$ have the covering property.
Specifically, the covering property for the lattices $\mathbb{COM}$ and $\mathbb{BAND}$ follows from Head~\cite{Head-68} and Wismath~\cite{Wismath-86}, respectively (see Theorems~\ref{struct of COM} and~\ref{struct of BAND}); as for the lattices $\mathbb{CR}$ and $\mathbb{GR}$, it suffices to refer to the proof of Proposition~\ref{number of covers cr} in Subsection~\ref{decompos}.

\subsection{The number of covers}
\label{number of covers}
It is fundamental to question how many covers a variety can have, if it has any at all.
There exist monoid varieties with infinitely many covers, with the trivial variety $\mathbf T$ being a mundane example; see Observation~\ref{atoms}.
The following result, the proof of which is deferred to Subsection~\ref{decompos}, provides some non-trivial examples.

\begin{proposition}
\label{number of covers cr}
Every completely regular monoid variety has infinitely many covers in the lattice $\mathbb{MON}$.
\end{proposition}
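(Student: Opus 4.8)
The plan is to exhibit, for an arbitrary completely regular monoid variety $\mathbf V$, an infinite family of covers of the shape $\mathbf V\vee\mathbf A_p$ with $p$ prime. Being completely regular, $\mathbf V$ satisfies an identity $x^{e+1}\approx x$ for some $e\in\mathbb N$, so every maximal subgroup of every member of $\mathbf V$ has exponent dividing $e$. I would let $p$ run over the infinitely many primes not dividing $e$. For such $p$ we have $\mathbf A_p\not\subseteq\mathbf V$ (its maximal subgroups have exponent $p\nmid e$), so $\mathbf V\subsetneq\mathbf V\vee\mathbf A_p$; and these joins are pairwise distinct, since $\mathbf A_p\subseteq\mathbf V\vee\mathbf A_p$ whereas for a prime $q\ne p$ the variety $\mathbf V\vee\mathbf A_q$ satisfies $x^{eq+1}\approx x$ and therefore does not contain $\mathbf A_p$, as $p\nmid eq$. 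Moreover $\mathbf V\vee\mathbf A_p$ satisfies $x^{ep+1}\approx x$, so it is again completely regular; consequently the whole interval $[\mathbf V,\mathbf V\vee\mathbf A_p]$ lies in $\mathbb{CR}$ (and in $\mathbb{GR}$ when $\mathbf V$ is a group variety), which is exactly what is needed for the corresponding assertions about $\mathbb{CR}$ and $\mathbb{GR}$ in Subsection~\ref{covers exists}. It thus remains to prove that each $\mathbf V\vee\mathbf A_p$ covers $\mathbf V$.

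To this end I would take any variety $\mathbf U$ with $\mathbf V\subsetneq\mathbf U\subseteq\mathbf V\vee\mathbf A_p$ and show $\mathbf A_p\subseteq\mathbf U$, which forces $\mathbf U=\mathbf V\vee\mathbf A_p$. The heart of the matter is the following claim: if $M\in\mathbf V\vee\mathbf A_p$ has all of its maximal subgroups of exponent dividing $e$, then $M\in\mathbf V$. Granting this, choose $M\in\mathbf U\setminus\mathbf V$; by the claim some maximal subgroup $H$ of $M$ has exponent not dividing $e$, while its exponent divides $ep$ because $\mathbf V\vee\mathbf A_p\models x^{ep+1}\approx x$. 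As $\gcd(e,p)=1$, the prime $p$ divides the exponent of $H$, so $H$ contains a copy of the cyclic group $C_p$ of order $p$. Finally, writing $f$ for the identity of $H$, the set $fMf\cup\{1\}$ is a submonoid of $M$ that maps homomorphically onto $fMf$ (send $1\mapsto f$ and fix $fMf$ pointwise); hence $fMf\in\var M\subseteq\mathbf U$, and $H$, being a submonoid of $fMf$, also lies in $\mathbf U$. Therefore $\mathbf A_p=\var(C_p)\subseteq\mathbf U$, as required.

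For the claim itself I would represent $M$ as a quotient $N/\theta$ of a submonoid $N$ of a (possibly infinite) product $A\times B$, where $A\in\mathbf V$ and $B\in\mathbf A_p$ (all $\mathbf V$-factors of the relevant product are amalgamated into one $A$, all $\mathbf A_p$-factors into one $B$). Let $\phi\colon A\times B\to A\times\{1\}$ be the endomorphism $(a,b)\mapsto(a,1)$; then $\phi(N)$ is isomorphic to a submonoid of $A$, so $\phi(N)\in\mathbf V$, and it suffices to show $\theta\supseteq\ker(\phi|_N)$, for then $M=N/\theta$ is a quotient of $N/\ker(\phi|_N)\cong\phi(N)\in\mathbf V$. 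So take $(a,b),(a,b')\in N$ with equal first coordinate. Since $A\times B$ is periodic and completely regular and $N$ is closed under the power operations, $N$ contains the common idempotent power $(a^\omega,1)$ of $(a,b)$ and $(a,b')$ as well as the group inverse $(a,b)^{-1}$. The images of $(a,b)$ and $(a,b')$ lie in a single maximal subgroup $H'$ of $M$, because their idempotent powers coincide (both equal $\overline{(a^\omega,1)}$, using $b^\omega=b'^\omega=1$ in $B$); and their quotient $\overline{(a,b')}\,\overline{(a,b)}^{-1}=\overline{(a^\omega,b'b^{-1})}$ has $p$-th power equal to the identity $\overline{(a^\omega,1)}$ of $H'$. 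Thus this element has order dividing $p$ in $H'$; but $H'$ has exponent dividing $e$ by hypothesis and $\gcd(e,p)=1$, so it is the identity of $H'$, giving $\overline{(a,b)}=\overline{(a,b')}$. Hence $\theta\supseteq\ker(\phi|_N)$ and the claim follows.

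The main obstacle is precisely this last claim, namely the verification that the congruence $\theta$ defining $M$ already collapses the $\mathbf A_p$-coordinate. This is the only step that genuinely uses both the coprimality $\gcd(e,p)=1$ and the complete regularity of $M$, through the arithmetic of element orders inside the maximal subgroups; the surrounding reductions (closure of the interval inside $\mathbb{CR}$, extraction of $C_p$ via the local submonoid $fMf$, and the count of distinct admissible primes) are routine by comparison.
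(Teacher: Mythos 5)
Your proof is correct, but it takes a genuinely different route from the paper's. The paper's argument is a two-step lattice-theoretic one: since the join of infinitely many group atoms equals $\mathbf{COM}$, a completely regular $\mathbf V$ is disjoint from $\mathbf A_p$ for all but finitely many primes $p$, and then Proposition~\ref{decompos cr} gives $L(\mathbf V\vee\mathbf A_p)\cong L(\mathbf V)\times L(\mathbf A_p)$, whence $\mathbf V\vee\mathbf A_p$ covers $\mathbf V$ because $L(\mathbf A_p)$ is a two-element chain. That decomposition rests on the $0$-distributivity of $\mathbb{MON}$ (Observation~\ref{0-distr}) together with the modularity of $\mathbb{CR}$ (Proposition~\ref{CR is mod}), which in turn is inherited from the deep Arguesian property of the lattice of unary completely regular semigroup varieties. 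You replace all of this by an elementary, self-contained structural argument: the identity $x^{e+1}\approx x$ supplies the infinitely many admissible primes, the local submonoid $fMf$ extracts a copy of $C_p$ from any $\mathbf U$ properly between $\mathbf V$ and $\mathbf V\vee\mathbf A_p$, and the Birkhoff-style analysis of a quotient of a submonoid of $A\times B$ (collapsing the $\mathbf A_p$-coordinate via the coprimality of $e$ and $p$ inside the maximal subgroups) shows that a member of $\mathbf V\vee\mathbf A_p$ with all subgroup exponents dividing $e$ already lies in $\mathbf V$. I checked the key computations ($(a,b)^{ep}=(a^e,1)$, the order of $\overline{(a^e,b'b^{-1})}$ dividing $\gcd(e,p)=1$, and the retraction $fMf\cup\{1\}\twoheadrightarrow fMf$) and they are all sound. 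What each approach buys: the paper's proof is shorter and its main tool, Proposition~\ref{decompos cr}, is reused several times later in the survey; yours avoids the heavy modularity input entirely and, as you note, keeps the whole interval $[\mathbf V,\mathbf V\vee\mathbf A_p]$ inside $\mathbb{CR}$ (respectively $\mathbb{GR}$), which is exactly what the paper needs when it cites this proof for the covering property of those sublattices.
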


Non-completely regular varieties with infinitely many covers also exist.
For instance, for any $n \in \mathbb N$ and prime $p$, the variety $\mathbf C_n\vee\mathbf A_p$ covers $\mathbf C_n$; this follows from the description of the lattice $\mathbb{COM}$~\cite{Head-68} (see Theorem~\ref{struct of COM}).

It follows from Theorem~\ref{chain gr uncount} that for all sufficiently large prime $p$, the group variety $\mathbf A_p$ has uncountably many covers in $\mathbb{GR}$.

\begin{question}
\label{number of covers?}
Is there a non-completely regular variety of monoids with uncountably many covers in $\mathbb{MON}$?
More specifically, is there an aperiodic variety of monoids with uncountably many covers in $\mathbb{APER}$?
\end{question}

This question is presently open but has been affirmatively answered within the context of semigroup varieties.
Indeed, Trakhtman exhibited a semigroup variety with uncountably many covers in $\mathbb{SEM}$; these varieties are all aperiodic and non-completely regular \cite[Theorem~2]{Trakhtman-74}.

We now consider varieties with very few covers.
There exist monoid varieties with a finite number of covers and moreover, with a unique cover.
This follows from the following universal-algebraic result.

\begin{proposition}
\label{unique cover general}
Let $\mathbf X$ be any variety of algebras and $\mathbf V$ be any proper subvariety of $\mathbf X$ defined within $\mathbf X$ by a single identity $\mathbf u\approx\mathbf v$ with the following property: if $\mathbf V$ satisfies an identity of the form $\mathbf u\approx\mathbf w$, then $\mathbf w\in\{\mathbf u,\mathbf v\}$.
Then $\mathbf V$ has a unique cover $\mathbf U$ in the lattice $L(\mathbf X)$ and $\mathbf U$ is the meet of all subvarieties of $\mathbf X$ that properly contain $\mathbf V$.
\end{proposition}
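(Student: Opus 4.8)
The plan is to take $\mathbf U$ to be precisely the variety named in the statement, namely the meet
\[
\mathbf U=\bigwedge\{\mathbf W\in L(\mathbf X)\mid\mathbf V\subset\mathbf W\},
\]
which exists because $L(\mathbf X)$ is complete and the indexing family is non-empty (it contains $\mathbf X$, since $\mathbf V$ is proper). As $\mathbf V$ is contained in every member of the family, we have $\mathbf V\subseteq\mathbf U$ and $\mathbf U\in L(\mathbf X)$. The whole proposition then reduces to the single assertion that this inclusion is \emph{strict}. Indeed, once $\mathbf V\subset\mathbf U$ is known, $\mathbf U$ itself belongs to the family $\{\mathbf W\mid\mathbf V\subset\mathbf W\}$ and is therefore its least element; hence no variety lies strictly between $\mathbf V$ and $\mathbf U$ (such a variety would belong to the family yet sit properly below $\mathbf U$), so $\mathbf U$ covers $\mathbf V$. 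Moreover, any cover $\mathbf U'$ of $\mathbf V$ lies in the family, forcing $\mathbf U\subseteq\mathbf U'$, and then $\mathbf U=\mathbf U'$ because $\mathbf U$ already covers $\mathbf V$. Thus $\mathbf U$ is the unique cover and equals the asserted meet by construction.

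Everything therefore hinges on proving $\mathbf U\neq\mathbf V$, equivalently that $\mathbf U$ does \emph{not} satisfy $\mathbf u\approx\mathbf v$ (for then $\mathbf U\not\subseteq\mathbf V$, which with $\mathbf V\subseteq\mathbf U$ gives $\mathbf V\subset\mathbf U$). This is where the special hypothesis on $\mathbf u\approx\mathbf v$ is used. Writing the family as $\{\mathbf W_i\mid i\in I\}$, its meet is the class-intersection $\bigcap_{i\in I}\mathbf W_i$, so if $\mathbf U\models\mathbf u\approx\mathbf v$ there would be a deduction of $\mathbf u\approx\mathbf v$ from the identities holding in the various $\mathbf W_i$: a sequence of words $\mathbf u=\mathbf w_0,\mathbf w_1,\dots,\mathbf w_m=\mathbf v$ in which each step $\mathbf w_j\to\mathbf w_{j+1}$ is a substitution into an identity valid in some $\mathbf W_{i_j}$.

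The key point is that every $\mathbf W_i$ contains $\mathbf V$, so each identity used in the deduction also holds in $\mathbf V$; reading the deduction from its start $\mathbf w_0=\mathbf u$ shows $\mathbf V\models\mathbf u\approx\mathbf w_j$ for every $j$. By the hypothesis on $\mathbf u\approx\mathbf v$ this forces $\mathbf w_j\in\{\mathbf u,\mathbf v\}$ throughout, so the deduction merely oscillates between $\mathbf u$ and $\mathbf v$. Consequently some single step rewrites $\mathbf u$ directly into $\mathbf v$ via an identity valid in some $\mathbf W_{i_j}$, so that $\mathbf W_{i_j}\models\mathbf u\approx\mathbf v$. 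Since $\mathbf V$ is by definition the subvariety of $\mathbf X$ on which $\mathbf u\approx\mathbf v$ holds and $\mathbf W_{i_j}\subseteq\mathbf X$, this yields $\mathbf W_{i_j}\subseteq\mathbf V$, contradicting $\mathbf V\subset\mathbf W_{i_j}$. Hence $\mathbf U\not\models\mathbf u\approx\mathbf v$, the strict inclusion $\mathbf V\subset\mathbf U$ holds, and the proposition follows.

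I expect the main obstacle to be making the deduction argument precise: justifying that validity of $\mathbf u\approx\mathbf v$ in the intersection $\bigcap_i\mathbf W_i$ is witnessed by a finite chain of one-step rewrites, each licensed by an identity of a single $\mathbf W_i$, and that a one-step rewrite carrying $\mathbf u$ to $\mathbf v$ forces the relevant variety to satisfy $\mathbf u\approx\mathbf v$. This is exactly the completeness of equational logic relative to $\mathbf X$ (Birkhoff's theorem) applied to the union of the equational theories of the $\mathbf W_i$; the hypothesis that only $\mathbf u$ and $\mathbf v$ are $\mathbf V$-equal to $\mathbf u$ is what collapses the chain and delivers the contradiction.
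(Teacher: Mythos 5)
Your proof is correct and rests on exactly the same key idea as the paper's: a deduction of $\mathbf u\approx\mathbf v$ from identities valid in varieties containing $\mathbf V$ passes only through terms that $\mathbf V$ identifies with $\mathbf u$, which the hypothesis collapses to $\{\mathbf u,\mathbf v\}$, forcing one of those varieties to satisfy $\mathbf u\approx\mathbf v$ and hence lie inside $\mathbf V$. The only difference is organizational: the paper first invokes the general fact that a finitely based proper subvariety has some cover $\mathbf U$ and then runs the deduction argument on the binary meet $\mathbf U\wedge\mathbf W$ for each $\mathbf W$ properly containing $\mathbf V$, whereas you run it once on the meet of the entire family, which also spares you that preliminary appeal.
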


\begin{proof}
The variety $\mathbf V$ is finitely based within $\mathbf X$.
As it is generally known, this implies that $\mathbf V$ has at least one cover in the lattice $L(\mathbf X)$.
Let $\mathbf U$ be a cover of $\mathbf V$ in $L(\mathbf X)$ and $\mathbf W$ be a variety such that $\mathbf V\subset\mathbf W\subseteq\mathbf X$.
Then it is clear that either $\mathbf U\subseteq\mathbf W$ or $\mathbf U\wedge\mathbf W=\mathbf V$.
Suppose that $\mathbf U\wedge\mathbf W=\mathbf V$.
Then there is a deduction of the identity $\mathbf u\approx\mathbf v$ from the identities of the varieties $\mathbf U$ and $\mathbf W$, that is, a sequence $\mathbf u = \mathbf w_0,\mathbf w_1,\dots,\mathbf w_n = \mathbf v$ of words such that for each $i=0,1,\dots,n-1$, the identity $\mathbf w_i\approx\mathbf w_{i+1}$ holds in either $\mathbf U$ or $\mathbf W$.
In any case, the identity $\mathbf w_i\approx\mathbf w_{i+1}$ holds in $\mathbf V$.
Then by hypothesis, $\mathbf w_i\in\{\mathbf u,\mathbf v\}$ for all $i=0,1,\dots,n$.
This contradicts the assumption that $\mathbf V$ is a proper subvariety of $\mathbf U$ and $\mathbf W$.
Therefore, $\mathbf U\subseteq\mathbf W$.
This evidently implies the required conclusion.
\end{proof}

\begin{remark}
\label{COM,M_2,N_k}
The hypothesis of Proposition~\ref{unique cover general} holds whenever $\mathbf X=\mathbf{MON}$ and $\mathbf V$ is either $\mathbf{COM}$, $\mathbf N_k$ for any $k\in\mathbb N$, or
\[
\mathbf M_2=\var\{yxyzxz\approx yxzxyxz\}.
\]
\end{remark}

\begin{proof}
It is well known and easily shown that if $\mathbf V = \mathbf{COM} =\var\{ xy \approx yx\}$ satisfies $xy \approx \mathbf v$, then $\mathbf v \in \{ xy,yx \}$.
If $\mathbf V = \mathbf N_k =\var\{\mathbf p_k\approx\mathbf q_k\}$ satisfies $\mathbf p_k\approx \mathbf v$, then $\mathbf v \in \{\mathbf p_k,\mathbf q_k\}$ by Lemma~\ref{ident in N_k}.
Finally, if $\mathbf V = \mathbf M_2=\var\{yxyzxz\approx yxzxyxz\}$ satisfies $yxyzxz \approx \mathbf v$, then it follows from Gusev \cite[proof of Lemma~3.2]{Gusev-18b} that $\mathbf v \in \{yxyzxz, yxzxyxz\}$.
\end{proof}

Proposition~\ref{unique cover general} and Remark~\ref{COM,M_2,N_k} imply the following result.

\begin{corollary}
\label{unique cover concrete}
Each of the varieties $\mathbf{COM}$, $\mathbf M_2$, and $\mathbf N_k$ for any $k\in\mathbb N$ has a unique cover in the lattice $\mathbb{MON}$.
\end{corollary}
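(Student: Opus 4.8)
The plan is to invoke Proposition~\ref{unique cover general} three times, in each instance taking the ambient variety to be $\mathbf X=\mathbf{MON}$, so that $L(\mathbf X)=\mathbb{MON}$, and taking $\mathbf V$ to be, successively, $\mathbf{COM}$, $\mathbf M_2$ and $\mathbf N_k$. What must be checked for each of these three varieties is precisely the hypothesis of the proposition: that $\mathbf V$ is a proper subvariety of $\mathbf{MON}$ defined within $\mathbf{MON}$ by a single identity $\mathbf u\approx\mathbf v$ enjoying the rigidity property that any identity of the form $\mathbf u\approx\mathbf w$ holding in $\mathbf V$ forces $\mathbf w\in\{\mathbf u,\mathbf v\}$.

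First I would observe that $\mathbf{COM}=\var\{xy\approx yx\}$, $\mathbf M_2=\var\{yxyzxz\approx yxzxyxz\}$ and $\mathbf N_k=\var\{\mathbf p_k\approx\mathbf q_k\}$ are each cut out inside $\mathbf{MON}$ by a single identity, and that none of them coincides with $\mathbf{MON}$, so each is a proper subvariety. The rigidity property is exactly what Remark~\ref{COM,M_2,N_k} supplies: for $\mathbf{COM}$ it rests on the elementary fact that an overcommutative variety can satisfy $xy\approx\mathbf w$ only when $\mathbf w$ has the same letters with the same multiplicities, forcing $\mathbf w\in\{xy,yx\}$; for $\mathbf M_2$ it is the cited computation; and for $\mathbf N_k$ it is Lemma~\ref{ident in N_k}, which guarantees that an identity $\mathbf p_k\approx\mathbf w$ (or $\mathbf q_k\approx\mathbf w$) holding in $\mathbf N_k$ has $\mathbf w$ equal to the companion word. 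With the hypothesis so verified, Proposition~\ref{unique cover general} immediately delivers a unique cover in $\mathbb{MON}$ for each of the three varieties.

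Since the genuine work has already been carried out in Proposition~\ref{unique cover general} and Remark~\ref{COM,M_2,N_k}, the corollary presents no real obstacle and amounts to assembling these two ingredients. The only points deserving a moment's attention are the bookkeeping facts that $L(\mathbf{MON})$ is the lattice $\mathbb{MON}$ and that the three varieties are proper, both of which are immediate; I therefore anticipate no difficulty beyond this direct application.
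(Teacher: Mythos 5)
Your proposal is correct and coincides with the paper's own argument: the corollary is stated there as an immediate consequence of Proposition~\ref{unique cover general} combined with Remark~\ref{COM,M_2,N_k}, exactly as you assemble it. The verification that each variety is proper and singly axiomatized, and the appeal to the rigidity property supplied by the remark, are precisely the intended route.
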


Corollary~\ref{unique cover concrete} shows that varieties with a unique cover in $\mathbb{MON}$ include both overcommutative varieties and aperiodic ones.

\begin{remark}
\label{COM vee D_1}
The variety $\mathbf{COM}\vee\mathbf D_1$ is the unique cover of $\mathbf{COM}$.
\end{remark}

\begin{proof}
Let $\mathbf V$ be any monoid variety such that $\mathbf{COM}\subset\mathbf V$.
Then evidently, $\mathbf V$ is neither completely regular nor commutative.
Since $\mathbf D_1\subseteq\mathbf V$ by Lemma~\ref{non-cr and non-commut}, the inclusion $\mathbf{COM}\vee\mathbf D_1\subseteq\mathbf V$ follows.
\end{proof}

\section{Varietal lattices with complex structures}
\label{complex}

\subsection{Varietal lattices without non-trivial identities}
\label{without ident}
It is general knowledge that the lattice of all group varieties is not only modular but also Arguesian.
In contrast, the lattice $\mathbb{SEM}$ and even some of its sublattices do not satisfy any non-trivial lattice identity.
The proofs of these results rely heavily upon a classical result in lattice theory concerning the lattice $\Pi_n$ of all partitions of the set $\{1,2,\ldots, n\}$.

\begin{lemma}[Sachs~\cite{Sachs-61}]
\label{Sachs Pi}
The class $\{ \Pi_n \mid n\in\mathbb N \}$ does not satisfy any non-trivial lattice identity.
Consequently, the lattice $\Pi_\infty$ of all partitions of $\mathbb N$ also does not satisfy any non-trivial lattice identity.
\end{lemma}

For each $n \in \mathbb N$, the lattice dual to $\Pi_n$ is embeddable in $\mathbb{COM}_\mathsf{sem}$ and $\mathbb{OC}_\mathsf{sem}$ \cite{Burris-Nelson-71b,Volkov-94}, while the lattice dual to $\Pi_\infty$ is isomorphic to a subinterval of $L(\var_\mathsf{sem}\{x^2 \approx x^3\})$~\cite{Burris-Nelson-71a}.
Therefore, by Lemma~\ref{Sachs Pi}, the sublattices $\mathbb{COM}_\mathsf{sem}$, $\mathbb{OC}_\mathsf{sem}$, and $\mathbb{APER}_\mathsf{sem}$ of $\mathbb{SEM}$ do not satisfy any non-trivial identity.

As for the lattice $\mathbb{MON}$, the question of whether it satisfies a non-trivial identity remained open until recently.
In 2018, Gusev~\cite{Gusev-18a} proved that for any $n\in\mathbb N$, the lattice dual to $\Pi_n$ is a homomorphic image of some sublattice of $\mathbb{OC}$.
It follows from Lemma~\ref{Sachs Pi} that $\mathbb{OC}$, and so also $\mathbb{MON}$, do not satisfy any non-trivial identity.

Stronger results were more recently established.
Let $\mathbf M_3$ denote the variety of monoids defined by the identities
\begin{align*}
\sigma_1:&\enskip xyzxty\approx yxzxty,\\
\sigma_2:&\enskip xtyzxy\approx xtyzyx,\\
\sigma_3:&\enskip xzxyty\approx xzyxty
\end{align*}
and define $\mathbf M_4= \mathbf M_3 \wedge \var\{x^3\approx x^4,\,x^3y\approx yx^3\}$.

\begin{theorem}[Gusev and Lee {\cite[Theorems~3.1 and~4.1]{Gusev-Lee-20}}]
\label{fin univ M_3 and M_4}
For any $n\in\mathbb N$, the lattice $\Pi_n$ is anti-isomorphic to
\begin{itemize}
\item[a)] some subinterval of $L(\mathbf M_4)$;
\item[b)] some subinterval of $[\mathbf{COM},\mathbf M_3]$.
\end{itemize}
Consequently, the lattices $\mathbb{APER}$ and $\mathbb{OC}$ do not satisfy any non-trivial identity.
\end{theorem}

An even stronger result is given in Corollary~\ref{no qid APER,OC} below.

Now since every subvariety of $\mathbf M_4$ is finitely based~\cite{Lee-12a}, some subvariety of $\mathbf M_4$ must be minimal with respect to having a subvariety lattice that does not satisfy any non-trivial identity.
But an explicit example has not yet been found.

\begin{problem}
\label{min without ident}
\quad
\begin{itemize}
\item[a)] Find a monoid variety that is minimal with respect to having a subvariety lattice that does not satisfy any non-trivial identity.
\item[b)] Describe monoid varieties that are minimal with respect to having a subvariety lattice that does not satisfy any non-trivial identity.
\end{itemize}
\end{problem}

\subsection{Finitely universal varieties}
\label{fin univ}
A variety $\mathbf X$ of algebras is \emph{finitely universal} if every finite lattice is embeddable in $L(\mathbf X)$.
Pudl\'ak and T\.uma~\cite{Pudlak-Tuma-80} proved that every finite lattice is embeddable in $\Pi_n$ for some $n \in \mathbb N$.
It follows that a variety $\mathbf X$ is finitely universal if and only if for any $n\in\mathbb N$, the lattice $L(\mathbf X)$ contains an anti-isomorphic copy of $\Pi_n$.

Examples of finitely universal semigroup varieties have been available since the early 1970s \cite{Burris-Nelson-71a,Burris-Nelson-71b}.
Moreover, the varieties $\mathbf{COM}_\mathsf{sem}$ and $\mathbf K$ on page~\pageref{variety K} are minimal finitely universal semigroup varieties; see Shevrin \textit{et~al.}\@ \cite[Section~12]{Shevrin-Vernikov-Volkov-09}.

However, examples of finitely universal monoid varieties have been elusive for a long time, and their existence has only recently been questioned \cite[Question~6.3]{Jackson-Lee-18}.
Now Theorem~\ref{fin univ M_3 and M_4} provides an affirmative answer since it implies that the variety $\mathbf M_4$ is finitely universal.
It turns out that $\mathbf M_4$ is the smallest finitely universal monoid variety currently known, but a minimal example has not been found.

\begin{problem}
\label{min fin univ}
\quad
\begin{itemize}
\item[a)] Find an example of a minimal finitely universal monoid variety.
\item[b)] Characterize minimal finitely universal monoid varieties.
\end{itemize}
\end{problem}

The following question is also open.

\begin{question}
\label{is there min fin univ?}
Is there a finitely universal variety of monoids that does not contain any minimal finitely universal variety?
\end{question}

Recall that a variety of algebras is \emph{finitely generated} if it is generated by a finite algebra.
Although the variety $\mathbf M_4$ is not finitely generated \cite[Theorem~5.1]{Lee-14}, it is contained in some finitely generated variety \cite[Theorem~5.1]{Gusev-Lee-20}.

\begin{proposition}
\label{fin gen fin univ}
There exists a finitely generated finitely universal monoid variety.
\end{proposition}

The smallest possible order of a semigroup that generates a finitely universal variety is four, and up to isomorphism and anti-isomorphism, there are precisely four examples~\cite{Lee-07}.
But similar information for finitely universal monoid varieties is presently unknown.

\begin{problem}
\label{min order fin univ}
Find the smallest possible order of a monoid that generates a finitely universal monoid variety.
\end{problem}

Up to isomorphism and anti-isomorphism, every monoid of order five or less, with one exception, generates a monoid variety with finitely many subvarieties~\cite{Lee-Zhang-14} and so is not finitely universal; the exception is the monoid $P_2^1$, where
\[
P_2=\langle a,b\mid a^2=ab=a,\,b^2a=b^2\rangle=\{a,b,ba,b^2\}.
\]
Let $\mathbf P_2^1$ denote the variety generated by $P_2^1$.
It follows from Lee and Li~\cite{Lee-Li_J-11} that
\[
\mathbf P_2^1=\var\{xyxz\approx xyxzx,\, \sigma_2\}
\]
(the identity basis for $\mathbf P_2^1$ published earlier \cite[Lemma~9]{Tishchenko-80} turns out to be incorrect).

\begin{proposition}[Gusev \textit{et~al.}\@~\cite{Gusev-Li_Y-Zhang-arx}]
\label{P_2^1 is not fin univ}
The lattice $L(\mathbf P_2^1)$ is given in Fig.~\ref{L(P_2^1)}.
In particular, the variety $\mathbf P_2^1$ is not finitely universal because every element in $L(\mathbf P_2^1)$ has at most two covers.
\end{proposition}

\begin{figure}[p]
\unitlength=0.88mm
\linethickness{0.4pt}
\begin{center}
\begin{picture}(112,229)
\put(9,3){\circle*{1.51}}
\put(9,13){\circle*{1.51}}
\put(9,16){\circle*{1.51}}
\put(9,19){\circle*{1.51}}
\put(9,23){\circle*{1.51}}
\put(9,26){\circle*{1.51}}
\put(9,29){\circle*{1.51}}
\put(9,33){\circle*{1.51}}
\put(9,43){\circle*{1.51}}
\put(9,46){\circle*{1.51}}
\put(9,49){\circle*{1.51}}
\put(9,53){\circle*{1.51}}
\put(9,63){\circle*{1.51}}
\put(9,73){\circle*{1.51}}
\put(9,83){\circle*{1.51}}
\put(9,86){\circle*{1.51}}
\put(9,89){\circle*{1.51}}
\put(9,93){\circle*{1.51}}
\put(9,103){\circle*{1.51}}
\put(9,113){\circle*{1.51}}
\put(9,123){\circle*{1.51}}
\put(9,133){\circle*{1.51}}
\put(9,143){\circle*{1.51}}
\put(9,153){\circle*{1.51}}
\put(9,163){\circle*{1.51}}
\put(9,173){\circle*{1.51}}
\put(19,18){\circle*{1.51}}
\put(19,28){\circle*{1.51}}
\put(19,38){\circle*{1.51}}
\put(19,48){\circle*{1.51}}
\put(19,58){\circle*{1.51}}
\put(19,68){\circle*{1.51}}
\put(19,78){\circle*{1.51}}
\put(19,88){\circle*{1.51}}
\put(19,98){\circle*{1.51}}
\put(19,108){\circle*{1.51}}
\put(19,118){\circle*{1.51}}
\put(19,128){\circle*{1.51}}
\put(19,138){\circle*{1.51}}
\put(19,148){\circle*{1.51}}
\put(19,158){\circle*{1.51}}
\put(19,168){\circle*{1.51}}
\put(19,178){\circle*{1.51}}
\put(29,53){\circle*{1.51}}
\put(29,73){\circle*{1.51}}
\put(29,83){\circle*{1.51}}
\put(29,93){\circle*{1.51}}
\put(29,113){\circle*{1.51}}
\put(29,123){\circle*{1.51}}
\put(29,133){\circle*{1.51}}
\put(29,143){\circle*{1.51}}
\put(29,153){\circle*{1.51}}
\put(29,163){\circle*{1.51}}
\put(29,173){\circle*{1.51}}
\put(29,183){\circle*{1.51}}
\put(39,88){\circle*{1.51}}
\put(39,98){\circle*{1.51}}
\put(39,128){\circle*{1.51}}
\put(39,138){\circle*{1.51}}
\put(39,148){\circle*{1.51}}
\put(39,158){\circle*{1.51}}
\put(39,168){\circle*{1.51}}
\put(39,178){\circle*{1.51}}
\put(39,188){\circle*{1.51}}
\put(49,103){\circle*{1.51}}
\put(49,143){\circle*{1.51}}
\put(49,153){\circle*{1.51}}
\put(49,163){\circle*{1.51}}
\put(49,173){\circle*{1.51}}
\put(49,183){\circle*{1.51}}
\put(49,193){\circle*{1.51}}
\put(59,108){\circle*{1.51}}
\put(59,158){\circle*{1.51}}
\put(59,168){\circle*{1.51}}
\put(59,178){\circle*{1.51}}
\put(59,188){\circle*{1.51}}
\put(59,198){\circle*{1.51}}
\put(69,113){\circle*{1.51}}
\put(69,173){\circle*{1.51}}
\put(69,183){\circle*{1.51}}
\put(69,193){\circle*{1.51}}
\put(69,203){\circle*{1.51}}
\put(79,118){\circle*{1.51}}
\put(79,188){\circle*{1.51}}
\put(79,198){\circle*{1.51}}
\put(79,208){\circle*{1.51}}
\put(89,203){\circle*{1.51}}
\put(89,213){\circle*{1.51}}
\put(99,218){\circle*{1.51}}
\put(109,223){\circle*{1.51}}
\gasset{AHnb=0,linewidth=0.4}
\drawline(9,3)(9,73)(39,88)
\drawline(9,13)(19,18)(19,78)
\drawline(9,23)(19,28)
\drawline(9,33)(19,38)
\drawline(9,43)(29,53)(29,83)
\drawline(9,53)(19,58)
\drawline(9,63)(29,73)
\drawline(9,93)(19,98)
\drawline(9,103)(29,113)
\drawline(9,113)(39,128)
\drawline(9,123)(49,143)
\drawline(9,153)(59,178)
\drawline(9,173)(59,198)
\drawline(19,88)(19,138)
\drawline(19,148)(19,168)
\drawline(29,93)(29,143)
\drawline(29,153)(29,173)
\drawline(69,183)(79,188)
\drawline(69,203)(89,213)
\drawline(79,198)(89,203)
\drawline(99,218)(109,223)
\drawpolygon(9,83)(9,133)(59,158)(59,108)
\drawpolygon(9,143)(9,163)(59,188)(59,168)
\drawpolygon(39,98)(39,148)
\drawpolygon(39,158)(39,178)
\drawpolygon(49,103)(49,153)
\drawpolygon(49,163)(49,183)
\drawpolygon(69,113)(69,193)(79,198)(79,118)
\drawline[dash={0.2 0.8}{0}](9,73)(9,83)
\drawline[dash={0.2 0.8}{0}](9,133)(9,143)
\drawline[dash={0.2 0.8}{0}](9,163)(9,173)
\drawline[dash={0.2 0.8}{0}](19,78)(19,88)
\drawline[dash={0.2 0.8}{0}](19,138)(19,148)
\drawline[dash={0.2 0.8}{0}](19,168)(19,178)
\drawline[dash={0.2 0.8}{0}](29,83)(29,93)
\drawline[dash={0.2 0.8}{0}](29,143)(29,153)
\drawline[dash={0.2 0.8}{0}](29,173)(29,183)
\drawline[dash={0.2 0.8}{0}](39,88)(39,98)
\drawline[dash={0.2 0.8}{0}](39,148)(39,158)
\drawline[dash={0.2 0.8}{0}](39,178)(39,188)
\drawline[dash={0.2 0.8}{0}](49,98)(49,103)
\drawline[dash={0.2 0.8}{0}](49,153)(49,163)
\drawline[dash={0.2 0.8}{0}](49,183)(49,193)
\drawline[dash={0.2 0.8}{0}](59,103)(59,108)(69,113)(69,108)
\drawline[dash={0.2 0.8}{0}](59,158)(59,168)(69,173)
\drawline[dash={0.2 0.8}{0}](59,178)(69,183)
\drawline[dash={0.2 0.8}{0}](79,198)(79,208)
\drawline[dash={0.2 0.8}{0}](89,203)(89,213)(99,218)
\drawpolygon[dash={0.2 0.8}{0}](59,188)(59,198)(69,203)(69,193)
\put(8,15.5){\makebox(0,0)[rc]{\footnotesize$\mathbf C_2$}}
\put(8,19.5){\makebox(0,0)[rc]{\footnotesize$\mathbf D_1$}}
\put(9,176){\makebox(0,0)[cc]{$\mathbf F$}}
\put(8,23){\makebox(0,0)[rc]{\footnotesize$\mathbf F_0$}}
\put(8,26){\makebox(0,0)[rc]{\footnotesize$\mathbf F_1$}}
\put(8,46){\makebox(0,0)[rc]{\footnotesize$\mathbf F_2$}}
\put(8,86.5){\makebox(0,0)[rc]{\footnotesize$\mathbf F_k$}}
\put(8,29.5){\makebox(0,0)[rc]{\footnotesize$\mathbf H_1$}}
\put(8,49.5){\makebox(0,0)[rc]{\footnotesize$\mathbf H_2$}}
\put(8,89.5){\makebox(0,0)[rc]{\footnotesize$\mathbf H_k$}}
\put(8,33){\makebox(0,0)[rc]{\footnotesize$\mathbf I_1$}}
\put(8,53){\makebox(0,0)[rc]{\footnotesize$\mathbf I_2$}}
\put(8,93){\makebox(0,0)[rc]{\footnotesize$\mathbf I_k$}}
\put(8,42){\makebox(0,0)[rc]{\footnotesize$\mathbf J_1^1$}}
\put(8,63){\makebox(0,0)[rc]{$\mathbf J_2^1$}}
\put(8,73){\makebox(0,0)[rc]{$\mathbf J_2^2$}}
\put(8,82){\makebox(0,0)[rc]{\footnotesize$\mathbf J_{k-1}^{k-1}$}}
\put(8,103){\makebox(0,0)[rc]{$\mathbf J_k^1$}}
\put(8,113){\makebox(0,0)[rc]{$\mathbf J_k^2$}}
\put(8,123){\makebox(0,0)[rc]{$\mathbf J_k^3$}}
\put(8,133){\makebox(0,0)[rc]{$\mathbf J_k^4$}}
\put(8,143){\makebox(0,0)[rc]{$\mathbf J_k^{k-2}$}}
\put(8,153){\makebox(0,0)[rc]{$\mathbf J_k^{k-1}$}}
\put(8,163){\makebox(0,0)[rc]{$\mathbf J_k^k$}}
\put(20,38){\makebox(0,0)[lc]{$\mathbf K_1$}}
\put(20,58){\makebox(0,0)[lc]{$\mathbf K_2$}}
\put(20,98){\makebox(0,0)[lc]{$\mathbf K_k$}}
\put(99,221){\makebox(0,0)[cc]{$\mathbf L$}}
\put(19,181){\makebox(0,0)[cc]{$\mathbf L_1$}}
\put(29,186){\makebox(0,0)[cc]{$\mathbf L_2$}}
\put(39,191){\makebox(0,0)[cc]{$\mathbf L_3$}}
\put(49,196){\makebox(0,0)[cc]{$\mathbf L_4$}}
\put(59,201){\makebox(0,0)[cc]{$\mathbf L_5$}}
\put(67,206){\makebox(0,0)[cc]{$\mathbf L_{k-1}$}}
\put(77,211){\makebox(0,0)[cc]{$\mathbf L_k$}}
\put(89,216){\makebox(0,0)[cc]{$\mathbf L_{k+1}$}}
\put(20,18){\makebox(0,0)[lc]{$\mathbf{LRB}$}}
\put(30,53){\makebox(0,0)[lc]{$\mathbf O_1^1$}}
\put(30,73){\makebox(0,0)[lc]{$\mathbf O_2^1$}}
\put(40,88){\makebox(0,0)[lc]{$\mathbf O_2^2$}}
\put(80,118){\makebox(0,0)[lc]{$\mathbf O_{k-1}^{k-1}$}}
\put(30,113){\makebox(0,0)[lc]{$\mathbf O_k^1$}}
\put(40,128){\makebox(0,0)[lc]{$\mathbf O_k^2$}}
\put(50,143){\makebox(0,0)[lc]{$\mathbf O_k^3$}}
\put(60,158){\makebox(0,0)[lc]{$\mathbf O_k^4$}}
\put(70,173){\makebox(0,0)[lc]{\small$\mathbf O_k^{k-2}$}}
\put(80,188){\makebox(0,0)[lc]{$\mathbf O_k^{k-1}$}}
\put(90,203){\makebox(0,0)[lc]{$\mathbf O_k^k$}}
\put(109,226){\makebox(0,0)[cc]{$\mathbf P_2^1$}}
\put(8,12){\makebox(0,0)[rc]{\footnotesize$\mathbf{SL}$}}
\put(9,0){\makebox(0,0)[cc]{$\mathbf T$}}
\end{picture}
\end{center}
\caption{The lattice $L(\mathbf P_2^1)$}
\label{L(P_2^1)}
\end{figure}

A description of the main varieties in Fig.~\ref{L(P_2^1)} requires the following words:
\[
\mathbf b_{k,m}=x_{k-1}x_kx_{k-2}x_{k-1}\cdots x_{m-1}x_m,
\]
where $k,m\in\mathbb N$ with $m\le k$.
For brevity, write $\mathbf b_k = \mathbf b_{k,1}$ and $\mathbf b_0 = \lambda$.
Then
\begin{align*}
\mathbf F&=\var\{xyx\approx xyx^2,\,x^2y\approx x^2yx,\,x^2y^2\approx y^2x^2\},\\
\mathbf F_0&=\mathbf F\wedge\var\{x^2y\approx xyx\},\\
\mathbf F_k&=\mathbf F\wedge\var\{x_ky_kx_{k-1}x_ky_k\mathbf b_{k-1}\approx y_kx_kx_{k-1}x_ky_k\mathbf b_{k-1}\},\\
\mathbf H_k&=\mathbf F\wedge\var\{xx_kx\mathbf b_k\approx x_kx^2\mathbf b_k\},\\
\mathbf I_k&=\mathbf F\wedge\var\{y_1y_0x_ky_1\mathbf b_k\approx y_1y_0y_1x_k\mathbf b_k\},\\
\mathbf J_k^m&=\mathbf F\wedge\var\left\{\!\!
\begin{array}{r}
y_{m+1}y_mx_ky_{m+1}\mathbf b_{k,m}y_m\mathbf b_{m-1} \\
\approx y_{m+1}y_my_{m+1}x_k\mathbf b_{k,m}y_m\mathbf b_{m-1}
\end{array}
\!\!\right\},\\
\mathbf K_k&=\mathbf P_2^1\wedge\var\{y_1y_0x_ky_1\mathbf b_k\approx y_1y_0y_1x_k\mathbf b_k\},\\
\mathbf L&=\mathbf P_2^1\wedge\var\{(xy)^2\approx x^2y^2\},\\
\mathbf L_k&=\mathbf P_2^1\wedge\var\{y_kx_{k-1}xy_kx\mathbf b_{k-1} \approx y_kx_{k-1}y_kx^2\mathbf b_{k-1}\},\\
\text{and}\enskip\mathbf O_k^m&=\mathbf P_2^1\wedge\var\left\{\!\!
\begin{array}{r}
y_{m+1}y_mx_ky_{m+1}\mathbf b_{k,m}y_m\mathbf b_{m-1} \\
\approx y_{m+1}y_my_{m+1}x_k\mathbf b_{k,m}y_m\mathbf b_{m-1}
\end{array}
\!\!\right\}.
\end{align*}
Only the sublattice $L(\mathbf F)$ of $L(\mathbf P_2^1)$ requires further elaboration.
The subvarieties of $\mathbf F$ form a chain that begins with $\mathbf T \subset \mathbf{SL} \subset \mathbf C_2 \subset \mathbf D_1 \subset \mathbf F_0$, followed successively by the intervals $[\mathbf F_1,\mathbf F_2], [\mathbf F_2,\mathbf F_3], [\mathbf F_3,\mathbf F_4], \ldots,$ where each $[\mathbf F_k,\mathbf F_{k+1}]$ is the chain
\[
\mathbf F_k\subset\mathbf H_k\subset\mathbf I_k\subset\mathbf J_k^1\subset\mathbf J_k^2\subset\cdots\subset\mathbf J_k^k\subset\mathbf F_{k+1}
\]
consisting of $k+4$ varieties.

\begin{corollary}
\label{order <6}
The monoid variety generated by any monoid of order five or less is not finitely universal.
Consequently, the smallest possible order of a monoid that generates a finitely universal monoid variety is at least six.
\end{corollary}

For any $k,m\in\mathbb N$ such that $k < m$, define the \emph{Burnside monoid variety}
\[
\mathbf B_{k,m}=\var\{x^k\approx x^m\}.
\]
It is clear that the proper inclusions
\[
\mathbf B_{1,2} \subset \mathbf B_{2,3} \subset \mathbf B_{3,4} \subset \cdots \subset \mathbf B_{k,k+1} \subset \cdots
\]
hold and that every aperiodic monoid variety is contained in $\mathbf B_{k,k+1}$ for all sufficiently large $k$.
It follows from Theorem~\ref{fin univ M_3 and M_4} that the variety $\mathbf B_{3,4}$ is finitely universal.
In contrast, the variety $\mathbf B_{1,2} = \mathbf{BAND}$ is not finitely universal because the lattice $\mathbb{BAND}$ is distributive~\cite{Wismath-86}; see Theorem~\ref{struct of BAND}.
It is currently unknown if the remaining case $\mathbf B_{2,3}$ is finitely universal.

\begin{question}[Gusev and Lee {\cite[Question~6.1]{Gusev-Lee-20}}]
\label{B_23 fin univ?}
Is the Burnside monoid variety $\mathbf B_{2,3} =\var\{x^2\approx x^3\}$ finitely universal?
\end{question}

\subsection{Lattice universal varieties}
\label{lat univ}
A variety $\mathbf X$ of algebras is \emph{lattice universal} if the lattice $L(\mathbf X)$ contains an interval anti-isomorphic to $\Pi_\infty$.
Simple arguments show that if a variety $\mathbf X$ is lattice universal, then $L(\mathbf X)$ contains the lattice of all varieties of algebras of any fixed finite or countably infinite type.
Recall that the Burnside semigroup variety $\var_\mathsf{sem}\{x^2 \approx x^3\}$ and therefore, the variety $\mathbf{SEM}$, are lattice universal~\cite{Burris-Nelson-71a}.
It is therefore natural to question if the same holds true in $\mathbb{MON}$.

\begin{question}
\label{dual to Pi_infty in MON?}
\quad
\begin{itemize}
\item[a)] Does the lattice $\mathbb{MON}$ contain an anti-isomorphic copy of $\Pi_\infty$?
\item[b)] Does there exist a lattice universal variety of monoids?
\end{itemize}
\end{question}

We note that a locally finite variety of algebras of any type is not lattice universal.
This immediately follows from three folklore results: the subvariety lattice of any locally finite variety of algebras is algebraic, the lattice $\Pi_\infty$ is not coalgebraic, and any subinterval of an algebraic lattice is again algebraic.

Since the lattice dual to $\Pi_\infty$ is not embeddable in $\mathbb{OC}_\mathsf{sem}$ \cite[Corollary~2.4]{Volkov-94}, it follows from Proposition~\ref{MON is sublat of SEM} that the same holds true for $\mathbb{OC}$.

\begin{remark}
\label{Pi_infty is not in OC}
The lattice dual to $\Pi_\infty$ is not embeddable in $\mathbb{OC}$.
\end{remark}

\subsection{Varietal lattices without non-trivial quasi-identities}
\label{without qid}
Since the class of all finite lattices does not satisfy any non-trivial quasi-identity \cite[Corollary~1 after Theorem~3]{Budkin-Gorbunov-75} and every finite lattice is embeddable in $\Pi_n$ for some $n \in \mathbb N$, a stronger version of Lemma~\ref{Sachs Pi} holds.

\begin{lemma}
\label{Pi quasi}
The class $\{ \Pi_n \mid n\in\mathbb N \}$ does not satisfy any non-trivial quasi-identity.
Consequently, the lattice $\Pi_\infty$ of all partitions of $\mathbb N$ also does not satisfy any non-trivial quasi-identity.
\end{lemma}

Combining this lemma with the aforementioned results of Burris and Nelson~\cite{Burris-Nelson-71a} and Volkov~\cite{Volkov-94}, we deduce that the lattices $\mathbb{APER}_\mathsf{sem}$ and $\mathbb{OC}_\mathsf{sem}$ do not satisfy any non-trivial quasi-identity.
By Theorem~\ref{fin univ M_3 and M_4} and Lemma~\ref{Pi quasi}, similar results also hold within the lattice $\mathbb{MON}$.

\begin{corollary}[Gusev and Lee {\cite[Remark~1.3]{Gusev-Lee-20}}]
\label{no qid APER,OC}
The lattices $\mathbb{APER}$ and $\mathbb{OC}$ do not satisfy any non-trivial quasi-identity.
\end{corollary}

On the other hand, the lattices $\mathbb{COM}$ and $\mathbb{BAND}$ are distributive, while the lattice $\mathbb{CR}$ is modular; see Theorems~\ref{struct of COM} and~\ref{struct of BAND} and Proposition~\ref{CR is mod}.

\section{Structure of certain sublattices of $\mathbb{MON}$}
\label{struct of sublat}

\subsection{The lattice $\mathbb{COM}$}
\label{COM}
It is known since the 1960s that the lattice $\mathbb{COM}_\mathsf{sem}$ is countably infinite; this result holds because Perkins~\cite{Perkins-69} has shown that every variety of commutative semigroups is finitely based.
Some characterization of this lattice is provided by Kisielewicz~\cite{Kisielewicz-94}; see also Shevrin \textit{et~al.}\@ \cite[Section~8]{Shevrin-Vernikov-Volkov-09}.
Nevertheless, the lattice $\mathbb{COM}_\mathsf{sem}$ has a very complex structure; in particular, it does not satisfy any non-trivial identity~\cite{Burris-Nelson-71b}.

However, it turns out that the complexity of the lattice $\mathbb{COM}_\mathsf{sem}$ is concentrated exclusively in its nil-part, that is, the lattice of commutative nil-varieties of semigroups.
More precisely, every periodic commutative semigroup variety is the join of a nil-variety and a variety generated by a semigroup with an identity element; this folklore result, first explicitly noted without proof in Korjakov~\cite{Korjakov-82}, follows from Volkov \cite[proof of Proposition~1]{Volkov-89}.
The lattice of commutative nil-varieties has quite a complex structure.
In particular, by Korjakov~\cite{Korjakov-82}, it contains an anti-isomorphic copy of the lattice $\Pi_n$ for any $n\in\mathbb N$, whence it does not satisfy any non-trivial identity.
But nil-varieties ``disappear'' in the case of monoids; see Subsection~\ref{sublat}.
As for the lattice consisting of varieties generated by commutative semigroups with an identity element, it is isomorphic to $\mathbb{COM}$ by Proposition~\ref{MON is sublat of SEM}.
The structure of the lattice $\mathbb{COM}$ turns out to be very simple.

\begin{theorem}[Head~\cite{Head-68}]
\label{struct of COM}
The lattice $\mathbb{COM}$ is obtained by adjoining a greatest element (the variety $\mathbf{COM}$) to the direct product of the lattice of natural numbers under division (the lattice of Abelian periodic group varieties) and the chain
\[
\mathbf T\subset\mathbf{SL} = \mathbf C_ 1 \subset \mathbf C_2 \subset\mathbf C_3 \subset\cdots.
\]
In particular, if $\mathbf V$ is a commutative monoid variety, then either $\mathbf V=\mathbf{COM}$ or $\mathbf V=\mathbf A_k\vee\mathbf X$ for some $k\in\mathbb N$ and some variety $\mathbf X$ from the above chain.
\end{theorem}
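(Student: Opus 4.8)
The plan is to reduce the entire classification to the one-variable equational theory, exploiting the feature that distinguishes monoids from semigroups: in a monoid one may substitute the identity element~$1$ for any letter. First I would dispose of the overcommutative case. Since a variety is periodic precisely when it does not contain $\mathbf{COM}$ (line~1 of Table~\ref{forbid var}), every commutative variety other than $\mathbf{COM}$ itself is periodic. It therefore suffices to show that the proper (equivalently, periodic) commutative varieties are exactly the varieties $\mathbf A_m\vee\mathbf X$ with $m\in\mathbb N$ and $\mathbf X$ in the displayed chain, and that this assignment respects the order.

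The key reduction is the following claim: for a commutative monoid variety $\mathbf V$, the equational theory of $\mathbf V$ is generated, modulo commutativity, by the one-variable power identities $x^a\approx x^b$ that hold in $\mathbf V$. One direction is immediate, since substituting $1$ for all letters except one in an identity of $\mathbf V$ yields such a power identity, a substitution available only because we work with monoids. For the converse, given an identity $\mathbf u\approx\mathbf v$ of $\mathbf V$, commutativity lets me write $\mathbf u=x_1^{a_1}\cdots x_k^{a_k}$ and $\mathbf v=x_1^{b_1}\cdots x_k^{b_k}$, and then rewrite one block $x_i^{a_i}\mapsto x_i^{b_i}$ at a time using $x_i^{a_i}\approx x_i^{b_i}$. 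Hence a commutative monoid variety is determined by the set of power identities it satisfies, that is, by a congruence on the free monogenic monoid $\mathbb N_0=\langle x\rangle$ (the additive monoid of non-negative integers). For a periodic variety this relatively free monogenic object is a finite cyclic monoid of some index $n\ge 0$ and period $m\ge 1$, so the congruence is the one generated by $x^n\approx x^{n+m}$, and consequently $\mathbf V=\var\{xy\approx yx,\ x^n\approx x^{n+m}\}$.

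It then remains to match $\mathbf V_{n,m}:=\var\{xy\approx yx,\ x^n\approx x^{n+m}\}$ with a join. Putting $\mathbf X_n=\mathbf T,\mathbf{SL},\mathbf C_n$ according as $n=0,1,\ge 2$, I would first check that $\mathbf A_m$ and $\mathbf X_n$ both satisfy $x^n\approx x^{n+m}$, so that $\mathbf A_m\vee\mathbf X_n\subseteq\mathbf V_{n,m}$; for the reverse inclusion I compare one-variable theories, verifying that a power identity holds in both $\mathbf A_m$ (iff its exponents agree modulo~$m$) and $\mathbf X_n$ (iff its exponents coincide or both reach~$n$) exactly when it lies in the congruence generated by $x^n\approx x^{n+m}$. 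By the reduction above this forces $\mathbf V_{n,m}=\mathbf A_m\vee\mathbf X_n$. Finally I would read off the lattice structure: the pair $(\mathbf A_m,\mathbf X_n)$ is recovered from $\mathbf V_{n,m}$ as the period and index of its monogenic free object, so the assignment is a bijection onto $\{\mathbf A_m\}\times\{\mathbf X_n\}$; and tracking the generating relation shows $\mathbf V_{n,m}\subseteq\mathbf V_{n',m'}$ iff $m\mid m'$ and $n\le n'$, which is precisely the product order on the division lattice of the $\mathbf A_m$ and the chain of the $\mathbf X_n$. Adjoining the top element $\mathbf{COM}$ then completes the description.

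The only genuinely delicate point is the reduction claim of the second paragraph, where the monoid identity element is indispensable; everything afterward — classifying the congruences on $\mathbb N_0$, treating the small-index cases $n=0,1$, and the routine order computation — is bookkeeping. This is also exactly the step that would fail for semigroups, matching the remark in Subsection~\ref{COM} that the complexity of $\mathbb{COM}_{\sem}$ is concentrated in its nil-part, which here collapses because letters can be erased.
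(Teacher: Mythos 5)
Your argument is correct. Note that the survey itself gives no proof of Theorem~\ref{struct of COM}; it only attributes the result to Head~\cite{Head-68}, and what you have written is essentially the classical argument. The one genuinely load-bearing step is exactly the one you isolate: since letters can be evaluated at $1$, every identity $x_1^{a_1}\cdots x_k^{a_k}\approx x_1^{b_1}\cdots x_k^{b_k}$ of a commutative monoid variety $\mathbf V$ entails, and is entailed by (modulo $xy\approx yx$), the one-variable identities $x^{a_i}\approx x^{b_i}$; hence $\mathbf V$ is determined by a congruence on the free monogenic monoid $(\mathbb N_0,+)$. The two points you dismiss as bookkeeping do deserve a sentence each in a full write-up: (i) every non-identity congruence on $\mathbb N_0$ is principal, generated by a pair $(n,n+m)$ with index $n\ge 0$ and period $m\ge 1$ (this is where the case $n=0$, i.e.\ the group varieties $\mathbf A_m$, and the cases $n=1$, i.e.\ $\mathbf{SL}$, enter uniformly); and (ii) the intersection of the congruences $\{(a,b): a\equiv b\ (\mathrm{mod}\ m)\}$ and $\{(a,b): a=b\text{ or }a,b\ge n\}$ is again the congruence generated by $(n,n+m)$, which is what identifies $\var\{xy\approx yx,\ x^n\approx x^{n+m}\}$ with $\mathbf A_m\vee\mathbf X_n$ and shows the order is the product order. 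With those two checks spelled out, the classification, the order isomorphism with $(\mathbb N,\mid)\times(\mathbb N_0,\le)$, and the adjunction of the top element $\mathbf{COM}$ (justified, as you say, by line~1 of Table~\ref{forbid var}) all go through.
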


\subsection{The lattice $\mathbb{BAND}$}
\label{BAND}
A complete description of the lattice $\mathbb{BAND}_\mathsf{sem}$ of all varieties of idempotent semigroups was independently found by Biryukov~\cite{Biryukov-70}, Fennemore~\cite{Fennemore-71}, and Gerhard~\cite{Gerhard-70}.
This well-known lattice is countably infinite and distributive; see, for example, Evans \cite[Fig.~4]{Evans-71} and Shevrin \textit{et~al.}\@ \cite[Fig.~2]{Shevrin-Vernikov-Volkov-09}.

A complete description of the lattice $\mathbb{BAND}$ was given by Wismath~\cite{Wismath-86}.
To describe this lattice, let $\mathbf B_2 = \mathbf{LRB}$ and for $n \ge 3$, define
\[
\mathbf B_n = \var\{x\approx x^2,\,\mathbf r_n\approx\mathbf s_n\},
\]
where
\begin{align*}
\mathbf r_n= &
\begin{cases}
x_1x_2x_3&\enskip\text{for}\enskip n=3,\\
\mathbf r_{n-1}x_n&\enskip\text{for even}\enskip n\ge 4,\\
x_n\mathbf r_{n-1}&\enskip\text{for odd}\enskip n\ge 5
\end{cases}
\\
\text{and}\enskip\mathbf s_n= &
\begin{cases}
x_1x_2x_3x_1x_3x_2x_3&\enskip\text{for}\enskip n=3,\\
\mathbf s_{n-1}x_n\mathbf r_n&\enskip\text{for even}\enskip n\ge 4,\\
\mathbf r_nx_n\mathbf s_{n-1}&\enskip\text{for odd}\enskip n\ge 5.
\end{cases}
\end{align*}

\begin{theorem}[Wismath~\cite{Wismath-86}]
\label{struct of BAND}
The lattice $\mathbb{BAND}$ is given in Fig.~\ref{L(BAND)}.
\end{theorem}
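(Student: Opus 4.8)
The plan is to exploit Proposition~\ref{MON is sublat of SEM} and reduce everything to the known, completely described lattice $\mathbb{BAND}_{\sem}$ of band semigroup varieties (the countably infinite distributive lattice of Biryukov--Fennemore--Gerhard). The embedding of Proposition~\ref{MON is sublat of SEM} identifies $\mathbb{BAND}$ with the sublattice of $\mathbb{BAND}_{\sem}$ consisting of those band semigroup varieties that are generated by monoids; consequently $\mathbb{BAND}$ is distributive, being a sublattice of a distributive lattice, and the whole task becomes to single out this sublattice and read off its order from that of $\mathbb{BAND}_{\sem}$. A band semigroup variety $\mathbf V_{\sem}$ belongs to the image precisely when it is generated by monoids; by~\cite[Lemma~7.1.1]{Almeida-94} (used already in the proof of Theorem~\ref{just non-com}) this is exactly the condition that $\mathbf V_{\sem}$ be closed under the operation $S\mapsto S^1$.

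The main tool is the erasure of letters. Since every band monoid satisfies $x\approx x^2$, words may be taken reduced, and whenever a band monoid variety satisfies $\mathbf u\approx\mathbf v$ it also satisfies every identity obtained by substituting $1$ for a subset of the letters, that is, by deleting those letters from both $\mathbf u$ and $\mathbf v$. Imposing this deletion-closure on the balanced identities that cut out the members of $\mathbb{BAND}_{\sem}$ forces a drastic collapse: the left- and right-zero semigroup varieties are not themselves monoidal, the least monoidal varieties lying above them being $\mathbf{LRB}=\var L^1$ (where $L$ is the two-element left-zero semigroup, as in the proof of Theorem~\ref{just non-com}) and dually $\mathbf{RRB}$, both sitting above the semilattice variety $\mathbf{SL}$; and most intermediate band semigroup varieties cease to be deletion-closed and so drop out of the image. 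First I would carry out this collapse systematically, checking each layer of the Biryukov--Fennemore--Gerhard lattice against deletion-closure, to show that the surviving monoidal varieties are exactly $\mathbf T$, $\mathbf{SL}$, $\mathbf{LRB}$, $\mathbf{RRB}$ and the varieties $\mathbf B_n$ for $n\ge 3$, together with their joins.

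It then remains to pin down the covering relations and match them to Fig.~\ref{L(BAND)}. The inclusions among the $\mathbf B_n$ come for free from erasure: substituting $1$ for $x_n$ in $\mathbf r_n\approx\mathbf s_n$ returns $\mathbf r_{n-1}\approx\mathbf s_{n-1}$, so $\mathbf B_3\subset\mathbf B_4\subset\cdots$ is an ascending chain converging to $\mathbf{BAND}=\mathbf B_{1,2}$, while the alternating left/right recursion defining $\mathbf r_n,\mathbf s_n$ accounts for the asymmetry of the two lower branches. At the bottom, a short computation shows $\mathbf T\subset\mathbf{SL}\subset\mathbf{LRB},\mathbf{RRB}$, with $\mathbf{SL}$ the unique atom among bands and $\mathbf{LRB},\mathbf{RRB}$ their minimal non-commutative covers (Theorem~\ref{just non-com}), whose join begins the chain above. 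Assembling these relations reproduces the diagram.

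The hard part will be the bookkeeping in the collapse step. The lattice $\mathbb{BAND}_{\sem}$ is governed by delicate word invariants (content together with the left- and right-hand iterated first and last occurrences), and one must verify that deletion-closure annihilates precisely the non-monoidal varieties --- neither too few, so that no spurious variety survives, nor too many, so that none of $\mathbf{SL},\mathbf{LRB},\mathbf{RRB},\mathbf B_n$ is lost --- and that the two one-sided branches stay genuinely distinct all the way up the chain. Tracking the behaviour of each defining identity under erasure, uniformly in $n$, is where the real effort lies.
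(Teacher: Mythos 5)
The paper offers no proof of this theorem --- it is quoted verbatim from Wismath's article --- so there is nothing in the text to compare against line by line; your overall strategy (embed $\mathbb{BAND}$ into $\mathbb{BAND}_{\sem}$ via Proposition~\ref{MON is sublat of SEM}, characterize the image as the monoid-generated, equivalently deletion-closed, band semigroup varieties, and read the answer off the Biryukov--Fennemore--Gerhard lattice) is the natural one and is essentially how the result is obtained in the literature. The problem is that, as you yourself acknowledge, the entire content of the theorem --- verifying that exactly the varieties $\mathbf T$, $\mathbf{SL}$, $\mathbf{LRB}$, $\mathbf{RRB}$, $\mathbf B_n$, $\overleftarrow{\mathbf B_n}$ and the joins $\mathbf{LRB}\vee\mathbf{RRB}$, $\mathbf B_n\vee\overleftarrow{\mathbf B_n}$ survive, and that the covering relations are those of Fig.~\ref{L(BAND)} --- is deferred as ``the bookkeeping''; without it this is a plan, not a proof. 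Note also that your list of survivors omits the duals $\overleftarrow{\mathbf B_n}$, which are not joins of the varieties you name.

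More seriously, the one computation you do carry out is wrong in two ways. First, substituting $1$ for $x_n$ in $\mathbf r_n\approx\mathbf s_n$ does not return $\mathbf r_{n-1}\approx\mathbf s_{n-1}$: since $\mathbf s_n=\mathbf s_{n-1}x_n\mathbf r_n$ for even $n$, erasing $x_n$ yields $\mathbf r_{n-1}\approx\mathbf s_{n-1}\mathbf r_{n-1}$, and dually for odd $n$. Second, and more importantly, the logic runs in the wrong direction: erasing a letter from the defining identity of $\mathbf B_n$ produces identities that \emph{hold in} $\mathbf B_n$, so if the erased identity really were $\mathbf r_{n-1}\approx\mathbf s_{n-1}$ you would have proved $\mathbf B_n\subseteq\mathbf B_{n-1}$, a \emph{descending} chain --- the opposite of the ascending chain $\mathbf B_3\subset\mathbf B_4\subset\cdots$ required by Fig.~\ref{L(BAND)}. (Compare the proof of Theorem~\ref{without covers}, where erasure of $t_{n+1}$ from $\mathbf p_{n+1}\approx\mathbf q_{n+1}$ is used precisely to obtain the descending chain $\mathbf N_1\supseteq\mathbf N_2\supseteq\cdots$.) To establish $\mathbf B_{n-1}\subseteq\mathbf B_n$ one must instead \emph{derive} $\mathbf r_n\approx\mathbf s_n$ from $\{x\approx x^2,\,\mathbf r_{n-1}\approx\mathbf s_{n-1}\}$, which erasure cannot do; so the inclusions among the $\mathbf B_n$ do not ``come for free'' and require a genuine band-theoretic argument, exactly the kind of detail your deferred collapse step would have to supply.
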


\begin{figure}[htb]
\unitlength=1mm
\linethickness{0.4pt}
\begin{center}
\begin{picture}(58,91)
\put(19,23){\circle*{1.33}}
\put(19,43){\circle*{1.33}}
\put(19,63){\circle*{1.33}}
\put(29,3){\circle*{1.33}}
\put(29,13){\circle*{1.33}}
\put(29,33){\circle*{1.33}}
\put(29,53){\circle*{1.33}}
\put(29,73){\circle*{1.33}}
\put(29,86){\circle*{1.33}}
\put(39,23){\circle*{1.33}}
\put(39,43){\circle*{1.33}}
\put(39,63){\circle*{1.33}}
\gasset{AHnb=0,linewidth=0.4}
\drawline(29,3)(29,13)(19,23)(39,43)(19,63)(31,75)
\drawline(29,13)(39,23)(19,43)(39,63)(27,75)
\put(29,81){\makebox(0,0)[cc]{$\vdots$}}
\put(18,23){\makebox(0,0)[rc]{$\mathbf B_2 = \mathbf{LRB}$}}
\put(40,23){\makebox(0,0)[lc]{$\overleftarrow{\mathbf B_2} = \mathbf{RRB}$}}
\put(40,43){\makebox(0,0)[lc]{$\mathbf B_3$}}
\put(18,43){\makebox(0,0)[rc]{$\overleftarrow{\mathbf B_3}$}}
\put(18,63){\makebox(0,0)[rc]{$\mathbf B_4$}}
\put(40,63){\makebox(0,0)[lc]{$\overleftarrow{\mathbf B_4}$}}
\put(29,89){\makebox(0,0)[cc]{$\mathbf{BAND}$}}
\put(30,12){\makebox(0,0)[lc]{$\mathbf{SL}$}}
\put(29,0){\makebox(0,0)[cc]{$\mathbf T$}}
\end{picture}
\end{center}
\caption{The lattice $\mathbb{BAND}$}
\label{L(BAND)}
\end{figure}
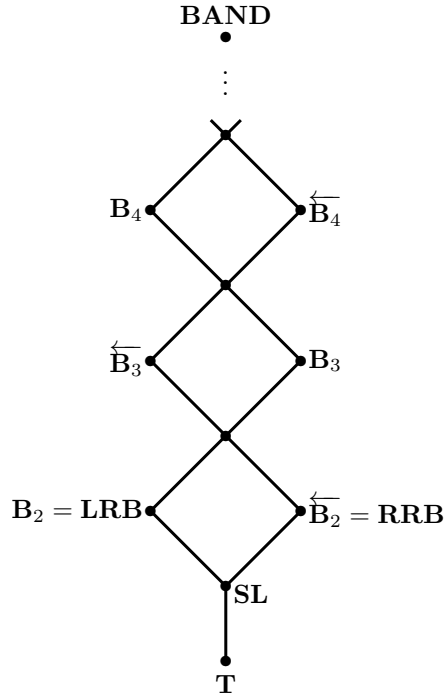

We see that the lattice $\mathbb{BAND}$ is countably infinite and distributive.
In fact, by Proposition~\ref{MON is sublat of SEM}, the distributivity and countability (but not infinitum) of $\mathbb{BAND}$ is inherited from $\mathbb{BAND}_\mathsf{sem}$.

\subsection{The lattice $\mathbb{CR}$}
\label{CR}
Completely regular semigroups can be treated as \emph{unary semigroups}, that is, semigroups with an additional unary operation.
Indeed, for any element $a$ of a completely regular semigroup $S$, denote by $a^{-1}$ the inverse element of $a$ in the maximal subgroup of $S$ that contains $a$.
The mapping $a\mapsto a^{-1}$ is a natural unary operation on $S$.
The identities
\[
(xy)z\approx x(yz),\quad xx^{-1}x\approx x,\quad xx^{-1}\approx x^{-1}x,\quad (x^{-1})^{-1}\approx x
\]
define the variety of all unary completely regular semigroups within the variety of all algebras of type $(2,1)$; denote this variety by $\mathbf{UCR}_\mathsf{sem}$ and its subvariety lattice by $\mathbb{UCR}_\mathsf{sem}$.
Important information about the structure of $\mathbb{UCR}_\mathsf{sem}$ can be found in the monograph by Petrich and Reilly~\cite{Petrich-Reilly-99}.

Every variety of completely regular semigroups can be considered as a variety of unary completely regular semigroups.
Indeed, since such a variety satisfies the identity $x\approx x^{n+1}$ for some $n\in\mathbb N$, the element $x^{2n-1}$ is inverse to $x$, whence the operation of inversion is definable in the language of multiplication.
Therefore, the lattice $\mathbb{CR}_\mathsf{sem}$ is naturally embedded in $\mathbb{UCR}_\mathsf{sem}$.
Practically, all information about the lattice $\mathbb{CR}_\mathsf{sem}$ that is known so far arises as the ``projection'' on $\mathbb{SEM}$ of results about the lattice $\mathbb{UCR}_\mathsf{sem}$; see Shevrin \textit{et~al.}\@ \cite[Section~6]{Shevrin-Vernikov-Volkov-09}.
A description of $\mathbb{UCR}_\mathsf{sem}$ and some of its important sublattices can be found in Pol\'ak \cite{Polak-85,Polak-87,Polak-88}.

Now we turn to varieties of completely regular monoids.
By analogy with the semigroup case, a monoid with an additional unary operation is called a \emph{unary monoid}.
Completely regular monoids can be treated as unary monoids with the same interpretation of the unary operation as in the semigroup case.
Let $\mathbf{UCR}$ denote the variety of all unary completely regular monoids and $\mathbb{UCR}$ denote the lattice of all unary completely regular monoid varieties.

The same arguments as in the semigroup case show that varieties of completely regular monoids can be considered as varieties of unary completely regular monoids.
Therefore, the lattice $\mathbb{CR}$ is naturally embedded in $\mathbb{UCR}$.
There has not been any articles devoted specifically to the lattice $\mathbb{CR}$, but the lattice $\mathbb{UCR}$ was studied and characterized by Vachuska~\cite{Vachuska-93}.

The following statement is a unary completely regular analog of Proposition~\ref{MON is sublat of SEM}.

\begin{proposition}[Vachuska {\cite[Lemma~2.1 and Theorem~2.2]{Vachuska-93}}]
\label{UCR is sublat of UCR_sem}
The mapping from $\mathbb{UCR}$ into $\mathbb{UCR}_\mathsf{sem}$ that sends a variety of unary completely regular monoids generated by a unary monoid $M$ to the variety of unary completely regular semigroups generated by the unary semigroup reduct of $M$ is an embedding of the lattice $\mathbb{UCR}$ into $\mathbb{UCR}_\mathsf{sem}$.
\end{proposition}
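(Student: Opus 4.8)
The plan is to mirror the proof of Proposition~\ref{MON is sublat of SEM}, carrying the unary operation along verbatim. Write $\Phi\colon\mathbb{UCR}\to\mathbb{UCR}_{\sem}$ for the map in question, defined on an arbitrary (not necessarily singly generated) variety $\mathbf V\in\mathbb{UCR}$ by letting $\Phi(\mathbf V)$ be the variety of unary completely regular semigroups generated by all unary semigroup reducts of members of $\mathbf V$; since each such reduct lies in $\mathbf{UCR}_{\sem}$, the map is well defined, and it is plainly order-preserving. The first routine observation is that, for nonempty (that is, $1$-free) unary semigroup terms $s,t$, one has $\Phi(\mathbf V)\models s\approx t$ if and only if $\mathbf V\models s\approx t$: the forward implication holds because every reduct of a member of $\mathbf V$ satisfies $s\approx t$, and the converse because a unary monoid and its reduct satisfy exactly the same $1$-free identities. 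In particular $\Phi$ agrees with the generator description whenever $\mathbf V=\var M$.

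The conceptual core is order-reflection, that is, the recovery statement: if the unary semigroup reduct $M_{\sem}$ of a unary completely regular monoid $M$ lies in $\Phi(\mathbf W)$, then $M\in\mathbf W$. To prove it I would check that $M$ satisfies every unary monoid identity of $\mathbf W$. Using that $1$ is a two-sided identity and that $1^{-1}\approx1$ holds in $\mathbf{UCR}$, every unary monoid term reduces either to the constant $1$ or to a $1$-free unary term, so it suffices to treat identities of the two forms $u\approx v$ with $u,v$ both $1$-free, and $u\approx1$ with $u$ a nonempty $1$-free term. For the first form the identity is $1$-free and transfers directly by the observation above. For the second form I would pass to the $1$-free consequences $ut\approx t$ and $tu\approx t$ with a fresh variable $t$: these hold throughout $\mathbf W$, hence in $\Phi(\mathbf W)$, hence in $M_{\sem}$, and evaluating $t$ at the identity element of $M$ then yields $u\approx1$ in $M$. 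This shows $M\in\mathbf W$, so $\Phi$ is order-reflecting and therefore injective. Equivalently, the ``fill-in'' map $\Psi\colon\mathbb{UCR}_{\sem}\to\mathbb{UCR}$ sending $\mathbf X$ to the variety of all unary monoids whose reduct lies in $\mathbf X$ is right adjoint to $\Phi$ and satisfies $\Psi\Phi=\mathrm{id}$.

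It remains to verify that $\Phi$ preserves joins and meets. Join-preservation is immediate: every member of $\mathbf V\vee\mathbf W$ is a quotient of a subalgebra of a product of members of $\mathbf V\cup\mathbf W$, so its reduct lies in the semigroup variety generated by the reducts of $\mathbf V\cup\mathbf W$, giving $\Phi(\mathbf V\vee\mathbf W)\subseteq\Phi(\mathbf V)\vee\Phi(\mathbf W)$; the reverse inclusion is order-preservation. (Abstractly, $\Phi$ is a left adjoint and hence preserves all joins.)

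Meet-preservation is the step I expect to be the main obstacle. Because $\Phi$ is a left adjoint with $\Psi\Phi=\mathrm{id}$, the composite $\Phi\Psi$ is an interior operator whose fixed points form the image of $\Phi$, and such images are automatically closed under joins but not, a priori, under meets; consequently $\Phi(\mathbf V\wedge\mathbf W)=\Phi(\mathbf V)\wedge\Phi(\mathbf W)$ holds exactly when $\Phi(\mathbf V)\cap\Phi(\mathbf W)$ lies in the image of $\Phi$. Unwinding this through identities, the task is to show that a $1$-free identity derivable in the monoid calculus (where substitution of $1$ for variables is permitted) from the identities of $\mathbf V$ and $\mathbf W$ is already derivable in the semigroup calculus from the $1$-free consequences of $\mathbf V$ and of $\mathbf W$ separately. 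My plan here is to normalize a monoid deduction of such an identity so that it never passes through the empty word: each erasing step substitutes $1$ into an axiom and hence realizes a genuine $1$-free identity of $\mathbf V$ or $\mathbf W$, while the detours through the empty word can be removed because both endpoints are nonempty and $1$-free. The bookkeeping needed to eliminate these empty-word detours, rather than the collar-variable argument of the previous paragraph, is where the real work lies, and it is precisely this point that is secured by Lemma~2.1 and Theorem~2.2 of~\cite{Vachuska-93}. Once it is in place, $\Phi$ is an order-embedding preserving binary meets and joins, hence the desired lattice embedding.
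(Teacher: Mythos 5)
The paper does not actually prove this proposition --- it is imported wholesale from Vachuska's article --- so your proposal is being measured against a bare citation rather than an argument. Within that caveat, your reduction is sound: the transfer of $1$-free identities between a variety and the variety generated by its reducts, the order-reflection argument via the collar identities $ut\approx t$ and $tu\approx t$ (together with $1^{-1}\approx 1$, which does follow from $xx^{-1}x\approx x$), and join-preservation are all correct, and you have correctly isolated meet-preservation as the only nontrivial point. Where I would push back is on the route you sketch for that last step. The deduction-normalization you describe is workable but laborious (the empty-word detours can be excised by splicing in $\xi(\mathbf p')\eta(\mathbf q')$ where the deduction enters $\lambda$ via an instance of $\mathbf p'\approx 1$ and leaves via $\mathbf q'\approx 1$, but the case analysis is genuinely fiddly), and it is not needed: the order-reflection fact you have already proved does essentially all the work once it is paired with the unary analogue of the fact the paper invokes elsewhere, namely Almeida's Lemma~7.1.1, that a variety generated by semigroups possessing identity elements contains $S^1$ whenever it contains $S$. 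Given $S\in\Phi(\mathbf V)\wedge\Phi(\mathbf W)$, both $\Phi(\mathbf V)$ and $\Phi(\mathbf W)$ are generated by reducts of unary monoids, so $S^1$ (which is again unary completely regular, with $1^{-1}=1$) lies in $\Phi(\mathbf V)\cap\Phi(\mathbf W)$; regarding $S^1$ as a unary monoid, your order-reflection argument places it in $\mathbf V\wedge\mathbf W$, and then $S$, being a unary subsemigroup of the reduct of $S^1$, lies in $\Phi(\mathbf V\wedge\mathbf W)$. This semantic detour buys a complete proof with no equational bookkeeping, at the cost of having to verify the $S^1$-absorption lemma in the unary signature; your syntactic route keeps everything inside the equational calculus but leaves the hardest combinatorial step delegated to the very reference the proposition is quoted from.
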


Recall that an element $x$ of a lattice $\langle L;\vee,\wedge\rangle$ is \emph{neutral} if
\[
\forall y,z\in L:\,(x\vee y)\wedge(y\vee z)\wedge(z\vee x)=(x\wedge y)\vee(y\wedge z)\vee(z\wedge x).
\]
It is well known that if $a$ is a neutral element in a lattice $L$, then $L$ is decomposable into a subdirect product of the principal ideal and the principal filter of $L$ generated by $a$; see Gr\"atzer \cite[Theorem~254]{Gratzer-11}, for instance.

Given that the variety $\mathbf{SL}_\mathsf{sem}$ of semilattices is a neutral element of the lattice $\mathbb{UCR}_\mathsf{sem}$~\cite{Hall-Jones-80}, it follows from Proposition~\ref{UCR is sublat of UCR_sem} that the variety $\mathbf{SL}$ of semilattice monoids is a neutral element of the lattice $\mathbb{UCR}$.
Therefore, $\mathbb{UCR}$ is a subdirect product of the coideal $[\mathbf{SL},\mathbf{UCR}]$ and the 2-element chain $L(\mathbf{SL})$.
To describe the lattice $\mathbb{UCR}$, it is thus sufficient to characterize the coideal $[\mathbf{SL},\mathbf{UCR}]$, which is performed in Vachuska~\cite{Vachuska-93}.
This is the monoid analog of results of Pol\'ak \cite{Polak-85,Polak-87}.

To describe the results in Vachuska~\cite{Vachuska-93}, we need some definitions and notation.
Let $U^1$ denote the free unary monoid over a countably infinite alphabet with the unary operation $^{-1}$.
Elements of $U^1$ are called \emph{unary words} or simply \emph{words}.
For any $\mathbf u\in U^1$, let $\con(\mathbf u)$ denote the set of letters occurring in $\mathbf u$.
If $|\con(\mathbf u)|>1$, then let $0(\mathbf u)$ [respectively, $1(\mathbf u)$] denote the unary word obtained from the longest initial [respectively, terminal] segment of the word $\mathbf u$ that contains $|\con(\mathbf u)|-1$ letters by omitting all opening brackets such that the segment does not contain the corresponding closing ones [respectively, all expressions in the form $)^{-1}$ such that the segment does not contain the corresponding opening brackets].
For example, if $\mathbf u=x((yx)^{-1}z)^{-1}x$, then $0(\mathbf u)=x(yx)^{-1}$ and $1(\mathbf u)=xzx$.
If $|\con(\mathbf u)|=1$, then define $0(\mathbf u) = 1(\mathbf u) = \lambda$.

For an arbitrary fully invariant congruence $\sim$ on $U^1$, define the relation $\overline\sim$ on $U^1$ recursively as follows: if $\mathbf u = \lambda$, then $\mathbf u\,\overline\sim\,\mathbf v$ if and only if $\mathbf v = \lambda$; if $|\con(\mathbf u)|=1$, then $\mathbf u\,\overline\sim\,\mathbf v$ if and only if $\con(\mathbf u)=\con(\mathbf v)$ and $\mathbf u\sim\mathbf v$; if $|\con(\mathbf u)|>1$, then $\mathbf u\,\overline\sim\,\mathbf v$ if and only if $\con(\mathbf u)=\con(\mathbf v)$, $\mathbf u\sim\mathbf v$, $0(\mathbf u)\,\overline\sim\,0(\mathbf v)$, and $1(\mathbf u)\,\overline\sim\,1(\mathbf v)$.
For any variety $\mathbf V$ of unary completely regular monoids, let $\sim_{\mathbf V}$ denote the fully invariant congruence on $U^1$ that corresponds to $\mathbf V$.
Define a relation $\rho$ on $\mathbb{UCR}$ by $\mathbf V\rho\mathbf W$ if and only if $\overline{\sim_{\mathbf V}}\,=\,\overline{\sim_{\mathbf W}}$; this relation is a complete lattice congruence on $\mathbb{UCR}$ \cite[Proposition~4.6]{Vachuska-93}.
The lattice obtained by adjoining a new least element~0 to $\mathbb{UCR}/\rho$ is denoted by $(\mathbb{UCR}/\rho)_0$.
Let $\Lambda$ be the partially ordered set in Fig.~\ref{Lambda}.

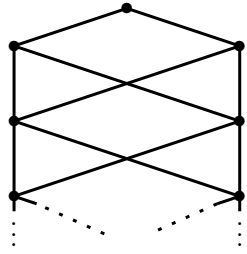
\begin{figure}[htb]
\unitlength=1mm
\linethickness{0.4pt}
\begin{center}
\begin{picture}(30,31)
\put(0,6){\circle*{1.33}}
\put(0,16){\circle*{1.33}}
\put(0,26){\circle*{1.33}}
\put(15,31){\circle*{1.33}}
\put(30,6){\circle*{1.33}}
\put(30,16){\circle*{1.33}}
\put(30,26){\circle*{1.33}}
\put(0,2){\makebox(0,0)[cc]{$\vdots$}}
\put(30,2){\makebox(0,0)[cc]{$\vdots$}}
\gasset{AHnb=0,linewidth=0.4}
\drawline(0,4)(0,26)(15,31)(30,26)(30,4)
\drawline(3,5)(0,6)(30,16)(0,26)
\drawline(27,5)(30,6)(0,16)(30,26)
\drawline[dash={0.52 1.5}{0}](3,5)(12,1)
\drawline[dash={0.52 1.5}{0}](18,1)(27,5)
\end{picture}
\end{center}
\caption{The partially ordered set $\Lambda$}
\label{Lambda}
\end{figure}

\begin{theorem}[Vachuska {\cite[Theorem~3.1]{Vachuska-93}}]
\label{struct of UCR}
The coideal $[\mathbf{SL},\mathbf{UCR}]$ of the lattice $\mathbb{UCR}$ is embeddable in the lattice of all isotone mappings from $\Lambda$ into $(\mathbb{UCR}/\rho)_0$.
\end{theorem}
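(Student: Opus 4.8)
The plan is to construct an explicit map $\Phi$ that assigns to each variety $\mathbf V\in[\mathbf{SL},\mathbf{UCR}]$ an isotone mapping $\Phi(\mathbf V)\colon\Lambda\to(\mathbb{UCR}/\rho)_0$, and then to verify that $\Phi$ is a lattice embedding. Since both meets and joins in the target lattice of isotone mappings are computed pointwise, and since $\rho$ is a complete lattice congruence (so that the canonical projection $\pi\colon\mathbb{UCR}\to\mathbb{UCR}/\rho$ preserves arbitrary meets and joins), it suffices to arrange that, for each fixed node $\lambda\in\Lambda$, the evaluation $\mathbf V\mapsto\Phi(\mathbf V)(\lambda)$ is a complete lattice homomorphism, and that the whole family $(\Phi(\mathbf V)(\lambda))_{\lambda\in\Lambda}$ recovers $\mathbf V$. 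The design of $\Phi$ is dictated by the recursive definition of $\overline{\sim}$: the apex of $\Lambda$ will carry the global datum $[\mathbf V]_\rho$, while the two infinite legs will carry the ``left'' data obtained by iterating the segment operator $0(\cdot)$ and the ``right'' data obtained by iterating $1(\cdot)$, with the level of a node corresponding to the size of $\con$ to which the datum is localised and the adjoined least element $0$ absorbing the degenerate cases $|\con|\le 1$.

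First I would set up the construction. Because $\mathbf V\supseteq\mathbf{SL}$, every identity $\mathbf u\approx\mathbf v$ of $\mathbf V$ satisfies $\con(\mathbf u)=\con(\mathbf v)$, so content is an invariant and the operators $0(\cdot)$, $1(\cdot)$ are compatible with $\sim_{\mathbf V}$ in the weak sense needed to speak of ``derived'' varieties. For each $n\ge 1$ I would define derived varieties $D^n_0(\mathbf V)$ and $D^n_1(\mathbf V)$ whose congruences are cut out of $\sim_{\mathbf V}$ by restricting to content size $n$ and by iterating the respective segment operator down to content $1$; I then set $\Phi(\mathbf V)$ to equal $[\mathbf V]_\rho$ at the apex, $[D^n_0(\mathbf V)]_\rho$ on the $0$-leg, $[D^n_1(\mathbf V)]_\rho$ on the $1$-leg, and $0$ where the content is too small. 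Isotonicity of $\Phi(\mathbf V)$ then has to be matched against the order of $\Lambda$, including the crossing relations $a_n<b_{n+1}$ and $b_n<a_{n+1}$; these express the fact that a content-$n$ left datum embeds into a content-$(n{+}1)$ right datum and vice versa, which must be extracted as a lemma about how $0(\cdot)$ and $1(\cdot)$ interleave. Order-preservation of $\Phi$ is the easy direction: $\mathbf V\subseteq\mathbf W$ forces $\sim_{\mathbf W}\subseteq\sim_{\mathbf V}$, hence each derived variety of $\mathbf W$ contains the corresponding one of $\mathbf V$, and $\pi$ is monotone.

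The substance of the proof is order-reflection together with preservation of joins. For order-reflection I would assume $\Phi(\mathbf V)\le\Phi(\mathbf W)$ pointwise and reconstruct $\mathbf V\subseteq\mathbf W$ by induction on content: the node values at level $n$ should encode $\overline{\sim}$ restricted to content $n$, so the hypothesis yields $\overline{\sim_{\mathbf W}}\subseteq\overline{\sim_{\mathbf V}}$ at every level, and the recursive definition of $\overline{\sim}$ on $[\mathbf{SL},\mathbf{UCR}]$ lets one climb back from $\overline{\sim}$ to $\sim$. That the full $\Lambda$-indexed family really does pin down $\overline{\sim}$ at every level is the heart of the matter and the step I expect to be the main obstacle, because it is exactly here that the apparent loss of information under $\rho$ must be shown to be repaired by the presence of the lower nodes. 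Preservation of meets is comparatively benign, since meets of fully invariant congruences are intersections and interact transparently with $0(\cdot)$, $1(\cdot)$ and with $\pi$; preservation of joins is delicate, since a join of congruences is generated rather than set-theoretic, so one needs a separate lemma that the derivation operators $D^n_0,D^n_1$ commute with arbitrary joins before invoking completeness of $\rho$.

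An alternative, and possibly shorter, route would be to transfer the corresponding representation of the semigroup coideal $[\mathbf{SL}_{\sem},\mathbf{UCR}_{\sem}]$ due to Pol\'ak through the embedding of Proposition~\ref{UCR is sublat of UCR_sem}; one would then only need to check that the monoid congruence $\rho$, the poset $\Lambda$ and the segment operators are the images of their semigroup counterparts under this embedding. I would keep this in reserve, but I expect that verifying the compatibility is no easier than the direct reconstruction argument above, so I would carry out the direct construction and isolate the join-preservation lemma and the reconstruction lemma as the two key technical points.
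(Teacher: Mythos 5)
This statement is not proved in the paper at all: it is quoted verbatim from Vachuska \cite[Theorem~3.1]{Vachuska-93}, and the survey only reproduces the definitions ($0(\cdot)$, $1(\cdot)$, $\overline{\sim}$, $\rho$, $(\mathbb{UCR}/\rho)_0$, $\Lambda$) needed to state it, remarking that the construction ``almost literally repeats'' Pol\'ak's construction for $\mathbb{UCR}_{\sem}$. So there is no in-paper proof to compare your attempt against; the most I can do is assess your outline against the construction the paper describes and against Pol\'ak--Vachuska.

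On that basis, your general strategy is the right one and matches the intended construction: the apex of $\Lambda$ carries the $\rho$-class of the variety itself, the two legs carry data obtained by iterating the segment operators $0(\cdot)$ and $1(\cdot)$, the crossing relations in $\Lambda$ reflect the way the recursion in $\overline{\sim}$ interleaves the two operators, and the hypothesis $\mathbf V\supseteq\mathbf{SL}$ is what makes content an invariant so that the derived congruences are well defined. You have also correctly located the two genuinely hard points: (i) that the $\Lambda$-indexed family of $\rho$-classes of derived varieties determines $\overline{\sim_{\mathbf V}}$, and hence $\sim_{\mathbf V}$, on the coideal above $\mathbf{SL}$ (injectivity/order-reflection); and (ii) that the derivation operators commute with arbitrary joins of fully invariant congruences (join-preservation). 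But these are precisely the substance of Vachuska's Sections~3--4, and your proposal leaves both as unproved lemmas, so what you have is an accurate plan rather than a proof. One concrete caution about your fallback route: transferring Pol\'ak's semigroup theorem through Proposition~\ref{UCR is sublat of UCR_sem} is not a shortcut, because the semigroup-side representation lands in isotone maps into the \emph{ordinal sum} of a 3-element non-chain meet-semilattice and $\mathbb{UCR}_{\sem}/\rho$, a different target from $(\mathbb{UCR}/\rho)_0$; reconciling the two targets and the two congruences $\rho$ is essentially as much work as redoing the direct construction, which is why Vachuska adapts Pol\'ak's argument rather than deducing the monoid case from the semigroup one.
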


In fact, the image of the coideal $[\mathbf{SL},\mathbf{UCR}]$ under the embedding mentioned in Theorem~\ref{struct of UCR} is explicitly described in Vachuska~\cite{Vachuska-93}.
Thus, the study of the lattice $\mathbb{UCR}$ is reduced to the study of the lattice $\mathbb{UCR}/\rho$.
Note also that in view of Vachuska \cite[Theorem~3.1 and Lemma~5.2]{Vachuska-93}, the coideal $[\mathbf{SL},\mathbf{UCR}]$ is a subdirect product of countably many copies of the lattice $(\mathbb{UCR}/\rho)_0$.

The construction in Vachuska~\cite{Vachuska-93}, which we reproduced above in order to state Theorem~\ref{struct of UCR}, almost literally repeats the construction introduced by Pol\'ak \cite{Polak-85,Polak-87}.
The main result of these two articles states that the coideal $[\mathbf{SL}_\mathsf{sem},\mathbf{UCR}_\mathsf{sem}]$ of the lattice $\mathbb{UCR}_\mathsf{sem}$ is embeddable in the lattice of all isotone mappings from $\Lambda$ into the ordinal sum of the 3-element non-chain meet-semilattice and $\mathbb{UCR}_\mathsf{sem}/\rho$, where the congruence $\rho$ on $\mathbb{UCR}_\mathsf{sem}$ is defined in exactly the same way as above; see Pol\'ak \cite[Theorem~3.6]{Polak-87}.

The description of the lattice $\mathbb{BAND}$ in Subsection~\ref{BAND} is in fact a very partial case of the results of Vachuska~\cite{Vachuska-93}.
The lattice $\mathbb{BAND}/\rho$ is singleton and this gives a presentation of $\mathbb{BAND}$ as a certain lattice of isotone mappings from $\Lambda$ into the 2-element lattice.

A semigroup $S$ is \emph{regular} if for any $a\in S$, there exists some $x\in S$ such that $axa=a$.
A regular semigroup is \emph{orthodox} if its idempotents form a subsemigroup.
The class of all completely regular orthodox semigroups [respectively, monoids] forms a subvariety in $\mathbf{UCR}_\mathsf{sem}$ [respectively, $\mathbf{UCR}$] defined within $\mathbf{UCR}_\mathsf{sem}$ [respectively, $\mathbf{UCR}$] by the identity $(xx^{-1}yy^{-1})^2\approx xx^{-1}yy^{-1}$.
The lattice of all completely regular orthodox semigroup [respectively, monoid] varieties is denoted by $\mathbb{UOCR}_\mathsf{sem}$ [respectively, $\mathbb{UOCR}$].
The construction in Pol\'ak \cite{Polak-85,Polak-87} turns out to be more transparent if restricted to $\mathbb{UOCR}_\mathsf{sem}$.
The lattice $\mathbb{UOCR}_\mathsf{sem}/\rho$ is isomorphic to the lattice of all varieties of groups, and Pol\'ak \cite[Theorem~4.2(2)]{Polak-88} has obtained a presentation of $\mathbb{UOCR}_\mathsf{sem}$ as a precisely described sublattice of the direct product of countably many copies of the lattice of varieties of groups.
We note that an analogous presentation for the lattice of all completely regular orthodox varieties in plain semigroup setting (without the unary operation in the language) was found earlier by Rasin~\cite{Rasin-82}.
The following proposition is a consequence of the mentioned result from Pol\'ak~\cite{Polak-88} together with Proposition~\ref{UCR is sublat of UCR_sem}.

\begin{proposition}
\label{struct of UOCR}
The lattice $\mathbb{UOCR}$ is a sublattice of the direct product of countably many copies of the lattice of varieties of groups.
\end{proposition}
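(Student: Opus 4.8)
The plan is to realize $\mathbb{UOCR}$ as a sublattice of $\mathbb{UOCR}_{\sem}$ and then invoke Pol\'ak's presentation of the latter. Write $\Phi\colon\mathbb{UCR}\to\mathbb{UCR}_{\sem}$ for the embedding furnished by Proposition~\ref{UCR is sublat of UCR_sem}, and let $\mathbf O$ [respectively, $\mathbf O_{\sem}$] denote the variety of all completely regular orthodox monoids [respectively, semigroups], so that $\mathbb{UOCR}=L(\mathbf O)$ is the principal ideal $[\mathbf T,\mathbf O]$ of $\mathbb{UCR}$ and likewise $\mathbb{UOCR}_{\sem}=[\mathbf T,\mathbf O_{\sem}]$ in $\mathbb{UCR}_{\sem}$. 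Since a principal ideal is a sublattice and the restriction of a lattice embedding to a sublattice of its domain is again a lattice embedding, it suffices to check that $\Phi$ maps $\mathbb{UOCR}$ into $\mathbb{UOCR}_{\sem}$; composing $\Phi|_{\mathbb{UOCR}}$ with Pol\'ak's embedding then yields the claim.

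First I would verify that $\Phi(\mathbf O)\subseteq\mathbf O_{\sem}$. The variety $\mathbf O$ is defined within $\mathbf{UCR}$ by the single identity $(xx^{-1}yy^{-1})^2\approx xx^{-1}yy^{-1}$; crucially this is an identity in the pure unary-semigroup signature, involving no occurrence of the monoid identity constant. Hence every unary semigroup reduct of a member of $\mathbf O$ satisfies it and is therefore a completely regular orthodox semigroup, so the semigroup variety generated by such a reduct lies in $\mathbf O_{\sem}$. By the definition of $\Phi$ this gives $\Phi(\mathbf O)\subseteq\mathbf O_{\sem}$, and monotonicity of $\Phi$ yields $\Phi(\mathbf V)\subseteq\Phi(\mathbf O)\subseteq\mathbf O_{\sem}$ for every $\mathbf V\in\mathbb{UOCR}$. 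Thus $\Phi|_{\mathbb{UOCR}}$ is a lattice embedding of $\mathbb{UOCR}$ into $\mathbb{UOCR}_{\sem}$.

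It then remains to apply Pol\'ak's result~\cite[Theorem~4.2(2)]{Polak-88}, which presents $\mathbb{UOCR}_{\sem}$ as a sublattice of the direct product of countably many copies of the lattice of varieties of groups; that is, there is a lattice embedding $\Psi\colon\mathbb{UOCR}_{\sem}\hookrightarrow\prod_{i\in\mathbb N}\mathcal G$, where $\mathcal G$ denotes the lattice of group varieties. The composite $\Psi\circ\Phi|_{\mathbb{UOCR}}$ is then a lattice embedding of $\mathbb{UOCR}$ into $\prod_{i\in\mathbb N}\mathcal G$, and its image is a sublattice isomorphic to $\mathbb{UOCR}$, which is exactly the assertion. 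The only genuinely delicate point is the containment $\Phi(\mathbf O)\subseteq\mathbf O_{\sem}$ of the second paragraph---everything else is the formal composition of embeddings---and that delicacy reduces entirely to the observation that the defining identity of orthodoxy is stable under passing from a unary monoid to its unary semigroup reduct.
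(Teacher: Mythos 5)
Your proposal is correct and follows exactly the route the paper intends: the paper derives this proposition as a consequence of Pol\'ak's presentation of $\mathbb{UOCR}_{\sem}$~\cite[Theorem~4.2(2)]{Polak-88} combined with the embedding of Proposition~\ref{UCR is sublat of UCR_sem}, which is precisely your composition $\Psi\circ\Phi|_{\mathbb{UOCR}}$. Your verification that the orthodoxy identity survives passage to the unary semigroup reduct (and hence that $\Phi$ maps $\mathbb{UOCR}$ into $\mathbb{UOCR}_{\sem}$) is the one detail the paper leaves implicit, and you supply it correctly.
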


Along with the results of Pol\'ak \cite{Polak-85,Polak-87,Polak-88}, another fundamental achievement in the study of the lattice $\mathbb{UCR}_\mathsf{sem}$ is the proof that this lattice is Arguesian and therefore, modular.
This was established in three different ways by Pastijn \cite{Pastijn-90,Pastin-91} and Petrich and Reilly~\cite{Petrich-Reilly-90}.
This result and Proposition~\ref{UCR is sublat of UCR_sem} imply the following assertion.

\begin{proposition}
\label{CR is mod}
The lattice $\mathbb{UCR}$ is Arguesian and therefore, modular. 
Consequently, the lattice $\mathbb{CR}$ possesses the same properties.
\end{proposition}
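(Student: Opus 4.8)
The plan is to exploit the fact, recalled immediately above the statement, that $\mathbb{UCR}_{\sem}$ is Arguesian, and to transport this property along the two embeddings already at our disposal. The key observation is that both the Arguesian law and the modular law are lattice \emph{identities}; although the Arguesian law is customarily written as an inequality, it is equivalent to an honest identity, and identities are inherited by sublattices. Consequently the entire argument reduces to exhibiting $\mathbb{UCR}$, and then $\mathbb{CR}$, as sublattices of lattices already known to satisfy these laws. Almost no computation is required, since the substantive content has been imported from the cited literature.

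First I would treat $\mathbb{UCR}$. By Proposition~\ref{UCR is sublat of UCR_sem}, the lattice $\mathbb{UCR}$ embeds into $\mathbb{UCR}_{\sem}$, so its image is a sublattice of an Arguesian lattice. Since the Arguesian identity passes to sublattices, $\mathbb{UCR}$ is itself Arguesian; and because ``Arguesian $\Rightarrow$ modular'' is a standard lattice-theoretic implication, $\mathbb{UCR}$ is modular as well. This establishes the first sentence of the proposition.

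It then remains to deduce the same conclusions for $\mathbb{CR}$. As explained in Subsection~\ref{CR}, every completely regular monoid variety satisfies an identity of the form $x\approx x^{n+1}$, on which the inversion operation is term-definable via $x^{-1}=x^{2n-1}$; hence $\mathbb{CR}$ is naturally a sublattice of $\mathbb{UCR}$. Applying once more the inheritance of the Arguesian and modular identities by sublattices yields that $\mathbb{CR}$ is Arguesian and modular, completing the proof. I do not anticipate a genuine obstacle: the only hard part — that $\mathbb{UCR}_{\sem}$ is Arguesian — is precisely the cited theorems of Pastijn and of Petrich and Reilly, and the sole point requiring care is the purely formal verification that ``Arguesian'' and ``modular'' are sublattice-hereditary, which is immediate from their expressibility as lattice identities.
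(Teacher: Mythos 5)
Your proposal is correct and follows essentially the same route as the paper: the result is deduced from the Arguesian property of $\mathbb{UCR}_{\sem}$ (Pastijn; Petrich and Reilly) together with the embedding of Proposition~\ref{UCR is sublat of UCR_sem}, plus the fact that the Arguesian and modular laws are inherited by sublattices, with $\mathbb{CR}$ then treated as a sublattice of $\mathbb{UCR}$. Nothing further is needed.
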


\part{Varieties with restrictions to subvariety lattices}
\label{restrict}

The restrictions on subvariety lattices of monoid varieties considered in this part are divided into three groups and given in Sections \ref{ident}--\ref{other}.
The first group includes lattice identities and related conditions.
The second group consists of finiteness conditions, that is, conditions satisfied by any finite lattice.
The third group contains other conditions that seem worthy of attention, such as decomposability into a direct product, properties related to the notion of lattice dualism, and the property of being a complemented lattice and related conditions.

\section{Identities and related conditions}
\label{ident}

The study of identities and related conditions traditionally attracts great attention when varietal lattices for algebras of various types are considered.
As we have already mentioned in Section~\ref{complex}, the lattice $\mathbb{MON}$ does not satisfy any non-trivial identity or even quasi-identity.
In this section, we consider a number of conditions of the aforementioned type for subvariety lattices of monoid varieties.
We consider in detail the modular and distributive laws, as well as the property of being a chain.
In addition, the Arguesian law and semimodular property are also discussed.

\subsection{Chain varieties}
\label{chain}
A variety of algebras is a \emph{chain variety} if its subvariety lattice is a chain.
Since being a chain is a much stronger property than satisfying the distributive law, the study of chain varieties can be considered a first step in the investigation of varieties with a distributive lattice of subvarieties.
For this reason, classifying chain varieties is typical for the initial stage of studying lattices of varieties of algebras of various types.

In the cases of semigroups and monoids, classifying chain varieties includes the problem of identifying chain varieties of periodic groups.
In the locally finite case, the latter problem was solved by Artamonov~\cite{Artamonov-78}.
But a complete classification of chain group varieties is extremely difficult because by Theorem~\ref{chain gr uncount}, there exist uncountably many non-locally finite group varieties whose subvariety lattice is isomorphic to the 3-element chain.

Non-group chain varieties of semigroups were completely listed by Sukhanov~\cite{Sukhanov-82}.
Gusev and Vernikov~\cite{Gusev-Vernikov-18} found a complete classification of non-group chain varieties of monoids.
To formulate this result, we need some notation.
For any $n\in\mathbb N$, let $S_n$ denote the full symmetric group on the set $\{1,2,\dots,n\}$.
For arbitrary permutations $\pi,\tau\in S_n$, define the words
\begin{align*}
\mathbf w_n(\pi,\tau)&=\biggl(\,\prod_{i=1}^nz_it_i\biggr)x\biggl(\,\prod_{i=1}^nz_{\pi(i)}z_{n+\tau(i)}\biggr)x\biggl(\,\prod_{i=n+1}^{2n} t_iz_i\biggr)\\
\text{and}\enskip\mathbf w_n'(\pi,\tau)&=\biggl(\,\prod_{i=1}^nz_it_i\biggr)x^2\biggl(\,\prod_{i=1}^nz_{\pi(i)}z_{n+\tau(i)}\biggr)\biggl(\,\prod_{i=n+1}^{2n}t_iz_i\biggr).
\end{align*}
Several varieties involving the identities $\sigma_1$, $\sigma_2$, and $\sigma_3$ from Subsection~\ref{without ident} are also required:
\begin{align*}
\mathbf D&=\var\{x^2\approx x^3,\,x^2y\approx yx^2,\,\sigma_1,\,\sigma_2,\,\sigma_3\},\\
\mathbf D_k&=\mathbf D\wedge\var\{x^2y_1y_2\cdots y_k\approx xy_1xy_2x\cdots xy_kx\},\ k\in\mathbb N, \\
\mathbf M_5&=\var\left\{\!\!
\begin{array}{l}
x^2y\approx yx^2,\,x^2yz\approx xyxzx,\\
\sigma_1,\,\sigma_2,\,\mathbf w_n(\pi,\tau)\approx\mathbf w'_n(\pi,\tau)
\end{array}
\!\middle\vert\,n\in\mathbb N,\,\pi,\tau\in S_n\right\},\\
\mathbf M_6&=\var\{x^2y\approx yx^2,\,x^2yz\approx xyxzx,\,\sigma_2,\,\sigma_3\},\\
\text{and}\enskip\mathbf M_7&=\mathbf M_6\wedge\var\{xyzxy\approx yxzxy\}.
\end{align*}

\begin{theorem}[Gusev and Vernikov {\cite[Corollary~7.1]{Gusev-Vernikov-18}}]
\label{chain descript}
The varieties $\mathbf B_2$, $\overleftarrow{\mathbf B_2}$, $\mathbf C_n$, $\mathbf D$, $\mathbf D_k$, $\mathbf F$, $\overleftarrow{\mathbf F}$, $\mathbf F_\ell$, $\overleftarrow{\mathbf F_\ell}$, $\mathbf H_k$, $\overleftarrow{\mathbf H_k}$, $\mathbf I_k$, $\overleftarrow{\mathbf I_k}$, $\mathbf J_k^m$, $\overleftarrow{\mathbf J_k^m}$, $\mathbf M_5$, $\mathbf M_6$, $\overleftarrow{\mathbf M_6}$, $\mathbf M_7$, and $\overleftarrow{\mathbf M_7}$, where $\ell\in\mathbb N\cup\{0\}$ and $k,m,n\in\mathbb N$ with $m\le k$, are the only non-group chain varieties of monoids.
\end{theorem}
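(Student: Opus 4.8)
The plan is to establish both inclusions: that every listed variety is a chain variety, and that the list is exhaustive. The sufficiency direction I would dispatch by a direct, if voluminous, inspection of identity bases: for each listed variety one sets up a normal form for words modulo its defining relations and reads off that the subvariety lattice is linearly ordered, organizing the work so that the automorphism $\delta$ handles each variety and its dual simultaneously (so $\mathbf F$ with $\overleftarrow{\mathbf F}$, $\mathbf M_6$ with $\overleftarrow{\mathbf M_6}$, and so on). The real content lies in the converse, to which the rest of the plan is devoted. First I would reduce to the aperiodic case. Let $\mathbf V$ be a non-trivial non-group chain variety. Since $\mathbb{MON}$ is atomic, $\mathbf V$ contains an atom, and since $L(\mathbf V)$ is a chain it contains at most one; thus $\mathbf V$ has a unique atom, which by Observation~\ref{atoms} is some $\mathbf A_p$ or $\mathbf{SL}$. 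Were it $\mathbf A_p$, then $\mathbf V$ would omit $\mathbf{SL}$ and hence be a group variety by line~3 of Table~\ref{forbid var}, contrary to hypothesis. So $\mathbf{SL}$ is the unique atom; in particular $\mathbf V$ omits every $\mathbf A_p$ and is aperiodic by line~2 of Table~\ref{forbid var}.

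Next I would treat the commutative and completely regular possibilities. If $\mathbf V$ is commutative, then Theorem~\ref{struct of COM} gives $\mathbf V=\mathbf{COM}$ or $\mathbf V=\mathbf A_k\vee\mathbf X$ with $\mathbf X$ in the chain $\mathbf T\subset\mathbf{SL}\subset\mathbf C_2\subset\mathbf C_3\subset\cdots$; aperiodicity forces $\mathbf A_k=\mathbf T$ and excludes $\mathbf{COM}$, so $\mathbf V\in\{\mathbf{SL},\mathbf C_2,\mathbf C_3,\dots\}$. Suppose instead $\mathbf V$ is non-commutative. The commutative subvarieties of $\mathbf V$ form a down-set of the chain $L(\mathbf V)$, so $\mathbf V$ has a \emph{unique} minimal non-commutative subvariety $\mathbf W$. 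As $\mathbf V$ is aperiodic, $\mathbf W$ contains no non-trivial group and is a non-group variety, whence $\mathbf W\in\{\mathbf D_1,\mathbf{LRB},\mathbf{RRB}\}$ by Theorem~\ref{just non-com}. If $\mathbf W=\mathbf{LRB}$, then $\mathbf D_1\not\subseteq\mathbf V$ (otherwise $\mathbf D_1$ would be a second minimal non-commutative subvariety), so by Lemma~\ref{non-cr and non-commut} the non-commutative variety $\mathbf V$ is completely regular, hence a band variety; by Theorem~\ref{struct of BAND} every band variety other than $\mathbf T,\mathbf{SL},\mathbf{LRB},\mathbf{RRB}$ contains both $\mathbf{LRB}$ and $\mathbf{RRB}$ and so is not a chain variety, forcing $\mathbf V=\mathbf{LRB}$. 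Dually, $\mathbf W=\mathbf{RRB}$ yields $\mathbf V=\mathbf{RRB}$.

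The heart of the argument is the remaining subcase $\mathbf W=\mathbf D_1$, in which $\mathbf V$ is an aperiodic chain variety with $\mathbf D_1\subseteq\mathbf V$ while $\mathbf{LRB},\mathbf{RRB}\not\subseteq\mathbf V$; the claim is that $\mathbf V$ is one of $\mathbf D$, $\mathbf D_k$, $\mathbf F$, $\mathbf F_\ell$, $\mathbf H_k$, $\mathbf I_k$, $\mathbf J_k^m$, $\mathbf M_5$, $\mathbf M_6$, $\mathbf M_7$ or a dual of one of these. Here the strategy is a fine analysis of the identities holding in $\mathbf V$. Since $\mathbf V$ is aperiodic it satisfies $x^k\approx x^{k+1}$ for some $k$, and since $\mathbf D_1\subseteq\mathbf V$ the relations $x^2\approx x^3$ and $x^2y\approx xyx\approx yx^2$ are available as a basis for reducing words to normal form. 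I would then stratify the possibilities according to which further identities $\mathbf V$ obeys, separating the branch in which squares stay central, which produces the chain $\mathbf D_1\subset\mathbf D_2\subset\cdots$ with join $\mathbf D$, from the branches governed by the defining relations of $\mathbf F$, which yield $\mathbf F_\ell,\mathbf H_k,\mathbf I_k,\mathbf J_k^m$ and their duals, and from the varieties $\mathbf M_5,\mathbf M_6,\mathbf M_7$. At each stage the chain hypothesis is the decisive constraint, forbidding incomparable covers and thereby pinning down a unique admissible continuation, while $\delta$ collapses each left/right alternative to a single computation.

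I expect this combinatorial identity analysis to be the main obstacle, and it is twofold: producing workable normal forms for words modulo the relatively free monoid over the base identities, and then carrying out the multi-way case distinction exhaustively, so as to guarantee both that no variety outside the stated list can arise and that each listed family genuinely forms a chain. In contrast with the semigroup setting, where Sukhanov's classification~\cite{Sukhanov-82} could be approached with the aid of non-trivial nil- and permutative varieties, the absence of such varieties in $\mathbb{MON}$ (see Subsection~\ref{sublat}) deprives one of the usual supply of separating counterexamples, so the word combinatorics must be pushed through directly.
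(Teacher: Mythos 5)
The survey does not prove this theorem at all: it is quoted verbatim from \cite[Corollary~7.1]{Gusev-Vernikov-18}, a 73-page memoir, so there is no internal proof to compare your attempt against. That said, your reduction steps are correct and consistent with the auxiliary results the survey does establish: a non-trivial non-group chain variety has $\mathbf{SL}$ as its unique atom and is therefore aperiodic (lines~2 and~3 of Table~\ref{forbid var}); the commutative case is settled by Theorem~\ref{struct of COM}; and in the non-commutative case the unique minimal non-commutative subvariety lies in $\{\mathbf D_1,\mathbf{LRB},\mathbf{RRB}\}$ by Theorem~\ref{just non-com}, with the $\mathbf{LRB}$ and $\mathbf{RRB}$ branches collapsing via Lemma~\ref{non-cr and non-commut} and Theorem~\ref{struct of BAND}. (One small point: the existence of a minimal non-commutative subvariety does not follow merely from the non-commutative subvarieties forming an up-set of a chain; you need, as the survey notes, that all commutative monoid varieties are finitely based plus Zorn's lemma.)

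The genuine gap is that the entire content of the theorem lives in the case you defer: classifying the aperiodic, non-completely-regular chain varieties containing $\mathbf D_1$ but neither $\mathbf{LRB}$ nor $\mathbf{RRB}$. ``Stratify the possibilities according to which further identities $\mathbf V$ obeys'' and ``the chain hypothesis pins down a unique admissible continuation'' describe a research programme, not an argument: nothing in your sketch explains how one proves that no variety outside the families $\mathbf D_k$, $\mathbf F_\ell$, $\mathbf H_k$, $\mathbf I_k$, $\mathbf J_k^m$, $\mathbf M_5$, $\mathbf M_6$, $\mathbf M_7$ (and duals) can occur, nor why the interval $[\mathbf F_k,\mathbf F_{k+1}]$ is exactly the $(k+4)$-element chain displayed after the theorem. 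The sufficiency direction is also not routine as you suggest: it includes showing that the non-finitely based variety $\mathbf M_5$ is a chain variety, which required a separate argument of Jackson~\cite{Jackson-05}. In short, your proposal is a sound skeleton whose load-bearing middle is the cited 73-page classification, left unproved.
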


The partially ordered set of all non-group chain varieties of monoids together with the variety $\mathbf T$ is shown in Fig.~\ref{all chain var}.

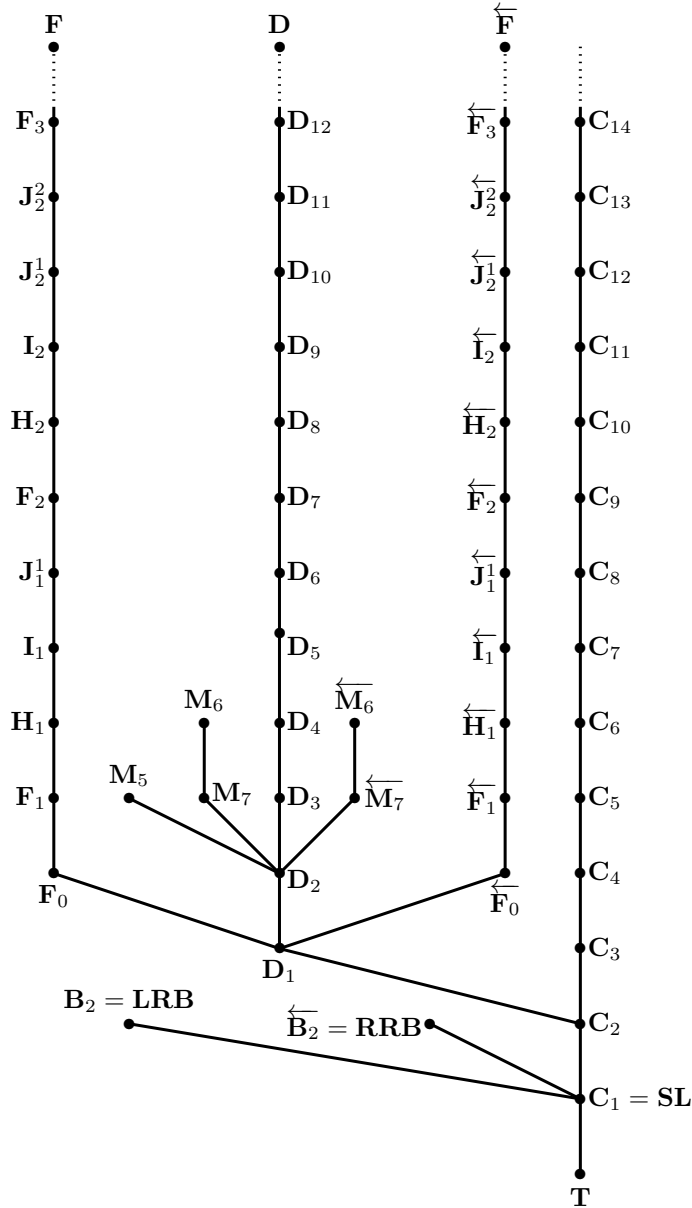
\begin{figure}[htb]
\unitlength=1mm
\linethickness{0.4pt}
\begin{center}
\begin{picture}(91,159)
\put(6,43){\circle*{1.33}}
\put(6,53){\circle*{1.33}}
\put(6,63){\circle*{1.33}}
\put(6,73){\circle*{1.33}}
\put(6,83){\circle*{1.33}}
\put(6,93){\circle*{1.33}}
\put(6,103){\circle*{1.33}}
\put(6,113){\circle*{1.33}}
\put(6,123){\circle*{1.33}}
\put(6,133){\circle*{1.33}}
\put(6,143){\circle*{1.33}}
\put(6,153){\circle*{1.33}}
\put(16,23){\circle*{1.33}}
\put(16,53){\circle*{1.33}}
\put(26,53){\circle*{1.33}}
\put(26,63){\circle*{1.33}}
\put(36,33){\circle*{1.33}}
\put(36,43){\circle*{1.33}}
\put(36,53){\circle*{1.33}}
\put(36,63){\circle*{1.33}}
\put(36,75){\circle*{1.33}}
\put(36,83){\circle*{1.33}}
\put(36,93){\circle*{1.33}}
\put(36,103){\circle*{1.33}}
\put(36,113){\circle*{1.33}}
\put(36,123){\circle*{1.33}}
\put(36,133){\circle*{1.33}}
\put(36,143){\circle*{1.33}}
\put(36,153){\circle*{1.33}}
\put(46,53){\circle*{1.33}}
\put(46,63){\circle*{1.33}}
\put(56,23){\circle*{1.33}}
\put(66,43){\circle*{1.33}}
\put(66,53){\circle*{1.33}}
\put(66,63){\circle*{1.33}}
\put(66,73){\circle*{1.33}}
\put(66,83){\circle*{1.33}}
\put(66,93){\circle*{1.33}}
\put(66,103){\circle*{1.33}}
\put(66,113){\circle*{1.33}}
\put(66,123){\circle*{1.33}}
\put(66,133){\circle*{1.33}}
\put(66,143){\circle*{1.33}}
\put(66,153){\circle*{1.33}}
\put(76,3){\circle*{1.33}}
\put(76,13){\circle*{1.33}}
\put(76,23){\circle*{1.33}}
\put(76,33){\circle*{1.33}}
\put(76,43){\circle*{1.33}}
\put(76,53){\circle*{1.33}}
\put(76,63){\circle*{1.33}}
\put(76,73){\circle*{1.33}}
\put(76,83){\circle*{1.33}}
\put(76,93){\circle*{1.33}}
\put(76,103){\circle*{1.33}}
\put(76,113){\circle*{1.33}}
\put(76,123){\circle*{1.33}}
\put(76,133){\circle*{1.33}}
\put(76,143){\circle*{1.33}}
\gasset{AHnb=0,linewidth=0.4}
\drawline(6,145)(6,43)(36,33)(66,43)(66,145)
\drawline(16,53)(36,43)
\drawline(26,63)(26,53)(36,43)(46,53)(46,63)
\drawline(56,23)(76,13)(76,145)
\drawline(76,3)(76,13)(16,23)
\drawline(76,23)(36,33)(36,145)
\drawline[dash={0.2 0.8}{0}](6,145)(6,153)
\drawline[dash={0.2 0.8}{0}](36,145)(36,153)
\drawline[dash={0.2 0.8}{0}](66,145)(66,153)
\drawline[dash={0.2 0.8}{0}](76,145)(76,153)
\put(16,26){\makebox(0,0)[cc]{$\mathbf B_2=\mathbf{LRB}$}}
\put(55,23){\makebox(0,0)[rc]{$\overleftarrow{\mathbf B_2}=\mathbf{RRB}$}}
\put(77,13){\makebox(0,0)[lc]{$\mathbf C_1=\mathbf{SL}$}}
\put(77,23){\makebox(0,0)[lc]{$\mathbf C_2$}}
\put(77,33){\makebox(0,0)[lc]{$\mathbf C_3$}}
\put(77,43){\makebox(0,0)[lc]{$\mathbf C_4$}}
\put(77,53){\makebox(0,0)[lc]{$\mathbf C_5$}}
\put(77,63){\makebox(0,0)[lc]{$\mathbf C_6$}}
\put(77,73){\makebox(0,0)[lc]{$\mathbf C_7$}}
\put(77,83){\makebox(0,0)[lc]{$\mathbf C_8$}}
\put(77,93){\makebox(0,0)[lc]{$\mathbf C_9$}}
\put(77,103){\makebox(0,0)[lc]{$\mathbf C_{10}$}}
\put(77,113){\makebox(0,0)[lc]{$\mathbf C_{11}$}}
\put(77,123){\makebox(0,0)[lc]{$\mathbf C_{12}$}}
\put(77,133){\makebox(0,0)[lc]{$\mathbf C_{13}$}}
\put(77,143){\makebox(0,0)[lc]{$\mathbf C_{14}$}}
\put(36,156){\makebox(0,0)[cc]{$\mathbf D$}}
\put(36,30){\makebox(0,0)[cc]{$\mathbf D_1$}}
\put(37,42){\makebox(0,0)[lc]{$\mathbf D_2$}}
\put(37,53){\makebox(0,0)[lc]{$\mathbf D_3$}}
\put(37,63){\makebox(0,0)[lc]{$\mathbf D_4$}}
\put(37,73){\makebox(0,0)[lc]{$\mathbf D_5$}}
\put(37,83){\makebox(0,0)[lc]{$\mathbf D_6$}}
\put(37,93){\makebox(0,0)[lc]{$\mathbf D_7$}}
\put(37,103){\makebox(0,0)[lc]{$\mathbf D_8$}}
\put(37,113){\makebox(0,0)[lc]{$\mathbf D_9$}}
\put(37,123){\makebox(0,0)[lc]{$\mathbf D_{10}$}}
\put(37,133){\makebox(0,0)[lc]{$\mathbf D_{11}$}}
\put(37,143){\makebox(0,0)[lc]{$\mathbf D_{12}$}}
\put(6,156){\makebox(0,0)[cc]{$\mathbf F$}}
\put(66,156.5){\makebox(0,0)[cc]{$\overleftarrow{\mathbf F}$}}
\put(6,40){\makebox(0,0)[cc]{$\mathbf F_0$}}
\put(66,39.5){\makebox(0,0)[cc]{$\overleftarrow{\mathbf F_0}$}}
\put(5,53){\makebox(0,0)[rc]{$\mathbf F_1$}}
\put(65,53){\makebox(0,0)[rc]{$\overleftarrow{\mathbf F_1}$}}
\put(5,93){\makebox(0,0)[rc]{$\mathbf F_2$}}
\put(65,93){\makebox(0,0)[rc]{$\overleftarrow{\mathbf F_2}$}}
\put(5,143){\makebox(0,0)[rc]{$\mathbf F_3$}}
\put(65,143){\makebox(0,0)[rc]{$\overleftarrow{\mathbf F_3}$}}
\put(5,63){\makebox(0,0)[rc]{$\mathbf H_1$}}
\put(65,63){\makebox(0,0)[rc]{$\overleftarrow{\mathbf H_1}$}}
\put(5,103){\makebox(0,0)[rc]{$\mathbf H_2$}}
\put(65,103){\makebox(0,0)[rc]{$\overleftarrow{\mathbf H_2}$}}
\put(5,73){\makebox(0,0)[rc]{$\mathbf I_1$}}
\put(65,73){\makebox(0,0)[rc]{$\overleftarrow{\mathbf I_1}$}}
\put(5,113){\makebox(0,0)[rc]{$\mathbf I_2$}}
\put(65,113){\makebox(0,0)[rc]{$\overleftarrow{\mathbf I_2}$}}
\put(5,83){\makebox(0,0)[rc]{$\mathbf J_1^1$}}
\put(65,83){\makebox(0,0)[rc]{$\overleftarrow{\mathbf J_1^1}$}}
\put(5,123){\makebox(0,0)[rc]{$\mathbf J_2^1$}}
\put(65,123){\makebox(0,0)[rc]{$\overleftarrow{\mathbf J_2^1}$}}
\put(5,133){\makebox(0,0)[rc]{$\mathbf J_2^2$}}
\put(65,133){\makebox(0,0)[rc]{$\overleftarrow{\mathbf J_2^2}$}}
\put(16,56){\makebox(0,0)[cc]{$\mathbf M_5$}}
\put(26,66){\makebox(0,0)[cc]{$\mathbf M_6$}}
\put(46,66.5){\makebox(0,0)[cc]{$\overleftarrow{\mathbf M_6}$}}
\put(27,53.5){\makebox(0,0)[lc]{$\mathbf M_7$}}
\put(50,53.5){\makebox(0,0)[cc]{$\overleftarrow{\mathbf M_7}$}}
\put(76,0){\makebox(0,0)[cc]{$\mathbf T$}}
\end{picture}
\end{center}
\caption{All non-group chain varieties of monoids with $\mathbf T$}
\label{all chain var}
\end{figure}

We note that a number of chain varieties of monoids were found prior to the full classification in Theorem~\ref{chain descript}.
The lattice $L(\mathbf C_n)$ was described in Head~\cite{Head-68}, while the lattices $L(\mathbf B_2)$ and $L(\overleftarrow{\mathbf B_2})$ can be found in Wismath~\cite{Wismath-86} (see Theorems~\ref{struct of COM} and~\ref{struct of BAND}); the lattices $L(\mathbf D)$ and $L(\mathbf F_0)$ were described by Lee \cite{Lee-08,Lee-14}; and the lattices $L(\mathbf M_5)$ and $L(\mathbf M_7)$ were described by Jackson~\cite{Jackson-05b}.
The remaining varieties---$\mathbf F$, $\mathbf F_k$, $\mathbf H_k$, $\mathbf I_k$, $\mathbf J_k^m$, $\mathbf M_6$, and their duals---were found by Gusev and Vernikov~\cite{Gusev-Vernikov-18}.

Every non-group chain variety of semigroups is contained in a maximal chain variety, while any non-group non-chain variety of semigroups contains some minimal non-chain subvariety \cite[Corollary~2]{Sukhanov-82}.
However, these statements do not hold for monoid varieties.
Indeed, the variety $\mathbf C_n$ with $n\ge 3$ is not contained in a maximal chain variety (see Fig.~\ref{all chain var}), while non-chain varieties of monoids that do not contain any minimal non-chain subvariety exist \cite[Corollary~7.4]{Gusev-Vernikov-18}.
Another significant feature of the monoid case is that $\mathbf M_5$ is a non-finitely based non-group chain variety of monoids \cite[Proposition~5.1]{Jackson-05b}, while all non-group chain varieties of semigroups are finitely based.
Since the set of all finitely based group varieties is countably infinite, Theorem~\ref{chain gr uncount} implies that there are uncountably many group chain varieties without finite identity basis.
However, explicit examples of non-finitely based chain group varieties have not been found so far.

There is one more interesting consequence of Theorem~\ref{chain descript}.

\begin{corollary}[Gusev and Vernikov {\cite[Corollary~7.6]{Gusev-Vernikov-18}}]
\label{chain is loc fin}
Every non-group chain variety of monoids is contained in some finitely generated variety and so is locally finite.
\end{corollary}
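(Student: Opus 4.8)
The plan is to deduce the statement directly from the classification in Theorem~\ref{chain descript}, combined with two standard universal-algebraic facts: a variety generated by a finite algebra is locally finite, and every subvariety of a locally finite variety is locally finite. By these facts it suffices to prove that every non-group chain variety of monoids is contained in \emph{some} finitely generated variety; the local finiteness asserted by the word ``therefore'' then comes for free. Thus the work reduces to a parametrised case check over the finite list supplied by Theorem~\ref{chain descript}.

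First I would clear away the members that are already generated by a finite monoid. By the remarks following Theorem~\ref{chain descript}, one has $\mathbf C_n=\var S(x^{n-1})$, $\mathbf D_1=\var S(xy)$ and $\mathbf D_k=\var S(xy_1xy_2\cdots xy_{k-1}x)$ for $k\ge2$, $\mathbf M_5=\var S(xzxyty)$, $\mathbf M_7=\var S(xyzxty)$ and $\mathbf F_0=\var S^1$ for the three-element semigroup $S=\{e,a,0\}$, while $\mathbf{SL}$, $\mathbf{LRB}$ and $\mathbf{RRB}$ are plainly generated by finite monoids. Every monoid $S(\mathbf w)$ is finite and the dual of a finite monoid is finite, so all these varieties together with their duals are themselves finitely generated. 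For the remaining parametrised families I would read off the inclusions from their definitions and from Fig.~\ref{all chain var}: each of $\mathbf F_k$, $\mathbf H_k$, $\mathbf I_k$ and $\mathbf J_k^m$ is a meet of $\mathbf F$ with a further variety and hence lies in $\mathbf F$; dually their mirror images lie in $\overleftarrow{\mathbf F}$; and each $\mathbf D_k$ lies in $\mathbf D$. Consequently the entire statement reduces to showing that the finitely many maximal chain varieties $\mathbf D$, $\mathbf F$, $\overleftarrow{\mathbf F}$, $\mathbf M_6$ and $\overleftarrow{\mathbf M_6}$ are contained in finitely generated varieties; and since the dual of a finitely generated variety is finitely generated, it is enough to treat $\mathbf D$, $\mathbf F$ and $\mathbf M_6$.

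The hard part will be this last step: producing, for each of $\mathbf D$, $\mathbf F$ and $\mathbf M_6$, a concrete finite monoid $M$ with the corresponding variety contained in $\var M$. None of these three is of the form $\var S(\mathbf w)$ for a single word, so no generator is handed to us. They are all finitely based and satisfy $x^2\approx x^3$ (for $\mathbf D$ this identity is part of the basis, while for $\mathbf F$ and $\mathbf M_6$ it follows by substituting $1$ for a letter), so each is contained in $\mathbf B_{2,3}$; but $\mathbf B_{2,3}$ itself cannot serve as the container, being far from finitely generated (it is not even locally finite). The approach I would take is to search among small monoids of the forms $T^1$ and $S(W)$ with $W$ a finite set of words for a suitable generator $M$, and then to verify the inclusion, say $\mathbf F\subseteq\var M$, by a direct identity analysis: one shows that the relatively free monoids of $\mathbf F$ over finite alphabets are finite and embed into powers of $M$, equivalently that every identity of $M$ is a consequence of the defining identities of $\mathbf F$. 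Executing this combinatorial verification for the three bases is where the genuine effort lies; once the three containments are in hand, the corollary follows from the reductions above.
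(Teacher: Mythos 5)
The paper gives no proof of this corollary --- it is quoted from \cite[Corollary~7.6]{Gusev-Vernikov-18} --- so your proposal can only be judged on its own terms. Your reduction is sound: the varieties $\mathbf C_n$, $\mathbf D_k$, $\mathbf F_0$, $\mathbf M_5$, $\mathbf M_7$, $\mathbf{SL}$, $\mathbf{LRB}$, $\mathbf{RRB}$ and their duals are finitely generated outright, the families $\mathbf F_k,\mathbf H_k,\mathbf I_k,\mathbf J_k^m$ sit inside $\mathbf F$ by construction, and everything collapses to the five varieties $\mathbf D$, $\mathbf F$, $\overleftarrow{\mathbf F}$, $\mathbf M_6$, $\overleftarrow{\mathbf M_6}$, hence by duality to $\mathbf D$, $\mathbf F$, $\mathbf M_6$. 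But at exactly this point the argument stops: you say you \emph{would} search for a finite monoid $M$ with $\mathbf F\subseteq\var M$ and verify the inclusion by an identity analysis, without producing any such $M$ or carrying out any verification. That is the entire mathematical content of the corollary --- the reduction is bookkeeping --- so what you have is a correct framing of the problem, not a proof.

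Two concrete remarks on closing the gap. For $\mathbf D$ you do not need any new work: $\mathbf D$ satisfies $x^3\approx x^4$, $x^3y\approx yx^3$ (from $x^2\approx x^3$ and $x^2y\approx yx^2$) and $\sigma_1,\sigma_2,\sigma_3$, so $\mathbf D\subseteq\mathbf M_3$, and Proposition~\ref{fin gen fin univ} already places $\mathbf M_3$ inside a finitely generated variety. This shortcut does \emph{not} extend to $\mathbf F$: the subvariety $\mathbf F_0=\var S^1$ with $S=\{e,a,0\}$, $ae=a$, $ea=0$ fails $x^3y\approx yx^3$ (take $x=e$, $y=a$), so $\mathbf F\nsubseteq\mathbf M_3$, and likewise $\mathbf M_6$ is not a variety with central idempotents. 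For these two varieties a genuine finite generator (or a finitely generated container) must be exhibited and the containment verified, which is precisely what \cite[Corollary~7.6]{Gusev-Vernikov-18} does and what your proposal leaves undone.
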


But by Theorem~\ref{chain gr uncount}, this result does not hold for group varieties.

\subsection{Modularity}
\label{mod}
One of the most profound advances in the study of the lattice $\mathbb{SEM}$, due to Volkov in the early 1990s, is the complete description of semigroup varieties with a modular subvariety lattice.
A full account of its proof, along with several of related results, occupied seven articles that were published in 1989--2004; see Shevrin \textit{et~al.}\@ \cite[Section~11]{Shevrin-Vernikov-Volkov-09} for more details.

One would hope to replicate Volkov's achievement in the lattice $\mathbb{MON}$.

\begin{problem}
\label{mod subvar lat}
Describe varieties of monoids with a modular subvariety lattice.
\end{problem}

Presently, this problem is very far from being solved; some complications are highlighted at the end of Subsection~\ref{sublat}.
However, there exist results that are relevant to solving Problem~\ref{mod subvar lat}.
Recall that any monoid variety that is commutative or completely regular has a modular subvariety lattice; see Theorem~\ref{struct of COM} and Proposition~\ref{CR is mod}.
The first example of a monoid variety with a non-modular subvariety lattice was published by Lee~\cite{Lee-12b} in 2012: the variety $\mathbf A_0^1\vee\mathbf{LRB}\vee\mathbf{RRB}$ with $\mathbf A_0^1=\var A_0^1$, where
\[
A_0=\langle a,b\mid a^2=a,\,b^2=b,\,ba=0\rangle=\{a,b,ab,0\};
\]
see Fig.~\ref{L(A_0^1 vee LRB vee RRB)}.
In contrast, the first examples of semigroup varieties with a non-modular subvariety lattice were discovered independently by Je\v{z}ek~\cite{Jezek-69} and Schwabauer~\cite{Schwabauer-69} back in the late 1960s.

\begin{figure}[htb]
\unitlength=1mm
\linethickness{0.4pt}
\begin{center}
\begin{picture}(100,88)
\put(10,33){\circle*{1.33}}
\put(10,43){\circle*{1.33}}
\put(10,53){\circle*{1.33}}
\put(10,63){\circle*{1.33}}
\put(40,28){\circle*{1.33}}
\put(50,3){\circle*{1.33}}
\put(50,13){\circle*{1.33}}
\put(50,18){\circle*{1.33}}
\put(50,23){\circle*{1.33}}
\put(50,33){\circle*{1.33}}
\put(50,43){\circle*{1.33}}
\put(50,53){\circle*{1.33}}
\put(50,73){\circle*{1.33}}
\put(50,83){\circle*{1.33}}
\put(60,28){\circle*{1.33}}
\put(90,33){\circle*{1.33}}
\put(90,43){\circle*{1.33}}
\put(90,53){\circle*{1.33}}
\put(90,63){\circle*{1.33}}
\gasset{AHnb=0,linewidth=0.4}
\drawline(10,33)(50,53)(90,33)
\drawline(10,43)(10,63)(50,83)(90,63)(90,43)
\drawline(50,3)(50,13)(10,33)(10,43)(50,23)(50,13)(90,33)(90,43)(50,23)
\drawline(50,33)(50,43)(10,63)
\drawline(50,43)(90,63)
\drawline(50,53)(50,83)
\drawline(60,28)(10,53)(50,73)(90,53)(40,28)
\put(51,42){\makebox(0,0)[lc]{$\mathbf A_0^1$}}
\put(50,86){\makebox(0,0)[cc]{$\mathbf A_0^1\vee\mathbf{LRB}\vee\mathbf{RRB}$}}
\put(49,18){\makebox(0,0)[rc]{$\mathbf C_2$}}
\put(51,21){\makebox(0,0)[lc]{$\mathbf D_1$}}
\put(39,26){\makebox(0,0)[rc]{$\mathbf F_0$}}
\put(61,25){\makebox(0,0)[lc]{$\overleftarrow{\mathbf F_0}$}}
\put(9,33){\makebox(0,0)[rc]{$\mathbf{LRB}$}}
\put(91,33){\makebox(0,0)[lc]{$\mathbf{RRB}$}}
\put(51,11){\makebox(0,0)[lc]{$\mathbf{SL}$}}
\put(50,0){\makebox(0,0)[cc]{$\mathbf T$}}
\end{picture}
\end{center}
\caption{The lattice $L(\mathbf A_0^1\vee\mathbf{LRB}\vee\mathbf{RRB})$}
\label{L(A_0^1 vee LRB vee RRB)}
\end{figure}
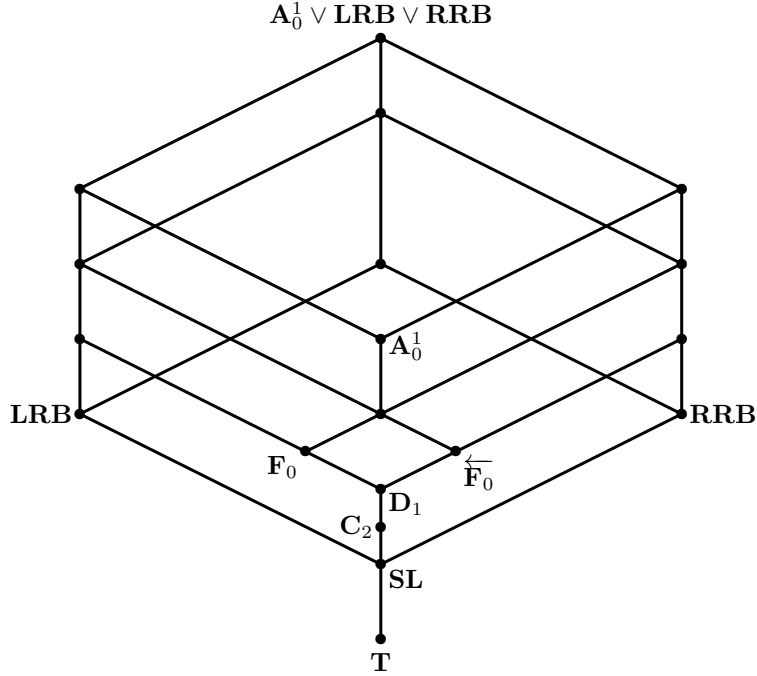

Now we formulate three necessary conditions for a monoid variety to have a modular subvariety lattice.

\begin{proposition}[M.\,V.\@ Volkov, unpublished]
\label{mod non-cr}
If $\mathbf X$ is a non-completely regular monoid variety with a modular subvariety lattice, then every completely regular subvariety of $\mathbf X$ is commutative.
\end{proposition}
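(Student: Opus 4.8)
The plan is to argue by contradiction: assuming that $\mathbf V$ has a non-commutative completely regular subvariety, I will exhibit a copy of the pentagon $N_5$ in $L(\mathbf V)$, contradicting modularity.

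First I would fix the two ``small'' witnesses. Since $\mathbf V$ is not completely regular, line~4 of Table~\ref{forbid var} gives $\mathbf C_2\subseteq\mathbf V$. If $\mathbf V$ had a non-commutative completely regular subvariety, then $\mathbf V$ itself would be non-commutative and non-completely-regular, so Lemma~\ref{non-cr and non-commut} would yield $\mathbf D_1\subseteq\mathbf V$. Recall that $\mathbf C_2$ is commutative whereas $\mathbf D_1=\var S(xy)$ is not, and that $\mathbf C_2\subsetneq\mathbf D_1$ (every identity of $\mathbf D_1$ holds in the commutative variety $\mathbf C_2$). Next, inside the assumed non-commutative completely regular subvariety I would pick a minimal non-commutative completely regular subvariety $\mathbf X$. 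By the analysis in the proof of Theorem~\ref{just non-com} (via Biryukov's classification), such an $\mathbf X$ is either $\mathbf{LRB}$, or $\mathbf{RRB}$, or a minimal non-Abelian group variety; in every case $\mathbf X\subseteq\mathbf V$.

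The candidate pentagon is $\{\mathbf C_2\wedge\mathbf X,\ \mathbf C_2,\ \mathbf D_1,\ \mathbf X,\ \mathbf C_2\vee\mathbf X\}$. To confirm it is an $N_5$ I must check two things. The meet condition $\mathbf C_2\wedge\mathbf X=\mathbf D_1\wedge\mathbf X$ is a short direct computation: when $\mathbf X\in\{\mathbf{LRB},\mathbf{RRB}\}$ both meets equal $\mathbf{SL}$ (adding idempotence to the defining identities of $\mathbf D_1$ forces commutativity), and when $\mathbf X$ is a group variety both meets equal $\mathbf T$ because $\mathbf C_2$ and $\mathbf D_1$ are aperiodic. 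The join condition is $\mathbf C_2\vee\mathbf X=\mathbf D_1\vee\mathbf X$, which amounts to the inclusion $\mathbf D_1\subseteq\mathbf C_2\vee\mathbf X$. This is the heart of the proof. I would establish it by a word argument: any identity $\mathbf u\approx\mathbf v$ holding in both $\mathbf C_2$ and $\mathbf X$ must satisfy $\con(\mathbf u)=\con(\mathbf v)$ and have the same set of letters occurring at least twice (forced by $\mathbf C_2$, by substituting the nonzero generator of $S(x)$ for one letter and $1$ for the rest), and must preserve the left-to-right order of the letters occurring exactly once (forced by the non-commutativity of $\mathbf X$, since substituting $1$ for all other letters would otherwise yield $xy\approx yx$ in $\mathbf X$). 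These three invariants are precisely what determines the value of a word in $S(xy)$: a letter occurring at least twice and sent to a non-identity element annihilates the product, so any non-zero evaluation just reads off the once-occurring letters in their order. Hence $\mathbf u\approx\mathbf v$ holds in $\var S(xy)=\mathbf D_1$.

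Finally I would verify that these five varieties are pairwise distinct and arranged in the required pentagonal pattern: $\mathbf C_2\subsetneq\mathbf D_1$, both are incomparable to $\mathbf X$ (their meets with $\mathbf X$ equal $\mathbf C_2\wedge\mathbf X\ne\mathbf X$), and $\mathbf C_2\vee\mathbf X=\mathbf D_1\vee\mathbf X$ lies strictly above both $\mathbf D_1$ and $\mathbf X$. This gives an embedding $N_5\hookrightarrow L(\mathbf V)$, so $L(\mathbf V)$ is non-modular, a contradiction. The main obstacle is the inclusion $\mathbf D_1\subseteq\mathbf C_2\vee\mathbf X$; once the three word-invariants above are isolated it becomes routine, and the uniform treatment of the group and band cases hinges precisely on the fact that only the non-commutativity of $\mathbf X$, and not its completely regular structure, is needed to control the order of the linear letters.
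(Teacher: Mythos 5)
Your proof is correct and takes the route the paper intends: the paper deduces the proposition from line~4 of Table~\ref{forbid var} together with the cited \cite[Lemma~3.1]{Gusev-18b}, and your pentagon $\{\mathbf C_2\wedge\mathbf X,\,\mathbf C_2,\,\mathbf D_1,\,\mathbf X,\,\mathbf C_2\vee\mathbf X\}$ built on the key inclusion $\mathbf D_1\subseteq\mathbf C_2\vee\mathbf X$ is exactly a self-contained reconstruction of that lemma. The syntactic core checks out: identities of $\mathbf C_2=\var S(x)$ fix the occurrence classes of the letters, non-commutativity of $\mathbf X$ fixes the order of the linear letters, and these invariants determine evaluation in $S(xy)$, so $\mathbf D_1=\var S(xy)\subseteq\mathbf C_2\vee\mathbf X$ and the five varieties form a copy of $N_5$.
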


\begin{proof}
This follows from Gusev \cite[Lemma~3.1]{Gusev-18b} and line~4 of Table~\ref{forbid var}.
\end{proof}

Note that the non-modularity of the lattice $L(\mathbf A_0^1\vee\mathbf{LRB}\vee\mathbf{RRB})$ follows from Proposition~\ref{mod non-cr}.

\begin{proposition}
\label{mod over C_3}
If $\mathbf X$ is a monoid variety with a modular subvariety lattice and $\mathbf C_3\subseteq\mathbf X$, then $\mathbf X$ is a variety of monoids with central idempotents.
\end{proposition}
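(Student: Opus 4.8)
The plan is to argue by contraposition and exhibit a copy of the pentagon $N_5$ in $L(\mathbf V)$ whenever $\mathbf C_3\subseteq\mathbf V$ but $\mathbf V$ fails to have central idempotents. Since $\mathbf C_2\subseteq\mathbf C_3\subseteq\mathbf V$, line~4 of Table~\ref{forbid var} shows that $\mathbf V$ is not completely regular, so Proposition~\ref{mod non-cr} is available. First I would split according to the origin of a non-central idempotent. If some completely regular subvariety of $\mathbf V$ is non-commutative, then the contrapositive of Proposition~\ref{mod non-cr} already yields that $L(\mathbf V)$ is non-modular; in particular this disposes of the cases $\mathbf{LRB}\subseteq\mathbf V$ and $\mathbf{RRB}\subseteq\mathbf V$, as $\mathbf{LRB}$ and $\mathbf{RRB}$ are completely regular and non-commutative. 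Thus the essential case is when every completely regular subvariety of $\mathbf V$ is commutative, forcing the non-central idempotent to arise from a non-completely-regular part of $\mathbf V$.

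In this essential case $\mathbf V$ is non-commutative (commutativity would make all idempotents central) and non-completely-regular, so $\mathbf D_1\subseteq\mathbf V$ by Lemma~\ref{non-cr and non-commut}. I would then localize the defect, showing that a suitable non-central idempotent forces $\mathbf F_0\subseteq\mathbf V$ or $\overleftarrow{\mathbf F_0}\subseteq\mathbf V$; up to duality assume $\mathbf F_0\subseteq\mathbf V$. The reason the hypothesis must be $\mathbf C_3$ rather than merely $\mathbf C_2$ surfaces here: $\mathbf F_0$ satisfies $x^2\approx x^3$, so $\mathbf C_2\subseteq\mathbf F_0$ and $\mathbf F_0\vee\mathbf C_2=\mathbf F_0$ adds nothing, whereas $\mathbf C_3\not\subseteq\mathbf F_0$ and the genuine extension $\mathbf P:=\mathbf F_0\vee\mathbf C_3\subseteq\mathbf V$ acquires a new non-central idempotent. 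Concretely, writing $\mathbf F_0=\var S^1$ with $S=\{e,a,0\}$ and letting $C=\langle c\mid c^3\approx c^4\rangle$ generate $\mathbf C_3$, the idempotent $(e,c^3)$ of $S^1\times C$ satisfies $(e,c^3)(a,c)=(0,c^3)\ne(a,c^3)=(a,c)(e,c^3)$. Hence $\mathbf P\not\models x^3y\approx yx^3$, while $\mathbf D_1\vee\mathbf C_3\models x^3y\approx yx^3$ because both $\mathbf D_1$ and $\mathbf C_3$ satisfy this identity.

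The pentagon I would exhibit is $\{\mathbf D_1,\ \mathbf Y,\ \mathbf W,\ \mathbf F_0,\ \mathbf P\}$, where $\mathbf Y:=\mathbf D_1\vee\mathbf C_3$ and $\mathbf W:=\mathbf P\wedge\var\{x^3y\approx yx^3\}$. Using $\mathbf D_1\subseteq\mathbf F_0$ and the fact that $\mathbf D_1$ is the largest subvariety of $\mathbf F_0$ satisfying $x^3y\approx yx^3$ (it is the maximal proper member of the chain $L(\mathbf F_0)$ described in~\cite[Section~5]{Lee-08}, and $\mathbf F_0$ itself is non-central), the cross relations are routine: $\mathbf F_0\wedge\mathbf Y=\mathbf F_0\wedge\mathbf W=\mathbf D_1$ and $\mathbf F_0\vee\mathbf Y=\mathbf F_0\vee\mathbf W=\mathbf P$, while $\mathbf F_0$ is incomparable to both $\mathbf Y$ and $\mathbf W$ (each of the latter contains $\mathbf C_3\not\subseteq\mathbf F_0$, and $\mathbf F_0$ violates $x^3y\approx yx^3$). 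These relations turn $\{\mathbf D_1,\mathbf Y,\mathbf W,\mathbf F_0,\mathbf P\}$ into a copy of $N_5$, and hence witness non-modularity, \emph{precisely} when the inclusion $\mathbf Y\subseteq\mathbf W$ is strict.

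The main obstacle is exactly this strictness $\mathbf D_1\vee\mathbf C_3\subsetneq\mathbf W$: I must show that imposing centrality of third powers on $\mathbf P$ yields something strictly larger than $\mathbf D_1\vee\mathbf C_3$. The cleanest route I see is to prove $\mathbf W\not\models x^2y\approx yx^2$ whereas $\mathbf Y\models x^2y\approx yx^2$ (the latter holds since $\mathbf D_1\models x^2y\approx yx^2$ and $\mathbf C_3$ is commutative). The square $(e,c^2)$ of $S^1\times C$ is already non-central, which points to a monoid in which cubes are central but squares are not, and such a monoid would lie in $\mathbf W\setminus\mathbf Y$. Making this rigorous requires a word-level analysis of the deducibility of identities across the interval $[\mathbf D_1\vee\mathbf C_3,\ \mathbf F_0\vee\mathbf C_3]$, in the spirit of Lemma~\ref{ident in N_k}; this step, together with the localization producing $\mathbf F_0$ or $\overleftarrow{\mathbf F_0}$ from an arbitrary non-central idempotent, is where the genuine work lies, the remainder being the bookkeeping of the meets and joins recorded above.
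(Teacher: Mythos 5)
Your overall strategy is sound and, as far as one can judge, coincides with what the paper intends: the paper gives no self-contained argument but derives the proposition from \cite[Lemma~2]{Gusev-arx1} and \cite[Lemma~4.1 and Proposition~4.2]{Gusev-Vernikov-18}, which supply precisely the two ingredients you defer --- a pentagon-type non-modularity criterion and the reduction of ``some idempotent is non-central'' to the presence of one of $\mathbf{LRB}$, $\mathbf{RRB}$, $\mathbf F_0$, $\overleftarrow{\mathbf F_0}$. Your pentagon is correct, and the step you single out as the main obstacle, the strictness $\mathbf D_1\vee\mathbf C_3\subsetneq\mathbf W$, is in fact the easy part. Every identity in the basis of $\mathbf W$ consisting of all identities of $\mathbf F_0\vee\mathbf C_3$ together with $x^3y\approx yx^3$ holds in $\mathbf C_3$; hence every word occurring in a putative deduction of $x^2y\approx yx^2$ from this basis has exactly two occurrences of $x$, one occurrence of $y$ and no other letters, i.e.\ lies in $\{x^2y,\,xyx,\,yx^2\}$. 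A non-trivial instance $\mathbf a\,\xi(x)^3\xi(y)\,\mathbf b\approx\mathbf a\,\xi(y)\,\xi(x)^3\,\mathbf b$ of $x^3y\approx yx^3$ cannot fit inside a word of length three unless that word is the cube of a single letter, which none of these three words is; so every step of the deduction applies an identity of $\mathbf F_0\vee\mathbf C_3$, and $x^2y\approx yx^2$ would already hold in $\mathbf F_0$, contradicting $e^2a=ea=0\ne a=ae=ae^2$ in $S^1$. Thus $\mathbf W$ violates $x^2y\approx yx^2$ while $\mathbf D_1\vee\mathbf C_3$ satisfies it, the inclusion is strict, and your five varieties do form a copy of $N_5$ inside $L(\mathbf V)$.

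The genuine gap is the other deferred step: the claim that if every completely regular subvariety of $\mathbf V$ is commutative and $\mathbf V$ has a non-central idempotent, then $\mathbf F_0\subseteq\mathbf V$ or $\overleftarrow{\mathbf F_0}\subseteq\mathbf V$. This is true, but it is a real lemma --- essentially the characterization of varieties of monoids with central idempotents by the excluded subvarieties $\mathbf{LRB}$, $\mathbf{RRB}$, $\mathbf F_0$, $\overleftarrow{\mathbf F_0}$, which is exactly what the paper's citation of \cite{Gusev-Vernikov-18} is meant to supply. It requires an analysis of the submonoid generated by a non-central idempotent $e$ and a witness $a$ with $ea\ne ae$ (distinguishing the band-like configurations from the $S^1$-like ones), and your proposal neither proves it nor points to a source. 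Until that reduction is supplied the argument is incomplete; with it, everything else you wrote goes through.
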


\begin{proof}
This is a consequence of Gusev \cite[Lemma~2]{Gusev-20b} and Gusev and Vernikov \cite[Lemma~4.1 and Proposition~4.2]{Gusev-Vernikov-18}.
\end{proof}

\begin{proposition}
\label{mod over F_0}
If $\mathbf X$ is a monoid variety with a modular subvariety lattice and $\mathbf F_0\subseteq\mathbf X$, then $\mathbf X\subseteq\mathbf B_{2,k}$ for some $k \ge 3$.
\end{proposition}

\begin{proof}
This can be easily deduced from Gusev \cite[Lemma~2]{Gusev-20b} and Gusev and Vernikov \cite[Lemma~2.5]{Gusev-Vernikov-18}.
\end{proof}

To date, up to duality, there are only three explicit examples of monoid varieties with non-modular subvariety lattice that are not covered by Propositions \ref{mod non-cr}--\ref{mod over F_0}:
\[
\mathbf M_6\vee\overleftarrow{\mathbf M_7}, \quad \mathbf M_8 = \var S_8^1, \quad \text{and} \quad \mathbf M_9 = \var M_9,
\]
where
\begin{align*}
S_8 &=\left\langle a,b,c \left| 
\begin{array}{l} 
a^2=a,\,b^2=b^3,\,bcb^2=bcb,\\
ca=c,\,abc=ac=ba=b^2c=0 
\end{array} 
\right.\!\!\right\rangle \\
&=\{a,b,c,ab,ab^2,b^2,bc,bcb,cb,cb^2,0\} \\
\text{and}\enskip M_9 &=\langle a,g \,|\, a^3=0, \, g^2=1, \, ag=a, \, ga^2=a^2 \rangle =\{a,g,a^2,ga,0,1\}.
\end{align*}
The lattice $L(\mathbf M_6\vee\overleftarrow{\mathbf M_7})$, modulo the interval $[\mathbf M_7\vee\overleftarrow{\mathbf M_7},\mathbf M_6\vee\overleftarrow{\mathbf M_7}]$, is given in Gusev~\cite{Gusev-19}, while the lattices $L(\mathbf M_8)$ and $L(\mathbf M_9)$ are due to Gusev and O.\,B.\@ Sapir~\cite{Gusev-Sapir_O-22} and Lee~\cite{Lee-22+}, respectively; see Figs.~\ref{L(M_6 vee dual to M_7)}--\ref{L(M_9)}.
The undefined varieties in Figs.~\ref{L(M_8)} and~\ref{L(M_9)} are $\mathbf M_{10} = \var S_{10}^1$ and $\mathbf M_{11} = \var M_{11}$, where
\begin{align*}
S_{10}&=\left\langle a,b,c \left| 
\begin{array}{l} 
a^2=a^3,\,b^2=b^3,\,bcb^2=bcb,\\
c^2=ba=ca=ac=b^2c=ab^2=0 
\end{array} 
\right.\!\!\right\rangle\\
&=\{a,b,c,cb,cb^2,b^2,bc,bcb,ab,abc,abcb,a^2,a^2b,a^2bc,a^2bcb,0\} \\
\text{and}\enskip M_{11} & = \langle a,g \,|\, a^2=0, \, g^2=1, \, ag=a \rangle =\{a,g,ga,0,1\}.
\end{align*}

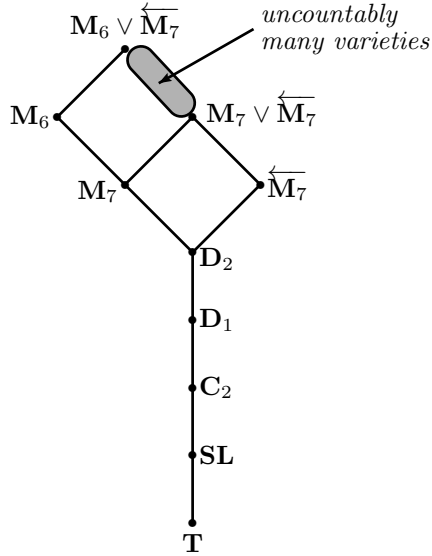
\begin{figure}[htb]
\unitlength=0.9mm
\linethickness{0.4pt}
\begin{center}
\begin{picture}(60,80)
\put(7,63){\circle*{1.33}}
\put(17,53){\circle*{1.33}}
\put(17,73){\circle*{1.33}}
\put(27,3){\circle*{1.33}}
\put(27,13){\circle*{1.33}}
\put(27,23){\circle*{1.33}}
\put(27,33){\circle*{1.33}}
\put(27,43){\circle*{1.33}}
\put(27,63){\circle*{1.33}}
\put(37,53){\circle*{1.33}}
\gasset{AHnb=0,linewidth=0.4}
\drawline(27,3)(27,43)(7,63)(17,73)
\drawline(27,43)(37,53)(27,63)(17,53)
\put(22,68)
{
\begin{rotate}{-47}
\drawoval[fillgray=0.7](-0.85,-0.55,12.7,4,2)
\end{rotate}
}
\gasset{AHnb=1,AHLength=2,linewidth=0.4}
\drawline(36,76)(22,68)
\put(28,23){\makebox(0,0)[lc]{$\mathbf C_2$}}
\put(28,33){\makebox(0,0)[lc]{$\mathbf D_1$}}
\put(28,42){\makebox(0,0)[lc]{$\mathbf D_2$}}
\put(6,63){\makebox(0,0)[rc]{$\mathbf M_6$}}
\put(17,77){\makebox(0,0)[cc]{$\mathbf M_6\vee\overleftarrow{\mathbf M_7}$}}
\put(16,52){\makebox(0,0)[rc]{$\mathbf M_7$}}
\put(29,64){\makebox(0,0)[lc]{$\mathbf M_7\vee\overleftarrow{\mathbf M_7}$}}
\put(38,53){\makebox(0,0)[lc]{$\overleftarrow{\mathbf M_7}$}}
\put(28,13){\makebox(0,0)[lc]{$\mathbf{SL}$}}
\put(27,0){\makebox(0,0)[cc]{$\mathbf T$}}
\put(37,78){\makebox(0,0)[lc]{\emph{uncountably}}}
\put(37,74){\makebox(0,0)[lc]{\emph{many varieties}}}
\end{picture}
\end{center}
\caption{The lattice $L(\mathbf M_6\vee\protect\overleftarrow{\mathbf M_7})$}
\label{L(M_6 vee dual to M_7)}
\end{figure}

\begin{figure}[htb]
\unitlength=0.9mm
\linethickness{0.4pt}
\begin{center}
\begin{picture}(41,88)
\put(5,53){\circle*{1.33}}
\put(15,43){\circle*{1.33}}
\put(15,63){\circle*{1.33}}
\put(20,68){\circle*{1.33}}
\put(25,3){\circle*{1.33}}
\put(25,13){\circle*{1.33}}
\put(25,23){\circle*{1.33}}
\put(25,33){\circle*{1.33}}
\put(25,53){\circle*{1.33}}
\put(25,73){\circle*{1.33}}
\put(25,83){\circle*{1.33}}
\put(35,43){\circle*{1.33}}
\put(35,63){\circle*{1.33}}
\gasset{AHnb=0,linewidth=0.4}
\drawline(25,3)(25,33)(5,53)(25,73)(35,63)(15,43)
\drawline(25,33)(35,43)(15,63)
\drawline(25,73)(25,83)
\put(36.5,63){\makebox(0,0)[lc]{$\mathbf A_0^1$}}
\put(26.5,23){\makebox(0,0)[lc]{$\mathbf C_2$}}
\put(26.5,32){\makebox(0,0)[lc]{$\mathbf D_1$}}
\put(13.5,43){\makebox(0,0)[rc]{$\mathbf F_0$}}
\put(36.5,43.5){\makebox(0,0)[lc]{$\overleftarrow{\mathbf F_0}$}}
\put(4,53){\makebox(0,0)[rc]{$\mathbf F_1$}}
\put(25,86){\makebox(0,0)[cc]{$\mathbf M_8$}}
\put(18.5,69){\makebox(0,0)[rc]{$\mathbf M_{10}$}}
\put(26.5,13){\makebox(0,0)[lc]{$\mathbf{SL}$}}
\put(25,0){\makebox(0,0)[cc]{$\mathbf T$}}
\end{picture}
\end{center}
\caption{The lattice $L(\mathbf M_8)$}
\label{L(M_8)}
\end{figure}

\begin{figure}[htb]
\unitlength=0.9mm
\linethickness{0.4pt}
\begin{center}
\begin{picture}(47,78)
\put(2,33){\circle*{1.33}}
\put(2,43){\circle*{1.33}}
\put(2,53){\circle*{1.33}}
\put(12,3){\circle*{1.33}}
\put(12,13){\circle*{1.33}}
\put(12,23){\circle*{1.33}}
\put(12,33){\circle*{1.33}}
\put(12,43){\circle*{1.33}}
\put(17,43){\circle*{1.33}}
\put(17,53){\circle*{1.33}}
\put(17,63){\circle*{1.33}}
\put(27,13){\circle*{1.33}}
\put(27,23){\circle*{1.33}}
\put(27,33){\circle*{1.33}}
\put(27,43){\circle*{1.33}}
\put(27,53){\circle*{1.33}}
\put(32,73){\circle*{1.33}}
\put(42,53){\circle*{1.33}}
\put(42,63){\circle*{1.33}}
\gasset{AHnb=0,linewidth=0.4}
\drawline(2,33)(2,53)(32,73)(42,63)(42,53)(27,43)(27,13)(12,3)(12,23)(2,33)(17,43)(17,63)(27,53)(27,43)(12,33)(12,43)(42,63)
\drawline(2,43)(12,33)(12,13)(27,23)
\drawline(12,23)(27,33)(17,43)
\drawline(2,43)(17,53)(27,43)
\drawline(2,53)(12,43)
\put(27,10){\makebox(0,0)[cc]{$\mathbf A_2$}}
\put(11.5,21.5){\makebox(0,0)[rc]{$\mathbf C_2$}}
\put(2,30){\makebox(0,0)[cc]{$\mathbf C_3$}}
\put(11.5,31.5){\makebox(0,0)[rc]{$\mathbf D_1$}}
\put(11.5,41.5){\makebox(0,0)[rc]{$\mathbf D_2$}}
\put(32,76){\makebox(0,0)[cc]{$\mathbf M_9$}}
\put(44,50){\makebox(0,0)[cc]{$\mathbf M_{11}$}}
\put(11,13){\makebox(0,0)[rc]{$\mathbf{SL}$}}
\put(12,0){\makebox(0,0)[cc]{$\mathbf T$}}
\end{picture}
\end{center}
\caption{The lattice $L(\mathbf M_9)$}
\label{L(M_9)}
\end{figure}
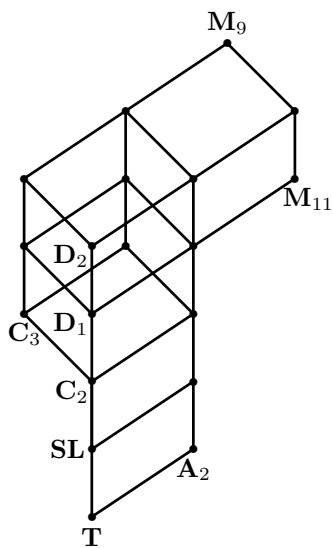

A variety $\mathbf X$ of algebras is \emph{almost modular} if its subvariety lattice is not modular but the subvariety lattice of each proper subvariety of $\mathbf X$ is modular.
Up to duality, $\mathbf C_2\vee\mathbf{LRB}$, $\mathbf A_0^1\vee\mathbf F_1$, and $\mathbf M_9$ are the only monoid varieties currently known to be almost modular.
The varieties $\mathbf C_2\vee\mathbf{LRB}$ and $\mathbf A_0^1\vee\mathbf F_1$ are aperiodic, while $\mathbf M_9$ is of period two.

\begin{question}
\label{generalize M_9?}
Is there a way to generalize $\mathbf M_9$ to an almost modular variety of any odd prime period?
\end{question}

In conclusion of this subsection, we suggest the following problems as feasible stepping stones toward solving Problem~\ref{mod subvar lat}.

\begin{problem}
\label{mod subvar lat partial}
\quad
\begin{itemize}
\item[a)] Describe varieties of monoids with central idempotents whose subvariety lattice is modular.
\item[b)] Describe overcommutative varieties of monoids whose subvariety lattice is modular.
\end{itemize}
\end{problem}

In view of Proposition~\ref{mod over C_3}, Part~b) of Problem~\ref{mod subvar lat partial} is a particular case of Part~a).

\subsection{Distributivity}
\label{distr}
Volkov's contributions to semigroup varieties with a modular subvariety lattice, mentioned at the beginning of Subsection~\ref{mod}, contains essential information about semigroup varieties with a distributive subvariety lattice, which results in an almost complete description modulo group varieties; see Shevrin \textit{et~al.}\@ \cite[Section~11]{Shevrin-Vernikov-Volkov-09}.

Although the lattice $\mathbb{GR}$ of periodic group varieties is modular, a description of periodic group varieties with distributive subvariety lattice has remained very elusive, especially in view of Theorem~\ref{chain gr uncount}.
Therefore, in the investigation of distributive lattices of monoid varieties, it is logical to focus on non-group varieties.

\begin{problem}
\label{distr subvar lat}
Describe varieties of monoids with a distributive subvariety lattice modulo varieties of groups.
\end{problem}

\begin{problem}
\label{distr subvar lat aperiodic}
Describe aperiodic varieties of monoids with a distributive subvariety lattice.
\end{problem}

Similar to Problem~\ref{mod subvar lat}, both Problems~\ref{distr subvar lat} and~\ref{distr subvar lat aperiodic} are very far from being completely solved.
Some reasons that complicate the solution of these problems are highlighted at the end of Subsection~\ref{sublat}.

Recall from Theorems~\ref{struct of COM} and~\ref{struct of BAND} that the lattices $\mathbb{COM}$ and $\mathbb{BAND}$ are distributive.
Overcommutative varieties with a distributive subvariety lattice also exist, obvious examples are the variety $\mathbf{COM}$ and its unique cover $\mathbf{COM}\vee\mathbf D_1$; see Remark~\ref{COM vee D_1}.
In contrast, the subvariety lattice of every overcommutative semigroup variety does not satisfy any non-trivial identity~\cite{Burris-Nelson-71b}.
Another class of varieties with a distributive subvariety lattice is the class of chain varieties.
In addition to commutative, band, and chain varieties, a number of examples of monoid varieties with the distributive subvariety lattice are given in or can be extracted from Figs.~\ref{L(A_0^1 vee LRB vee RRB)}--\ref{L(E)}.
Notable progress in solving Problem~\ref{distr subvar lat aperiodic} has recently been made when a characterization was found for all varieties of aperiodic monoids with central idempotents whose subvariety lattice is distributive~\cite{Gusev-22+b}.
To describe these varieties, several new words are required: define $S_0 = S_1$ and for $m,k,\ell \in \mathbb N \cup \{ 0 \}$ and $\rho \in S_{m+k+\ell}$, let
\begin{align*}
\mathbf c_{m,k,\ell}(\rho)&=\biggl(\prod_{i=1}^m z_it_i\biggr)xyt\biggl(\prod_{i=m+1}^{m+k} z_it_i\biggr)x\biggl(\prod_{i=1}^{m+k+\ell} z_{i\rho}\biggr)y\biggl(\prod_{i=m+k+1}^{m+k+\ell} t_iz_i\biggr)\\[-3pt]
\text{and}\enskip\mathbf c_{m,k,\ell}^\prime(\rho)&=\biggl(\prod_{i=1}^m z_it_i\biggr)yxt\biggl(\prod_{i=m+1}^{m+k} z_it_i\biggr)x\biggl(\prod_{i=1}^{m+k+\ell} z_{i\rho}\biggr)y\biggl(\prod_{i=m+k+1}^{m+k+\ell} t_iz_i\biggr),
\end{align*}
and let $\overleftarrow{\mathbf c}_{m,k,\ell}(\rho)$ and $\overleftarrow{\mathbf c}_{m,k,\ell}'(\rho)$ be the words obtained by writing $\mathbf c_{m,k,\ell}(\rho)$ and $\mathbf c_{m,k,\ell}^\prime(\rho)$ in reverse order.
Then for each $n \in \mathbb N$, define the varieties
\begin{align*}
\mathbf Q_n & =\var\left\{\!\!
\begin{array}{l}
x^n\approx x^{n+1},\,x^2y\approx yx^2,\\
\mathbf w_n(\pi,\tau)\approx\mathbf w_n'(\pi,\tau),\\
\mathbf c_{m,k,\ell}(\rho)\approx\mathbf c_{m,k,\ell}'(\rho),\\[1pt]
\overleftarrow{\mathbf c}_{m,k,\ell}(\rho)\approx \overleftarrow{\mathbf c}_{m,k,\ell}'(\rho)
\end{array}
\middle\vert
\begin{array}{l}
n\in\mathbb N,\,\pi,\tau\in S_n,\\
m,k,\ell\in\mathbb N\cup\{0\},\\
\rho\in S_{m+k+\ell}
\end{array}
\!\!\!\right\},\\
\mathbf R_n & =\var\{x^n\approx x^{n+1},\,x^ny\approx yx^n,\,x^2y\approx xyx\},\\
\text{and}\enskip\mathbf S_n & =\var\{x^n\approx x^{n+1},\,x^2y\approx yx^2,\,\sigma_2,\,\sigma_3\}.
\end{align*}

\begin{theorem}[Gusev {\cite[Theorem~1.1]{Gusev-22+b}}]
\label{A_cen}
A variety of aperiodic monoids with central idempotents has a distributive subvariety lattice if and only if it is contained in $\mathbf Q_n$, $\mathbf R_n$, $\overleftarrow{\mathbf R_n}$, $\mathbf S_n$, or $\overleftarrow{\mathbf S_n}$ for some $n \in \mathbb N$.
\end{theorem}

The structure of the lattices $L(\mathbf Q_n)$, $L(\mathbf R_n)$, and $L(\mathbf S_n)$ are so complicated that it is impossible to fully illustrate them with clarity.
But it is possible to exhibit the subvariety lattice of some subvarieties of $\mathbf S_n$, such as $\mathbf D\vee\mathbf M_6$ \cite[Corollary~5.3]{Gusev-Vernikov-21}; see Fig.~\ref{L(D vee M_6)}.

\begin{figure}[htb]
\unitlength=1mm
\linethickness{0.4pt}
\begin{center}
\begin{picture}(34,91)
\put(7,3){\circle*{1.33}}
\put(7,13){\circle*{1.33}}
\put(7,23){\circle*{1.33}}
\put(7,33){\circle*{1.33}}
\put(7,43){\circle*{1.33}}
\put(7,53){\circle*{1.33}}
\put(7,63){\circle*{1.33}}
\put(7,76){\circle*{1.33}}
\put(17,48){\circle*{1.33}}
\put(17,58){\circle*{1.33}}
\put(17,68){\circle*{1.33}}
\put(17,81){\circle*{1.33}}
\put(27,53){\circle*{1.33}}
\put(27,63){\circle*{1.33}}
\put(27,73){\circle*{1.33}}
\put(27,86){\circle*{1.33}}
\gasset{AHnb=0,linewidth=0.4}
\drawline(7,3)(7,65)
\drawline(7,43)(27,53)(27,75)
\drawline(7,53)(27,63)
\drawline(7,63)(27,73)
\drawline(7,76)(27,86)
\drawline(17,48)(17,70)
\drawline[dash={0.2 0.8}{0}](7,63)(7,76)
\drawline[dash={0.2 0.8}{0}](17,68)(17,81)
\drawline[dash={0.2 0.8}{0}](27,73)(27,86)
\put(6,23){\makebox(0,0)[rc]{$\mathbf C_2$}}
\put(6,76){\makebox(0,0)[rc]{$\mathbf D$}}
\put(27,89){\makebox(0,0)[cc]{$\mathbf D\vee\mathbf M_6$}}
\put(19,84){\makebox(0,0)[rc]{$\mathbf D\vee\mathbf M_7$}}
\put(6,33){\makebox(0,0)[rc]{$\mathbf D_1$}}
\put(6,43){\makebox(0,0)[rc]{$\mathbf D_2$}}
\put(6,53){\makebox(0,0)[rc]{$\mathbf D_3$}}
\put(6,63){\makebox(0,0)[rc]{$\mathbf D_4$}}
\put(28,50){\makebox(0,0)[cc]{$\mathbf M_6$}}
\put(18,45){\makebox(0,0)[cc]{$\mathbf M_7$}}
\put(6,13){\makebox(0,0)[rc]{$\mathbf{SL}$}}
\put(7,0){\makebox(0,0)[cc]{$\mathbf T$}}
\end{picture}
\end{center}
\caption{The lattice $L(\mathbf D\vee\mathbf M_6)$}
\label{L(D vee M_6)}
\end{figure}
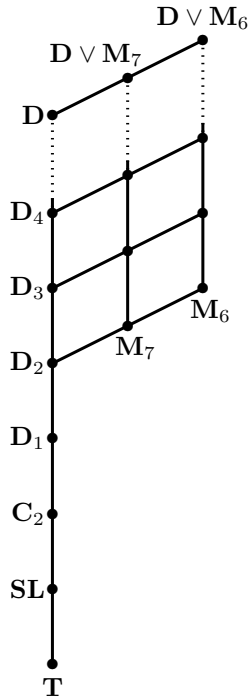

The following provides some reasonable problems the solutions of which would further contribute toward solving Problem~\ref{distr subvar lat} completely.

\begin{problem}
\label{distr subvar lat tasks}
\quad
\begin{itemize}
\item[a)] Describe varieties of aperiodic monoids with commuting idempotents whose subvariety lattice is distributive.
\item[b)] Describe overcommutative varieties of monoids whose subvariety lattice is distributive.
\end{itemize}
\end{problem}

Besides characterizing varieties having a distributive subvariety lattice, another approach toward solving Problem~\ref{distr subvar lat} is to locate substantial examples of varieties with a subvariety lattice that is modular but not distributive.
The first and currently only known example was recently found: the variety $\mathbf M_{12}\vee\overleftarrow{\mathbf M_{12}}$, where $\mathbf M_{12}=\var S(x^2y)$.

\begin{proposition}[Gusev and Lee~\cite{Gusev-Lee-21}]
\label{mod not distr}
The lattice $L(\mathbf M_{12}\vee\overleftarrow{\mathbf M_{12}})$ is modular but not distributive; see Fig.~\ref{L(M_12 vee dual to M_12)}.
\end{proposition}

\begin{figure}[htb]
\unitlength=1mm
\linethickness{0.4pt}
\begin{center}
\begin{picture}(59,69)
\put(6,43){\circle*{1.33}}
\put(21,33){\circle*{1.33}}
\put(21,53){\circle*{1.33}}
\put(36,3){\circle*{1.33}}
\put(36,13){\circle*{1.33}}
\put(36,23){\circle*{1.33}}
\put(36,43){\circle*{1.33}}
\put(36,53){\circle*{1.33}}
\put(36,63){\circle*{1.33}}
\put(51,33){\circle*{1.33}}
\put(51,53){\circle*{1.33}}
\gasset{AHnb=0,linewidth=0.4}
\drawline(21,33)(36,43)(36,63)
\drawline(21,53)(36,43)
\drawline(36,3)(36,23)(6,43)(36,63)(51,53)(36,43)(51,33)(36,23)
\put(37,21){\makebox(0,0)[lc]{$\mathbf C_2$}}
\put(52,33){\makebox(0,0)[lc]{$\mathbf C_3$}}
\put(20,32){\makebox(0,0)[rc]{$\mathbf D_1$}}
\put(5,43){\makebox(0,0)[rc]{$\mathbf D_2$}}
\put(37,53){\makebox(0,0)[lc]{$\mathbf M_{12}$}}
\put(36,66){\makebox(0,0)[cc]{$\mathbf M_{12}\vee\overleftarrow{\mathbf M_{12}}$}}
\put(52,54){\makebox(0,0)[lc]{$\overleftarrow{\mathbf M_{12}}$}}
\put(37,13){\makebox(0,0)[lc]{$\mathbf{SL}$}}
\put(36,0){\makebox(0,0)[cc]{$\mathbf T$}}
\end{picture}
\end{center}
\caption{The lattice $L(\mathbf M_{12}\vee\protect\overleftarrow{\mathbf M_{12}})$}
\label{L(M_12 vee dual to M_12)}
\end{figure}
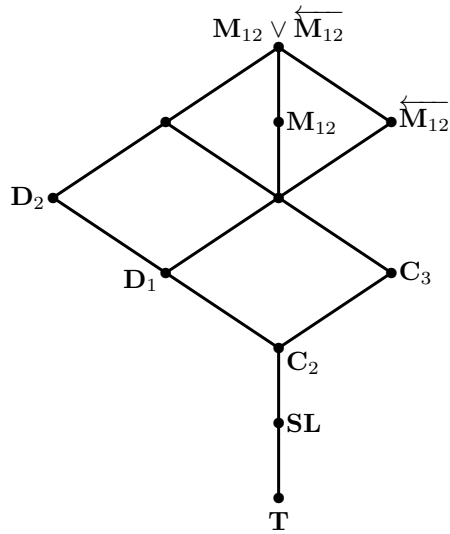

\subsection{Other identities and related restrictions}
\label{other ident}
Besides the distributive and modular laws, another important lattice identity is the Arguesian law; see Gr\"atzer \cite[Subsection~V.4.4]{Gratzer-11}, for instance.
It is general knowledge that any Arguesian lattice is modular but the converse is false.
For subvariety lattices of semigroup varieties, the properties of being modular and Arguesian are equivalent \cite[Theorem~11.2]{Shevrin-Vernikov-Volkov-09}.
However, it is unknown if the same holds true for subvariety lattices of monoid varieties.

\begin{question}
\label{mod not arg?}
Is there a variety of monoids whose subvariety lattice is modular but not Arguesian?
\end{question}

The following result concerning a variety $\mathbf X$ and its greatest group subvariety $\mathsf{Gr}(\mathbf X)$ can be deduced from Rasin \cite[Corollary~5]{Rasin-82} and Proposition~\ref{MON is sublat of SEM} or from Pol\'ak~\cite{Polak-88} and Proposition~\ref{UCR is sublat of UCR_sem}.

\begin{proposition}
\label{ident orthodox}
For any variety $\mathbf X$ of orthodox completely regular monoids, the lattices $L(\mathbf X)$ and $L(\mathsf{Gr}(\mathbf X))$ satisfy the same non-trivial identities.
\end{proposition}

It is unknown if Proposition~\ref{ident orthodox} holds for other non-orthodox varieties.

\begin{question}
\label{ident cr?}
Is the analog of Proposition~\ref{ident orthodox} true for arbitrary varieties of completely regular monoids?
\end{question}

In order to estimate the complexness of the class of monoid varieties whose subvariety lattice satisfies a non-trivial identity, it is useful to establish whether or not this class is closed under joins.
The following question is presently open.

\begin{question}
\label{ident join?}
Are there monoid varieties $\mathbf X_1$ and $\mathbf X_2$ such that each of the lattices $L(\mathbf X_1)$ and $L(\mathbf X_2)$ satisfies some non-trivial identity, while the lattice $L(\mathbf X_1\vee\mathbf X_2)$ does not satisfy any non-trivial identity?
\end{question}

For comparison, we note that the analogous question for semigroup varieties is answered in the affirmative.
For instance, it is long known that the subvariety lattices of $\mathbf X_1 = \var_\mathsf{sem} L_2^1$ and $\mathbf X_2 = \var_\mathsf{sem} S(x)$ are distributive and of width two~\cite{Evans-71}, but since the join $\mathbf X_1 \vee \mathbf X_2$ is finitely universal \cite[Subsection~6.3]{Gusev-Lee-20}, its subvariety lattice does not satisfy any non-trivial identity.

Recall that a lattice $\langle L;\vee,\wedge\rangle$ is
\begin{align*}
&\text{\emph{upper semimodular} if}&&\forall x,y\in L:\,x\enskip\text{covers}\enskip x\wedge y\longrightarrow x\vee y\enskip\text{covers}\enskip y;\\
&\text{\emph{lower semimodular} if}&&\forall x,y\in L:\,x\vee y\enskip\text{covers}\enskip x\longrightarrow y\enskip\text{covers}\enskip x\wedge y.
\end{align*}
The subvariety lattice of a semigroup variety is upper semimodular if and only if it is modular, while there is a semigroup variety whose subvariety lattice is lower semimodular but not modular; see Shevrin \textit{et~al.}\@ \cite[Subsection~11.3]{Shevrin-Vernikov-Volkov-09}.
Analog of the latter claim for monoid varieties is true because the lattice $L(\mathbf M_9)$ is lower semimodular but not modular; see Fig.~\ref{L(M_9)}.
We do not know whether the analog of the former claim holds true or not.

\begin{question}
\label{umod not mod?}
Is there a monoid variety whose subvariety lattice is upper semimodular but not modular?
\end{question}

A number of other restrictions to a subvariety lattice related to the subject matter of this section were considered for lattices of semigroup varieties with varying degrees of success.
These restrictions include upper and lower semidistributivity, belonging to an arbitrary given quasi-variety of modular lattices or certain other lattice quasi-varieties, and the property of having width two; see Shevrin \textit{et~al.}\@ \cite[Section~11]{Shevrin-Vernikov-Volkov-09}.
For monoid varieties, all these restrictions have not yet been considered in the literature.

\section{Finiteness conditions}
\label{finite}

As usual, by \emph{finiteness condition} for lattices, we mean a lattice property that holds in every finite lattice.
Some of the most important finiteness conditions on a lattice include the property of being finite, the ascending chain condition, and the descending chain condition.
Another interesting finiteness condition is the property of having a \emph{finite width} in the sense that all anti-chains in the lattice are finite.
The investigation of varieties of semigroups or monoids with such restrictions on their subvariety lattices seems important and interesting, but it turns out to be very difficult.
As a result, for each of the above finiteness condition $\theta$, semigroup varieties whose subvariety lattice satisfies $\theta$ have so far not been completely described.
Moreover, all corresponding problems are very far from being completely solved, even modulo group varieties; see Shevrin \textit{et~al.}\@ \cite[Section~10]{Shevrin-Vernikov-Volkov-09}.

Regarding subvariety lattices of monoid varieties, none of the aforementioned finiteness conditions has been systematically examined.
For the finite width condition, not much is known beyond chain varieties.
For the other three conditions---being finite and satisfying the ascending chain and descending chain conditions---only several examples are published and analogs of Proposition~\ref{ident orthodox} can be easily deduced from known results.
Since very few examples are available, we are able to describe them all in this section.

\subsection{Small varieties}
\label{small}
A variety of algebras with a finite subvariety lattice is said to be \emph{small}.
Small varieties of monoids have not yet been specifically studied.
However, results of Rasin~\cite{Rasin-82} and Proposition~\ref{MON is sublat of SEM} (or results of Pol\'ak~\cite{Polak-88} and Proposition~\ref{UCR is sublat of UCR_sem}) imply the following result.

\begin{proposition}
\label{small orthodox}
For any variety $\mathbf X$ of orthodox completely regular monoids, $\mathbf X$ is small if and only if $\mathsf{Gr}(\mathbf X)$ is small and $\mathbf{BAND}\nsubseteq\mathbf X$.
\end{proposition}

It is unknown if Proposition~\ref{small orthodox} holds for other non-orthodox varieties.

\begin{question}
\label{small cr?}
Is the analog of Proposition~\ref{small orthodox} true for arbitrary varieties of completely regular monoids?
\end{question}

A \emph{Cross variety} is a variety that is finitely based, finitely generated, and small.
Practically, all current information on small varieties outside the completely regular case is a certain number of examples obtained in the study of Cross varieties.
Every finitely generated group variety is Cross~\cite{Oates-Powell-64}, but the analog of this result does not hold for semigroup or monoid varieties; for instance, there exist finitely generated varieties of semigroups~\cite{Trakhtman-88} and of monoids \cite{Jackson-Lee-18,Jackson-Zhang-21} that are both non-finitely based and with uncountably many subvarieties.

Cross monoid varieties constitute an important subclass of the class of small monoid varieties.
A non-Cross variety is \emph{almost Cross} if all its proper subvarieties are Cross.
It is natural to investigate almost Cross varieties since by Zorn's lemma, every non-Cross variety contains an almost Cross subvariety.
Many articles are devoted to Cross or almost Cross monoid varieties.
We will not survey all these works here since it is outside the scope of this survey.
We restrict our attention to only articles in which the Cross or almost Cross property of a variety is established through a complete description of its subvariety lattice.
Further, we will only exhibit here Cross varieties and almost Cross varieties that have not appeared above.

Most known examples of almost Cross monoid varieties arise in the study of \emph{limit varieties}, that is, non-finitely based varieties whose proper subvarieties are all finitely based.
It follows from Zorn's lemma that every non-finitely based variety contains a limit subvariety; this is a main motivation to study limit varieties.

By Theorem~\ref{chain gr uncount}, there are uncountably many limit varieties of periodic groups.
However, explicit examples of such varieties have not yet been found.
Presently, up to duality, there are only five explicit examples of non-group limit varieties of monoids whose subvariety lattices are known.
The first two of these five examples, due to Jackson~\cite{Jackson-05b}, are the varieties $\mathbf M_5$ and $\mathbf M_7\vee\overleftarrow{\mathbf M_7}$; their subvariety lattices can be found inside Figs.~\ref{all chain var} and~\ref{L(M_6 vee dual to M_7)}, respectively.

The third limit monoid variety, due to Zhang and Luo~\cite{Zhang-Luo-arx}, is the variety $\mathbf M_{13}\vee\overleftarrow{\mathbf M_{13}}$, where $\mathbf M_{13}$ is generated by the monoid $S^1$ obtained from
\[
S=\langle a,b,c\mid a^2=a,\,b^2=b,\,ab=ca=0,\,ac=cb=c\rangle=\{a,b,c,ba,bc,0\}.
\]
The lattice $L(\mathbf M_{13}\vee\overleftarrow{\mathbf M_{13}})$ is shown in Fig.~\ref{L(M_13 vee dual to M_13)}; the undefined variety here is $\mathbf Q=\var Q^1$, where
\[
Q=\langle a,b,c\mid a^2=a,\,ab=b,\,ca=c,\,ac=ba=cb=0\rangle=\{a,b,c,bc,0\}.
\]

\begin{figure}[htb]
\unitlength=1mm
\linethickness{0.4pt}
\begin{center}
\begin{picture}(36,99)
\put(8,43){\circle*{1.33}}
\put(8,63){\circle*{1.33}}
\put(8,83){\circle*{1.33}}
\put(18,3){\circle*{1.33}}
\put(18,13){\circle*{1.33}}
\put(18,23){\circle*{1.33}}
\put(18,33){\circle*{1.33}}
\put(18,53){\circle*{1.33}}
\put(18,73){\circle*{1.33}}
\put(18,93){\circle*{1.33}}
\put(28,43){\circle*{1.33}}
\put(28,63){\circle*{1.33}}
\put(28,83){\circle*{1.33}}
\gasset{AHnb=0,linewidth=0.4}
\drawline(18,3)(18,33)(28,43)(8,63)(28,83)(18,93)(8,83)(28,63)(8,43)(18,33)
\put(7,63){\makebox(0,0)[rc]{$\mathbf A_0^1$}}
\put(19,23){\makebox(0,0)[lc]{$\mathbf C_2$}}
\put(19,32){\makebox(0,0)[lc]{$\mathbf D_1$}}
\put(7,43){\makebox(0,0)[rc]{$\mathbf F_0$}}
\put(29,43){\makebox(0,0)[lc]{$\overleftarrow{\mathbf F_0}$}}
\put(7,83){\makebox(0,0)[rc]{$\mathbf M_{13}$}}
\put(18,96){\makebox(0,0)[cc]{$\mathbf M_{13}\vee\overleftarrow{\mathbf M_{13}}$}}
\put(29,83){\makebox(0,0)[lc]{$\overleftarrow{\mathbf M_{13}}$}}
\put(29,63){\makebox(0,0)[lc]{$\mathbf Q$}}
\put(19,13){\makebox(0,0)[lc]{$\mathbf{SL}$}}
\put(18,0){\makebox(0,0)[cc]{$\mathbf T$}}
\end{picture}
\end{center}
\caption{The lattice $L(\mathbf M_{13}\vee\protect\overleftarrow{\mathbf M_{13}})$}
\label{L(M_13 vee dual to M_13)}
\end{figure}

The fourth example of a limit monoid variety, constructed by Gusev~\cite{Gusev-20a}, is
\[
\mathbf M_{14}=\var
\left\{\!\!
\begin{array}{l}
x^2y^2\approx y^2x^2,\ xyx\approx xyx^2,\ xyzxy\approx yxzxy,\\
xyxztx\approx xyxzxtx,\ \mathbf v_n(\pi)\approx\mathbf v_n'(\pi)
\end{array}
\!\middle\vert\!
\begin{array}{l}
n\in\mathbb N,\\
\pi\in S_n
\end{array}
\!\!\right\},
\]
where
\begin{align*}
\mathbf v_n(\pi) & =xz_{\pi(1)} z_{\pi(2)}\cdots z_{\pi(n)}x\biggl(\,\prod_{i=1}^nt_iz_i\biggr) \\
\text{and}\enskip\mathbf v_n'(\pi) & =x^2z_{\pi(1)} z_{\pi(2)}\cdots z_{\pi(n)}\biggl(\,\prod_{i=1}^nt_iz_i\biggr).
\end{align*}
The lattice $L(\mathbf M_{14})$ is shown in Fig.~\ref{L(M_14)}, where
\[
\mathbf M_{15}=\mathbf M_{14}\wedge\var\{\sigma_3\}\enskip\text{and}\enskip\mathbf M_{16}=\mathbf M_{14}\wedge\var\{yx^2zy\approx xyxzy\}.
\]
Finite monoids that generate the variety $\mathbf M_{14}$ and each of its subvarieties can be found in O.\,B.\@ Sapir \cite[Theorem~7.1]{Sapir_O-21}.

\begin{figure}[htb]
\unitlength=1mm
\linethickness{0.4pt}
\begin{center}
\begin{picture}(40,98)
\put(5,53){\circle*{1.33}}
\put(15,43){\circle*{1.33}}
\put(15,63){\circle*{1.33}}
\put(15,73){\circle*{1.33}}
\put(15,83){\circle*{1.33}}
\put(15,93){\circle*{1.33}}
\put(25,3){\circle*{1.33}}
\put(25,13){\circle*{1.33}}
\put(25,23){\circle*{1.33}}
\put(25,33){\circle*{1.33}}
\put(25,53){\circle*{1.33}}
\put(35,43){\circle*{1.33}}
\gasset{AHnb=0,linewidth=0.4}
\drawline(15,43)(25,53)
\drawline(25,3)(25,33)(5,53)(15,63)(15,93)
\drawline(25,33)(35,43)(15,63)
\put(26,23){\makebox(0,0)[lc]{$\mathbf C_2$}}
\put(26,32){\makebox(0,0)[lc]{$\mathbf D_1$}}
\put(14,43){\makebox(0,0)[rc]{$\mathbf F_0$}}
\put(36,43){\makebox(0,0)[lc]{$\overleftarrow{\mathbf F_0}$}}
\put(4,53){\makebox(0,0)[rc]{$\mathbf F_1$}}
\put(15,96){\makebox(0,0)[cc]{$\mathbf M_{14}$}}
\put(16,83){\makebox(0,0)[lc]{$\mathbf M_{15}$}}
\put(16,73){\makebox(0,0)[lc]{$\mathbf M_{16}$}}
\put(26,13){\makebox(0,0)[lc]{$\mathbf{SL}$}}
\put(25,0){\makebox(0,0)[cc]{$\mathbf T$}}
\end{picture}
\end{center}
\caption{The lattice $L(\mathbf M_{14})$}
\label{L(M_14)}
\end{figure}
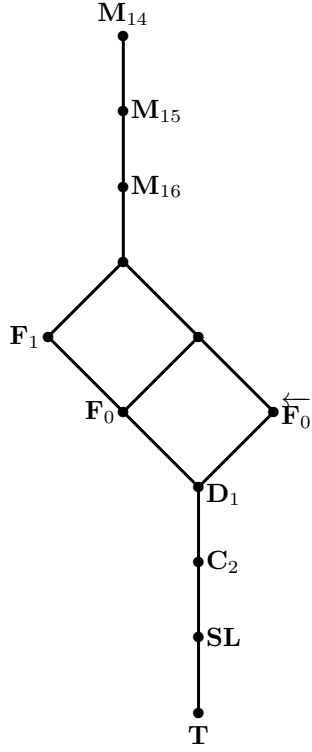

Finally, the fifth limit monoid variety, due to Gusev and O.\,B.\@ Sapir~\cite{Gusev-Sapir_O-22}, is the variety $\mathbf M_8$; the lattice $L(\mathbf M_8)$ is shown in Fig.~\ref{L(M_8)}.
A few more explicit examples of limit varieties of aperiodic monoids have recently been found \cite{Gusev-Li_Y-Zhang-arx,Sapir_O-arx}, but descriptions of their subvariety lattices are unknown.

The aforementioned examples of limit varieties play an important role in describing almost Cross varieties in some important classes of monoid varieties.
For instance, the limit varieties $\mathbf M_5$ and $\mathbf M_7\vee\overleftarrow{\mathbf M_7}$, together with $\mathbf D$, are the only almost Cross varieties of aperiodic monoids with central idempotents~\cite{Lee-13}; this result was recently generalized to varieties of aperiodic monoids with commuting idempotents.

\begin{proposition}[Gusev~\cite{Gusev-arx}]
\label{all limit}
The varieties $\mathbf M_5$, $\mathbf M_7\vee\overleftarrow{\mathbf M_7}$, $\mathbf M_{14}$, $\overleftarrow{\mathbf M_{14}}$, $\mathbf D$, $\mathbf F$, $\overleftarrow{\mathbf F}$,
\[
\mathbf Y =\var\{xyx\approx xyx^2,x^2y^2\approx y^2x^2,\,xyzxy\approx yxzxy,\,x^2yzy\approx xyxzy\approx yx^2zy\},
\]
and $\overleftarrow{\mathbf Y}$ are the only almost Cross varieties of aperiodic monoids with commuting idempotents.
\end{proposition}

The lattice $L(\mathbf Y)$ is given in Fig.~\ref{L(Y)}; the undefined varieties in this lattice are
\[
\mathbf Y_n=\mathbf Y \wedge\var\{xyt_1\mathbf f_1t_2\mathbf f_2\cdots t_{n+1}\mathbf f_{n+1}\approx yxt_1\mathbf f_1t_2\mathbf f_2\cdots t_{n+1}\mathbf f_{n+1}\},
\]
where $\mathbf f_{2i-1}=x$ and $\mathbf f_{2i}=y$ for all $i\in\mathbb N$ \cite[Proposition~3.1]{Gusev-arx}.

\begin{figure}[htb]
\unitlength=1mm
\linethickness{0.4pt}
\begin{center}
\begin{picture}(44,118)
\put(8,53){\circle*{1.33}}
\put(8,73){\circle*{1.33}}
\put(18,43){\circle*{1.33}}
\put(18,63){\circle*{1.33}}
\put(18,83){\circle*{1.33}}
\put(18,93){\circle*{1.33}}
\put(18,103){\circle*{1.33}}
\put(18,113){\circle*{1.33}}
\put(28,3){\circle*{1.33}}
\put(28,13){\circle*{1.33}}
\put(28,23){\circle*{1.33}}
\put(28,33){\circle*{1.33}}
\put(28,53){\circle*{1.33}}
\put(28,73){\circle*{1.33}}
\put(38,43){\circle*{1.33}}
\put(38,63){\circle*{1.33}}
\gasset{AHnb=0,linewidth=0.4}
\drawline(18,83)(18,105)
\drawline(28,3)(28,33)(8,53)
\drawline(28,33)(38,43)(18,63)(8,53)
\drawline(28,73)(18,83)(8,73)(18,63)(28,73)(38,63)(18,43)
\drawline[dash={0.2 0.8}{0}](18,105)(18,112)
\put(29,23){\makebox(0,0)[lc]{$\mathbf C_2$}}
\put(29,31){\makebox(0,0)[lc]{$\mathbf D_1$}}
\put(17,43){\makebox(0,0)[rc]{$\mathbf F_0$}}
\put(39,43){\makebox(0,0)[lc]{$\overleftarrow{\mathbf F_0}$}}
\put(7,53){\makebox(0,0)[rc]{$\mathbf F_1$}}
\put(7,73){\makebox(0,0)[rc]{$\mathbf M_{16}$}}
\put(39,63){\makebox(0,0)[lc]{$\mathbf Q$}}
\put(29,13){\makebox(0,0)[lc]{$\mathbf{SL}$}}
\put(29,0){\makebox(0,0)[cc]{$\mathbf T$}}
\put(18,116){\makebox(0,0)[cc]{$\mathbf Y$}}
\put(30,73){\makebox(0,0)[lc]{$\mathbf Y_1$}}
\put(20,83){\makebox(0,0)[lc]{$\mathbf Y_2$}}
\put(19,93){\makebox(0,0)[lc]{$\mathbf Y_3$}}
\put(19,103){\makebox(0,0)[lc]{$\mathbf Y_4$}}
\end{picture}
\end{center}
\caption{The lattice $L(\mathbf Y)$}
\label{L(Y)}
\end{figure}
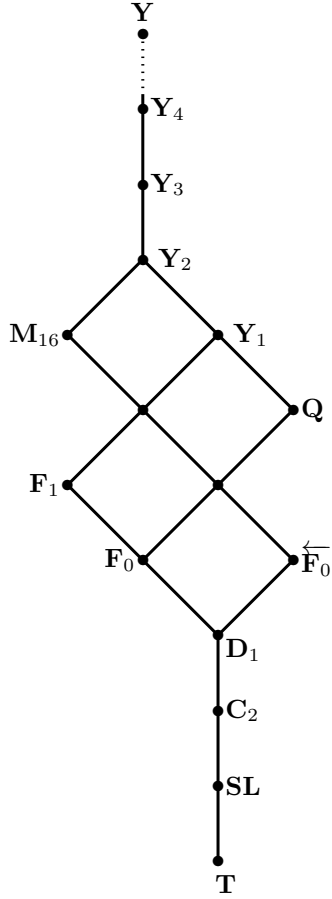

One more countably infinite series of Cross monoid varieties, $\mathbf E_n$ with $n\in\mathbb N$, and an almost Cross variety $\mathbf E_\infty$ will be described in Subsection~\ref{max and min}.

It is obvious that the class of small varieties of any algebras is closed under meets.
In general, however, this class need not be closed under joins or covers.
In particular, M.\,V.\@ Sapir~\cite{Sapir_M-91} has shown that the class of small semigroup varieties is closed under neither joins nor covers.
The same result also holds for monoid varieties due to examples of Gusev~\cite{Gusev-19} and Jackson and Lee~\cite{Jackson-Lee-18}.

\begin{proposition}
\label{small is not closed}
The class of small monoid varieties is closed under neither joins nor covers.
\end{proposition}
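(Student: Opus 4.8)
The plan is to establish the two failures separately, in both cases extracting the required varieties from the single example of Gusev~\cite{Gusev-19} whose subvariety lattice is recorded in Fig.~\ref{L(M_6 vee dual to M_7)}. Throughout I use that a variety is small precisely when its subvariety lattice is finite, so that a variety with uncountably many subvarieties is certainly not small.

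For the failure of closure under joins I would take the varieties $\mathbf M_6$ and $\overleftarrow{\mathbf M_7}$. Both are non-group chain varieties (Theorem~\ref{chain descript}), and Fig.~\ref{all chain var} shows that their subvariety lattices are the finite chains $\mathbf T\subset\mathbf{SL}\subset\mathbf C_2\subset\mathbf D_1\subset\mathbf D_2\subset\mathbf M_7\subset\mathbf M_6$ and $\mathbf T\subset\mathbf{SL}\subset\mathbf C_2\subset\mathbf D_1\subset\mathbf D_2\subset\overleftarrow{\mathbf M_7}$; hence both are small. By~\cite{Gusev-19} (Fig.~\ref{L(M_6 vee dual to M_7)}), however, the interval $[\mathbf M_7\vee\overleftarrow{\mathbf M_7},\mathbf M_6\vee\overleftarrow{\mathbf M_7}]$ is uncountable, so $\mathbf M_6\vee\overleftarrow{\mathbf M_7}$ has uncountably many subvarieties and is not small. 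Thus the join of two small varieties need not be small.

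For the failure of closure under covers I would argue, using the same example, that $\mathbf M_6\vee\overleftarrow{\mathbf M_7}$ covers $\mathbf M_6$. Reading the structure of $L(\mathbf M_6\vee\overleftarrow{\mathbf M_7})$ off Fig.~\ref{L(M_6 vee dual to M_7)}, every proper subvariety of $\mathbf M_6\vee\overleftarrow{\mathbf M_7}$ either lies in $L(\mathbf M_7\vee\overleftarrow{\mathbf M_7})$, equals $\mathbf M_6$, or lies in the open interval $(\mathbf M_7\vee\overleftarrow{\mathbf M_7},\mathbf M_6\vee\overleftarrow{\mathbf M_7})$. No variety of the first or third kind contains $\mathbf M_6$: a variety $\mathbf W$ in that interval satisfies $\mathbf W\supseteq\overleftarrow{\mathbf M_7}$, so $\mathbf W\supseteq\mathbf M_6$ would force $\mathbf W\supseteq\mathbf M_6\vee\overleftarrow{\mathbf M_7}$, i.e. $\mathbf W=\mathbf M_6\vee\overleftarrow{\mathbf M_7}$; and each member of $L(\mathbf M_7\vee\overleftarrow{\mathbf M_7})$ is contained in $\mathbf M_7\vee\overleftarrow{\mathbf M_7}$, which does not contain $\mathbf M_6$. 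Hence nothing lies strictly between $\mathbf M_6$ and $\mathbf M_6\vee\overleftarrow{\mathbf M_7}$, so $\mathbf M_6\prec\mathbf M_6\vee\overleftarrow{\mathbf M_7}$. As $\mathbf M_6$ is small and $\mathbf M_6\vee\overleftarrow{\mathbf M_7}$ is not, the class of small varieties is not closed under covers. (An independent example giving the same conclusion can be drawn from Jackson and Lee~\cite{Jackson-Lee-18}.)

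The hard part is the covering step: establishing $\mathbf M_6\prec\mathbf M_6\vee\overleftarrow{\mathbf M_7}$ rests essentially on the complete description of the lattice $L(\mathbf M_6\vee\overleftarrow{\mathbf M_7})$ obtained in~\cite{Gusev-19} and summarised, modulo the uncountable top interval, in Fig.~\ref{L(M_6 vee dual to M_7)}. Once that description is granted, the smallness of $\mathbf M_6$ and $\overleftarrow{\mathbf M_7}$ and the non-smallness of their join are immediate from the figures, and the rest reduces to routine order-theoretic bookkeeping about which subvarieties contain $\mathbf M_6$.
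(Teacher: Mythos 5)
Your proposal is correct and follows essentially the same route as the paper: the paper likewise takes the pair $(\mathbf M_6,\overleftarrow{\mathbf M_7})$ and the cover $\mathbf M_6\prec\mathbf M_6\vee\overleftarrow{\mathbf M_7}$, citing~\cite[Theorem~1.1]{Gusev-19} and Fig.~\ref{L(M_6 vee dual to M_7)} for both facts (and additionally records two further Cross pairs from~\cite{Jackson-Lee-18} witnessing the join failure). Your extra step of reading the covering relation off the figure, rather than citing it directly, is sound but adds nothing beyond what the cited description of $L(\mathbf M_6\vee\overleftarrow{\mathbf M_7})$ already provides.
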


\begin{proof}
This class is not closed under joins because there exist pairs $(\mathbf X_1,\mathbf X_2)$ of Cross varieties for which the join $\mathbf X_1\vee\mathbf X_2$ is non-small, for example, $(\mathbf M_6,\overleftarrow{\mathbf M_7})$ \cite[Theorem 1.1(ii)]{Gusev-19}, $(\mathbf C_3,\mathbf A_0^1\vee\mathbf{LRB}\vee\mathbf{RRB})$, and $(\mathbf C_3,\mathbf G)$, where $\mathbf G$ is any finitely generated non-Abelian group variety \cite[Subsection~3.2]{Jackson-Lee-18}.
It is also not closed under covers since the Cross variety $\mathbf M_6$ is covered by the non-small variety $\mathbf M_6\vee\overleftarrow{\mathbf M_7}$ \cite[Theorem 1.1(i)]{Gusev-19}; see Fig.~\ref{L(M_6 vee dual to M_7)}.
\end{proof}

It is of interest to note that each of the joins $\mathbf X_1\vee\mathbf X_2$ given in the proof of Proposition~\ref{small is not closed} has the extreme property of containing uncountably many subvarieties.
Therefore the following question is relevant: are there two small semigroup varieties whose join contains uncountably many subvarieties?
This question, first posed by Jackson \cite[Question~3.15]{Jackson-00}, remains open; it is one of only a few questions where the answer is known for monoid varieties but unknown for semigroup varieties.
Another example will be given in Subsection~\ref{max and min}.

\subsection{The ascending and descending chain conditions}
\label{max and min}
It is convenient to say that a variety $\mathbf X$ \emph{satisfies ACC} [respectively, \emph{DCC}] if the lattice $L(\mathbf X)$ satisfies the ascending chain condition [respectively, descending chain condition].

First, we mention here two results that can be deduced from Rasin \cite[Corollaries~7 and~8]{Rasin-82} and Proposition~\ref{MON is sublat of SEM} (or from Pol\'ak~\cite{Polak-88} and Proposition~\ref{UCR is sublat of UCR_sem}).

\begin{proposition}
\label{max min orthodox}
For any variety $\mathbf X$ of orthodox completely regular monoids,
\begin{itemize}
\item[a)] $\mathbf X$ satisfies ACC if and only if $\mathsf{Gr}(\mathbf X)$ satisfies ACC and $\mathbf{BAND}\nsubseteq\mathbf X$;
\item[b)] $\mathbf X$ satisfies DCC if and only if $\mathsf{Gr}(\mathbf X)$ satisfies DCC.
\end{itemize}
\end{proposition}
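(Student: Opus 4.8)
The plan is to deduce the monoid statement from its semigroup counterpart (Rasin's criteria \cite[Corollaries~7 and~8]{Rasin-82}) through the embedding $f\colon\mathbb{MON}\hookrightarrow\mathbb{SEM}$ of Proposition~\ref{MON is sublat of SEM}. Fix a completely regular orthodox monoid variety $\mathbf V$ and put $\mathbf V_{\sem}=f(\mathbf V)$; since the semigroup reduct of a completely regular orthodox monoid is again completely regular and orthodox, $\mathbf V_{\sem}$ is a completely regular orthodox semigroup variety, and $f$ restricts to a lattice embedding $L(\mathbf V)\hookrightarrow L(\mathbf V_{\sem})$ (the image is the interval $[\mathbf T,\mathbf V_{\sem}]$). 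First I would record two transfer facts. (i) The restriction of $f$ to group varieties is the canonical isomorphism between group monoid varieties and group semigroup varieties; using that $f$ is an order-embedding one checks that $f$ sends the greatest group subvariety $\mathbf G$ of $\mathbf V$ to the greatest group subvariety $\mathbf G_{\sem}$ of $\mathbf V_{\sem}$ and restricts to an isomorphism $L(\mathbf G)\cong L(\mathbf G_{\sem})$; hence $\mathbf G$ satisfies ACC (respectively, DCC) if and only if $\mathbf G_{\sem}$ does. (ii) Every band embeds into a band monoid by adjoining an identity, so $f(\mathbf{BAND})=\mathbf{BAND}_{\sem}$, and since $f$ reflects inclusions, $\mathbf{BAND}\subseteq\mathbf V$ if and only if $\mathbf{BAND}_{\sem}\subseteq\mathbf V_{\sem}$.

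For the \emph{only if} directions I would argue entirely on the monoid side. The group subvarieties of $\mathbf V$ form the principal ideal $[\mathbf T,\mathbf G]=L(\mathbf G)$ of $L(\mathbf V)$, so if $L(\mathbf V)$ satisfies ACC or DCC then this interval inherits the condition and $\mathbf G$ satisfies it. For part~(a) it remains to see that $\mathbf{BAND}\nsubseteq\mathbf V$: otherwise $\mathbb{BAND}=L(\mathbf{BAND})$ would be an interval of $L(\mathbf V)$, and by Fig.~\ref{L(BAND)} the lattice $\mathbb{BAND}$ contains an infinite strictly ascending chain among the varieties $\mathbf B_n$ and $\overleftarrow{\mathbf B_n}$, contradicting ACC. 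The absence of an analogous clause in part~(b) is explained by the same figure: $\mathbb{BAND}$ does satisfy DCC, so containing $\mathbf{BAND}$ is compatible with DCC.

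For the \emph{if} directions I would transfer from the semigroup result. Being an order-embedding, $f$ maps strictly ascending (respectively, descending) chains of $L(\mathbf V)$ to such chains of $L(\mathbf V_{\sem})$; consequently, if $L(\mathbf V_{\sem})$ satisfies ACC (respectively, DCC), then so does $L(\mathbf V)$. In part~(b), assuming $\mathbf G$ satisfies DCC, fact~(i) gives that $\mathbf G_{\sem}$, the greatest group subvariety of $\mathbf V_{\sem}$, satisfies DCC; the semigroup DCC criterion then yields DCC for $\mathbf V_{\sem}$, and the chain-transfer above gives DCC for $\mathbf V$. In part~(a), assuming $\mathbf G$ satisfies ACC and $\mathbf{BAND}\nsubseteq\mathbf V$, facts~(i) and~(ii) translate these into ``$\mathbf G_{\sem}$ satisfies ACC and $\mathbf{BAND}_{\sem}\nsubseteq\mathbf V_{\sem}$''; the semigroup ACC criterion gives ACC for $\mathbf V_{\sem}$, whence ACC for $\mathbf V$.

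The only genuinely nontrivial input is Rasin's characterisation, so the main obstacle lies not in the present argument but in invoking it correctly: the chain-preservation afforded by $f$ runs only from $\mathbb{SEM}$ to $\mathbb{MON}$, which is exactly why the two implications must be handled by different means---necessity on the monoid side via the ideal $L(\mathbf G)$ and the figure for $\mathbb{BAND}$, and sufficiency by pulling the finiteness condition back along $f$ from the semigroup theorem. An entirely parallel argument proves the statement through the unary setting, using Pol\'ak's \cite{Polak-88} description of $\mathbb{UOCR}_{\sem}$ (cf.\ Proposition~\ref{struct of UOCR}) in place of Rasin's corollaries and the embedding of Proposition~\ref{UCR is sublat of UCR_sem} in place of $f$.
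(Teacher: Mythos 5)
Your proposal is correct and follows the same route the paper indicates: the paper gives no explicit proof but states that the result follows from Rasin's Corollaries~7 and~8 together with the embedding of Proposition~\ref{MON is sublat of SEM} (or, alternatively, from Pol\'ak's results via Proposition~\ref{UCR is sublat of UCR_sem}), which is precisely the transfer argument you carry out in detail, including the correct observation that the order-embedding only pulls finiteness conditions back from $\mathbb{SEM}$ to $\mathbb{MON}$, so the necessity direction must be handled directly on the monoid side.
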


It is unknown if Proposition~\ref{max min orthodox} holds for other non-orthodox varieties.

\begin{question}
\label{max min cr?}
Is the analog of Proposition~\ref{max min orthodox} true for arbitrary varieties of completely regular monoids?
\end{question}

We have little information about monoid varieties satisfying ACC or DCC outside the completely regular case.
M.\,V.\@ Sapir~\cite{Sapir_M-91} demonstrated that the class of semigroup varieties that satisfy DCC is closed under neither joins nor covers.
But whether or not the class of semigroup varieties that satisfy ACC is closed under joins or covers remains an open question \cite[Question~10.2]{Shevrin-Vernikov-Volkov-09}.
This is another question---after the one given at the end of Subsection~\ref{small}---where the answer is unknown for semigroup varieties but known for monoid varieties.

\begin{proposition}
\label{ACC and DCC are not closed}
The class of monoid varieties that satisfy ACC and the class of monoid varieties that satisfy DCC are closed under neither joins nor covers.
\end{proposition}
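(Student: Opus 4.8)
The plan is to exploit the observation that every Cross variety is small, hence has a finite subvariety lattice and so satisfies both ACC and DCC; thus Cross varieties are convenient witnesses lying in \emph{both} classes under consideration. Since a variety fails ACC (respectively DCC) precisely when its subvariety lattice contains an infinite strictly ascending (respectively descending) chain, disproving closure under joins amounts to exhibiting Cross varieties $\mathbf V_1,\mathbf V_2$ for which $\mathbf V_1\vee\mathbf V_2$ contains the relevant infinite chain, and disproving closure under covers amounts to exhibiting a Cross variety $\mathbf V$ with a cover $\mathbf V\prec\mathbf V'$ in $\mathbb{MON}$ for which $\mathbf V'$ contains such a chain. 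I would stress at the outset that the ascending and descending cases cannot be reduced to one another by duality: the map $\delta\colon\mathbf V\mapsto\overleftarrow{\mathbf V}$ is a lattice \emph{automorphism} of $\mathbb{MON}$, so $L(\overleftarrow{\mathbf V})\cong L(\mathbf V)$ and the two chain conditions are preserved, not interchanged, by $\delta$. Consequently separate witnesses are needed for ACC and for DCC.

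First I would reuse the very examples appearing in the proof of Proposition~\ref{small is not closed}. There, Cross varieties such as $\mathbf M_6$ and $\overleftarrow{\mathbf M_7}$ were shown to have a non-small join $\mathbf M_6\vee\overleftarrow{\mathbf M_7}$, and $\mathbf M_6$ was shown to be covered by this same non-small variety (see Fig.~\ref{L(M_6 vee dual to M_7)}); the pairs $(\mathbf C_3,\mathbf A\vee\mathbf{LRB}\vee\mathbf{RRB})$ and $(\mathbf C_3,\mathbf G)$ furnish further non-small joins. All the smaller varieties here are Cross and so satisfy ACC and DCC. The task is therefore reduced to locating, inside each of these non-small lattices, an honest infinite ascending chain (for the ACC assertions) and an honest infinite descending chain (for the DCC assertions). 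Here I would invoke the detailed structural descriptions obtained in \cite{Gusev-19} and \cite{Jackson-Lee-18}: these continuum-sized subvariety lattices contain a sublattice in which both an infinite ascending chain and an infinite descending chain can be read off (for instance a copy of an infinite Boolean lattice, whose singleton-generated subsets ascend and whose cosingletons descend). Once such chains are exhibited, the join of the two Cross varieties, as well as the cover $\mathbf M_6\prec\mathbf M_6\vee\overleftarrow{\mathbf M_7}$ of the Cross variety $\mathbf M_6$, fails both ACC and DCC, which settles all four assertions simultaneously.

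The hard part will be the descending direction. Cardinality alone is insufficient: by the Ramsey dichotomy an infinite poset contains an infinite chain \emph{or} an infinite antichain, and an interval carrying uncountably many subvarieties could in principle be an uncountable antichain of finite height, in which case it would satisfy both ACC and DCC. Thus the purely antichain situation must be excluded by appeal to the explicit lattice structure rather than to mere size. Infinite ascending chains are comparatively easy to produce --- for example the Cross chains $\mathbf C_2\subset\mathbf C_3\subset\cdots$ and $\mathbf D_1\subset\mathbf D_2\subset\cdots$ of Theorem~\ref{struct of COM} and Fig.~\ref{all chain var}, or the series $\mathbf E_n$ introduced in this subsection, already show that non-ACC varieties such as $\mathbf{COM}$, $\mathbf D$ and $\mathbf D\vee\mathbf M_6$ (Fig.~\ref{L(D vee M_6)}) arise near Cross varieties. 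The genuinely delicate step is to pin down an explicit infinite \emph{descending} chain inside a join or cover of Cross varieties, since such chains are scarce in $\mathbb{MON}$; for this I would lean on the concrete constructions of Gusev~\cite{Gusev-19} and Jackson and Lee~\cite{Jackson-Lee-18}, which are the monoid counterparts of M.\,V.~Sapir's semigroup examples~\cite{Sapir_M-91} and are engineered to contain descending chains of the required kind.
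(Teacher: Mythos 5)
Your proposal is correct and follows essentially the same route as the paper: both use the Cross-variety pairs from the proof of Proposition~\ref{small is not closed} (whose joins violate ACC and DCC by~\cite{Gusev-19,Jackson-Lee-18}) for non-closure under joins, and the cover $\mathbf M_6\prec\mathbf M_6\vee\overleftarrow{\mathbf M_7}$ for non-closure under covers, with the paper likewise pointing to the interval $[\mathbf M_7\vee\overleftarrow{\mathbf M_7},\mathbf M_6\vee\overleftarrow{\mathbf M_7}]$ as the source of the infinite ascending and descending chains. Your added remarks --- that duality preserves rather than interchanges the two chain conditions, and that uncountability alone does not rule out a lattice of finite height --- are sound cautions but do not change the argument.
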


\begin{proof}
These two classes are not closed under joins because for any of the explicit pairs $(\mathbf X_1,\mathbf X_2)$ of Cross varieties given in the proof of Proposition~\ref{small is not closed}, the join $\mathbf X_1 \vee\mathbf X_2$ violates both ACC and DCC \cite{Gusev-19,Jackson-Lee-18}.
The two classes are also not closed under covers because the Cross variety $\mathbf M_6$ in Fig.~\ref{L(M_6 vee dual to M_7)} is covered by the variety $\mathbf M_6\vee\overleftarrow{\mathbf M_7}$ which violates both ACC and DCC; specifically, the subinterval $[\mathbf M_7\vee\overleftarrow{\mathbf M_7},\mathbf M_6\vee\overleftarrow{\mathbf M_7}]$ of $L(\mathbf M_6\vee\overleftarrow{\mathbf M_7})$ violates the ascending and descending chain conditions~\cite{Gusev-19}.
\end{proof}

There exist monoid varieties that satisfy DCC but violate ACC, for example, $\mathbf{BAND}$, $\mathbf{COM}$, $\mathbf D$, $\mathbf F$, and $\mathbf Y$; see Figs.~\ref{L(BAND)}, \ref{all chain var}, and~\ref{L(Y)} and Theorem~\ref{struct of COM}.
These varieties are all non-finitely generated; a finitely generated example is the variety $\mathbf E=\var E^1$, where
\[
E=\langle a,b,c\mid a^2=ab=0,\,b^2=bc=b,\,c^2=cb=c,\,ba=ca=a\rangle=\{a,b,c,ac,0\}.
\]
The lattice $L(\mathbf E)$ is described in Jackson and Lee~\cite{Jackson-Lee-18}; see Fig.~\ref{L(E)}.
The undefined varieties in $L(\mathbf E)$ are
\[
\mathbf E_n=
\begin{cases}
\mathbf E\wedge\var\{\mathbf e_1t_1\mathbf e_2t_2\cdots\mathbf e_nt_nx^2y^2\approx\mathbf e_1t_1\mathbf e_2t_2\cdots\mathbf e_nt_ny^2x^2\}&\text{if}\enskip n\in\mathbb N,\\
\mathbf E\wedge\var\{x^2y^2tx^2y^2\approx x^2y^2ty^2x^2\}&\text{if}\enskip n=\infty,
\end{cases}
\]
where $\mathbf e_{2i-1}=x^2$ and $\mathbf e_{2i}=y^2$ for all $i\in\mathbb N$.

\begin{figure}[htb]
\unitlength=1mm
\linethickness{0.4pt}
\begin{center}
\begin{picture}(46,118)
\put(10,33){\circle*{1.33}}
\put(10,53){\circle*{1.33}}
\put(20,43){\circle*{1.33}}
\put(20,63){\circle*{1.33}}
\put(30,3){\circle*{1.33}}
\put(30,13){\circle*{1.33}}
\put(30,23){\circle*{1.33}}
\put(30,33){\circle*{1.33}}
\put(30,53){\circle*{1.33}}
\put(30,73){\circle*{1.33}}
\put(30,83){\circle*{1.33}}
\put(30,93){\circle*{1.33}}
\put(30,103){\circle*{1.33}}
\put(30,113){\circle*{1.33}}
\put(40,43){\circle*{1.33}}
\put(40,63){\circle*{1.33}}
\gasset{AHnb=0,linewidth=0.4}
\drawline(20,63)(30,73)(40,63)(20,43)
\drawline(30,3)(30,33)(10,53)(10,33)(30,13)
\drawline(30,33)(40,43)(20,63)(10,53)
\drawline(30,73)(30,95)
\drawline(30,103)(30,113)
\drawline[dash={0.2 0.8}{0}](30,95)(30,103)
\put(31,23){\makebox(0,0)[lc]{$\mathbf C_2$}}
\put(31,32){\makebox(0,0)[lc]{$\mathbf D_1$}}
\put(30,116){\makebox(0,0)[cc]{$\mathbf E$}}
\put(32,73){\makebox(0,0)[lc]{$\mathbf E_1$}}
\put(31,83){\makebox(0,0)[lc]{$\mathbf E_2$}}
\put(31,93){\makebox(0,0)[lc]{$\mathbf E_3$}}
\put(31,103){\makebox(0,0)[lc]{$\mathbf E_\infty$}}
\put(19,43){\makebox(0,0)[rc]{$\mathbf F_0$}}
\put(41,43){\makebox(0,0)[lc]{$\overleftarrow{\mathbf F_0}$}}
\put(9,33){\makebox(0,0)[rc]{$\mathbf{LRB}$}}
\put(41,63){\makebox(0,0)[lc]{$\mathbf Q$}}
\put(31,13){\makebox(0,0)[lc]{$\mathbf{SL}$}}
\put(30,0){\makebox(0,0)[cc]{$\mathbf T$}}
\end{picture}
\end{center}
\caption{The lattice $L(\mathbf E)$}
\label{L(E)}
\end{figure}
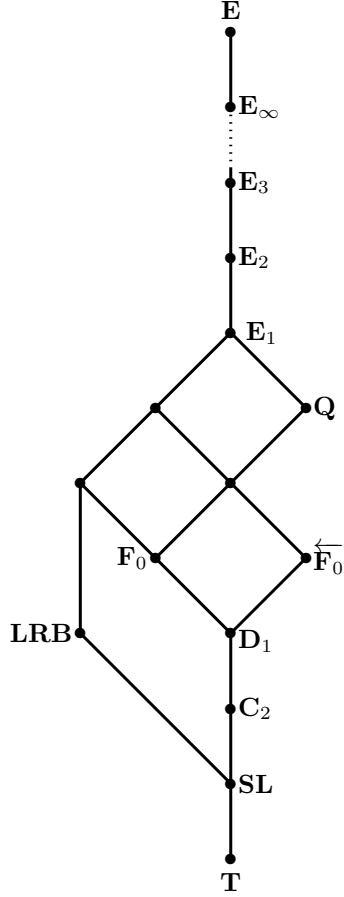

The following question remains open.

\begin{question}
\label{asc but not desc?}
Is there a monoid variety that satisfies ACC but violates DCC?
\end{question}

Note that an example of a semigroup variety that satisfies ACC but violates DCC can be found in M.\,V.\@ Sapir~\cite{Sapir_M-91}.

The following question was first posed in A\v{\i}zen\v{s}tat and Boguta~\cite{Aizenshtat-Boguta-79} and later repeated in Shevrin \textit{et~al.}\@ \cite[Question~10.3]{Shevrin-Vernikov-Volkov-09}: is there a non-small semigroup variety that satisfies both ACC and DCC? 
This question remains open so far, and it is natural to formulate it for monoid varieties.

\begin{question}
\label{asc and desc but inf?}
Is there a non-small monoid variety that satisfies both ACC and DCC?
\end{question}

\section{Other restrictions}
\label{other}

In this section, we consider three different types of restrictions on a subvariety lattice of monoid varieties: decomposability into a direct product, self-duality, and complementability and related conditions.
These properties have not yet been considered for monoid varieties, but substantial information---up to complete classification in some cases---easily follows from known results.

\subsection{Decomposability into a direct product}
\label{decompos}
A variety of algebras is \emph{decomposable} if its subvariety lattice is decomposable into a direct product of non-singleton lattices.
Decomposable varieties of semigroups were studied by Vernikov; see Shevrin \textit{et~al.}\@ \cite[Subsection~13.3]{Shevrin-Vernikov-Volkov-09}.

Two varieties of algebras $\mathbf X_1$ and $\mathbf X_2$ of the same type are \emph{disjoint} if $\mathbf X_1\wedge\mathbf X_2=\mathbf T$.
Simple lattice-theoretic arguments show that if $\langle L;\vee,\wedge\rangle$ is a modular, 0-distributive lattice and $x,y\in L$ are such that $x\wedge y=0$, then the interval $[0,x\vee y]$ in $L$ is isomorphic to the direct product of the intervals $[0,x]$ and $[0,y]$.
Combining this claim with Observation~\ref{0-distr} and Proposition~\ref{CR is mod}, we obtain the following result.

\begin{proposition}
\label{decompos cr}
If $\mathbf V$ is a completely regular monoid variety such that $\mathbf V=\mathbf X_1\vee\mathbf X_2$ for some disjoint varieties $\mathbf X_1$ and $\mathbf X_2$, then $L(\mathbf V)\cong L(\mathbf X_1)\times L(\mathbf X_2)$.
Therefore, a completely regular monoid variety is decomposable if and only if it is the join of two non-trivial disjoint varieties.
\end{proposition}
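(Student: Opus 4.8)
The plan is to apply to the lattice $L=\mathbb{CR}$ the lattice-theoretic fact quoted immediately before the statement, taking $x=\mathbf X_1$ and $y=\mathbf X_2$. First I would check the hypotheses. Modularity of $\mathbb{CR}$ is exactly Proposition~\ref{CR is mod}. For 0-distributivity, I would observe that $\mathbb{CR}$ is a sublattice of $\mathbb{MON}$ with the same least element $\mathbf T$, so the defining implication of 0-distributivity is inherited from $\mathbb{MON}$, which is 0-distributive by Observation~\ref{0-distr}. Since $\mathbf X_1,\mathbf X_2\subseteq\mathbf V$ and $\mathbf V$ is completely regular, both $\mathbf X_1$ and $\mathbf X_2$ are completely regular and hence lie in $\mathbb{CR}$; the disjointness assumption says precisely $\mathbf X_1\wedge\mathbf X_2=\mathbf T$, the zero of $\mathbb{CR}$.

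The quoted fact then gives $[\mathbf T,\mathbf X_1\vee\mathbf X_2]\cong[\mathbf T,\mathbf X_1]\times[\mathbf T,\mathbf X_2]$ inside $\mathbb{CR}$. Because every subvariety of a completely regular variety is again completely regular, each such interval in $\mathbb{CR}$ coincides with the corresponding full subvariety lattice; thus, using $\mathbf X_1\vee\mathbf X_2=\mathbf V$, the isomorphism reads $L(\mathbf V)\cong L(\mathbf X_1)\times L(\mathbf X_2)$, which is the first assertion.

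For the equivalence, the forward direction is immediate: if $\mathbf V=\mathbf X_1\vee\mathbf X_2$ with $\mathbf X_1,\mathbf X_2$ non-trivial and disjoint, the isomorphism just established presents $L(\mathbf V)$ as a product of the non-singleton lattices $L(\mathbf X_1)$ and $L(\mathbf X_2)$, so $\mathbf V$ is decomposable. Conversely, suppose $L(\mathbf V)\cong L_1\times L_2$ with each $L_i$ non-singleton. I would pull the elements $(1_{L_1},0_{L_2})$ and $(0_{L_1},1_{L_2})$ back along this isomorphism to subvarieties $\mathbf X_1,\mathbf X_2$ of $\mathbf V$. Their meet corresponds to $(0_{L_1},0_{L_2})$, i.e.\ $\mathbf T$, and their join to $(1_{L_1},1_{L_2})$, i.e.\ $\mathbf V$, so $\mathbf X_1$ and $\mathbf X_2$ are disjoint with $\mathbf X_1\vee\mathbf X_2=\mathbf V$; since each $L_i$ has distinct top and bottom, neither $\mathbf X_i$ equals $\mathbf T$, so both are non-trivial.

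I do not expect a serious obstacle, as the substantive lattice-theoretic content is supplied by the quoted fact. The only points needing care are the inheritance of 0-distributivity to $\mathbb{CR}$ (which relies on $\mathbf T$ being the common least element) and, in the converse, the routine check that the two ``coordinate'' elements of a direct product are complementary and differ from the bottom exactly when the factors are non-singleton.
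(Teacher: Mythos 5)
Your proposal is correct and follows essentially the same route as the paper: the paper likewise obtains the isomorphism by combining the quoted modular\,$+$\,0-distributive lattice fact with Proposition~\ref{CR is mod} and Observation~\ref{0-distr} (implicitly using, as you make explicit, that 0-distributivity passes to the sublattice $\mathbb{CR}$ with the same least element), and treats the decomposability equivalence as immediate. Your explicit verification of the converse via the coordinate elements $(1,0)$ and $(0,1)$ is exactly the routine check the paper leaves to the reader.
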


Since the lattice $\mathbb{CR}_\mathsf{sem}$ is modular and 0-distributive, the semigroup analog of Proposition~\ref{decompos cr} is valid as well.

Recall that Proposition~\ref{number of covers cr} states that every completely regular monoid variety has an infinite number of covers in the lattice $\mathbb{MON}$.
We are now ready to provide a proof of this result.

\begin{proof}[Proof of Proposition~\ref{number of covers cr}]
Let $\mathbf V$ be any completely regular monoid variety.
The join of an arbitrary infinite family of group atoms of $\mathbb{MON}$ equals $\mathbf{COM}$.
Therefore, $\mathbf V$ contains only a finite number of varieties of the form $\mathbf A_p$ with prime $p$.
In other words, there is an infinite set $\Pi$ of prime numbers such that $\mathbf A_p\nsubseteq\mathbf V$ for any $p\in\Pi$.
Now we can apply Observation~\ref{atoms} and Proposition~\ref{decompos cr} with the conclusion that $\mathbf V\vee\mathbf A_p$ covers $\mathbf V$ for any $p\in\Pi$.
\end{proof}

Some necessary conditions of decomposability can also be established for varieties of monoids that are not completely regular.

\begin{proposition}
\label{decompos non-cr}
Suppose that $\mathbf V$ is any non-completely regular monoid variety such that $L(\mathbf V)\cong L(\mathbf X_1)\times L(\mathbf X_2)$ for some non-trivial subvarieties $\mathbf X_1$ and $\mathbf X_2$ of $\mathbf V$.
Then either $\mathbf X_1$ or $\mathbf X_2$ equals $\mathbf A_n$ for some $n\ge 2$.
\end{proposition}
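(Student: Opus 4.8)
The plan is to realise the product decomposition through the canonical coordinate map and then to show that the non-completely regular part of $\mathbf V$ is forced to absorb the variety $\mathbf D_1$. First I would record that a decomposition $L(\mathbf V)\cong L(\mathbf X_1)\times L(\mathbf X_2)$ in which the factors are the subvariety lattices of actual subvarieties $\mathbf X_1,\mathbf X_2$ means that $\mathbf X_1$ and $\mathbf X_2$ are complementary neutral elements of $L(\mathbf V)$: we have $\mathbf X_1\vee\mathbf X_2=\mathbf V$, $\mathbf X_1\wedge\mathbf X_2=\mathbf T$, every $\mathbf W\in L(\mathbf V)$ satisfies $\mathbf W=(\mathbf W\wedge\mathbf X_1)\vee(\mathbf W\wedge\mathbf X_2)$, and the isomorphism is $\mathbf W\mapsto(\mathbf W\wedge\mathbf X_1,\mathbf W\wedge\mathbf X_2)$. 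Since $\mathbf V$ is not completely regular, line~4 of Table~\ref{forbid var} gives $\mathbf C_2\subseteq\mathbf V$ and hence $\mathbf{SL}\subseteq\mathbf V$. The variety $\mathbf{SL}$ is an atom of $L(\mathbf V)$ by Observation~\ref{atoms}, and an atom of a direct product lies below exactly one factor; so I may assume $\mathbf{SL}\subseteq\mathbf X_1$ and $\mathbf{SL}\not\subseteq\mathbf X_2$, so that $\mathbf X_2$ is a periodic group variety by line~3 of Table~\ref{forbid var}.

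Next I would determine the relevant coordinates. The varieties $\mathbf C_2$ and $\mathbf D_1$ both satisfy $x^2\approx x^3$, so they are aperiodic and their meet with the group variety $\mathbf X_2$ is trivial. Thus $\mathbf C_2\wedge\mathbf X_2=\mathbf T$, which forces $\mathbf C_2=(\mathbf C_2\wedge\mathbf X_1)\vee\mathbf T\subseteq\mathbf X_1$; that is, $\mathbf C_2$ has coordinates $(\mathbf C_2,\mathbf T)$. The crux is to prove that $\mathbf X_2$ is commutative. Suppose it is not. Then $\mathbf C_2\vee\mathbf X_2$ is neither completely regular (it contains $\mathbf C_2$) nor commutative (it contains the non-commutative $\mathbf X_2$), so Lemma~\ref{non-cr and non-commut} gives $\mathbf D_1\subseteq\mathbf C_2\vee\mathbf X_2$. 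Since $\mathbf X_2\wedge\mathbf X_1=\mathbf T$, the join $\mathbf C_2\vee\mathbf X_2$ has coordinates $(\mathbf C_2,\mathbf X_2)$, while $\mathbf D_1\wedge\mathbf X_2=\mathbf T$ gives $\mathbf D_1\subseteq\mathbf X_1$, so $\mathbf D_1$ has coordinates $(\mathbf D_1,\mathbf T)$. Comparing coordinates, the inclusion $\mathbf D_1\subseteq\mathbf C_2\vee\mathbf X_2$ collapses to $\mathbf D_1\subseteq\mathbf C_2$, which is impossible because $\mathbf D_1$ is non-commutative while $\mathbf C_2$ is commutative. Hence $\mathbf X_2$ is commutative, and being a periodic commutative group variety it equals $\mathbf A_n$ for some $n$ (cf.\ Theorem~\ref{struct of COM}); as $\mathbf X_2$ is non-trivial, $n\ge2$.

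The step that looks genuinely threatening at the outset is controlling $\mathbf X_2$: a non-commutative group variety can be extremely wild (Theorem~\ref{chain gr uncount}), so any approach that tries to analyse its internal structure seems hopeless. The realisation that dissolves this obstacle is that one never needs to understand $\mathbf X_2$ at all---it suffices to feed it, together with $\mathbf C_2$, into Lemma~\ref{non-cr and non-commut} to extract the single small witness $\mathbf D_1$, after which a one-line coordinate comparison produces the contradiction. The only other point requiring care is the opening identification of the product isomorphism with the canonical coordinate map, i.e.\ verifying that $\mathbf X_1$ and $\mathbf X_2$ are truly complementary; this is where I would be most careful to avoid a gap.
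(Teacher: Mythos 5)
Your proof is correct and follows essentially the same route as the paper's: locate $\mathbf{SL}$ (hence $\mathbf C_2$) in one factor, conclude the other factor is a periodic group variety via line~3 of Table~\ref{forbid var}, and rule out the non-Abelian case by producing $\mathbf D_1$ inside $\mathbf C_2\vee\mathbf X_2$ and deriving a contradiction with the product structure. The only cosmetic differences are that the paper obtains $\mathbf D_1\subset\mathbf C_2\vee\mathbf X_2$ by citing an external lemma rather than deducing it from Lemma~\ref{non-cr and non-commut} as you do, and that your implicit identification of the abstract isomorphism with the canonical coordinate map is exactly the reading the paper itself adopts.
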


\begin{proof}
Clearly, $\mathbf X_1$ and $\mathbf X_2$ are disjoint.
Further, since $\mathbf V$ is not completely regular, $\mathbf C_2\subseteq\mathbf V$ by line~4 of Table~\ref{forbid var}.
Since the lattice $L(\mathbf C_2)$ has only one atom, $\mathbf C_2$ is contained in either $\mathbf X_1$ or $\mathbf X_2$, say, in $\mathbf X_1$.
Then $\mathbf{SL}\subset\mathbf C_2\subseteq\mathbf X_1$.
It follows that $\mathbf{SL}\nsubseteq\mathbf X_2$ because $\mathbf X_1$ and $\mathbf X_2$ are disjoint.
Now we can apply line~3 of Table~\ref{forbid var} and conclude that $\mathbf X_2$ is a group variety.
Suppose that $\mathbf X_2$ is non-Abelian.
Then $\mathbf D_1\subset\mathbf C_2\vee\mathbf X_2$ \cite[Lemma~3.1]{Gusev-18b}, whence $\mathbf D_1\vee\mathbf X_2\subseteq\mathbf C_2\vee\mathbf X_2$; the reverse inclusion is evident, so that $\mathbf D_1\vee\mathbf X_2=\mathbf C_2\vee\mathbf X_2$.
Since the lattice $L(\mathbf D_1)$ has only one atom (see Fig.~\ref{all chain var}), the inclusion $\mathbf D_1\subseteq\mathbf X_1$ follows.
We see that $\mathbf C_2\subset\mathbf D_1\subseteq\mathbf X_1$ but $\mathbf C_2\vee\mathbf X_2=\mathbf D_1\vee\mathbf X_2$.
This contradicts the claim that $L(\mathbf V)\cong L(\mathbf X_1)\times L(\mathbf X_2)$.
Consequently, $\mathbf X_2$ is an Abelian group variety, that is, $\mathbf X_2=\mathbf A_n$ for some $n\ge 2$.
\end{proof}

\begin{proposition}
\label{decompos one more nec}
Let $\mathbf V$ be an aperiodic monoid variety with a unique cover $\mathbf U$ in $\mathbb{MON}$ such that $\mathbf U$ is a meet of all varieties that properly contain $\mathbf V$. 
If a monoid variety $\mathbf W$ is decomposable, then $\mathbf V\nsubseteq\mathbf W$.
\end{proposition}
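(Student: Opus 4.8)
The plan is to argue by contradiction, closely following the strategy of the proof of Proposition~\ref{mod one more nec}: assuming $\mathbf V\subseteq\mathbf W$ with $\mathbf W$ decomposable, I would manufacture a copy of some $\mathbf A_p$ inside the cover $\mathbf U$, which is impossible because $\mathbf U$ is aperiodic. First I would record two preliminary facts. The variety $\mathbf V$ is non-trivial: if $\mathbf V=\mathbf T$, then by Observation~\ref{atoms} there is more than one atom, each covering $\mathbf T$, contradicting the uniqueness of $\mathbf U$. The cover $\mathbf U$ is aperiodic: since there are no maximal aperiodic monoid varieties (as noted in the proof of Proposition~\ref{mod one more nec}), some aperiodic variety properly contains $\mathbf V$, and as $\mathbf U$ is the meet of all proper supervarieties of $\mathbf V$, it is contained in that aperiodic variety and hence aperiodic.

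Next I would set up the decomposition. Write $L(\mathbf W)\cong L(\mathbf X_1)\times L(\mathbf X_2)$ with $\mathbf X_1,\mathbf X_2$ non-trivial, $\mathbf W=\mathbf X_1\vee\mathbf X_2$ and $\mathbf X_1\wedge\mathbf X_2=\mathbf T$, and put $\mathbf V_i=\mathbf V\wedge\mathbf X_i$. The direct decomposition gives $\mathbf V=\mathbf V_1\vee\mathbf V_2$ with $\mathbf V_1\wedge\mathbf V_2=\mathbf T$. Each $\mathbf V_i$ is aperiodic, and by Observation~\ref{atoms} every non-trivial aperiodic variety contains the atom $\mathbf{SL}$; hence $\mathbf V_1,\mathbf V_2$ cannot both be non-trivial, for otherwise $\mathbf{SL}\subseteq\mathbf V_1\wedge\mathbf V_2=\mathbf T$. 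As $\mathbf V$ is non-trivial, exactly one of them, say $\mathbf V_2$, is non-trivial and $\mathbf V_1=\mathbf T$; thus $\mathbf V=\mathbf V_2\subseteq\mathbf X_2$ and $\mathbf V\wedge\mathbf X_1=\mathbf T$. In particular $\mathbf V\subset\mathbf W$ (equality would force $\mathbf X_1\subseteq\mathbf X_2$, i.e.\ $\mathbf X_1=\mathbf T$). The meet property then yields $\mathbf U\subseteq\mathbf W$, and since $L(\mathbf W)$ is a principal ideal of $\mathbb{MON}$, a cover of $\mathbf V$ inside $L(\mathbf W)$ is a cover in $\mathbb{MON}$; therefore $\mathbf U$ is the unique cover of $\mathbf V$ within $L(\mathbf W)$.

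Finally I would read the cover off the product. In coordinates $\mathbf V\leftrightarrow(\mathbf T,\mathbf V)$, so the covers of $\mathbf V$ are the pairs $(\mathbf a,\mathbf V)$ with $\mathbf a$ an atom of $L(\mathbf X_1)$, together with the pairs $(\mathbf T,\mathbf b)$ with $\mathbf V\lessdot\mathbf b$ in $L(\mathbf X_2)$. Since $\mathbf X_1$ is non-trivial, $L(\mathbf X_1)$ has at least one atom; uniqueness of the cover then forces $L(\mathbf X_1)$ to have a single atom $\mathbf U_1$ and forces $\mathbf V$ to be maximal in $L(\mathbf X_2)$, i.e.\ $\mathbf V=\mathbf X_2$. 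Consequently $\mathbf U=\mathbf U_1\vee\mathbf V$. Now $\mathbf X_1$ is disjoint from $\mathbf X_2=\mathbf V$, and because $\mathbf{SL}\subseteq\mathbf V$ we must have $\mathbf{SL}\nsubseteq\mathbf X_1$; by line~3 of Table~\ref{forbid var}, $\mathbf X_1$ is a group variety, so its unique atom is $\mathbf U_1=\mathbf A_p$ for some prime $p$. Hence $\mathbf U=\mathbf A_p\vee\mathbf V\supseteq\mathbf A_p$ is non-aperiodic, contradicting the aperiodicity of $\mathbf U$.

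The main obstacle I anticipate is the bookkeeping in the middle step: translating ``$\mathbf V$ has a unique cover'' into a constraint on the two factors of $L(\mathbf W)$, and, crucially, justifying that this cover actually lies in $L(\mathbf W)$ so that the product structure detects it. This is precisely where the hypothesis that $\mathbf U$ is the meet of \emph{all} proper supervarieties of $\mathbf V$, rather than merely a cover, is indispensable; the remaining inferences are routine applications of Observation~\ref{atoms} and Table~\ref{forbid var}.
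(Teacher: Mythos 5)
Your proof is correct, but it follows a genuinely different route from the paper's. The paper first invokes Proposition~\ref{number of covers cr} to conclude that $\mathbf V$ (hence $\mathbf W$) is not completely regular, then applies Proposition~\ref{decompos non-cr} to pin down the decomposition as $L(\mathbf W)\cong L(\mathbf X)\times L(\mathbf A_n)$, and finally derives a contradiction with the injectivity of that decomposition: since $\mathbf U$ is aperiodic it has a single atom, so $\mathbf U\subseteq\mathbf X$, yet $\mathbf V\vee\mathbf A_n=\mathbf U\vee\mathbf A_n$ while $\mathbf V\subset\mathbf U$. You instead work with an arbitrary decomposition $L(\mathbf W)\cong L(\mathbf X_1)\times L(\mathbf X_2)$, locate $\mathbf V$ as $(\mathbf T,\mathbf V)$ using atomicity and aperiodicity, read the covers of $\mathbf V$ off the product, and conclude that the unique cover must be $\mathbf A_p\vee\mathbf V$, contradicting the aperiodicity of $\mathbf U$ directly. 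Your argument is more self-contained---it bypasses Propositions~\ref{number of covers cr} and~\ref{decompos non-cr} entirely, needing only Observation~\ref{atoms}, line~3 of Table~\ref{forbid var}, and the standard description of covers in a direct product---at the price of more bookkeeping; the paper's is shorter given the machinery it has already built.

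One small point: your assertion that uniqueness of the cover ``forces $\mathbf V$ to be maximal in $L(\mathbf X_2)$'' does not follow from cover-uniqueness alone, since $\mathbf V$ could a priori sit strictly below $\mathbf X_2$ without having a cover there (the paper exhibits varieties with no covers at all). The claim is true, but only via the meet hypothesis: if $\mathbf V\subset\mathbf X_2$ then $\mathbf U\subseteq\mathbf X_2$, which is incompatible with $\mathbf U=(\mathbf U_1,\mathbf V)$ having non-trivial first coordinate. Fortunately the equality $\mathbf V=\mathbf X_2$ is not actually needed: your final contradiction uses only $\mathbf{SL}\subseteq\mathbf V\subseteq\mathbf X_2$ and $\mathbf X_1\wedge\mathbf X_2=\mathbf T$, so the argument stands as written once that superfluous claim is dropped or properly justified.
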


\begin{proof}
By Proposition~\ref{number of covers cr}, the variety $\mathbf V$ is not completely regular.
Suppose that $\mathbf W$ is decomposable and $\mathbf V\subseteq\mathbf W$.
Then $\mathbf W$ is not completely regular too.
According to Proposition~\ref{decompos non-cr}, $L(\mathbf W)\cong L(\mathbf X)\times L(\mathbf A_n)$ for some subvariety $\mathbf X$ of $\mathbf W$ and some $n\ge 2$ such that $\mathbf A_n\subseteq\mathbf W$.
Since $\mathbf U$ is a meet of all subvarieties of $\mathbf X$ that properly contain $\mathbf V$ and maximal aperiodic monoid varieties do not exist, the variety $\mathbf U$ is aperiodic.
Hence $\mathbf U\nsubseteq\mathbf A_n$.
On the other hand, $\mathbf U\subset\mathbf V\vee\mathbf A_n$ by the choice of $\mathbf U$, which implies that $\mathbf U\vee\mathbf A_n\subseteq\mathbf V\vee\mathbf A_n$.
The reverse inclusion is evident, thus $\mathbf U\vee\mathbf A_n=\mathbf V\vee\mathbf A_n$.
Since $\mathbf U$ is aperiodic, the lattice $L(\mathbf U)$ has only one atom, whence $\mathbf U\subseteq\mathbf X$.
We see that $\mathbf V\subset\mathbf U\subseteq\mathbf X$ but $\mathbf V\vee\mathbf A_n=\mathbf U\vee\mathbf A_n$.
This contradicts the claim that $L(\mathbf W)\cong L(\mathbf X)\times L(\mathbf A_n)$.
\end{proof}

In particular, Propositions~\ref{unique cover general} and~\ref{decompos one more nec} and Remark~\ref{COM,M_2,N_k} imply the following result.

\begin{remark}
\label{out of decompos}
Every decomposable monoid variety does not contain any of the varieties $\mathbf M_2$ and $\mathbf N_k$ for any $k\in\mathbb N$.
\end{remark}

In view of Proposition~\ref{decompos non-cr}, if $\mathbf V$ is a non-completely regular decomposable monoid variety, then $\mathbf V=\mathbf A_n\vee\mathbf X$ for some $n\ge 2$ and $\mathbf X \subseteq \mathbf V$ such that $\mathbf A_n \wedge \mathbf X = \mathbf T$.
To describe all decomposable varieties, it is natural to first of all consider the case when the variety $\mathbf X$ is aperiodic.
This leads to the following problem which seems to be very difficult.

\begin{problem}
\label{A_n vee aper}
Let $n\ge 2$. 
Describe all aperiodic monoid varieties $\mathbf X$ such that $L(\mathbf A_n\vee\mathbf X)\cong L(\mathbf A_n)\times L(\mathbf X)$.
\end{problem}

Remark~\ref{out of decompos} shows that there are aperiodic varieties $\mathbf X$ that do not satisfy the property indicated in Problem~\ref{A_n vee aper}.
A weakened variant of this problem (Problem~\ref{hsd descr}) will be posed in Subsection~\ref{hered selfdual}.

\subsection{Hereditarily selfdual varieties}
\label{hered selfdual}
A variety of algebras is \emph{selfdual} if it has a selfdual subvariety lattice.
Natural examples of selfdual varieties are small chain varieties.
It is clear from Figs.~\ref{L(M_14)} and~\ref{L(Y)} that $\mathbf M_{14}$, $\mathbf Y_5$, and the varieties dual to them are selfdual.

\begin{proposition}
\label{selfdual commut or band}
\quad
\begin{itemize}
\item[a)] A commutative monoid variety $\mathbf V$ is selfdual if and only if $\mathbf V\ne\mathbf{COM}$.
\item[b)] A variety of band monoids is selfdual if and only if it coincides with $\mathbf{T}$, $\mathbf{SL}$, $\mathbf B_k$, or $\overleftarrow{\mathbf B_k}$ for some $k\ge 2$.
\end{itemize}
\end{proposition}

Parts a) and b) of this proposition follow from Theorems~\ref{struct of COM} and~\ref{struct of BAND}, respectively.

The problem of describing selfdual varieties of semigroups or monoids is very difficult and has not received much attention.
One approach to the problem is to consider a stronger property: a variety of algebras is \emph{hereditarily selfdual} if all its subvarieties are selfdual.
Hereditarily selfdual varieties of semigroups, modulo chain group varieties, were described by Vernikov \cite[Theorem~1]{Vernikov-90}.

Being hereditarily selfdual is quite a strong restriction on a variety, but varieties with this property is of some interest.
Indeed, as we will see below, hereditarily selfdual varieties of monoids (and as well as semigroups) occupy an intermediate position between the fully studied class of small chain varieties (see Theorem~\ref{chain descript} and Fig.~\ref{all chain var}) and the class of varieties with a distributive subvariety lattice, which is still far from completely classified (see Subsection~\ref{distr}).
Thus, the investigation of hereditarily selfdual varieties of monoids can be considered an intermediate step in the study of monoid varieties with a distributive lattice of subvarieties.

It is evident that every small chain variety of any algebras is hereditarily selfdual.
The following assertion shows that, in a large class of varieties of algebras that includes both semigroup varieties and monoid ones, the consideration of hereditarily selfdual varieties is reduced, in a sense, to the problem of ``interaction'' between small chain varieties.

\begin{proposition}[Vernikov {\cite[Proposition~2]{Vernikov-90}}]
\label{hsd univ alg}
Suppose that $\mathbf V$ is any variety of algebras such that any two different atoms in the lattice $L(\mathbf V)$ are covered by their join.
Then $\mathbf V$ is hereditarily selfdual if and only if $L(\mathbf V)$ is a direct product of a finite number of finite chains.
\end{proposition}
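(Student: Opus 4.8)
The plan is to prove the two implications separately, working with the abstract lattice $L=L(\mathbf V)$ and using the standard fact that, for a subvariety $\mathbf W\subseteq\mathbf V$, the lattice $L(\mathbf W)$ is precisely the principal ideal $[\mathbf T,\mathbf W]$ of $L$. Consequently $\mathbf V$ is hereditarily selfdual if and only if \emph{every} principal ideal of $L$ is a selfdual lattice. For the (easy) sufficiency, suppose $L\cong C_1\times\cdots\times C_k$ with each $C_i$ a finite chain. Every finite chain is selfdual and a direct product of selfdual lattices is selfdual (dualise coordinatewise), so $L$ is selfdual; moreover each principal ideal of $L$ has the form $[\mathbf T,a_1]\times\cdots\times[\mathbf T,a_k]$, again a product of finite chains and hence selfdual. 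Thus every subvariety of $\mathbf V$ is selfdual, i.e.\ $\mathbf V$ is hereditarily selfdual.

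For the (hard) necessity, I would first record that hereditary selfduality yields selfduality of $L$ itself (take $\mathbf W=\mathbf V$) and of all its principal ideals, and that the atom-covering hypothesis is inherited by every principal ideal, since the atoms of $[\mathbf T,\mathbf W]$ are exactly the atoms of $L$ lying below $\mathbf W$ and their joins are computed in $L$. The aim is then to place oneself in the range of Birkhoff's representation theorem: once $L$ is known to be a \emph{finite distributive} lattice, one has $L\cong\mathcal O(P)$, where $P=J(L)$ is the poset of join-irreducibles and $\mathcal O(P)$ is the lattice of order ideals (down-sets) of $P$. Under this isomorphism the principal ideal of $L$ generated by a down-set $I$ is $\mathcal O(I)$, and $\mathcal O(I)$ is selfdual if and only if the poset $I$ is selfdual; so selfduality of every principal ideal of $L$ is equivalent to the assertion that \emph{every order ideal of $P$ is a selfdual poset}. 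Everything then reduces to the combinatorial lemma: a finite poset all of whose order ideals are selfdual is a disjoint union of chains. Granting it, $P=P_1\sqcup\cdots\sqcup P_k$ with each $P_i$ a chain, whence $L\cong\mathcal O(P)\cong C_1\times\cdots\times C_k$ with $C_i$ the finite chain of cardinality $|P_i|+1$, as required.

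The lemma I would prove by contraposition. Suppose $P$ is not a disjoint union of chains; then some element has two distinct upper covers or two distinct lower covers. Since $P$ is itself a (selfdual) order ideal, $P\cong P^{\partial}$, and this isomorphism interchanges upper and lower covers, so without loss of generality some element $a$ has two distinct lower covers $b_1,b_2$. The order ideal ${\downarrow}a$ is selfdual and has $a$ as its unique maximal element; selfduality interchanges maximal and minimal elements, so ${\downarrow}a$ has a unique minimal element $0_a$. Now $I'={\downarrow}a\setminus\{a\}$ is again an order ideal (removing the top element of a down-set leaves a down-set), its minimal element is still the unique $0_a$, yet its maximal elements are exactly the lower covers of $a$, of which there are at least two. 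Hence $I'$ has one minimal but at least two maximal elements, which is impossible for a selfdual poset. This contradiction proves the lemma.

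The step suppressed above is the real obstacle: showing that the hypotheses force $L$ to be finite and distributive. Selfduality together with the atom-covering condition does not suffice for this, as both the modular lattice $M_3$ and the countable Boolean lattice $2^{\aleph_0}$ are hereditarily selfdual and satisfy the atom-covering condition while failing to be a product of finitely many finite chains. This is exactly where the specific features of subvariety lattices must enter: $L$ is complete, atomic, coalgebraic and (for the varieties in the intended scope) $0$-distributive, and selfduality upgrades coalgebraicity to the statement that $L$ is simultaneously algebraic and coalgebraic. One uses $0$-distributivity to exclude $M_3$-type configurations and hence secure distributivity, and the interplay of atomicity, coalgebraicity and the atom-covering hypothesis to bound the atoms and the length of $L$ and thereby secure finiteness; only after that do the Birkhoff reduction and the poset lemma deliver the conclusion. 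Isolating and carrying out this finiteness-and-distributivity argument is where the genuine difficulty lies, the subsequent poset combinatorics being routine.
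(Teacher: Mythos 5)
Your sufficiency argument and your reduction of necessity to the claim that $L(\mathbf V)$ is a finite distributive lattice (via Birkhoff's representation and the lemma that a finite poset all of whose order ideals are selfdual is a disjoint union of chains) are both correct, and the poset combinatorics is carried out properly. But you have not proved necessity: you explicitly defer the assertion that hereditary selfduality forces $L(\mathbf V)$ to be finite and distributive, and that assertion is the entire mathematical content of the proposition. Observe that once $L(\mathbf V)$ is known to be finite and distributive, the hypothesis on atoms becomes automatic (in a modular lattice the join of two distinct atoms always covers each of them), so the atom-covering hypothesis must be consumed exactly in the step you omit; a write-up that never uses it in an essential way cannot be complete.

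Worse, the strategy you sketch for filling the gap cannot succeed as described, and your own counterexample shows why. The powerset of a countably infinite set is complete, atomic, distributive (hence $0$-distributive), algebraic and dually algebraic, satisfies the atom-covering condition, and has every principal ideal selfdual, yet it is not a finite product of finite chains. Hence ``$0$-distributivity to secure distributivity'' together with ``atomicity, coalgebraicity and the atom-covering hypothesis to bound the atoms and the length'' is demonstrably insufficient: some property of subvariety lattices that the powerset of an infinite set lacks must enter. One candidate is the fact that the trivial variety is dually compact --- a descending chain of non-trivial subvarieties has non-trivial intersection, by compactness of equational logic applied to $x\approx y$ --- which fails for the chain $\{n,n+1,\dots\}$ in the powerset lattice and which, transported through the self-dualities of the ideals $L(\mathbf W)$, yields compactness statements at the top of each $L(\mathbf W)$; but you would still have to assemble such facts into an actual finiteness-and-distributivity argument. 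Note finally that the survey states this proposition with a citation to Vernikov's 1990 paper and gives no proof, so there is nothing in the text to lean on; as it stands your proposal establishes only the easy direction and a correct but routine endgame for the hard one.
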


In particular, if a hereditarily selfdual variety $\mathbf V$ satisfies the hypothesis of Proposition~\ref{hsd univ alg}, then the lattice $L(\mathbf V)$ is distributive.

The fact that any two different atoms of $\mathbb{MON}$ are covered by their join follows from Observation~\ref{atoms} and Proposition~\ref{decompos cr}.

Proposition~\ref{selfdual commut or band} immediately implies that a commutative monoid variety $\mathbf V$ is hereditarily selfdual if and only if $\mathbf V\ne\mathbf{COM}$.
A description of hereditarily selfdual completely regular monoid varieties modulo chain group varieties follows from Propositions~\ref{decompos cr} and~\ref{hsd univ alg} and Theorem~\ref{chain descript}.

\begin{proposition}
\label{hsd cr}
A completely regular monoid variety $\mathbf V$ is hereditarily selfdual if and only if $\mathbf V=\mathbf G_1\vee\mathbf G_2\vee\cdots\vee\mathbf G_k\vee\mathbf X$ for some pairwise disjoint small chain group varieties $\mathbf G_1,\mathbf G_2,\dots,\mathbf G_k$ and some $\mathbf X\in\{\mathbf T,\mathbf{SL},\mathbf{LRB},\mathbf{RRB}\}$.
\end{proposition}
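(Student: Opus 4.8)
The plan is to route both implications through Proposition~\ref{hsd univ alg}. First I would record that its hypothesis is satisfied here: the atoms of $L(\mathbf V)$ are exactly those atoms of $\mathbb{MON}$ (namely some of the $\mathbf A_p$ and possibly $\mathbf{SL}$, by Observation~\ref{atoms}) that lie in $\mathbf V$, joins in $L(\mathbf V)$ agree with joins in $\mathbb{MON}$, and any two distinct atoms of $\mathbb{MON}$ are covered by their join. Consequently Proposition~\ref{hsd univ alg} yields, for every completely regular monoid variety $\mathbf V$, the equivalence: $\mathbf V$ is hereditarily selfdual if and only if $L(\mathbf V)$ is a direct product of finitely many finite chains. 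The whole argument then reduces to showing that, for completely regular $\mathbf V$, being such a product of chains is equivalent to admitting the stated decomposition.

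For the ``if'' part, suppose $\mathbf V=\mathbf G_1\vee\cdots\vee\mathbf G_k\vee\mathbf X$ as in the statement. Since $\mathbb{CR}$ is a sublattice of $\mathbb{PER}$, the variety $\mathbf V$ is completely regular. The factors are pairwise disjoint by hypothesis, and Observation~\ref{0-distr} ($0$-distributivity of $\mathbb{MON}$) lets me peel them off one at a time: a variety disjoint from each of several varieties is disjoint from their join. Applying Proposition~\ref{decompos cr} repeatedly therefore gives $L(\mathbf V)\cong L(\mathbf G_1)\times\cdots\times L(\mathbf G_k)\times L(\mathbf X)$. Each $L(\mathbf G_i)$ is a finite chain, and $L(\mathbf X)$ is one of the chains $\{\mathbf T\}$, $\{\mathbf T,\mathbf{SL}\}$, $\{\mathbf T,\mathbf{SL},\mathbf{LRB}\}$, $\{\mathbf T,\mathbf{SL},\mathbf{RRB}\}$; hence $L(\mathbf V)$ is a direct product of finitely many finite chains, and $\mathbf V$ is hereditarily selfdual.

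For the ``only if'' part, assume $\mathbf V$ is completely regular with $L(\mathbf V)\cong C_1\times\cdots\times C_r$, each $C_i$ a nontrivial finite chain. Writing $e_i$ for the element of the product that is the top of $C_i$ in coordinate $i$ and the bottom elsewhere, I let $\mathbf V_i$ be the corresponding subvariety; then the $\mathbf V_i$ are pairwise disjoint, $\mathbf V=\mathbf V_1\vee\cdots\vee\mathbf V_r$, and $L(\mathbf V_i)\cong C_i$ is a finite chain, so each $\mathbf V_i$ is a small, completely regular chain variety. By line~4 of Table~\ref{forbid var} a completely regular variety omits $\mathbf C_2$, so inspecting Theorem~\ref{chain descript} (equivalently Fig.~\ref{all chain var}) shows that the only non-group chain varieties that are completely regular are $\mathbf{SL}$, $\mathbf{LRB}$ and $\mathbf{RRB}$. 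Hence each $\mathbf V_i$ is either a small chain group variety or one of $\mathbf{SL},\mathbf{LRB},\mathbf{RRB}$. Since $\mathbf{SL}\subseteq\mathbf{LRB},\mathbf{RRB}$, any two of these three have intersection $\mathbf{SL}\neq\mathbf T$ and so cannot both occur among pairwise disjoint factors; collecting the group factors as $\mathbf G_1,\dots,\mathbf G_k$ and the single possible non-group factor (or $\mathbf T$) as $\mathbf X$ gives the required form.

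The routine points are the lattice-theoretic bookkeeping in the product decomposition and the disjointness checks. The main obstacle is the ``only if'' step: one must correctly read off the disjoint chain summands $\mathbf V_i$ from the abstract direct-product presentation of $L(\mathbf V)$ and then pin down exactly which chain varieties survive the complete-regularity restriction, which is where Theorem~\ref{chain descript} together with line~4 of Table~\ref{forbid var} does the real work.
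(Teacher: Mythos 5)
Your proposal is correct and follows exactly the route the paper intends: the paper gives no explicit proof but states that the result ``easily follows from Propositions~\ref{decompos cr} and~\ref{hsd univ alg} and Theorem~\ref{chain descript},'' and your argument is precisely a fleshed-out version of that derivation (Proposition~\ref{hsd univ alg} reduces the question to $L(\mathbf V)$ being a finite product of finite chains, Proposition~\ref{decompos cr} with $0$-distributivity handles the product decomposition in both directions, and Theorem~\ref{chain descript} together with line~4 of Table~\ref{forbid var} identifies $\mathbf{SL}$, $\mathbf{LRB}$, $\mathbf{RRB}$ as the only completely regular non-group chain varieties). No gaps.
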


As for hereditarily selfdual monoid varieties that are not completely regular, combining Proposition~\ref{hsd univ alg} with line~4 of Table~\ref{forbid var} and Proposition~\ref{decompos non-cr}, we obtain the following necessary condition.

\begin{proposition}
\label{hsd nec}
Suppose that $\mathbf V$ is any non-completely regular monoid variety that is hereditarily selfdual.
Then $\mathbf V=\mathbf A_n\vee\mathbf X$ for some $n\in\mathbb N$ and small chain variety $\mathbf X$ such that $\mathbf C_2\subseteq\mathbf X$.
\end{proposition}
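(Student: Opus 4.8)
The plan is to use the finite-chain-product description of $L(\mathbf V)$ supplied by Proposition~\ref{hsd univ alg} as the backbone, and then peel off the group part with Proposition~\ref{decompos non-cr}. First I would check that $\mathbf V$ satisfies the hypothesis of Proposition~\ref{hsd univ alg}. The atoms of $L(\mathbf V)$ are precisely the atoms of $\mathbb{MON}$ lying below $\mathbf V$, and any two distinct atoms of $\mathbb{MON}$ are covered by their join (the fact noted just before Proposition~\ref{hsd cr}, following from Observation~\ref{atoms} and Proposition~\ref{decompos cr}); since joins and the covering relation between atoms are inherited by the sublattice $L(\mathbf V)$, the hypothesis holds. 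Proposition~\ref{hsd univ alg} then yields an isomorphism $\phi\colon L(\mathbf V)\cong C_1\times\cdots\times C_r$ onto a direct product of finitely many finite chains.

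Next I would locate $\mathbf{SL}$. Since $\mathbf V$ is non-completely regular, line~4 of Table~\ref{forbid var} gives $\mathbf C_2\subseteq\mathbf V$, hence $\mathbf{SL}\subseteq\mathbf V$, so $\mathbf{SL}$ is an atom of $L(\mathbf V)$; say it is the atom of the factor $C_J$. Every other atom of $L(\mathbf V)$ is an atom of $\mathbb{MON}$ distinct from $\mathbf{SL}$, hence a group atom. Using that the element which is the top of $C_J$ and the bottom of every other factor is neutral and complemented, I would split the product into the single factor $C_J$ and the product of the remaining factors, obtaining complementary subvarieties $\mathbf X_1$ and $\mathbf X_2$ with $L(\mathbf V)\cong L(\mathbf X_1)\times L(\mathbf X_2)$, where $L(\mathbf X_1)\cong C_J$ and $L(\mathbf X_2)\cong\prod_{i\ne J}C_i$. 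If $r=1$ there are no group factors, so $\mathbf X_2=\mathbf T$ and $\mathbf V=\mathbf X_1$ is itself a small chain variety with $\mathbf C_2\subseteq\mathbf V$; taking $n=1$ (so $\mathbf A_1=\mathbf T$) settles this case.

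For $r\ge2$ both $\mathbf X_1$ and $\mathbf X_2$ are nontrivial, so Proposition~\ref{decompos non-cr} applies to the non-completely regular variety $\mathbf V$ and forces $\mathbf X_1$ or $\mathbf X_2$ to equal $\mathbf A_n$ for some $n\ge2$. Since $\mathbf{SL}\subseteq\mathbf X_1$ while $\mathbf{SL}\nsubseteq\mathbf A_n$ (the variety $\mathbf A_n$ lies in $\mathbb{GR}$, so by line~3 of Table~\ref{forbid var} it does not contain $\mathbf{SL}$), it must be $\mathbf X_2=\mathbf A_n$. It then remains to show $\mathbf C_2\subseteq\mathbf X_1=:\mathbf X$. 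For complementary neutral elements the decomposition map is $v\mapsto(v\wedge\mathbf X_1,\,v\wedge\mathbf X_2)$, so $\mathbf C_2=(\mathbf C_2\wedge\mathbf X_1)\vee(\mathbf C_2\wedge\mathbf A_n)$; as $\mathbf C_2$ contains no nontrivial group (in a group member $x^2\approx x^3$ forces triviality), $\mathbf C_2\wedge\mathbf A_n=\mathbf T$, whence $\mathbf C_2=\mathbf C_2\wedge\mathbf X_1\subseteq\mathbf X_1$. Thus $\mathbf V=\mathbf A_n\vee\mathbf X$ with $\mathbf X$ a small chain variety (its subvariety lattice $C_J$ is a finite chain) satisfying $\mathbf C_2\subseteq\mathbf X$, as required.

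I expect the main obstacle to be the bookkeeping in the middle step: converting the abstract isomorphism onto a product of chains into an honest join decomposition $\mathbf V=\mathbf X_1\vee\mathbf X_2$ of monoid varieties to which Proposition~\ref{decompos non-cr} can be applied. Concretely, one must verify that the ``coordinate'' elements $\mathbf X_1,\mathbf X_2$ really are complementary neutral elements whose principal ideals are the subvariety lattices $L(\mathbf X_1),L(\mathbf X_2)$ and whose direct product recovers $L(\mathbf V)$. Once this identification is secured, pinning down $\mathbf X_2=\mathbf A_n$ and establishing $\mathbf C_2\subseteq\mathbf X$ are both short.
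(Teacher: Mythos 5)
Your proposal is correct and follows exactly the route the paper indicates (the paper gives no detailed proof, only the remark that the result is obtained by combining Proposition~\ref{hsd univ alg}, line~4 of Table~\ref{forbid var} and Proposition~\ref{decompos non-cr}); your write-up simply fills in the bookkeeping of extracting the complementary subvarieties $\mathbf X_1$ and $\mathbf X_2$ from the product-of-chains decomposition, and that bookkeeping is sound. The handling of the edge case $r=1$ via $\mathbf A_1=\mathbf T$ and the use of aperiodicity of $\mathbf C_2$ to place it under $\mathbf X_1$ are both consistent with the paper's conventions.
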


We note that by Proposition~\ref{selfdual commut or band}, the variety $\mathbf A_n\vee\mathbf C_k$ is hereditarily selfdual for any $n,k\in\mathbb N$.
In view of Proposition~\ref{hsd nec}, the problem of completely classifying hereditarily selfdual monoid varieties modulo chain group varieties is equivalent to the following problem.

\begin{problem}
\label{hsd descr}
Find all non-group small chain varieties of monoids $\mathbf X$ such that $L(\mathbf A_n\vee\mathbf X)\cong L(\mathbf A_n)\times L(\mathbf X)$ for any $n\ge 2$.
\end{problem}

In view of Proposition~\ref{selfdual commut or band} and Fig.~\ref{all chain var}, to solve this problem, it suffices to consider the cases when a small chain variety $\mathbf X$ contains $\mathbf D_1$ and is contained in one of the following varieties: $\mathbf D$, $\mathbf F$, $\mathbf M_5$, and $\mathbf M_6$.

\subsection{Complementability and related properties}
\label{compl}
A lattice $\langle L;\vee,\wedge\rangle$ with~0 and~1 is a \emph{lattice with upper semicomplements} if, for any $x\in L\setminus\{0\}$, there exists some $y\in L\setminus\{1\}$ such that $x\vee y=1$.

\begin{theorem}
\label{compl and usemicompl}
For any variety $\mathbf V$ of monoids, the following are equivalent:
\begin{itemize}
\item[a)] $L(\mathbf V)$ is a lattice with upper semicomplements;
\item[b)] $L(\mathbf V)$ is a lattice with complements;
\item[c)] $L(\mathbf V)$ is a finite Boolean algebra;
\item[d)] $\mathbf V$ is the join of a finite number of atoms of the lattice $\mathbb{MON}$.
\end{itemize}
\end{theorem}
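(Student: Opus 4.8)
The plan is to establish the cycle (d)$\Rightarrow$(c)$\Rightarrow$(b)$\Rightarrow$(a), which is routine, and then the substantive implication (a)$\Rightarrow$(d). For (d)$\Rightarrow$(c), suppose $\mathbf V=\mathbf X_1\vee\cdots\vee\mathbf X_n$ is a join of distinct atoms. By Observation~\ref{atoms} each $\mathbf X_i$ is either $\mathbf{SL}$ or some $\mathbf A_p$, and all of these are completely regular, so $\mathbf V\in\mathbb{CR}$. Distinct atoms are pairwise disjoint, so Observation~\ref{0-distr} gives $\mathbf X_i\wedge\bigvee_{j\ne i}\mathbf X_j=\mathbf T$, and iterating Proposition~\ref{decompos cr} yields $L(\mathbf V)\cong L(\mathbf X_1)\times\cdots\times L(\mathbf X_n)$. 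Since each $L(\mathbf X_i)$ is the two-element chain, $L(\mathbf V)$ is isomorphic to the Boolean lattice $2^n$. The implications (c)$\Rightarrow$(b) (a finite Boolean algebra is complemented) and (b)$\Rightarrow$(a) (if $x\ne\mathbf T$ and $y$ is a complement of $x$, then $x\vee y=\mathbf V$ while $y\ne\mathbf V$, since $y=\mathbf V$ would force $x=x\wedge y=\mathbf T$) are then immediate.

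The heart of the matter is (a)$\Rightarrow$(d), and the engine I would use is the following observation: if the greatest element $1$ of a lattice $L$ is join-irreducible and $|L|\ge 3$, then any element $x$ with $0<x<1$ admits no upper semicomplement, because $x\vee y=1$ forces $y=1$; hence such an $L$ lacks upper semicomplements. Applied to $\mathbf V$ itself, this shows that whenever $L(\mathbf V)$ has upper semicomplements and $\mathbf V$ is join-irreducible in $\mathbb{MON}$, then $\mathbf V=\mathbf T$ or $\mathbf V$ is an atom, and (d) holds trivially; in particular $\mathbf V\ne\mathbf{MON}$ by Observation~\ref{top}(a). The sharper coatom form of the same principle — a nonzero element lying below every coatom of a suitably coatomic lattice has no upper semicomplement — is the tool I plan to use for excluding the remaining ``tall'' configurations.

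Assuming (a), I would next show that $\mathbf V$ is completely regular and has only finitely many atoms. If $\mathbf V$ is not completely regular, then $\mathbf C_2\subseteq\mathbf V$ by line~4 of Table~\ref{forbid var}; if $\mathbf V$ has infinitely many atoms, then infinitely many group atoms $\mathbf A_p$ lie in $\mathbf V$, whose join is $\mathbf{COM}$ (as recorded in the proof of Proposition~\ref{number of covers cr}), so $\mathbf{COM}\subseteq\mathbf V$. In each case the aim is to locate inside $L(\mathbf V)$ a nonzero variety lying beneath every coatom: for the overcommutative case $\mathbf{COM}$ is join-irreducible by Theorem~\ref{struct of COM} (its adjoined top is unreachable by a finite join), while for the non-completely-regular case the three-element chain $\mathbf T\subset\mathbf{SL}\subset\mathbf C_2$ is the local configuration to be leveraged. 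The coatom criterion then contradicts (a). Thus $\mathbf V\in\mathbb{CR}$, whence $\mathbf C_2\nsubseteq\mathbf V$, so $\mathbf{COM}\nsubseteq\mathbf V$ and only finitely many atoms occur.

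Finally, with $\mathbf V$ completely regular and atoms $\mathbf X_1,\dots,\mathbf X_k$, I would set $\mathbf V_0=\mathbf X_1\vee\cdots\vee\mathbf X_k$, so that $L(\mathbf V_0)\cong 2^k$ by the computation in (d)$\Rightarrow$(c), and it remains to prove $\mathbf V=\mathbf V_0$. If $\mathbf V_0\subsetneq\mathbf V$, the interval $[\mathbf V_0,\mathbf V]$ contains no new atoms, and I would again produce a nonzero element of $L(\mathbf V)$ under every coatom, contradicting upper semicomplements and forcing $\mathbf V=\mathbf V_0$, which is exactly (d). The main obstacle, and the place where the monoid-specific input is genuinely needed, is this last structural dichotomy: proving rigorously that upper semicomplements forbid any ``extra height'' above the join of the atoms. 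The cleanest route is to verify that $L(\mathbf V)$ is coatomic in the relevant situations and that the offending top (or internal element) is join-irreducible, using Theorem~\ref{struct of COM} together with the cover analysis behind Proposition~\ref{number of covers cr}; the subtlest point, where I expect to spend the most effort, is the case $\mathbf V\supsetneq\mathbf{COM}$, in which the greatest element need not itself be join-irreducible.
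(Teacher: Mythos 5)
Your chain d)\,$\to$\,c)\,$\to$\,b)\,$\to$\,a) is correct and close in spirit to the paper's d)\,$\to$\,b): the iteration of Proposition~\ref{decompos cr} over pairwise disjoint atoms (with Observation~\ref{0-distr} supplying disjointness from the partial joins) is sound, and it even yields the Boolean structure directly where the paper delegates the group part to Lemma~\ref{compl group}. The problem is a)\,$\to$\,d), which is where all the content of the theorem lives, and your argument for it is not a proof but a plan whose central tool does not work. Your ``coatom criterion'' requires $L(\mathbf V)$ to be coatomic, and this fails precisely in the cases you need it: by Theorem~\ref{struct of COM} the proper subvarieties of $\mathbf{COM}$ are the varieties $\mathbf A_k\vee\mathbf X$ with $\mathbf X$ in the chain $\mathbf T\subset\mathbf{SL}\subset\mathbf C_2\subset\cdots$, and every such variety is properly contained in another one, so $L(\mathbf{COM})$ has no coatoms at all; likewise $\mathbb{MON}$ has none (Observation~\ref{top}b)), and nothing guarantees coatomicity of $L(\mathbf V)$ for a general $\mathbf V$ with $\mathbf C_2\subseteq\mathbf V$ or $\mathbf{COM}\subseteq\mathbf V$. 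The join-irreducibility observation only disposes of the case where $\mathbf V$ itself is join-irreducible; for $\mathbf V\supsetneq\mathbf{COM}$ it tells you nothing about whether some \emph{proper} nonzero element of $L(\mathbf V)$ lacks an upper semicomplement (indeed $\mathbf{COM}$ itself \emph{does} have the upper semicomplement $\mathbf D_1$ inside $L(\mathbf{COM}\vee\mathbf D_1)$, so the obstruction must be located elsewhere and by a different argument). Finally, your last step ($\mathbf V$ completely regular with atoms $\mathbf X_1,\dots,\mathbf X_k$ implies $\mathbf V=\mathbf X_1\vee\cdots\vee\mathbf X_k$) silently includes the assertion that every group variety with upper semicomplements in its subvariety lattice is Abelian of squarefree exponent; this is exactly the Vernikov--Volkov classification (Lemma~\ref{compl group}) and cannot be extracted from a coatom count.

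The paper avoids all of this by quoting two external results: the equivalence a)\,$\leftrightarrow$\,b) holds for subvariety lattices of arbitrary varieties of algebras by Dierks--Ern\'e--Reinhold (or, via Observation~\ref{0-distr}, by Vernikov's earlier result for $0$-distributive lattices), and conditions b)--d) are equivalent for group varieties by Vernikov--Volkov. With an actual \emph{complement} $\mathbf U$ of $\mathbf{SL}$ in hand (so $\mathbf U\wedge\mathbf{SL}=\mathbf T$, not merely $\mathbf U\vee\mathbf{SL}=\mathbf V$), line~3 of Table~\ref{forbid var} forces $\mathbf U$ to be a group variety, whence $\mathbf V=\mathbf U\vee\mathbf{SL}$ is completely regular and Proposition~\ref{decompos cr} splits $L(\mathbf V)$. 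You should either import the a)\,$\leftrightarrow$\,b) equivalence as the paper does, or supply a genuinely new proof of it for monoid varieties; the route you sketch does not close.
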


For convenience of references, we formulate the following result which immediately follows from the main result of Vernikov and Volkov~\cite{Vernikov-Volkov-82}.

\begin{lemma}
\label{compl group}
Conditions~b)--d) of Theorem~\ref{compl and usemicompl} are equivalent for any variety $\mathbf V$ of groups.
\end{lemma}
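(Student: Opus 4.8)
The plan is to prove the three conditions equivalent by running the cycle \textup{c)}$\Rightarrow$\textup{b)}$\Rightarrow$\textup{d)}$\Rightarrow$\textup{c)}. The implication \textup{c)}$\Rightarrow$\textup{b)} is immediate, since every Boolean algebra is complemented. This leaves the two substantive directions \textup{d)}$\Rightarrow$\textup{c)} and \textup{b)}$\Rightarrow$\textup{d)}, and it is only the latter that requires the cited result of~\cite{Vernikov-Volkov-82}; everything else is a routine consequence of earlier propositions.

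For \textup{d)}$\Rightarrow$\textup{c)}, I would first pin down what condition~\textup{d)} means for a \emph{group} variety. Every subvariety of a group variety is again a group variety, so by Observation~\ref{atoms} the only atoms of $\mathbb{MON}$ that can lie below $\mathbf V$ are the varieties $\mathbf A_p$ with $p$ prime; the remaining atom $\mathbf{SL}$ is not a group variety and hence is not contained in $\mathbf V$. Consequently~\textup{d)} forces $\mathbf V=\mathbf A_{p_1}\vee\cdots\vee\mathbf A_{p_k}$ for finitely many distinct primes $p_1,\dots,p_k$. For distinct primes one has $\mathbf A_{p_i}\wedge\mathbf A_{p_j}=\mathbf A_1=\mathbf T$, so these atoms are pairwise disjoint. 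Since a group variety is completely regular, I would then apply Proposition~\ref{decompos cr} inductively: setting $\mathbf X_1=\mathbf A_{p_1}\vee\cdots\vee\mathbf A_{p_{k-1}}$ and $\mathbf X_2=\mathbf A_{p_k}$, the atom $\mathbf A_{p_k}$ is not contained in $\mathbf X_1$ (whose members have exponent dividing $p_1\cdots p_{k-1}$), so $\mathbf X_1\wedge\mathbf X_2=\mathbf T$ and $L(\mathbf V)\cong L(\mathbf X_1)\times L(\mathbf A_{p_k})$. As each $L(\mathbf A_{p_i})$ is the two-element chain, induction gives $L(\mathbf V)\cong\mathbf 2^{\,k}$, a finite Boolean algebra, which is~\textup{c)}.

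For \textup{b)}$\Rightarrow$\textup{d)}, I would invoke the main result of~\cite{Vernikov-Volkov-82} directly. That result characterizes the group varieties with a complemented subvariety lattice as exactly the joins of finitely many atoms $\mathbf A_p$ (equivalently, the varieties $\mathbf A_n$ with $n$ square-free). Thus any group variety satisfying~\textup{b)} automatically satisfies~\textup{d)}, closing the cycle.

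The main obstacle is entirely isolated in \textup{b)}$\Rightarrow$\textup{d)}: passing from mere complementedness to the conclusion that the variety is a join of finitely many atoms requires bounding the exponent and excluding non-abelian group varieties as well as abelian ones of non-square-free exponent, which is precisely the content of~\cite{Vernikov-Volkov-82} that I import rather than reprove. The two remaining steps are formal—one a triviality, the other a repeated use of the decomposition in Proposition~\ref{decompos cr}—so no further difficulty is anticipated, and this is exactly why the lemma can be said to follow immediately from the cited theorem.
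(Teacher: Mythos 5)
Your proof is correct and takes essentially the same approach as the paper: the paper gives no argument for this lemma beyond citing the main result of~\cite{Vernikov-Volkov-82}, which is exactly where you isolate the only substantive implication b)\,$\to$\,d). Your routine verifications of c)\,$\to$\,b) and of d)\,$\to$\,c) (the latter by inductively applying the disjoint-join decomposition of Proposition~\ref{decompos cr} to the pairwise disjoint atoms $\mathbf A_{p_i}$) are sound and mirror how the paper itself uses that proposition in the proof of Theorem~\ref{compl and usemicompl}.
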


\begin{proof}[Proof of Theorem~\ref{compl and usemicompl}]
By Dierks \textit{et~al.}\@ \cite[Corollary~1]{Dierks-Erne-Reinhold-94}, conditions~a) and~b) are equivalent for varieties of arbitrary algebras.\footnote
{Before the publication of Dierks \textit{et~al.}\@~\cite{Dierks-Erne-Reinhold-94}, Vernikov \cite[Theorem~1(f)]{Vernikov-92} proved that~a) and~b) are equivalent for varieties of algebras with a 0-distributive subvariety lattice.
In view of Observation~\ref{0-distr}, this implies the equivalence of these two claims for monoid varieties.}
Since the implication c)\,$\to$\,d) is evident, it remains to verify the implications b)\,$\to$\,c) and d)\,$\to$\,b).

\smallskip

b)\,$\to$\,c) If $\mathbf V\nsupseteq\mathbf{SL}$, then $\mathbf V$ is a group variety by line~3 of Table~\ref{forbid var}, whence $L(\mathbf V)$ is a finite Boolean algebra by Lemma~\ref{compl group}.
Therefore, assume that $\mathbf V\supseteq\mathbf{SL}$.
Let $\mathbf U$ be the complement of $\mathbf{SL}$ in $L(\mathbf V)$.
Then $\mathbf U$ is a group variety.
Since $\mathbf V=\mathbf U\vee\mathbf{SL}$, Proposition~\ref{decompos cr} implies that $L(\mathbf V)$ is a direct product of $L(\mathbf U)$ and the 2-element chain $L(\mathbf{SL})$.
If a subvariety $\mathbf X$ of $\mathbf U$ has no complements in $L(\mathbf U)$, then the variety $\mathbf X\vee\mathbf{SL}$ has no complements in $L(\mathbf V)$.
Therefore, $L(\mathbf U)$ is a lattice with complements.
Then $L(\mathbf U)$ is a finite Boolean algebra by Lemma~\ref{compl group}, whence $L(\mathbf V)$ is a finite Boolean algebra too.

\smallskip

d)\,$\to$\,b) Let $\mathbf V=\bigvee_{i=1}^n\mathbf X_i$, where $\mathbf X_1,\mathbf X_2,\dots,\mathbf X_n$ are distinct atoms of the lattice $\mathbb{MON}$.
If $\mathbf X_1,\mathbf X_2,\dots,\mathbf X_n$ are group varieties, then it suffices to refer to Lemma~\ref{compl group}.
Suppose now that one of the varieties $\mathbf X_1,\mathbf X_2,\dots,\mathbf X_n$, say $\mathbf X_n$, coincides with $\mathbf{SL}$.
Then $\mathbf U=\bigvee_{i=1}^{n-1}\mathbf X_i$ is a group variety with $\mathbf V=\mathbf U\vee\mathbf{SL}$.
As in the previous paragraph, Proposition~\ref{decompos cr} implies that $L(\mathbf V)$ is a direct product of $L(\mathbf U)$ and the 2-element chain $L(\mathbf{SL})$.
By Lemma~\ref{compl group}, the lattice $L(\mathbf U)$ has complements, so that the lattice $L(\mathbf V)$ also has complements.
\end{proof}

In view of Observation~\ref{atoms}, Theorem~\ref{compl and usemicompl} gives a complete classification of monoid varieties that possess the properties in a)--c), as well as all standard stronger versions of complementability, such as relative complementness, uniqueness of complements, etc.

One can note that the exact analog of Theorem~\ref{compl and usemicompl} is true for semigroup varieties; see Shevrin \textit{et~al.}\@ \cite[Theorem~13.1]{Shevrin-Vernikov-Volkov-09} or Vernikov and Volkov \cite[Proposition~1]{Vernikov-Volkov-82} and Dierks \textit{et~al.}\@ \cite[Corollary~1]{Dierks-Erne-Reinhold-94}.
This is one of the few cases when the properties of the lattices $\mathbb{SEM}$ and $\mathbb{MON}$ completely coincide, and this coincidence is not a consequence of Proposition~\ref{MON is sublat of SEM}; other examples of the same type are given by Proposition~\ref{hsd univ alg} or by Proposition~\ref{decompos cr} and its semigroup analog.

The lattice property of having \emph{lower semicomplements}---a property dual to having upper semicomplements---is not interesting for varietal lattices since by very simple lattice-theoretical arguments, a complete atomic lattice is a lattice with lower semicomplements if and only if its greatest element is the join of all its atoms.
This fact and Theorem~\ref{compl and usemicompl} immediately imply the following result.

\begin{observation}
\label{lsemicompl}
The subvariety lattice of a monoid variety $\mathbf V$ is a lattice with lower semicomplements if and only if either $L(\mathbf V)$ is a lattice with complements or $\mathbf V=\mathbf{COM}$.
\end{observation}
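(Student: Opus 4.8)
The plan is to reduce the statement to the lattice-theoretic fact recalled just before it: since the subvariety lattice $L(\mathbf V)$ of any monoid variety is complete and atomic, $L(\mathbf V)$ is a lattice with lower semicomplements if and only if $\mathbf V$ equals the join of all atoms of $L(\mathbf V)$. The atoms of $L(\mathbf V)$ are precisely the atoms of $\mathbb{MON}$ contained in $\mathbf V$, and by Observation~\ref{atoms} these are among the varieties $\mathbf{SL}$ and $\mathbf A_p$ for primes $p$. Writing $\mathbf W$ for the join of all atoms contained in $\mathbf V$, the whole proof comes down to deciding when $\mathbf V=\mathbf W$ and matching this with the two alternatives in the statement.

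For the two sufficiency directions I would argue as follows. First, if $L(\mathbf V)$ has complements, then by Theorem~\ref{compl and usemicompl} (condition~d)) the variety $\mathbf V$ is the join of finitely many atoms of $\mathbb{MON}$; each such atom is an atom contained in $\mathbf V$, so $\mathbf V\subseteq\mathbf W\subseteq\mathbf V$, giving $\mathbf V=\mathbf W$ and hence lower semicomplementness. Second, if $\mathbf V=\mathbf{COM}$, then every atom of $\mathbb{MON}$ is contained in $\mathbf{COM}$, and since the join of the infinitely many group atoms $\mathbf A_p$ already equals $\mathbf{COM}$ (as recorded in the proof of Proposition~\ref{number of covers cr}), we again obtain $\mathbf W=\mathbf{COM}=\mathbf V$.

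For the converse I would assume $L(\mathbf V)$ is a lattice with lower semicomplements, so that $\mathbf V=\mathbf W$, and split into two cases. If only finitely many atoms lie below $\mathbf V$, then $\mathbf V$ is a join of finitely many atoms, which is exactly condition~d) of Theorem~\ref{compl and usemicompl}, whence $L(\mathbf V)$ has complements. If instead infinitely many atoms lie below $\mathbf V$, then, $\mathbf{SL}$ being the only non-group atom, infinitely many group atoms $\mathbf A_p$ are contained in $\mathbf V$; by the join-of-group-atoms fact this forces $\mathbf{COM}\subseteq\mathbf V$, while $\mathbf V=\mathbf W\subseteq\mathbf{COM}$ because every atom of $\mathbb{MON}$ lies in $\mathbf{COM}$, so $\mathbf V=\mathbf{COM}$.

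The only genuine content beyond bookkeeping is the case split in the converse together with the use of the fact that the join of infinitely many group atoms collapses to $\mathbf{COM}$; this is precisely what produces the exceptional alternative $\mathbf V=\mathbf{COM}$ and keeps it separate from the complemented case. I expect no serious obstacle here: no appeal to $0$-distributivity is required, and the classification of atoms needed is only that furnished by Observation~\ref{atoms}.
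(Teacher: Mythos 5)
Your proof is correct and follows the same route as the paper: reduce to the fact that a complete atomic lattice has lower semicomplements exactly when its top element is the join of all its atoms, then invoke Theorem~\ref{compl and usemicompl}. You merely make explicit the case split (finitely versus infinitely many atoms below $\mathbf V$) and the collapse of an infinite join of group atoms to $\mathbf{COM}$, which the paper leaves as ``immediately imply.''
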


The exact semigroup analog of this result does not hold because the lattice $\mathbb{SEM}$ contains non-commutative atoms, namely the varieties $\mathbf{LZ}$ and $\mathbf{RZ}$.
To obtain such an analog, one should change the equality $\mathbf V=\mathbf{COM}$ to $\mathbf V=\mathbf{COM}\vee\mathbf X$, where $\mathbf X$ is any of the following varieties: $\mathbf T$, $\mathbf{LZ}$, $\mathbf{RZ}$, and $\mathbf{LZ}\vee\mathbf{RZ}$.

\part{Distinctive elements in $\mathbb{MON}$}
\label{elements}

\section{Special elements}
\label{spec elem}

\subsection{Concrete types of special elements}
\label{concrete se}
In lattice theory, considerable attention is given to the study of so-called special elements of different types.
We will mention nine types of special elements: neutral, standard, costandard, distributive, codistributive, cancellable, modular, lower-modular, and upper-modular elements.
Neutral elements were defined in Subsection~\ref{CR}.
Note that an element $x$ in a lattice $\langle L;\vee,\wedge\rangle$ is neutral if and only if, for all $y,z\in L$, the sublattice of $L$ generated by $x$, $y$, and $z$ is distributive; see Gr\"atzer \cite[Theorem~254]{Gratzer-11}, for instance.
An element $x \in L$ is
\begin{align*}
&\text{\emph{standard} if}&&\forall y,z\in L:\,(x\vee y)\wedge z=(x\wedge z)\vee(y\wedge z);\\
&\text{\emph{distributive} if}&&\forall y,z\in L:\,x\vee(y\wedge z)=(x\vee y)\wedge(x\vee z);\\
&\text{\emph{cancellable} if}&&\forall y,z\in L:\,x\vee y=x\vee z\ \&\ x\wedge y=x\wedge z\longrightarrow y=z;\\
&\text{\emph{modular} if}&&\forall y,z\in L:\,y\le z\longrightarrow(x\vee y)\wedge z=(x\wedge z)\vee y;\\
&\text{\emph{lower-modular} if}&&\forall y,z\in L:\,x\le y\longrightarrow x\vee(y\wedge z)=y\wedge(x\vee z).
\end{align*}
\emph{Costandard}, \emph{codistributive}, and \emph{upper-modular} elements are defined dually to standard, distributive, and lower-modular ones, respectively.\footnote
{Modular, upper-modular, lower-modular, and cancellable elements are named differently in a number of works.
In particular, modular [respectively, upper-modular, cancellable] elements are called left modular [respectively, right modular, separating] in Stern~\cite{Stern-99}.
The term ``modular element'' is used in the literature not only in the sense defined above but also in a number of other ways.}
Significant information about special elements in a lattice can be found in Gr\"atzer \cite[Section~III.2]{Gratzer-11} or Stern \cite[Sections~2.1 and~2.2]{Stern-99}, for instance.

There exist several interrelations between the different types of elements defined above.
It is evident that a neutral element is both standard and costandard, a [co]standard element is cancellable, a cancellable element is modular, and a [co]distribu\-tive element is lower-modular [respectively, upper-modular].
It is also well known that a [co]standard element is [co]distributive; see Gr\"atzer \cite[Theorem~253]{Gratzer-11}, for instance.
A summary of these interrelations is given in Fig.~\ref{se interrelat abstr}.

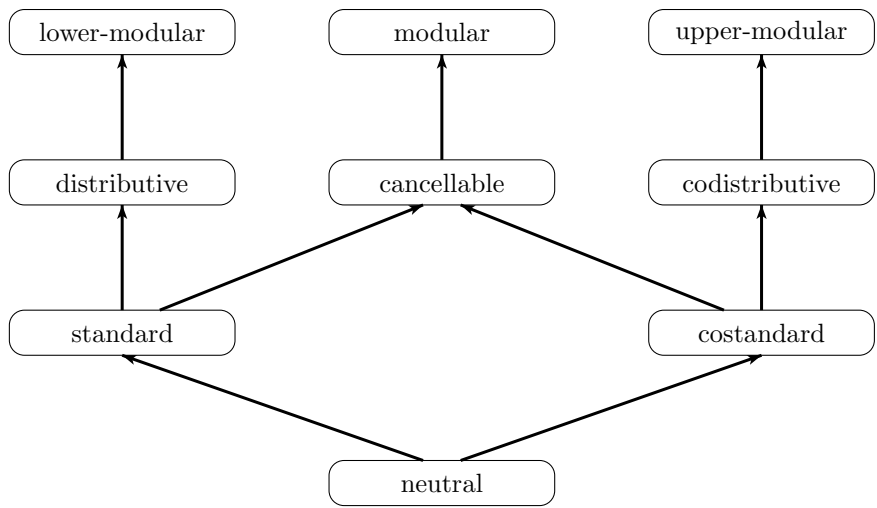
\begin{figure}[tbh]
\begin{center}
\unitlength=1mm
\linethickness{0.4pt}
\begin{picture}(115,66)
\drawoval(15,23,30,6,2)
\drawoval(15,43,30,6,2)
\drawoval(15,63,30,6,2)
\drawoval(57.5,3,30,6,2)
\drawoval(57.5,43,30,6,2)
\drawoval(57.5,63,30,6,2)
\drawoval(100,23,30,6,2)
\drawoval(100,43,30,6,2)
\drawoval(100,63,30,6,2)
\put(57.5,43){\makebox(0,0)[cc]{cancellable}}
\put(100,43){\makebox(0,0)[cc]{codistributive}}
\put(100,23){\makebox(0,0)[cc]{costandard}}
\put(15,43){\makebox(0,0)[cc]{distributive}}
\put(15,63){\makebox(0,0)[cc]{lower-modular}}
\put(57.5,63){\makebox(0,0)[cc]{modular}}
\put(57.5,3){\makebox(0,0)[cc]{neutral}}
\put(15,23){\makebox(0,0)[cc]{standard}}
\put(100,63){\makebox(0,0)[cc]{upper-modular}}
\gasset{AHnb=1,AHLength=2,linewidth=0.4}
\drawline(15,26)(15,40)
\drawline(15,46)(15,60)
\drawline(20,26)(55,40)
\drawline(55,6)(15,20)
\drawline(57.5,46)(57.5,60)
\drawline(60,6)(100,20)
\drawline(95,26)(60,40)
\drawline(100,26)(100,40)
\drawline(100,46)(100,60)
\end{picture}
\caption{Interrelations between types of special elements in lattices}
\label{se interrelat abstr}
\end{center}
\end{figure}

Over the past~15 years, a number of articles were devoted to the examination of elements of the aforementioned types in the lattice $\mathbb{SEM}$ and some of its sublattices, such as $\mathbb{COM}_\mathsf{sem}$ and $\mathbb{OC}_\mathsf{sem}$.
Initial results were overviewed in Shevrin \textit{et~al.}\@ \cite[Section~14]{Shevrin-Vernikov-Volkov-09}.
Significantly more results were systematized in a more recent survey by Vernikov~\cite{Vernikov-15} devoted entirely to the problems under discussion.
Results concerning the lattice $\mathbb{SEM}$ that were in this survey or obtained later can be briefly summarized as follows.
Six types of elements in $\mathbb{SEM}$---neutral, standard, costandard, distributive, cancellable, and lower-modular---have been completely determined.
For codistributive, modular, and upper-modular elements in $\mathbb{SEM}$, strong necessary conditions and descriptions in wide and important partial cases have been found.
For modular elements, a non-trivial sufficient condition is known too.
Further, some interrelations between special elements of different types in $\mathbb{SEM}$ that do not hold in abstract lattices have also been found; specifically, for any variety $\mathbf X$ of semigroups, $\mathbf X$ is standard if and only if $\mathbf X$ is distributive, $\mathbf X$ is costandard if and only if $\mathbf X$ is neutral, and $\mathbf X$ is modular whenever $\mathbf X$ is lower-modular.
For a description of all cancellable elements in $\mathbb{SEM}$, see Shaprynskii \textit{et~al.}\@ \cite[Theorem~1.1]{Shaprynskii-Skokov-Vernikov-19}; results on other special elements described in this paragraph can be found in Vernikov \cite[Section~3]{Vernikov-15}.

Regarding special elements in the lattice $\mathbb{MON}$, not much is known until recently, when some significant progress has been made \cite{Gusev-18b,Gusev-20b,Gusev-22+a,Gusev-Lee-21}.
To date, the exact six types of special elements---neutral, standard, costandard, distributive, cancellable, and lower-modular---are completely described in the lattice $\mathbb{MON}$ as in the lattice $\mathbb{SEM}$.
However, as we will see below, the interrelations between the types of special elements in the lattices $\mathbb{MON}$ and $\mathbb{SEM}$ differ drastically.
The aforementioned classifications of the six types of special elements are summarized in the following three theorems.

\begin{theorem}[Gusev \cite{Gusev-18b,Gusev-20b,Gusev-22+a}]
\label{neutr,stand,distr,lmod}
For any monoid variety $\mathbf V$, the following are equivalent:
\begin{itemize}
\item[a)] $\mathbf V$ is a lower-modular element in $\mathbb{MON}$;
\item[b)] $\mathbf V$ is a distributive element in $\mathbb{MON}$;
\item[c)] $\mathbf V$ is a standard element in $\mathbb{MON}$;
\item[d)] $\mathbf V$ is a neutral element in $\mathbb{MON}$;
\item[e)] $\mathbf V$ coincides with $\mathbf T$, $\mathbf{SL}$, or $\mathbf{MON}$.
\end{itemize}
\end{theorem}

Specifically, the equivalences~d)\,$\leftrightarrow$\,e), c)\,$\leftrightarrow$\,e), and a)\,$\leftrightarrow$\,b)\,$\leftrightarrow$\,e) were established in the articles \cite{Gusev-18b,Gusev-20b,Gusev-22+a}, respectively.

The number of neutral elements of a lattice can be considered as a kind of measure of its complexity.
The least and the greatest elements of any lattice $L$, if they exist, are neutral in $L$.
The existence of a unique neutral element of the lattice $\mathbb{MON}$ that is different from $\mathbf T$ and $\mathbf{MON}$ once again demonstrates how complex this lattice is.
For comparison, we note that the lattice $\mathbb{SEM}$ contains exactly three neutral elements that are different from $\mathbf T$ and $\mathbf{SEM}$: the variety $\mathbf{SL}_\mathsf{sem}$ of semilattices, the variety $\mathbf{ZM}$ of semigroups with zero multiplication, and their join $\mathbf{SL}_\mathsf{sem}\vee\mathbf{ZM}$; see Shevrin \textit{et~al.}\@ \cite[Theorem~14.2]{Shevrin-Vernikov-Volkov-09} or Vernikov \cite[Theorem~3.4]{Vernikov-15}.
We note also that the lattice $\mathbb{CR}$ has infinitely many neutral elements; in particular, it follows from Trotter~\cite{Trotter-89} and Proposition~\ref{UCR is sublat of UCR_sem} that every variety of band monoids is neutral in $\mathbb{CR}$.

The four types of special elements in Theorem~\ref{neutr,stand,distr,lmod}a)--d) coincide in the lattice $\mathbb{MON}$, and there are only three of them: $\mathbf T$, $\mathbf{SL}$, and $\mathbf{MON}$.
This differs sharply from the situation with the lattice $\mathbb{SEM}$, where the set of all lower-modular elements of $\mathbb{SEM}$ is uncountably infinite, the set of all standard elements of $\mathbb{SEM}$ is countably infinite, and the set of all neutral elements of $\mathbb{SEM}$ is finite \cite[Theorems 3.2--3.4]{Vernikov-15}.
Further, an element of $\mathbb{SEM}$ is standard if and only if it is distributive \cite[Theorem~3.3]{Vernikov-15}.

\begin{theorem}[Gusev {\cite[Theorem~1.2]{Gusev-18b}}]
\label{costand}
For any monoid variety $\mathbf V$, the following are equivalent:
\begin{itemize}
\item[a)] $\mathbf V$ is a modular and upper-modular element in $\mathbb{MON}$;
\item[b)] $\mathbf V$ is a costandard element in $\mathbb{MON}$;
\item[c)] $\mathbf V$ is one of the varieties $\mathbf T$, $\mathbf{SL}$, $\mathbf C_2$, or $\mathbf{MON}$.
\end{itemize}
\end{theorem}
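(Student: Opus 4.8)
The plan is to prove the cycle of implications (c)$\,\Rightarrow\,$(b)$\,\Rightarrow\,$(a)$\,\Rightarrow\,$(c); the middle implication is immediate, the first is a short verification, and the last is the substantive one. For (b)$\,\Rightarrow\,$(a) I would simply read off the general interrelations recorded in Fig.~\ref{se interrelat}: a costandard element is codistributive and hence upper-modular, and it is also cancellable and hence modular. Thus every costandard element of $\mathbb{MON}$ is automatically both modular and upper-modular, with no special properties of $\mathbb{MON}$ needed.

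For (c)$\,\Rightarrow\,$(b) the varieties $\mathbf T$ and $\mathbf{MON}$ are the least and greatest elements of $\mathbb{MON}$ and so are neutral, hence costandard; $\mathbf{SL}$ is neutral by Theorem~\ref{neutr,stand}, hence costandard. The real content is that $\mathbf C_2$ is costandard, i.e.\ $(\mathbf C_2\wedge\mathbf Y)\vee\mathbf Z=(\mathbf C_2\vee\mathbf Z)\wedge(\mathbf Y\vee\mathbf Z)$ for all $\mathbf Y,\mathbf Z$. The inclusion $\subseteq$ holds in every lattice, so only $\supseteq$ must be checked. Here I would use that the subvarieties of $\mathbf C_2$ form the three-element chain $\mathbf T\subset\mathbf{SL}\subset\mathbf C_2$ and split according to the value of $\mathbf C_2\wedge\mathbf Y$. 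When $\mathbf C_2\subseteq\mathbf Y$ the identity is trivial. When $\mathbf C_2\wedge\mathbf Y=\mathbf T$ one has $\mathbf{SL}\nsubseteq\mathbf Y$, so by line~3 of Table~\ref{forbid var} the variety $\mathbf Y$ is a group variety, and it remains to show $(\mathbf C_2\vee\mathbf Z)\wedge(\mathbf Y\vee\mathbf Z)\subseteq\mathbf Z$; the case $\mathbf C_2\wedge\mathbf Y=\mathbf{SL}$ is analogous with $\mathbf{SL}\vee\mathbf Z$ in place of $\mathbf Z$. These two reverse inclusions I expect to follow from a syntactic analysis of the words satisfied by $\mathbf C_2\vee\mathbf Z$ (recall $\mathbf C_2=\var S(x)$ is commutative and satisfies $x^2\approx x^3$), exploiting the separation between the group part of $\mathbf Y$ and the aperiodic variety $\mathbf C_2$.

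For (a)$\,\Rightarrow\,$(c), assume $\mathbf V$ is modular and upper-modular. I would first split $\mathbb{MON}$ into the ideal of periodic varieties and the coideal $\mathbb{OC}$ of overcommutative varieties. In the overcommutative case the goal is $\mathbf V=\mathbf{MON}$, which I would obtain by showing that any proper $\mathbf V\supseteq\mathbf{COM}$ occurs as the critical element of a pentagon built inside the rich interval $[\mathbf{COM},\mathbf M_4]$, whose anti-isomorphic copies of $\Pi_n$ are furnished by Theorem~\ref{oc fin univ}. In the periodic case I would first force $\mathbf V=\mathbf T$ or $\mathbf{SL}\subseteq\mathbf V$ by ruling out every non-trivial group variety, and then, using that $\mathbf V$ now contains $\mathbf{SL}$, pin down $\mathbf V$ among the varieties lying between $\mathbf{SL}$ and $\mathbf C_2$ (or jumping to $\mathbf{MON}$) with the help of Lemma~\ref{non-cr and non-commut} and the position of $\mathbf C_2\subset\mathbf D_1$.

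The hard part will be these pentagon constructions in the direction (a)$\,\Rightarrow\,$(c), and in particular the refutation of non-trivial group varieties. The obstacle is that every Abelian group variety lies, together with $\mathbf T$, $\mathbf{SL}$, $\mathbf C_2$ and the $\mathbf A_n$, inside a distributive sublattice of $\mathbb{MON}$, so no pentagon $N_5$ can be produced from commutative test varieties alone; the violating $\mathbf Y,\mathbf Z$ must be non-commutative aperiodic varieties such as $\mathbf D_1$, $\mathbf{LRB}$ or $\mathbf{RRB}$, and the argument therefore hinges on computing joins of the form $\mathbf A_p\vee\mathbf D_1$ and locating an intermediate aperiodic subvariety. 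I would expect to import these computations from the structural lemmas of~\cite{Gusev-18b}; the same computations also supply the reverse inclusions needed for the costandardness of $\mathbf C_2$ above.
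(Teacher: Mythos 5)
The survey does not actually prove this theorem; it is imported verbatim from \cite[Theorem~1.2]{Gusev-18b}, so there is no in-paper argument to measure you against. Your cycle (c)\,$\Rightarrow$\,(b)\,$\Rightarrow$\,(a)\,$\Rightarrow$\,(c) is sound in outline: (b)\,$\Rightarrow$\,(a) is indeed free from the general implications in Fig.~\ref{se interrelat}, and your reduction of the costandardness of $\mathbf C_2$ to two reverse inclusions, split along $\mathbf C_2\wedge\mathbf Y\in\{\mathbf T,\mathbf{SL},\mathbf C_2\}$ via lines~3 and~4 of Table~\ref{forbid var}, is the right shape — though all of the actual content there is deferred to an unspecified ``syntactic analysis''.

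The genuine gaps are in (a)\,$\Rightarrow$\,(c). In the overcommutative case your plan breaks already for $\mathbf V=\mathbf{COM}$: by Proposition~\ref{umod codistr}b) this variety is codistributive, hence upper-modular, so it can only be excluded by refuting its \emph{modularity}; but $\mathbf{COM}$ is the least element of $[\mathbf{COM},\mathbf M_4]$ and therefore cannot occupy the side position of any pentagon inside that interval — the witnesses must be periodic varieties lying outside it. (Moreover, an arbitrary proper overcommutative $\mathbf V$ need not be comparable with $\mathbf M_4$, so it need not ``occur'' in that interval at all.) In the periodic case most of the work is simply absent: every commutative variety is upper-modular by Proposition~\ref{umod codistr}b), so each $\mathbf A_k\vee\mathbf C_n$ other than $\mathbf T$, $\mathbf{SL}$, $\mathbf C_2$ must be shown non-modular by hand, and one must separately dispose of all completely regular varieties ($\mathbf{LRB}$, $\mathbf{RRB}$, the $\mathbf B_n$, their joins with group varieties, $\ldots$) and of the non-commutative, non-completely-regular periodic varieties; the phrase ``pin down $\mathbf V$ among the varieties lying between $\mathbf{SL}$ and $\mathbf C_2$'' presupposes precisely what has to be proved. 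Finally, your proposed witness for the non-modularity of $\mathbf A_p$ is shaky: a pentagon with $\mathbf A_p$ on the side requires aperiodic varieties $\mathbf Y\subset\mathbf Z$ with $\mathbf A_p\vee\mathbf Y=\mathbf A_p\vee\mathbf Z$ and $\mathbf A_p\wedge\mathbf Y=\mathbf A_p\wedge\mathbf Z=\mathbf T$, and the pair $(\mathbf C_2,\mathbf D_1)$ does not supply this since $\mathbf A_p\vee\mathbf C_2$ is commutative while $\mathbf A_p\vee\mathbf D_1$ is not. The survey's own machinery — Proposition~\ref{unique cover general}, Remark~\ref{COM,M_2,N_k} and the pentagon built in the proof of Proposition~\ref{mod one more nec} from $\mathbf N_k$ and its unique cover — provides exactly such a pair and is what you should import at this point.
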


Contrary to the semigroup case, we see from Theorems~\ref{neutr,stand,distr,lmod} and~\ref{costand} that for elements in the lattice $\mathbb{MON}$, the properties of being neutral and costandard are not equivalent, while the properties of being neutral and standard are equivalent.
Besides that, Theorems~\ref{neutr,stand,distr,lmod} and~\ref{costand} imply that any standard element of $\mathbb{MON}$ is costandard but the converse does not hold.
In contrast, in the lattice $\mathbb{SEM}$, any costandard element is standard but the converse is false; see Vernikov \cite[Theorems~3.3 and~3.4]{Vernikov-15}.

\begin{theorem}[Gusev and Lee~\cite{Gusev-Lee-21}]
\label{cancel}
A monoid variety is a cancellable element of the lattice $\mathbb{MON}$ if and only if it coincides with $\mathbf T$, $\mathbf{SL}$, $\mathbf C_2$, $\mathbf D_1$, or $\mathbf{MON}$.
\end{theorem}

Theorem~\ref{cancel} shows that there are only five cancellable elements in the lattice $\mathbb{MON}$.
In contrast, the set of all cancellable elements of the lattice $\mathbb{SEM}$ is countably infinite \cite[Theorem~1.1]{Shaprynskii-Skokov-Vernikov-19}.

Some results on upper-modular elements and codistributive elements of $\mathbb{MON}$ are also available.

\begin{proposition}[Gusev {\cite[Propositions~1.3 and~1.4]{Gusev-18b}}]
\label{umod,codistr}
\quad
\begin{itemize}
\item[a)] Every proper monoid variety that is an upper-modular element of the lattice $\mathbb{MON}$ is either commutative or completely regular.
\item[b)] Every commutative monoid variety is a codistributive element of the lattice $\mathbb{MON}$.
\end{itemize}
\end{proposition}

Since any codistributive element of a lattice is upper-modular, Proposition~\ref{umod,codistr} completely reduces the problem of describing codistributive or upper-modular elements in $\mathbb{MON}$ to completely regular varieties.

However, there are some essential difficulties here.
The lattice $\mathbb{GR}$ is modular but not distributive.
Therefore, it contains a sublattice isomorphic to the 5-element modular non-distributive lattice $L_5$ in Fig.~\ref{modular non-distributive}.
Clearly, if a group variety $\mathbf G$ is one of three pairwise non-comparable elements $a,b,c \in L_5$, then $\mathbf G$ is a non-codistributive element of $\mathbb{MON}$.
We see that an examination of codistributive elements of $\mathbb{MON}$ is closely related to that of group varieties with a distributive subvariety lattice, but as observed earlier, classifying such group varieties is extremely difficult; see Theorem~\ref{chain gr uncount}.
Therefore, it is logical to restrict our attention to codistributive elements of $\mathbb{MON}$ that are aperiodic varieties.
Combining this restriction with the observation from the previous paragraph, it suffices to examine varieties of band monoids that are codistributive elements of $\mathbb{MON}$.
But to date, we do not even know the answer to the following question.

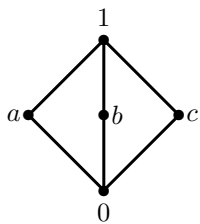
\begin{figure}[htb]
\unitlength=1mm
\linethickness{0.4pt}
\begin{center}
\begin{picture}(26,28)
\put(3,13){\circle*{1.33}}
\put(13,3){\circle*{1.33}}
\put(13,13){\circle*{1.33}}
\put(13,23){\circle*{1.33}}
\put(23,13){\circle*{1.33}}
\gasset{AHnb=0,linewidth=0.4}
\drawline(13,3)(3,13)(13,23)(23,13)(13,3)(13,23)
\put(13,0){\makebox(0,0)[cc]{0}}
\put(13,26){\makebox(0,0)[cc]{1}}
\put(2,13){\makebox(0,0)[rc]{$a$}}
\put(14,13){\makebox(0,0)[lc]{$b$}}
\put(24,13){\makebox(0,0)[lc]{$c$}}
\end{picture}
\end{center}
\caption{The lattice $L_5$}
\label{modular non-distributive}
\end{figure}

\begin{question}
\label{codistr bands?}
Is there a variety of band monoids that is a non-codistributive element of the lattice $\mathbb{MON}$?
\end{question}

In other words, is there a variety $\mathbf B$ of band monoids such that the equality
\[
\mathbf B\wedge(\mathbf X_1 \vee\mathbf X_2) = (\mathbf B\wedge\mathbf X_1)\vee(\mathbf B\wedge \mathbf X_2)
\]
fails for some monoid varieties $\mathbf X_1$ and $\mathbf X_2$?
It follows from Pastijn and Trotter \cite[Corollary~5.9]{Pastijn-Trotter-98} and Proposition~\ref{MON is sublat of SEM} that this equality holds whenever the varieties $\mathbf X_1$ and $\mathbf X_2$ both are locally finite.

The following analog of Question~\ref{codistr bands?} is also open.

\begin{question}
\label{umod bands?}
Is there a variety of band monoids that is a non-upper-modular element of the lattice $\mathbb{MON}$?
\end{question}

Note that the analogs of Questions~\ref{codistr bands?} and~\ref{umod bands?} for semigroup varieties are also currently open.
Besides that, we do not know of examples of upper-modular but non-codistributive elements of $\mathbb{MON}$.
This makes the following question relevant.

\begin{question}
\label{umod is codistr?}
Is every upper-modular element of $\mathbb{MON}$ codistributive?
\end{question}

Regarding the modularity of an element in $\mathbb{MON}$, a necessary condition has recently been found.

\begin{proposition}[Gusev and Lee {\cite[Proposition~4.3]{Gusev-Lee-21}}]
\label{mod elem}
Every proper monoid variety that is a modular element of the lattice $\mathbb{MON}$ satisfies the identities $x^2\approx x^3$ and $x^2y\approx yx^2$.
\end{proposition}

However, no modular element of $\mathbb{MON}$ is currently known to be non-cancellable.

\begin{question}
\label{mod is canc?}
Is every modular element of $\mathbb{MON}$ cancellable?
\end{question}

We summarize in Fig.~\ref{se interrelat MON} all known interrelations between different types of elements in the lattice $\mathbb{MON}$.
Dashed arrows in this figure correspond to interrelations for which it is unknown whether they hold or not.
Green ovals correspond to completely described types of elements, while yellow ovals correspond to those about which partial information is known.

\begin{figure}[tbh]
\begin{center}
\unitlength=1mm
\linethickness{0.4pt}
\begin{picture}(85,66)
\drawoval[fillcolor=green](15,43,25,6,2)
\drawoval[fillcolor=yellow](15,63,25,6,2)
\drawoval[fillcolor=green](42.5,3,85,6,2)
\drawoval[fillcolor=green](42.5,23,25,6,2)
\drawoval[fillcolor=yellow](67.5,43,25,6,2)
\drawoval[fillcolor=yellow](67.5,63,25,6,2)
\put(15,43){\makebox(0,0)[cc]{cancellable}}
\put(67.5,43){\makebox(0,0)[cc]{codistributive}}
\put(42.5,23){\makebox(0,0)[cc]{costandard}}
\put(15,63){\makebox(0,0)[cc]{modular}}
\put(42.5,3){\makebox(0,0)[cc]{neutral = standard = distributive = lower-modular}}
\put(67.5,63){\makebox(0,0)[cc]{upper-modular}}
\put(18.5,53){\makebox(0,0)[lc]{\large ?}}
\put(71,53){\makebox(0,0)[lc]{\large ?}}
\gasset{AHnb=1,AHLength=2,linewidth=0.4}
\drawline(12.5,46)(12.5,60)
\drawline[dash={1 1}{0}](17.5,60)(17.5,46)
\drawline(40,26)(15,40)
\drawline(42.5,6)(42.5,20)
\drawline(45,26)(67.5,40)
\drawline(65,46)(65,60)
\drawline[dash={1 1}{0}](70,60)(70,46)
\end{picture}
\end{center}
\caption{Interrelations between types of elements in the lattice $\mathbb{MON}$}
\label{se interrelat MON}
\end{figure}
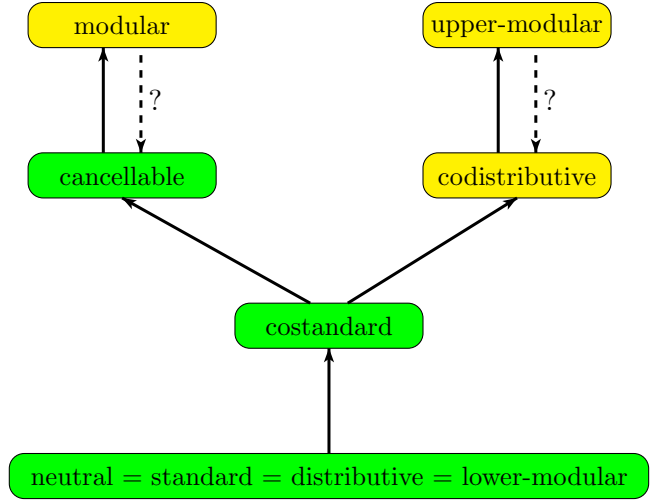

\subsection{Id-elements}
\label{Id-elem}
All types of special elements introduced above, except cancellable elements, are defined by the same scheme.
Namely, we take a particular lattice identity and consider it as an open formula.
Then, one of the letters is left free while all the others are subjected to a universal quantifier.\footnote
{Formally speaking, the definitions of modular, lower-modular, and upper-modular elements are based on a lattice quasi-identity rather than an identity.
But we give such definitions for the sake of brevity only. 
Since the modularity law may be written as an identity, it is fairly easy to redefine these types of elements in the language of lattice identities.}
This approach can easily be generalized in a natural way to an arbitrary lattice identity.

Let $\varepsilon$ be a lattice identity of the form $\mathbf s\approx\mathbf t$, where $\mathbf s$ and $\mathbf t$ are terms in the language of lattice operations $\vee$ and $\wedge$.
Suppose that these terms depend on letters $x_1,\dots,x_n$ and $1\le i\le n$.
Then an element $x$ of a lattice $L$ is an \mbox{$(\varepsilon,i)$}-\emph{element} of $L$ if for all $x_1,\dots,x_{i-1},x_{i+1},\dots,x_n\in L$, the equality
\[
\mathbf s(x_1,\dots,x_{i-1},x,x_{i+1},\dots,x_n)=\mathbf t(x_1,\dots,x_{i-1},x,x_{i+1},\dots,x_n)
\]
holds.
An element of a lattice $L$ is an \emph{Id-element} of $L$ if it is an \mbox{$(\varepsilon,i)$}-element of $L$ for some non-trivial identity $\varepsilon$ depending on letters $x_1,\dots,x_n$ with $1\le i\le n$.

For an element $a$ of a lattice $L$, we put $(a]=\{x\in L\mid x\le a\}$.
If $a\in L$ and the lattice $(a]$ satisfies the identity $\mathbf p(x_1,\dots,x_n)\approx\mathbf q(x_1,\dots,x_n)$, then
\[
\mathbf p(a\wedge x_1,\dots,a\wedge x_n)=\mathbf q(a\wedge x_1,\dots,a\wedge x_n)
\]
for all $x_1,\dots,x_n\in L$ because $a\wedge x_1,\dots,a\wedge x_n\in(a]$.
Therefore, in this situation, $a$ is an \mbox{$(\varepsilon,n+1)$}-element of $L$ with the following identity $\varepsilon$ depending on letters $x_1,\dots,x_{n+1}$:
\[
\mathbf p(x_{n+1}\wedge x_1,\dots,x_{n+1}\wedge x_n)\approx\mathbf q(x_{n+1}\wedge x_1,\dots,x_{n+1}\wedge x_n).
\]
So, we have the following statement from Shaprynski\v{\i}~\cite{Shaprynskii-12}: if $a$ is an element of a lattice $L$ such that the ideal $(a]$ of $L$ satisfies some non-trivial identity, then $a$ is an Id-element of $L$.

A monoid [respectively, semigroup] variety is an \emph{Id-variety} if it is an Id-element of the lattice $\mathbb{MON}$ [respectively, $\mathbb{SEM}$].
The following assertion is a specialization of the aforementioned result for the lattice $\mathbb{MON}$.
The analogous claim is true for semigroup varieties.

\begin{observation}
\label{ident and Id-var}
If $\mathbf V$ is a monoid variety and the lattice $L(\mathbf V)$ satisfies some non-trivial identity, then $\mathbf V$ is an Id-variety.
\end{observation}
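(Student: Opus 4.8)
The plan is to recognize the subvariety lattice $L(\mathbf V)$ as the principal ideal of $\mathbb{MON}$ generated by $\mathbf V$, and then to invoke the general lattice-theoretic fact stated immediately above. First I would observe that, for the lattice $\mathbb{MON}$ ordered by inclusion, the principal ideal $(\mathbf V]=\{\mathbf W\in\mathbb{MON}\mid\mathbf W\subseteq\mathbf V\}$ is nothing other than the collection of all subvarieties of $\mathbf V$. Moreover, the lattice operations inherited from $\mathbb{MON}$---meet being intersection of varieties and join being the varietal join---restrict on this ideal to precisely the operations of $L(\mathbf V)$. Hence $(\mathbf V]$ and $L(\mathbf V)$ are literally the same lattice.

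Given this identification, the argument is immediate. By hypothesis, $L(\mathbf V)=(\mathbf V]$ satisfies some non-trivial lattice identity. Applying the fact mentioned by Shaprynski\v{\i}~\cite{Shaprynskii-12} with $L=\mathbb{MON}$ and $a=\mathbf V$, we conclude that $\mathbf V$ is an Id-element of $\mathbb{MON}$, that is, an Id-variety, as required. Tracing through the general construction, the witnessing identity $\varepsilon$ is the one obtained from the identity $\mathbf p\approx\mathbf q$ holding in $L(\mathbf V)$ by substituting $x_{n+1}\wedge x_j$ for each variable $x_j$, and $\mathbf V$ is then the corresponding $(\varepsilon,n+1)$-element of $\mathbb{MON}$.

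There is essentially no hard step here: the entire content of the Observation is already contained in the preceding general remark, and the only thing to verify is the harmless identification of $L(\mathbf V)$ with the ideal $(\mathbf V]$. The single point I would state explicitly, so as not to gloss over it, is that the join of two subvarieties of $\mathbf V$ computed inside the ambient lattice $\mathbb{MON}$ again lies in $\mathbf V$ and coincides with their join inside $L(\mathbf V)$; this is clear since $\mathbf V$ is closed under forming joins of its subvarieties, so no discrepancy between ``ambient'' and ``internal'' operations can arise. With that checked, the observation follows directly by specialization of the cited fact.
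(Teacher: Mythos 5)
Your proposal is correct and follows exactly the route the paper intends: the Observation is stated as "a specialization of the just mentioned fact," namely that an element $a$ whose principal ideal $(a]$ satisfies a non-trivial identity is an Id-element, applied with $L=\mathbb{MON}$ and $a=\mathbf V$ after identifying $(\mathbf V]$ with $L(\mathbf V)$. Your extra remark about the ambient and internal joins agreeing is a harmless and accurate bit of diligence.
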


It is verified by Shaprynski\v{\i} \cite[Theorems~1 and~2]{Shaprynskii-12} that all proper overcommutative semigroup varieties are non-Id-varieties and there exist periodic non-Id-varieties of semigroups.
The analog of the former fact does not hold for monoid varieties.
Indeed, by Proposition~\ref{umod,codistr}b), the variety $\mathbf{COM}$ is an Id-variety; this follows also from Observation~\ref{ident and Id-var} and the fact that the lattice $L(\mathbf{COM})$ is distributive by Theorem~\ref{struct of COM}.
As noted in Subsection~\ref{mod}, the lattice $L(\mathbf{COM}\vee\mathbf D_1)$ is also distributive.
In view of Observation~\ref{ident and Id-var}, the variety $\mathbf{COM}\vee\mathbf D_1$ provides one more example of overcommutative Id-variety of monoids.
The following question is still open.

\begin{question}
\label{non-Id-var exist?}
Is there a [periodic] non-Id-variety of monoids?
\end{question}

The proof of Theorem~2 in Shaprynski\v{\i}~\cite{Shaprynskii-12} implies that if a lattice $L$ contains a sublattice $K$ that is anti-isomorphic to $\Pi_\infty$, then $K$ contains an element that is not an Id-element of $L$.
Thus, an affirmative answer to Question~\ref{dual to Pi_infty in MON?}a) implies an affirmative answer to Question~\ref{non-Id-var exist?}.

\section{Definable varieties and sets of varieties}
\label{definable}

\subsection{Definability}
\label{def}
A subset $A$ of a lattice $\langle L;\vee,\wedge\rangle$ is \emph{definable} in $L$ if there exists a first-order formula $\Phi(x)$ with one free variable $x$ in the language of lattice operations $\vee$ and $\wedge$ with the following property: for an element $z\in L$, the sentence $\Phi(z)$ is true if and only if $z\in A$.
An element $a\in L$ is \emph{definable} in $L$ if the set $\{a\}$ is definable in $L$.

The importance of definable elements and subsets of a lattice is due to their close connection with automorphisms of the lattice.
Indeed, it is clear that if a subset $A$ of a lattice $L$ is definable in $L$ and $\varphi$ is any automorphism of $L$, then $\varphi(a)\in A$ for all $a\in A$; in particular, if $a$ is definable in $L$, then $\varphi(a)=a$.

The notion of definable sets of elements deeply generalizes, in a sense, the notion of special elements.
Indeed, if $\varepsilon$ is a lattice identity depending on the letters $x_1,\dots,x_n$ and $1\le i\le n$, then the set of all \mbox{$(\varepsilon,i)$}-elements of a lattice $L$ is evidently definable in $L$.
The same is true for the set of all cancellable elements of a lattice.

Definable elements and subsets of the lattice $\mathbb{SEM}$ have been examined by Je\v{z}ek and McKenzie~\cite{Jezek-McKenzie-93} and Vernikov~\cite{Vernikov-12}; see also Shevrin \textit{et~al.}\@ \cite[Section~15]{Shevrin-Vernikov-Volkov-09}.
In particular, an arbitrary commutative semigroup variety is definable in $\mathbb{SEM}$ \cite[Corollary~15.1]{Shevrin-Vernikov-Volkov-09}.
For brevity, we say that a monoid variety or a set of monoid varieties is \emph{definable} if it is definable in $\mathbb{MON}$.
There has not been any work devoted to the study of definable monoid varieties or sets of monoid varieties, but some results on this topic can be easily deduced from existing results.

\begin{proposition}
\label{def sets and var}
The set of all varieties of commutative [overcommutative, periodic, group, Abelian group, aperiodic, completely regular, band] monoid varieties and each of the varieties $\mathbf{COM}$, $\mathbf{BAND}$, and $\mathbf C_n$ for any $n\in \mathbb N$ are all definable.
\end{proposition}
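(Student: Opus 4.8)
The plan is to build a short hierarchy of definable elements and definable classes, using the special-element theorems of Subsection~\ref{concrete se}, the forbidden-subvariety descriptions of Table~\ref{forbid var}, and the completeness of $\mathbb{MON}$ to pin down infinite joins by first-order least-upper-bound formulas. First I would record the elementary facts that the least element $\mathbf T$ and the greatest element $\mathbf{MON}$ are definable (by $\forall y\,(x\wedge y=x)$ and $\forall y\,(x\vee y=x)$, respectively), that ``$x$ is an atom'' is first-order ($x$ covers $\mathbf T$), and that for any definable element the principal ideal and filter it generates are definable classes. By Theorem~\ref{neutr,stand} the only neutral elements are $\mathbf T$, $\mathbf{SL}$ and $\mathbf{MON}$, so $\mathbf{SL}$ is definable as the unique neutral element different from $\mathbf T$ and $\mathbf{MON}$; by Theorem~\ref{costand} the only costandard elements are $\mathbf T$, $\mathbf{SL}$, $\mathbf C_2$ and $\mathbf{MON}$, so $\mathbf C_2$ is definable as the unique costandard element that is not neutral. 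Since ``neutral'' and ``costandard'' are each expressed by a first-order formula, these definitions are legitimate.

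The central step is to define the element $\mathbf{COM}$. By Observation~\ref{atoms} the atoms are exactly the $\mathbf A_p$ together with $\mathbf{SL}$, so the set of group atoms is definable as the class of atoms distinct from the (already definable) element $\mathbf{SL}$. As recorded in the proof of Proposition~\ref{number of covers cr}, the join of the infinite family of all group atoms equals $\mathbf{COM}$; hence $\mathbf{COM}$ is the supremum in $\mathbb{MON}$ of the definable set of group atoms, and the supremum of a definable set is captured by the first-order least-upper-bound formula
\[
\bigl(\forall a\,(a\text{ is a group atom}\to a\le x)\bigr)\wedge\bigl(\forall x'\,((\forall a\,(a\text{ is a group atom}\to a\le x'))\to x\le x')\bigr).
\]
With $\mathbf{COM}$, $\mathbf{SL}$, $\mathbf C_2$ and the group atoms all definable, every class in the statement follows from Table~\ref{forbid var} read as a membership test: $\mathbb{GR}=\{x:\mathbf{SL}\nleq x\}$, $\mathbb{CR}=\{x:\mathbf C_2\nleq x\}$, $\mathbb{APER}=\{x:\text{no group atom is}\le x\}$, $\mathbb{PER}=\{x:\mathbf{COM}\nleq x\}$, $\mathbb{BAND}=\mathbb{APER}\cap\mathbb{CR}$, $\mathbb{COM}=\{x:x\le\mathbf{COM}\}$ and $\mathbb{OC}=\{x:\mathbf{COM}\le x\}$; the Abelian group varieties are $\mathbb{GR}\cap\mathbb{COM}$, since a group variety is Abelian precisely when it lies below $\mathbf{COM}$. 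Each of these is patently first-order.

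It remains to define the individual varieties $\mathbf{BAND}$ and $\mathbf C_n$. The variety $\mathbf{BAND}$ is the greatest element of the definable class $\mathbb{BAND}$, hence definable by ``$x\in\mathbb{BAND}$ and $\forall y\,(y\in\mathbb{BAND}\to y\le x)$''. For $\mathbf C_n$ I would use that the definable class $\mathbb{COM}\cap\mathbb{APER}$ is, by Theorem~\ref{struct of COM}, exactly the chain $\mathbf T\subset\mathbf{SL}\subset\mathbf C_2\subset\mathbf C_3\subset\cdots$; in this chain $\mathbf C_n$ is the unique element having exactly $n$ elements strictly below it, a condition expressible, for each fixed $n$, by a first-order formula asserting the existence of exactly $n$ such lower elements. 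This completes the definability of every item in the list.

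The main obstacle is the definition of $\mathbf{COM}$: the remaining classes are routine translations of Table~\ref{forbid var} once enough anchor elements are available, but $\mathbf{COM}$ is neither an atom nor determined by a single special-element property. The crucial observation that unlocks it is that $\mathbf{COM}$ is the join of all group atoms, which lets a genuinely infinite join be pinned down by a finite least-upper-bound formula over the definable set of group atoms. A secondary point requiring care is that the individuation of $\mathbf C_n$ is only uniform in the metatheory (a separate formula for each $n$), which is exactly what ``$\mathbf C_n$ is definable for every $n\ge 2$'' asserts; no single formula defining the whole family is claimed or needed.
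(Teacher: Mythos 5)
Your proposal is correct and follows essentially the same route as the paper: $\mathbf{SL}$ via Theorem~\ref{neutr,stand}, $\mathbf C_2$ via Theorem~\ref{costand}, $\mathbf{COM}$ as the (first-order expressible) join of a definable set of atoms, the classes via Table~\ref{forbid var}, and $\mathbf{BAND}$ as the greatest band variety. The only divergence is in pinning down $\mathbf C_n$ for $n\ge 3$: the paper proceeds by induction on covers within the definable set of chain varieties, while you count the $n$ proper subvarieties of $\mathbf C_n$ inside the definable chain $\mathbb{COM}\cap\mathbb{APER}$ given by Theorem~\ref{struct of COM} --- both are valid and of comparable effort.
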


\begin{proof}
The set of all atoms of an arbitrary lattice $L$ is definable in $L$.
Further, if $L$ is a complete lattice and a subset $A$ of $L$ is definable in $L$, then the element $\bigvee A$ is also definable in $L$.
The variety $\mathbf{COM}$ is the join of all atoms of the lattice $\mathbb{MON}$ and so is definable.
This immediately implies definability of the classes of all commutative, all overcommutative, and all periodic varieties (as varieties $\mathbf V$ with $\mathbf V\subseteq\mathbf{COM}$, $\mathbf{COM}\subseteq\mathbf V$, and $\mathbf{COM}\nsubseteq\mathbf V$, respectively).

Further, the variety $\mathbf C_1 = \mathbf{SL}$ is definable because it is a unique neutral element of the lattice $\mathbb{MON}$ different from the least and the greatest elements of this lattice; see Theorem~\ref{neutr,stand,distr,lmod}.
This allows us to prove definability of the sets of all group varieties (as varieties $\mathbf V$ with $\mathbf{SL}\nsubseteq\mathbf V$, see line~3 of Table~\ref{forbid var}), all Abelian group varieties (as varieties that are commutative and group varieties simultaneously), and all aperiodic varieties (as varieties that do not contain group atoms of the lattice $\mathbb{MON}$).

The variety $\mathbf C_2$ is definable because it is a unique costandard but non-neutral element of $\mathbb{MON}$; see Theorems~\ref{neutr,stand,distr,lmod} and~\ref{costand}.
Then we have definability of the sets of all completely regular varieties (as varieties $\mathbf V$ with $\mathbf C_2\nsubseteq\mathbf V$, see line~4 of Table~\ref{forbid var}) and all band varieties (as varieties that are completely regular and aperiodic simultaneously).
The variety $\mathbf{BAND}$ is then definable as the greatest band variety.

It remains to check that the variety $\mathbf C_n$ with any $n\ge 3$ is definable.
Here we use the fact that the set of all chain varieties is definable.
Then the variety $\mathbf C_3$ is definable because it is a unique commutative chain variety that covers $\mathbf C_2$; see Fig.~\ref{all chain var}.
Finally, let $n\ge 4$.
By induction on $n$, the variety $\mathbf C_n$ is definable because it is a unique chain variety that covers $\mathbf C_{n-1}$; see Fig.~\ref{all chain var} again.
\end{proof}

However, the following question remains open.

\begin{question}
\label{comm is def?}
Is every commutative monoid variety definable?
\end{question}

Proposition~\ref{def sets and var} and Theorem~\ref{struct of COM} reduce this question to the following: is the variety $\mathbf A_n$ definable for each $n\ge 2$? 
Note that, for any $n\ge 2$, the variety of Abelian groups of exponent $n$ (considered as a semigroup variety) is definable in the lattice $\mathbb{SEM}$; see Shevrin \textit{et~al.}\@ \cite[Corollary~15.1]{Shevrin-Vernikov-Volkov-09} or Vernikov \cite[Theorem~5.7]{Vernikov-12}.

\subsection{Semidefinability}
\label{semidefinable}
If $\mathbf V$ is a monoid variety such that $\mathbf V\ne\overleftarrow{\mathbf V}$, then $\mathbf V$ is evidently non-definable because for an arbitrary first-order formula $\Phi(x)$ in the lattice language, the sentences $\Phi(\mathbf V)$ and $\Phi(\overleftarrow{\mathbf V})$ are true or false simultaneously.
The following definition is thus very natural: a monoid variety $\mathbf V$ is \emph{semidefinable} if the set $\{\mathbf V,\overleftarrow{\mathbf V}\}$ is definable.

\begin{proposition}
\label{band semidef}
Every variety of band monoids is semidefinable.
\end{proposition}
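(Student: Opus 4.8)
The plan is to exploit the explicit description of $\mathbb{BAND}$ from Theorem~\ref{struct of BAND} (Fig.~\ref{L(BAND)}) together with the definability facts already assembled in Proposition~\ref{def sets and var}, climbing the lattice $\mathbb{BAND}$ level by level along the covering relation. Three tools suffice. By Proposition~\ref{def sets and var} the class $\mathcal B$ of all band varieties is definable, the variety $\mathbf{SL}$ is definable, and $\mathbf{BAND}$ is definable; moreover, as noted in the proof of that proposition, $\mathbb{MON}$ is complete (see Subsection~\ref{gen prop}) and the join $\bigvee A$ of any definable subset $A$ is again definable. Since $\mathcal B$ is definable, I would relativize every quantifier below to $\mathcal B$; in particular, for any definable element $\mathbf J$ the set of band varieties covering $\mathbf J$ inside the induced subposet $\mathbb{BAND}$ is definable, being cut out by the first-order formula asserting that $\mathbf X\in\mathcal B$, that $\mathbf J\subset\mathbf X$, and that no band variety lies strictly between $\mathbf J$ and $\mathbf X$.

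The structure of $\mathbb{BAND}$ in Fig.~\ref{L(BAND)} is a stack of four-element Boolean lattices glued at corners along a central self-dual axis. I would make this precise by a recursion. Put $\mathbf J_0=\mathbf{SL}$; let $\mathcal P_{k+1}$ be the set of band varieties covering $\mathbf J_k$; and put $\mathbf J_{k+1}=\bigvee\mathcal P_{k+1}$. By Theorem~\ref{struct of BAND}, the band varieties covering $\mathbf J_0$ are exactly $\mathbf{LRB}$ and $\mathbf{RRB}=\overleftarrow{\mathbf{LRB}}$, so $\mathcal P_1=\{\mathbf{LRB},\mathbf{RRB}\}$ and $\mathbf J_1=\mathbf{LRB}\vee\mathbf{RRB}$ is self-dual; inductively, the self-dual "waist" element $\mathbf J_k$ is covered by exactly a dual pair $\mathcal P_{k+1}=\{\mathbf B_{n},\overleftarrow{\mathbf B_{n}}\}$ (with $n=k+2$), whose join $\mathbf J_{k+1}$ is again a single self-dual element. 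By the opening paragraph, each $\mathcal P_{k+1}$ is definable once $\mathbf J_k$ is, and then $\mathbf J_{k+1}$ is definable as the join of a definable set. Since $\mathbf J_0=\mathbf{SL}$ is definable, a straightforward induction shows that for every fixed $k$ both $\mathbf J_k$ and $\mathcal P_{k+1}$ are definable, each by a concrete formula depending on $k$; note that pointwise definability is all that is required, so no single uniform formula is needed.

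It then remains to observe that every band monoid variety is reached. Each $\mathbf V\in\mathcal B$ is one of $\mathbf T$, $\mathbf{BAND}$, some $\mathbf J_k$, or a member of some pair $\mathcal P_{k+1}$. Now $\mathbf T$ is the least element and $\mathbf{BAND}$ is definable by Proposition~\ref{def sets and var}, so both are definable, hence, being self-dual, semidefinable; every $\mathbf J_k$ is self-dual and definable, hence semidefinable; and for every dual pair $\mathcal P_{k+1}=\{\mathbf B_{n},\overleftarrow{\mathbf B_{n}}\}$ (and for $\mathcal P_1=\{\mathbf{LRB},\mathbf{RRB}\}$) the definability of the pair is precisely the statement that $\mathbf B_n$ and its dual are semidefinable. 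This exhausts $\mathbb{BAND}$, yielding the claim.

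The only genuine work is the bookkeeping in the middle step: one must invoke Theorem~\ref{struct of BAND} to be certain that covering the self-dual waist element $\mathbf J_k$ produces exactly a two-element dual pair and that the join of that pair is again a single self-dual element, so that the recursion never leaves the central axis and never needs to distinguish $\mathbf B_n$ from $\overleftarrow{\mathbf B_n}$ (which no lattice formula could do, as observed just before the statement). I do not expect a substantive obstacle: since each proper band variety has finite height in $\mathbb{BAND}$, the recursion captures it at a finite stage, with $\mathbf{BAND}$ handled separately as the definable top — in keeping with the expectation that the result follows readily from the results already in place.
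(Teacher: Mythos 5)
Your proposal is correct and follows essentially the same route as the paper: both arguments use the definability of the set of band varieties, of $\mathbf{SL}$ and of $\mathbf{BAND}$ from Proposition~\ref{def sets and var}, and then climb Fig.~\ref{L(BAND)} by induction, identifying each dual pair $\{\mathbf B_n,\overleftarrow{\mathbf B_n}\}$ (and $\{\mathbf{LRB},\mathbf{RRB}\}$) as the set of band varieties covering the previously defined self-dual join $\mathbf B_{n-1}\vee\overleftarrow{\mathbf B_{n-1}}$ (respectively $\mathbf{SL}$). Your recursion with the waist elements $\mathbf J_k$ is just a slightly more explicit packaging of the paper's induction.
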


\begin{proof}
It is evident that every definable variety is semidefinable and if a variety $\mathbf V$ is semidefinable, then the variety $\mathbf V\vee\overleftarrow{\mathbf V}$ is definable.
The varieties $\mathbf{SL}$ and $\mathbf{BAND}$ are definable by Proposition~\ref{def sets and var}.
By Fig.~\ref{L(BAND)}, it suffices to check that the set $\{\mathbf B_n,\overleftarrow{\mathbf B_n}\}$, for any $n\ge 2$, is definable.
We will use the fact that the set of all band varieties is definable by Proposition~\ref{def sets and var}.
The set $\{\mathbf B_2,\overleftarrow{\mathbf B_2}\}$ is definable because $\mathbf B_2$ and $\overleftarrow{\mathbf B_2}$ are the only varieties of band monoids that cover $\mathbf{SL}$; see Fig.~\ref{L(BAND)}.
By induction on $n$, for any $n>2$, the set $\{\mathbf B_n,\overleftarrow{\mathbf B_n}\}$ is definable because $\mathbf B_n$ and $\overleftarrow{\mathbf B_n}$ are the only varieties of band monoids that cover $\mathbf B_{n-1}\vee\overleftarrow{\mathbf B_{n-1}}$; see Fig.~\ref{L(BAND)} again.
\end{proof}

One can verify semidefinability of many other monoid varieties.
For example, it is easy to check that each non-group chain variety of monoids is semidefinable.

It is clear that the set of all semidefinable varieties is countably infinite.
Since the lattice $\mathbb{MON}$ is uncountably infinite, non-semidefinable monoid varieties exist, but we do not know of any explicit example.

\begin{problem}
\label{non-semidef}
Find an example of a non-semidefinable monoid variety.
\end{problem}

This problem is closely related to Question~\ref{aut exist?}a).
Indeed, if Question~\ref{aut exist?}a) is affirmatively answered, and if $\varphi$ is a non-trivial automorphism of the lattice $\mathbb{MON}$ that is different from $\delta$, then any variety $\mathbf V$ such that $\varphi(\mathbf V)\ne\mathbf V$ is non-semidefinable.

\subsection*{Acknowledgements.} The authors are indebted to Marcel Jackson and Olga Sapir for very helpful discussions, and to Mikhail Volkov for important information on pseudovarieties in Subsection~\ref{pseudovar}.

\end{document}